\newcommand\mybullet{\raisebox{-5pt}{\normalsize \ensuremath{\bullet}}}
\newcommand\mycirc{\raisebox{-5pt}{\normalsize \ensuremath{\circ}}}
\def\absdot{\@ifnextchar[{\@absdotlabel}{\@absdotnolabel}}
	\def\@absdotlabel[#1]#2{%
		\node at #2 {\normalsize \mybullet};
		\node at #2 [below=2pt] {\ensuremath{#1}};
	}
	\def\@absdotnolabel#1{%
		\node at #1 {\normalsize \mybullet};
	}
\def\absdothollow{\@ifnextchar[{\@absdothollowlabel}{\@absdothollownolabel}}
	\def\@absdothollowlabel[#1]#2{%
		\node at #2 {\normalsize \textcolor{white}{\mybullet}};
		\node at #2 {\normalsize \mycirc};
		\node at #2 [below=2pt] {\ensuremath{#1}};
	}
	\def\@absdothollownolabel#1{%
		\node at #1 {\normalsize \textcolor{white}{\mybullet}};
		\node at #1 {\normalsize \mycirc};
	}
\newcommand{\plotperm}[1]{
	\foreach \j [count=\i] in {#1} {
		\absdot{(\i,\j)};
	};
}
\newcommand{\plotpermbox}[4]{
	\draw [darkgray, thick, rounded corners=0.01, line cap=round]
		({#1-0.5}, {#2-0.5}) rectangle ({#3+0.5}, {#4+0.5});
}
\newcommand{\plotpermgraph}[1]{
	\foreach \j [count=\i] in {#1} {
		\foreach \b [count=\a] in {#1} {
			\ifthenelse{\a<\i \AND \b>\j}{\draw (\a,\b)--(\i,\j);}{}
		};
	};
	\plotperm{#1};
}
\newcommand{\plotpermdyckpath}[1]{
	\draw [ultra thick, rounded corners=0.01, line cap=round] (0.5,0.5)
	\foreach \step in {#1} {
		\ifnum\step=1
			-- ++(0,1)
		\else
			-- ++(1,0)
		\fi
	};
}
\newcommand{\plotrotheperm}[1]{
	\foreach \j [count=\i] in {#1} {
		\absdot{(\j, -\i)};
	};
  \pgfmathsetmacro\len{dim({#1})}; 
	\foreach \x in {0,1,...,\len} {
		\draw[darkgray, thin, line cap=round] (0.5,{-\x-0.5})--({\len+0.5},{-\x-0.5});
		\draw[darkgray, thin, line cap=round] ({\x+0.5},-0.5)--({\x+0.5},{-\len-0.5});
	};
}
\newcommand{\plotrotheperminversions}[1]{
	\foreach \pii [count=\i] in {#1} {
		\foreach \pij [count=\j] in {#1} {
			\ifthenelse{\i<\j \AND \pii>\pij}{\absdothollow{(\pij, -\i)}}{}
		};
	};
	\plotrotheperm{#1}
}
\newcommand{\matrixpermwithzeros}[2]{
	\foreach \y [count=\x] in {#2} {
		\foreach \j in {1, 2, ..., #1} {
			\ifthenelse{\j=\y}{
				\node at (\x,\j) {\tiny $\mathbf{1}$};
			}{
				\node at (\x,\j) {\textcolor{darkgray}{\tiny $0$}};
			}
		}
	}
}
\newcommand{\plotdyckpath}[1]{
	\draw[ultra thick, line cap=round] (0.5,0)
	\foreach \step in {#1} {
		\ifnum\step=1
			-- ++(1,1)
		\else
			-- ++(1,-1)
		\fi
	};
}
\newcommand{\arcskinnyplain}[2]{
	\draw (#1,0) arc (180:0:{(#2-#1)/2});
}
\newcommand{\matchsmall}[1]{
	\begin{tikzpicture}[scale=.1, anchor=base]
		\def\h{0};
		\def\maxh{0};
		\foreach \i/\j in {#1} {
			\pgfmathparse{\j-\i};
			\let\h\pgfmathresult;
			\pgfmathifthenelse{\h>\maxh}{\h}{\maxh};
			\global\let\maxh\pgfmathresult;
		};
		\pgftransformyscale{{4.5/\maxh}};
		\foreach \i/\j in {#1} {
			\arcskinnyplain{\i}{\j};
		};
	\end{tikzpicture}
}
\newcommand{\matchpermsmall}[1]{
	\begin{tikzpicture}[scale=.1, anchor=base]
    \pgfmathsetmacro\len{dim({#1})}; 
		\def\h{0};
		\def\maxh{0};
		\foreach \j [count=\i] in {#1} {
			\pgfmathparse{2*\len+1-\j-\i};
			\let\h\pgfmathresult;
			\pgfmathifthenelse{\h>\maxh}{\h}{\maxh};
			\global\let\maxh\pgfmathresult;
		};
		\pgftransformyscale{{4.5/\maxh}};
		\foreach \j [count=\i] in {#1} {
			\arcskinnyplain{\i}{{2*\len+1-\j}};
		};
	\end{tikzpicture}
}
\newcommand{\plotpinsequence}[1]{
	\absdot{(0,0)}{};
	\edef\n{0}
	\edef\s{0}
	\edef\e{0}
	\edef\w{0}
	\edef\x{0}
	\edef\y{0}
	\foreach \pin [remember=\pin as \oldpin (initially 1), count=\i] in {#1} {
		\ifthenelse{\pin=1 \OR \pin=2}{
			\ifthenelse{\oldpin=3}{
				\xdef\x{\number\numexpr\e-1}
			}{
				\xdef\x{\number\numexpr\w+1}
			}
			\ifnum\i=1 
				\pgfmathparse{\e+1}
 				\xdef\e{\pgfmathresult}
			\fi	
		}{ 
			\ifthenelse{\oldpin=1}{
				\xdef\y{\number\numexpr\n-1}
			}{
				\xdef\y{\number\numexpr\s+1}
			}
			\ifnum\i=1 
				\pgfmathparse{\s-1}
 				\xdef\s{\pgfmathresult}
			\fi	
		}
		\ifnum\pin=1 
			\pgfmathparse{\n+2}
 			\xdef\n{\pgfmathresult}		
			\absdot{(\x,\n)}{};
			\ifnum\i>1
				\draw (\x,\n) -- (\x,\y-0.5);
			\else
			\fi
		\fi
		\ifnum\pin=2 
			\pgfmathparse{\s-2}
 			\xdef\s{\pgfmathresult}
			\absdot{(\x,\s)}{};
			\ifnum\i>1
				\draw (\x,\s) -- (\x,\y+0.5);
			\else
			\fi
		\fi
		\ifnum\pin=3 
			\pgfmathparse{\e+2}
 			\xdef\e{\pgfmathresult}
			\absdot{(\e,\y)}{};
			\ifnum\i>1
				\draw (\e,\y) -- (\x-0.5,\y);
			\else
			\fi
		\fi
		\ifnum\pin=4 
			\pgfmathparse{\w-2}
 			\xdef\w{\pgfmathresult}
			\absdot{(\w,\y)}{};
			\ifnum\i>1
				\draw (\w,\y) -- (\x+0.5,\y);
			\else
			\fi
		\fi		
	};
}
\newcommand{\gridsmallhoriz}[1]{
	\begin{tikzpicture}[scale=1, anchor=base]
    \pgfmathsetmacro\gridwidth{dim({#1})}; 
	  \pgftransformxscale{{0.225/2}};
		\pgftransformyscale{{0.225}};
    \foreach \dir [count=\i] in {#1} {
      \ifthenelse{\dir>0}{
		    \draw [semithick, line cap=round] ({\i-1}, 0)--(\i, 1);
      }{
        \draw [semithick, line cap=round] ({\i-1}, 1)--(\i, 0);
      };
    };
  \end{tikzpicture}
}
\newcommand{\gridsmallhorizfn}[1]{
	\begin{tikzpicture}[scale=1, anchor=base]
    \pgfmathsetmacro\gridwidth{dim({#1})}; 
	  \pgftransformxscale{{0.155/\gridwidth}};
		\pgftransformyscale{{0.155}};
    \foreach \dir [count=\i] in {#1} {
      \ifthenelse{\dir>0}{
		    \draw [semithick, line cap=round] ({\i-1}, 0)--(\i, 1);
      }{
        \draw [semithick, line cap=round] ({\i-1}, 1)--(\i, 0);
      };
    };
  \end{tikzpicture}
}
\newcommand{\gridhoriz}[1]{
	\begin{tikzpicture}[scale=1, anchor=base]
	  \pgftransformxscale{0.225};
		\pgftransformyscale{0.225};
    \foreach \dir [count=\i] in {#1} {
      \ifthenelse{\dir>0}{
		    \draw [semithick, line cap=round] ({\i-1}, 0)--(\i, 1);
      }{
        \draw [semithick, line cap=round] ({\i-1}, 1)--(\i, 0);
      };
    };
  \end{tikzpicture}
}
\newcommand{\gridsmallvert}[1]{
	\begin{tikzpicture}[scale=1, anchor=base]
    \pgfmathsetmacro\gridheight{dim({#1})}; 
	  \pgftransformxscale{0.225};
		\pgftransformyscale{0.225/2};
    \foreach \dir [count=\i] in {#1} {
      \ifthenelse{\dir>0}{
		    \draw [semithick, line cap=round] (0, {\i-1})--(1, \i);
      }{
        \draw [semithick, line cap=round] (0, \i)--(1, {\i-1});
      };
    };
  \end{tikzpicture}
}
\newcommand{\gridsmalltwobyvert}[2]{
	\begin{tikzpicture}[scale=1, anchor=base]
    \def\gridwidth{2};
    \pgfmathsetmacro\gridheight{dim({#1})}; 
	  \pgftransformxscale{{0.225/\gridwidth}};
		\pgftransformyscale{{0.225/\gridheight}};
    \foreach \dir [count=\i] in {#1} {
			\ifthenelse{\dir=1 \OR \dir=-1}{
				\ifthenelse{\dir>0}{
					\draw [semithick, line cap=round] (0, {\i-1})--(1, \i);
				}{
					\draw [semithick, line cap=round] (0, \i)--(1, {\i-1});
				};
			}{};
    };
    \foreach \dir [count=\i] in {#2} {
			\ifthenelse{\dir=1 \OR \dir=-1}{
				\ifthenelse{\dir>0}{
					\draw [semithick, line cap=round] (1, {\i-1})--(2, \i);
				}{
					\draw [semithick, line cap=round] (1, \i)--(2, {\i-1});
				};
			}{};
    };
  \end{tikzpicture}
}
\newcommand{\gridverysmallvert}[1]{
	\begin{tikzpicture}[scale=1, anchor=base]
    \def\gridwidth{1};
    \pgfmathsetmacro\gridheight{dim({#1})}; 
	  \pgftransformxscale{{0.175/\gridwidth}};
		\pgftransformyscale{{0.175/\gridheight}};
    \foreach \dir [count=\i] in {#1} {
      \ifthenelse{\dir>0}{
		    \draw [semithick, line cap=round] (0, {\i-1})--(1, \i);
      }{
        \draw [semithick, line cap=round] (0, \i)--(1, {\i-1});
      };
    };
  \end{tikzpicture}
}
  \tikzstyle{ball}=[circle, draw, inner sep=1pt, color=black]
  \tikzstyle{open_y}=[ball,fill=white]
  \tikzstyle{open}=[ball,fill=white]
  \tikzstyle{open2}=[ball,fill=white,minimum size=.15cm]
  \tikzstyle{closed}=[ball,fill=black]
  \tikzstyle{zball}=[ball,rectangle,minimum size=.1cm]
\newcommand{\inv}{\textnormal{inv}}
\newcommand{\perm}{\textnormal{perm}}
\newcommand{\Geom}{\textnormal{Geom}}
\newcommand{\eval}[2][\right]{\relax\ifx#1\right\relax \left.\fi#2#1\rvert}
\newcommand{\Ex}[1]{\mathbb{E}\left[ #1 \right]}
\renewcommand{\Pr}[1]{\mathbb{P}\left[ #1 \right]}
  \definecolor{lightgray}{rgb}{.8,.8,.8}
  \definecolor{light-gray}{gray}{0.65}
  \definecolor{dark-gray}{gray}{0.35}
  \theoremstyle{plain} 
  \newtheorem{theorem}{Theorem}[section]
  \newtheorem{lemma}[theorem]{Lemma}
  \newtheorem{corollary}[theorem]{Corollary}
  \newtheorem{proposition}[theorem]{Proposition}
  \newtheorem{conjecture}[theorem]{Conjecture}
  \newtheorem{problem}[theorem]{Problem}
\newcommand{\UnderlinedTocSection}[1]{%
  \addtocontents{toc}{\protect\addvspace{1pc}%
         \protect\contentsline {chapter}%
        {\protect\Large\scshape\mdseries #1}{}{}%
        \hfill\par\protect\addvspace{-18pt}%
        \noindent\hrulefill\hspace*{24pc}\par}}
\renewcommand*{\backref}[1]{}
\renewcommand*{\backrefalt}[4]{%
    \ifcase #1 (Not cited.)%
    \or        (Cited on page~#2.)%
    \else      (Cited on pages~#2.)%
    \fi}
\begin{document}

\thispagestyle{empty}

\pretitle{ \begin{center}}
\title{\vspace{-6pc} \hrulefill \\ {\scshape \LARGE On Inversion Graphs of Permutations \\[6pt]}}
\author{{\small by} \\[4pc] Sean Patrick Mandrick}
\predate{\vfill\begin{center}\normalsize}

\date{\vspace{4pc} \emph{A lightly edited version of}\vspace{5mm} \\ A Dissertation Presented to the Graduate School of the University
of Florida in Partial Fulfillment of the Requirements for the Degree of \\
Doctor of Philosophy \\[1pc]
University of Florida \\[1pc]
2025}
\maketitle
\thispagestyle{empty}

\cleardoublepage 

\thispagestyle{empty}

\begin{center} \itshape
  \vspace*{.4\textheight}
  To Mathilde
\end{center}

\cleardoublepage

\frontmatter

\cleardoublepage

\chapter*{Acknowledgements}
\addcontentsline{toc}{chapter}{Acknowledgements}
With deep gratitude, I acknowledge the many individuals who have made this dissertation possible. First and foremost, my family's unwavering support gave me the confidence to embark on this academic journey and pursue my intellectual passions. My path in combinatorics began under the guidance of my undergraduate advisor, Rob Davis, who greatly inspired both my interest in the field and self-belief. I am indebted to Zachary Hamaker, whose mentorship and insightful counsel as my faculty mentor proved invaluable. The challenging and enriching courses taught by Miklós Bóna transformed my understanding of combinatorics and shaped me into the mathematician I am today. Above all, I extend my heartfelt thanks to my advisor, Vince Vatter, who generously shared his time, expertise, and the fascinating problems that form the foundation of this work.

\cleardoublepage

\tableofcontents

\cleardoublepage


\cleardoublepage

\listoffigures

\newpage
\cleardoublepage

\chapter*{Abstract}
\addcontentsline{toc}{chapter}{Abstract}
  In this dissertation, we explore the structure of inversion graphs of permutations---a class of graphs that naturally arises by representing each permutation as a graph, where vertices correspond to entries and edges encode inversions. Advantageously, this class also arises from several other definitions that relate these graphs to geometry and partially ordered sets. By leveraging these different perspectives, we are equipped to gain many insights into their combinatorial intricacies.

First among the contributions of this work is a concise proof of a result of Gallai pertaining to modular decomposition and partial orders. Because of their relationship to partial orders, this result has important consequences for inversion graphs and the permutations they represent. After giving the proof, we see how its method arises from a geometric perspective on inversion graphs, and then discuss these consequences.

Next, we consider the notion of geometric grid classes of permutations and its intimate connection, via inversion graphs, to letter graphs from structural graph theory. These geometric grid classes are indeed closely related to a geometric perspective on inversion graphs, and the connection to letter graphs enables us to show that inversion graphs have very well-behaved structure in comparison to almost all graphs as measured by a related parameter called lettericity.

Last, we observe how applying transpositions to a permutation can affect its inversion graph, thus enabling us to generalize transpositions to an operation on arbitrary simple graphs. In studying this operation, we make further connections between the subjects of permutation theory and graph theory.

\UnderlinedTocSection{Chapters}

\mainmatter

\cleardoublepage
\typeout{******************}
\typeout{**  Chapter 1   **}
\typeout{******************}
\chapter{Introduction}
\label{chap:intro}

Permutations are among the most fundamental objects in mathematics. From pure mathematical theory to applications across sciences, these objects function as essential tools for understanding and describing patterns. Similarly fundamental are graphs, which serve as the mathematical language for representing interconnected structures. In addition to being used to model complex systems across diverse scientific domains, graphs give rise to many intriguing abstract mathematical problems.

This dissertation explores the inversion graphs of permutations---constructions that capture the structure of permutations in the language of graph theory. In this opening chapter, we introduce the necessary concepts from permutation theory and graph theory that enable us to define inversion graphs and list some of their elementary properties. We then observe the connections between inversion graphs, geometry, and partially ordered sets, providing multiple perspectives from which to study these graphs. By leveraging these perspectives, we explore the connections between permutations and graphs in the following chapters, leading to insights in both domains.

\section{Permutations and Inversions}

\subsection{Permutations}

A \emph{permutation}~\cite{bona:combinatorics-o:} is a bijection from a finite linearly ordered set to itself. The labels of the elements upon which a permutation acts are immaterial, so we need only consider permutations of the set $[n] = \{ 1, \dots, n\}$ for some positive integer $n$. The simplest way to represent a permutation $\pi$ of $[n]$ is to write it in \emph{one-line notation} as  $\pi = \pi(1) \pi(2) \dots \pi(n)$. Note that for each $i \in [n]$, we call the pair~$(i,\pi(i))$ an \emph{entry} of $\pi$, and we refer to $i$ as its \emph{index} and $\pi(i)$ as its \emph{value}.

We often refer to the set of permutations of $[n]$, which we denote by $S_n$. There are~$n!$ permutations of $[n]$. For example, there are $3!=6$ permutations in $S_3$: $123$, $213$, $132$, $231$, $312$, and $321$. 

\subsection{Inversions}

Central to the study of permutations is the notion of an \emph{inversion}~\cite[Chp. 2]{bona:combinatorics-o:}. An inversion in a permutation~$\pi$ is any pair $(i,j)$ of indices such that $i<j$ and  $\pi(i) > \pi(j)$. Inversions are easy to spot from the one-line notation of a permutation. That is, one can simply read the permutation from left to right and pick out the pairs of indices in which the larger value is first. For example, the inversions of the permutation $2413$ are $(1,3)$, $(2,3)$ and $(2,4)$.

We let $\inv(\pi)$ denote the number of inversions in the permutation $\pi$. It is clear that $0 \leq \inv( \pi) \leq \binom{n}{2}$ for all permutations $\pi \in S_n$, and that the extremal permutations are the identity and reverse identity. More explicitly, $\inv(e) = 0$ and $\inv(e^{\textnormal{r}}) = \binom{n}{2}$, where~$e = 12 \dots n$ and~$e^{\textnormal{r}} = n \dots 21$.

We see next that we can uniquely represent a permutation in terms of its inversions. Given a permutation $\pi$ of $[n]$, we define its \emph{Lehmer code}~\cite{laisant:sur-la-numerati:, lehmer:teaching-combin:}, or simply \emph{code}, to be the~$n$-tuple $\mathbf{c}(\pi) =(c_1, c_2, \dots, c_n)$ such that $c_i$ denotes the number of indices $j$ for which~$(i,j)$ is an inversion of~$\pi$. In other words, the number of entries to the right of $\pi(i)$ in $\pi(1) \pi(2) \dots \pi(n)$ that are smaller than $\pi(i)$. For example, we have 
\[
\mathbf{c} (37168254) = (2,5,0,3,3,0,1,0).
\]

It is not difficult to see that given a permutation $\pi$ of $[n]$ that $\mathbf{c}(\pi) \in \mathfrak{C}_n$ where
\[
\mathfrak{C}_n = [0,n-1] \times [0,n-2] \times \dots \times [0,1] \times [0,0]. 
\]
As hinted at by the fact that $|\mathfrak{C}_n|$ is $n!$, the following confirms that codes uniquely identify their permutations.

\begin{proposition}
	For all integers $n \geq 1$, the map $\mathbf{c}: S_n \to \mathfrak{C}_n$ is a bijection.
\end{proposition}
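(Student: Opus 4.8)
The plan is to prove that $\mathbf{c}: S_n \to \mathfrak{C}_n$ is a bijection. Since we already know $|S_n| = n!$ and, as noted in the excerpt, $|\mathfrak{C}_n| = \prod_{k=0}^{n-1} (k+1) = n!$, the two sets have the same finite cardinality. Therefore it suffices to show that $\mathbf{c}$ is injective (equivalently, surjective); I would prove injectivity by exhibiting an explicit inverse, which simultaneously establishes surjectivity and gives a more satisfying constructive proof.

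First I would verify the codomain claim, namely that $\mathbf{c}(\pi) \in \mathfrak{C}_n$ for every $\pi \in S_n$. For each index $i$, the quantity $c_i$ counts indices $j > i$ with $\pi(i) > \pi(j)$; there are only $n-i$ indices to the right of position $i$, so $0 \leq c_i \leq n-i$, which is exactly the constraint defining the $i$-th factor $[0, n-i]$ of $\mathfrak{C}_n$. This is routine but worth stating to confirm the map is well defined.

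Next I would describe the reconstruction algorithm that inverts $\mathbf{c}$. Given a tuple $(c_1, \dots, c_n) \in \mathfrak{C}_n$, I recover $\pi$ by reading left to right: the value $\pi(i)$ is the unique value among those not yet used that has exactly $c_i$ unused values smaller than it. Concretely, maintaining a sorted list of the available values $\{1, \dots, n\}$, the entry $\pi(i)$ is determined as the $(c_i + 1)$-st smallest remaining value. The constraint $c_i \leq n-i$ guarantees that at step $i$, when $n - i + 1$ values remain, a $(c_i+1)$-st smallest value exists, so the procedure never fails. I would argue that this map is a genuine two-sided inverse: applying $\mathbf{c}$ to a permutation built this way recovers the original code (since at position $i$ the number of smaller values remaining to be placed equals the number of smaller entries appearing to the right), and building a permutation from $\mathbf{c}(\pi)$ recovers $\pi$.

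The main obstacle, though it is more a matter of careful bookkeeping than genuine difficulty, is showing that the reconstruction map is precisely inverse to $\mathbf{c}$ rather than merely a well-defined map into $S_n$. The key observation that makes this clean is that $c_i$ counts inversions $(i,j)$ \emph{only} with $j > i$, so it depends solely on entries appearing later in one-line notation; this is exactly what allows a left-to-right greedy reconstruction using the pool of not-yet-placed values. Once the inverse is verified on both sides, bijectivity follows immediately, and the cardinality count serves as a consistency check rather than a necessary ingredient.
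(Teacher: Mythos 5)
Your proof is correct and follows essentially the same route as the paper: both construct the explicit inverse map by reading the code left to right and choosing $\pi(i)$ to be the $(c_i+1)$-st smallest value not yet used. Your version is somewhat more careful than the paper's (you verify well-definedness of $\mathbf{c}$ and explicitly address why the reconstruction is a two-sided inverse), but the underlying argument is identical.
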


\begin{proof}
	To prove the result we find the inverse of this map. That is, let $(c_1, c_2, \dots, c_n) \in \mathfrak{C}_n$ and we construct the permutation $\tau$ that has this tuple as its code. Given that every other entry is to the right of the first entry in one-line notation, we must define $\tau(1) = c_1 + 1$. Next, we set $\tau(2)$ to be the element of $[n] \setminus \{ c_1 + 1\}$ such that it contains exactly $c_2$ elements smaller than it. We continuing defining $\tau$ from left to right in this way, setting $\tau(i)$ to be the element of $[n] \setminus \{\tau(1), \tau(2), \dots, \tau(i-1) \}$ such that it contains exactly $c_i$ elements smaller than it. This gives the result.
\end{proof}

Given this bijection, it is simple to find the generating function that enumerates permutations of $[n]$ by their number of inversions.

\begin{theorem}[Rodrigues~\cite{rodrigues:note-sur-les-in:}]
	For all integers $n \geq 2$, we have
	\[
	\sum_{\pi \in S_n} q^{\inv(\pi)} = (1+q) (1+q+q^2) \dots (1+q+ \dots +q^{n-1}).
	\]
\end{theorem}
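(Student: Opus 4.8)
The plan is to exploit the bijection $\mathbf{c} \colon S_n \to \mathfrak{C}_n$ established in the preceding proposition, together with the observation that the left-hand side is a generating function tracking the total number of inversions, a quantity the Lehmer code records coordinate by coordinate. The single key fact I would establish first is that for any permutation $\pi$ with code $\mathbf{c}(\pi) = (c_1, \dots, c_n)$, one has $\inv(\pi) = c_1 + c_2 + \cdots + c_n$. This is immediate from the definition of the code: $c_i$ counts precisely those $j$ for which $(i,j)$ is an inversion of $\pi$, so summing over all $i$ enumerates every inversion exactly once, each inversion $(i,j)$ being tallied only in the term $c_i$.

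Granting this additive identity, I would use the bijection to re-index the sum over codes rather than permutations,
\[
\sum_{\pi \in S_n} q^{\inv(\pi)} = \sum_{(c_1, \dots, c_n) \in \mathfrak{C}_n} q^{c_1 + c_2 + \cdots + c_n}.
\]
Since $\mathfrak{C}_n$ is a Cartesian product of the intervals $[0,\,n-i]$ and the exponent splits as a sum over the coordinates, the sum factors into a product of independent sums, one per coordinate:
\[
\sum_{(c_1, \dots, c_n) \in \mathfrak{C}_n} q^{c_1 + \cdots + c_n} = \prod_{i=1}^{n} \sum_{c_i = 0}^{n-i} q^{c_i} = \prod_{i=1}^{n} \left( 1 + q + \cdots + q^{n-i} \right).
\]
Reading the factors in the order $i = n, n-1, \dots, 1$, and discarding the trivial factor $1$ arising from the interval $[0,0]$, yields exactly the claimed right-hand side, as multiplication is commutative.

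I expect no serious obstacle here, since the entire content of the argument is carried by the two prior ingredients: the bijectivity of $\mathbf{c}$ and the decomposition $\inv(\pi) = \sum_i c_i$. The one point that merits explicit care is the factorization of the sum over the product set $\mathfrak{C}_n$; this is the familiar principle that summing $q^{\sum_i c_i}$ over a Cartesian product of index sets equals the product of the individual coordinate sums, and it is precisely the step where the structure of $\mathfrak{C}_n$ as a product of intervals is genuinely used. I would state this factorization explicitly rather than invoke it tacitly.
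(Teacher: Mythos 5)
Your proposal is correct and follows essentially the same route as the paper: both use the bijection $\mathbf{c}\colon S_n \to \mathfrak{C}_n$, the identity $\inv(\pi) = c_1 + \cdots + c_n$, and the factorization of the resulting sum over the Cartesian product $\mathfrak{C}_n$ into a product of geometric sums. Your added care in stating the factorization step explicitly is a minor stylistic improvement over the paper's terser presentation, but the argument is identical in substance.
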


\begin{proof}
	For any permutation $\pi \in S_n$, given its code $\mathbf{c} (\pi) = (c_1, c_2, \dots, c_n)$ it is clear that $\inv(\pi) = c_1 + c_2 + \dots + c_n$. Therefore, we have
	\begin{align*}
		\sum_{\pi \in S_n} q^{\inv(\pi)} & = \sum_{c_1 = 0}^{n-1} \sum_{c_2 = 0}^{n-2} \dots \sum_{c_n = 0}^{0} q^{c_1 + c_2 + \dots + c_n} \\[2ex]
		& = \left( \sum_{c_1 = 0}^{n-1} q^{c_1} \right)\left( \sum_{c_2 = 0}^{n-2} q^{c_2} \right) \dots \left( \sum_{c_n = 0}^{0} q^{c_n} \right).
	\end{align*}
	This gives the result.
\end{proof}

Denoting this polynomial by $I_n(q)$, we let $i(n,k)$ denote the coefficient of $q^k$ in $I_n(q)$, i.e., the number of permutations of $[n]$ with $k$ inversions. It follows from this factorization of~$I_n(q)$ that the sequence $i(n,0), i(n,1), \dots, i(n, \binom{n}{2})$ has a very desirable property. We say that a sequence $a_1, a_2, \dots ,a_n$ of positive real numbers is \emph{log-concave}~\cite{stanley:log-concave-and:} if $a_{k-1} a_{k+1} \leq a_k^2$ for all indices $k \in \{2, \dots, n-1 \}$. Further, we say that a polynomial is log-concave if its coefficients, (ordered by degree), form a log-concave sequence. It is a standard exercise to show that the product of two log-concave polynomials is also log-concave, and thus we have the following.

\begin{corollary}
	For all integers $n \geq 3$, the sequence $i(n,0), i(n,1), \dots, i(n, \binom{n}{2})$ of coefficients of~$I_n(q)$ is log-concave.
\end{corollary}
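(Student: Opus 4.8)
The plan is to exploit the factorization of $I_n(q)$ supplied by Rodrigues's theorem together with the stated closure property that a product of log-concave polynomials is again log-concave. Writing $f_j(q) = 1 + q + \cdots + q^j$, the theorem gives $I_n(q) = f_1(q) f_2(q) \cdots f_{n-1}(q)$, so it suffices to verify that each factor is log-concave and then combine the factors.

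First I would check that each factor $f_j(q)$ is log-concave. Its coefficient sequence is $(1, 1, \dots, 1)$, consisting of $j+1$ ones, and at every interior index the defining inequality reads $1 \cdot 1 \le 1^2$, which holds. Each factor therefore has positive coefficients forming a log-concave sequence, and its support is the full interval of exponents $0, 1, \dots, j$ with no internal zeros. Next I would apply the stated fact inductively: since the product of two log-concave polynomials is log-concave, an induction on the number of factors shows that $I_n(q) = \prod_{j=1}^{n-1} f_j(q)$ is log-concave. As the coefficient sequence of $I_n(q)$ is precisely $i(n,0), i(n,1), \dots, i(n, \binom{n}{2})$, this yields the desired conclusion.

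The one point requiring care — and the place where the ``standard exercise'' does real work — is that the product closure for log-concavity genuinely needs the factors to have no internal zeros among their coefficients, since without this hypothesis the product of two log-concave sequences can fail to be log-concave. I would therefore note that each $f_j$ has full support on its exponent range, that this no-internal-zeros property is preserved under multiplication of polynomials with nonnegative coefficients whose supports are intervals, and hence that every partial product, and in particular $I_n(q)$, has all coefficients $i(n,k)$ strictly positive for $0 \le k \le \binom{n}{2}$. This both validates the inductive application of the closure fact and confirms that $i(n,0), \dots, i(n,\binom{n}{2})$ is a genuine sequence of positive reals, as the definition of log-concavity requires. I expect this bookkeeping about positivity and internal zeros to be the only genuine obstacle; the rest is an immediate consequence of the factorization.
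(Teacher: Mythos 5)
Your proof is correct and follows the same route as the paper: factor $I_n(q)$ via Rodrigues's theorem and apply the closure of log-concavity under products of the factors $1+q+\cdots+q^j$. Your extra care about the no-internal-zeros hypothesis is exactly the content of the ``standard exercise'' the paper invokes without detail, so the two arguments coincide.
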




We have now seen how to uniquely capture the structure of a permutation in terms of its inversions with its Lehmer code. In the rest of this dissertation, we study inversion graphs, constructions that also capture the structure of permutations in terms inversions.

\section{Inversion Graphs}

\subsection{Graphs and a First Definition}

To capture the structure of a permutation and its inversions, we can use a graph~\cite{chartrand:graphs--digraph:7}. For completeness, a \emph{graph} $G$ is an ordered pair $(V,E)$ made up of
\begin{itemize}
	\item a set $V$, whose elements are called \emph{vertices}, \emph{nodes} or \emph{points}, and
	\item a set $E \subseteq \{ \{ u,v\} : u \neq v \}$ of pairs of vertices, whose elements are called \emph{edges}.
\end{itemize}
We mention that an edge $\{ u,v \}$ is often denoted more concisely as $uv$ or $vu$, and that when not explicitly defined, the vertex and edge sets of a graph $G$ are denoted by $V(G)$ and $E(G)$, respectively. For a vertex $v$ of a graph $G$, we let $N_G(v) = \{ v : u \sim v \} \subseteq V(G)$ denote the \emph{neighborhood} of $v$, and similarly, $N_G[v] = N_G(v) \cup \{ v\}$ the \emph{closed neighborhood} of $v$. We may omit the subscript from these notations if it is clear which graph we are referring to. 

The \emph{complement} of a graph $G$ is the graph $\overline{G}$ on the same set of vertices and with edge set $E(\overline{G}) = \{ uv : u \neq v \text{ and } uv \notin E(G) \}$. A \emph{subgraph} of a graph $G$ is any graph $H$ such that $V(H) \subseteq V(G)$ and $E(H) \subseteq E(G)$. Furthermore, a subgraph $H$ of a graph $G$ is an \emph{induced subgraph} if $E(H) = V(H)^2 \cap E(G)$. That is, two vertices are adjacent in $H$ if and only if they are adjacent in $G$. Thus, an induced subgraph~$H$ of a graph $G$ depends only on the subset $S$ of vertices such that $V(H) = S$, and thus we may denote $H$ by $G[S]$. If $S = \{ v_1, \dots, v_k\}$, we may also slightly abuse notation and denote $H$ by $G[v_1, \dots, v_k]$. 

With most of our graphical notation now outlined, we direct the reader to~\cite{chartrand:graphs--digraph:7} (or any other standard text on graph theory) for the many further common definitions used throughout the subject. We make use of the following conventional terms but omit their definitions: degree, walk, path, connected, component, tree, forest, cycle, complete, bipartite, clique, independent set (i.e., anticlique), disjoint union, join, diameter, girth, isomorphism, and automorphism.

The objects of study in this dissertation are inversion graphs of permutations, previously referred to as permutation graphs~\cite[Chp. 7]{golumbic:algorithmic-gra:} in the literature. Precisely, the \emph{inversion graph} $G_{\pi}$ of the permutation $\pi$ is the graph whose vertices are the entries of $\pi$, with an edge between two entries if and only if they form an inversion. It is often convenient to label the vertices by only the index or the value of the corresponding entry in the permutation. Throughout our discussions, it will be most useful to label vertices by value, and thus, if~$\pi \in S_n$ we write $V(G_{\pi}) = [n]$ and $E(G_{\pi}) = \{ \pi(i) \pi(j) : i<j \text{ and } \pi(i)>\pi(j) \}$.

We list here some elementary facts about inversion graphs.

\begin{proposition}\label{prop:isomorphic_to_inverse}
	For all permutations $\pi$, we have $G_{\pi} \cong G_{\pi^{-1}}$.
\end{proposition}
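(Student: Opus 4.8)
The plan is to exhibit an explicit graph isomorphism rather than argue abstractly, and the cleanest route rests on rephrasing the edge relation of an inversion graph as a statement about \emph{discordant} pairs. First I would observe that, with vertices labelled by value as fixed in the text, a pair $\{a,b\} \subseteq [n]$ is an edge of $G_\pi$ exactly when the larger value occurs to the left of the smaller one in one-line notation; since $\pi^{-1}(a)$ and $\pi^{-1}(b)$ are precisely the indices at which the values $a$ and $b$ sit, this says
\[
\{a,b\} \in E(G_\pi) \iff (a - b)\bigl(\pi^{-1}(a) - \pi^{-1}(b)\bigr) < 0.
\]
This reformulation is the key device: it treats value and position symmetrically, so it dispenses with any case analysis on which of $a,b$ is the larger.

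Next I would propose the candidate isomorphism $\phi = \pi^{-1}$, viewed as a bijection of the common vertex set $[n]$. The intuition is geometric: passing from $\pi$ to $\pi^{-1}$ reflects the permutation plot across the main diagonal, sending the point $(i,\pi(i))$ to $(\pi(i),i)$, and $\phi$ simply relabels each value $a$ by the position $\pi^{-1}(a)$ it occupies, which is exactly the vertex-label it carries in $G_{\pi^{-1}}$. To check that $\phi$ preserves adjacency, apply the same discordant-pair characterization to $\pi^{-1}$: for values $c,d$ we have $\{c,d\} \in E(G_{\pi^{-1}})$ iff $(c-d)\bigl(\pi(c) - \pi(d)\bigr) < 0$, using $(\pi^{-1})^{-1} = \pi$. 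Substituting $c = \pi^{-1}(a)$ and $d = \pi^{-1}(b)$ and cancelling via $\pi \circ \pi^{-1} = \mathrm{id}$ turns this product into $\bigl(\pi^{-1}(a) - \pi^{-1}(b)\bigr)(a - b)$, which is literally the product governing adjacency of $\{a,b\}$ in $G_\pi$.

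Because the two products coincide identically, I obtain $\{a,b\} \in E(G_\pi) \iff \{\phi(a),\phi(b)\} \in E(G_{\pi^{-1}})$, and since $\phi = \pi^{-1}$ is a bijection on $[n]$, this is exactly the assertion that $\phi$ is a graph isomorphism, giving $G_\pi \cong G_{\pi^{-1}}$. I expect the only genuine obstacle to be bookkeeping: keeping straight that vertices are labelled by \emph{value} (not index), and correctly identifying the vertex-label of the image so that the isomorphism comes out to be $\pi^{-1}$ rather than $\pi$. The discordant-pair identity is what reduces the verification to a single cancellation instead of a four-case argument, so once the labelling convention is pinned down the remainder is routine.
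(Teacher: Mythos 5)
Your proof is correct and follows essentially the same route as the paper: the paper's remark after the proposition associates the entry $(i,\pi(i))$ of $\pi$ with the entry $(\pi(i),i)$ of $\pi^{-1}$, which on value-labels is exactly your bijection $\phi=\pi^{-1}$, and your discordant-pair identity $(a-b)\bigl(\pi^{-1}(a)-\pi^{-1}(b)\bigr)<0$ is just an explicit algebraic rendering of the paper's observation that inversions correspond under this association (equivalently, of its geometric remark that reflecting the plot over $y=x$ preserves the sign of slopes). The extra detail is welcome but does not change the argument.
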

	
That is, we can associate the entry $(i, \pi(i))$ in $\pi$ with the entry $(\pi(i), i)$ in $\pi^{-1}$, and we see that the entries $(i, \pi(i))$ and $(j, \pi(j))$ form an inversion in $\pi$ if and only if $(\pi(i),i)$ and $(\pi(j), j)$ form an inversion in $\pi^{-1}$. We will further discuss the problem of determining which permutations have isomorphic inversion graphs in Chapter~\ref{chap:uniqueness}.

Letting $\pi^{\textnormal{r}}$ denote the reverse of the permutation $\pi$, we see next that the complement of an inversion graph is also an inversion graph.

\begin{proposition}
	For all permutations $\pi$, we have $\overline{G}_{\pi} \cong G_{\pi^{\textnormal{r}}}$.
\end{proposition}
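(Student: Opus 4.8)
The plan is to prove something slightly stronger than mere isomorphism: under the value labeling introduced above, the graphs $\overline{G}_\pi$ and $G_{\pi^{\textnormal{r}}}$ are in fact \emph{equal} as labeled graphs, from which the claimed isomorphism follows immediately. Both graphs have vertex set $[n]$, so the only thing to verify is that their edge sets coincide.

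First I would fix two distinct values $a, b \in [n]$, say with $a > b$, and reduce the adjacency condition to a statement about relative position. Recalling that $E(G_\pi) = \{ \pi(i)\pi(j) : i<j \text{ and } \pi(i)>\pi(j) \}$, the pair $\{a,b\}$ is an edge of $G_\pi$ precisely when the larger value $a$ appears to the left of $b$ in the one-line notation of $\pi$. Consequently $\{a,b\}$ is an edge of the complement $\overline{G}_\pi$ precisely when $a$ appears to the \emph{right} of $b$ in $\pi$.

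Next I would apply the same criterion to $G_{\pi^{\textnormal{r}}}$. Since $\pi^{\textnormal{r}}$ is obtained by reading $\pi$ from right to left, the order in which any two values occur is reversed; in particular $a$ lies to the left of $b$ in $\pi^{\textnormal{r}}$ exactly when $a$ lies to the right of $b$ in $\pi$. Applying the adjacency criterion to $\pi^{\textnormal{r}}$, the pair $\{a,b\}$ (with $a>b$) is an edge of $G_{\pi^{\textnormal{r}}}$ exactly when $a$ is to the right of $b$ in $\pi$, which is precisely the condition characterizing edges of $\overline{G}_\pi$. Since $\{a,b\}$ was an arbitrary pair, this gives $E(\overline{G}_\pi) = E(G_{\pi^{\textnormal{r}}})$, and hence $\overline{G}_\pi = G_{\pi^{\textnormal{r}}}$, so in particular $\overline{G}_\pi \cong G_{\pi^{\textnormal{r}}}$.

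There is no substantial obstacle here; the main thing is careful bookkeeping. Because edges are recorded by value rather than index, one must consistently track the \emph{position} of each value and resist conflating the roles of index and value. The one genuine point worth stating cleanly is why reversal flips the left/right order of a fixed pair of values, which follows at once from $\pi^{\textnormal{r}}(i) = \pi(n+1-i)$: the map $i \mapsto n+1-i$ is an order-reversing bijection of the index set, so it exchanges the notions of ``to the left of'' and ``to the right of.''
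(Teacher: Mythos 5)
Your proof is correct and follows essentially the same route as the paper: both arguments observe that reversal exchanges the left/right order of any fixed pair of values, so each pair forms an inversion in exactly one of $\pi$ and $\pi^{\textnormal{r}}$, giving equality (not merely isomorphism) of $\overline{G}_\pi$ and $G_{\pi^{\textnormal{r}}}$ under the value labeling. Your explicit reduction of adjacency to the relative position of values is just a slightly more spelled-out version of the paper's one-line justification.
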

	
That is, if $i<j$, then the value $\pi(i)$ appears to the left of~$\pi(j)$ in $\pi$, whereas the value $\pi(i)$ is to the right of~$\pi(j)$ in $\pi^{\textnormal{r}}$ since they appear at the indices $n+1-i$ and $n+1-j$, respectively. Thus, each pair of values forms an inversion in exactly one of $\pi$ and $\pi^{\textnormal{r}}$, and therefore $\overline{G}_{\pi} = G_{\pi^{\textnormal{r}}}$ in keeping with our convention of labeling each vertex by the value of its corresponding entry.

In the rest of this section, we observe several definitions that also give rise to the class of inversion graphs. Given these definitions, we see that inversion graphs are related to both geometry and partially ordered sets, giving us diverse perspectives from which to study these graphs.

\subsection{Geometric Definitions}

For our first geometric definition giving rise to the class of inversion graphs, we see that one can obtain the inversion graph of a permutation $\pi$ by plotting the vertex of the entry~$(i, \pi(i))$ at the point $(i, \pi(i))$ in the plane for each $i \in [n]$, and then connecting each pair of vertices such that the unique line through their points has negative slope. This geometric perspective is the most useful for our study of inversion graphs, and we often identify permutations with their plots in the plane. We see an example of an inversion graph plotted in this way in Figure~\ref{fig:sm_inversion_graph}. Note that we omit grid lines and axes in the figures that follow. 

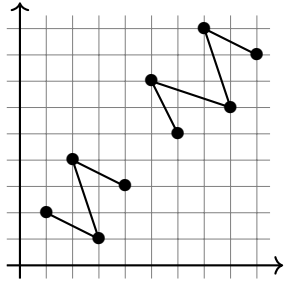
\begin{figure}[h]
\begin{center}
	\begin{tikzpicture}[scale=0.35]
	
		\draw[step=1cm,gray,very thin] (-0.5,-0.5) grid (9.5,9.5);
		\draw[->] (-0.5,0) to (10,0);
		\draw[->] (0,-0.5) to (0,10);

		\plotpermgraph{2,4,1,3,7,5,9,6,8};
				
	\end{tikzpicture}
\end{center}
\caption{The inversion graph of $241375968$ plotted in the plane.}
\label{fig:sm_inversion_graph}
\end{figure}

This perspective enables us to give an equivalent geometric definition for the class of inversion graphs with the following result. It isn't difficult to understand both directions from the above discussion.

\begin{theorem}\label{thm:plotting_def}
	A graph is the inversion graph of a permutation if and only if it can be obtained by plotting a finite set of points in the plane such that no two points share an $x$- or $y$-coordinate, and connecting any two points that determine a line with negative slope.
\end{theorem}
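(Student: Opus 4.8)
The statement is a biconditional, so the plan is to establish the two implications separately, with the forward direction essentially formalizing the discussion immediately preceding the theorem. The single conceptual ingredient, which I would isolate at the outset, is that the \emph{sign} of the slope of the line through two points depends only on the relative order of their coordinates along each axis, not on the actual coordinate values. Concretely, for $p = (a,b)$ and $p' = (a',b')$ with $a \neq a'$, the slope $(b'-b)/(a'-a)$ is negative if and only if exactly one of the two differences $a'-a$ and $b'-b$ is positive. This slope-sign invariance under independent strictly increasing reparametrizations of the two axes is what lets us move freely between an arbitrary admissible point configuration and the canonical integer plot of a permutation.

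For the forward direction, suppose $G = G_\pi$ for some $\pi \in S_n$. I would plot the entry $(i,\pi(i))$ at the point $(i,\pi(i))$ for each $i \in [n]$. Since the indices $1, \dots, n$ are distinct and $\pi$ is a bijection, no two of these points share an $x$- or a $y$-coordinate, so the configuration is admissible. It then remains to verify that the negative-slope rule recovers exactly $E(G_\pi)$: for $i < j$ the line through $(i,\pi(i))$ and $(j,\pi(j))$ has slope $(\pi(j)-\pi(i))/(j-i)$, and as $j - i > 0$, this is negative precisely when $\pi(i) > \pi(j)$, that is, precisely when $(i,j)$ is an inversion. Hence the plotted graph is $G_\pi$ exactly.

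For the backward direction, let $P = \{p_1, \dots, p_n\}$ be a finite point set in which no two points share an $x$- or $y$-coordinate. I would first relabel the points so that their $x$-coordinates are strictly increasing, writing $p_i = (a_i, b_i)$ with $a_1 < \cdots < a_n$, and then replace the (distinct) $y$-coordinates by their ranks, defining a permutation $\pi \in S_n$ by letting $\pi(i)$ be the rank of $b_i$ among $b_1, \dots, b_n$. By the slope-sign observation, for $i < j$ the line through $p_i$ and $p_j$ has negative slope exactly when $b_i > b_j$, which holds exactly when $\pi(i) > \pi(j)$, i.e. exactly when $(i,j)$ is an inversion of $\pi$. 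Labeling each vertex by the corresponding value then shows that the graph built from $P$ coincides with $G_\pi$.

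Both verifications are routine, and I do not anticipate a genuine obstacle. The only care needed is in tracking the directions of the inequalities so that ``negative slope'' is matched correctly against ``inversion,'' and in stating the standardization step cleanly so that it is transparent that passing to ranks changes neither admissibility nor any slope sign. Making the slope-sign invariance explicit up front is what keeps both directions short and makes clear why the geometric and combinatorial definitions of inversion graphs describe the same class.
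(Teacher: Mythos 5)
Your proposal is correct and follows essentially the same route as the paper, which treats both directions as immediate from the plotting discussion: the forward direction places $(i,\pi(i))$ in the plane and checks that negative slope matches the inversion condition, and the backward direction standardizes an arbitrary admissible point set to a permutation via coordinate ranks. Your explicit isolation of the slope-sign invariance under order-preserving reparametrization is a clean way of making precise what the paper leaves implicit, but it is not a different argument.
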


Furthermore, this plotting perspective enables us to observe another simple proof of Proposition~\ref{prop:isomorphic_to_inverse}. That is, one can obtain the plot of the permutation $\pi^{-1}$ by reflecting the plot of~$\pi$ over the line $y = x$. Noting that this reflection preserves the sign of the slope of any line with nonzero, finite slope, the result follows. 

This geometric perspective on inversion graphs also gives us an opportunity to intuitively define the well-known sum operations on permutations. The \emph{direct sum} of the permutations $\sigma$ of $[n]$ and $\tau$ of $[m]$ is the permutation~$\sigma \oplus \tau$  given by
\[
(\sigma \oplus \tau)(i) = \begin{cases}
	\sigma(i) & \text{for } 1 \leq i \leq n, \\
	\tau(i-n)+n & \text{for } n+1 \leq i \leq m+n.
\end{cases}
\]
Similarly, their \emph{skew sum} is the permutation $\sigma \ominus \tau$ given by
\[
(\sigma \ominus \tau)(i) = \begin{cases}
	\sigma(i) + m & \text{for } 1 \leq i \leq n, \\
	\tau(i - n) & \text{for } n+1 \leq i \leq m+n.
\end{cases}
\]

More intuitively, we see in Figure~\ref{fig:direct_skew_sums} that the plot of $\sigma \oplus \tau$ is obtained by appending the plot of $\tau$ above and to the right of the plot of $\sigma$, and $\sigma \ominus \tau$ is obtained by appending $\tau$ below and to the right of~$\sigma$.

\begin{figure}[h]
\begin{center}
	\begin{tikzpicture}[scale=0.2]
		\draw[line width = 1pt, color = gray] (0.5,0.5) rectangle (5.5,5.5);
		
		\plotperm{2,4,1,5,3};
		
		\node [] at (3,-1) {$\sigma$};
		
	\end{tikzpicture}
	\hspace{10mm}
	\begin{tikzpicture}[scale=0.2]
		\draw[line width = 1pt, color = gray] (0.5,0.5) rectangle (4.5,4.5);
		
		\plotperm{2,3,4,1};
		
		\node [] at (2.5,-1) {$\tau$};
		
	\end{tikzpicture}
	\hspace{10mm}
	\begin{tikzpicture}[scale=0.2]
		\draw[line width = 1pt, color = gray] (0.5,0.5) rectangle (5.5,5.5);
		\draw[line width = 1pt, color = gray] (0.5+5,0.5+5) rectangle (4.5+5,4.5+5);

		\plotperm{2,4,1,5,3,2+5,3+5,4+5,1+5};
		
		\node [] at (5,-1) {$\sigma \oplus \tau $};
		
	\end{tikzpicture}
	\hspace{10mm}
	\begin{tikzpicture}[scale=0.2]
		\draw[line width = 1pt, color = gray] (0.5,0.5+4) rectangle (5.5,5.5+4);
		\draw[line width = 1pt, color = gray] (0.5+5,0.5) rectangle (4.5+5,4.5);

		\plotperm{2+4,4+4,1+4,5+4,3+4,2,3,4,1};
		
		\node [] at (5,-1) {$\sigma \ominus \tau $};
		
	\end{tikzpicture}
\end{center}
\caption{The direct sum and skew sum operations on permutations.}
\label{fig:direct_skew_sums}
\end{figure}
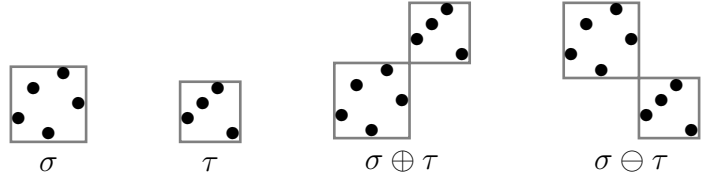

It follows easily from this figure that, given two permutations $\sigma$ and $\tau$, we have 
\[
G_{\sigma \oplus \tau} \cong G_{\sigma} \cup G_{\tau} \textnormal{ and } G_{\sigma \ominus \tau} \cong G_{\sigma} \vee G_{\tau},
\]
with $\cup$ and $\vee$ denoting the well-known graph operations of disjoint union and join, respectively.

Perhaps the most common definition for the class of inversion graphs in the literature is given as follows.

\begin{theorem}[Even, Pneuli, and Lempel~\cite{pnueli:transitive-orie:, even:permutation-gra:}]\label{thm:parallel_lines}
	A graph is the inversion graph of a permutation if and only if it is the intersection graph of line segments whose endpoints lie on two parallel lines.
\end{theorem}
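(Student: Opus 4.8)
The plan is to reduce the statement to the plotting characterization of inversion graphs given in Theorem~\ref{thm:plotting_def}. Fix the two parallel lines to be the horizontal lines $y = 1$ and $y = 0$, so that a segment is determined by its top endpoint $(a,1)$ and its bottom endpoint $(c,0)$. The central observation is that two segments with top coordinates $a_i, a_j$ and bottom coordinates $c_i, c_j$ cross precisely when these coordinates are oppositely ordered, that is, when $(a_i - a_j)(c_i - c_j) < 0$. But this is exactly the condition that the points $(a_i, c_i)$ and $(a_j, c_j)$ determine a line of negative slope. Thus the map sending a segment with endpoints $(a,1)$ and $(c,0)$ to the point $(a,c)$ translates crossings of segments into negative-slope pairs of points, and it is this dictionary that drives both directions.

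For the forward direction, I start with an inversion graph and invoke Theorem~\ref{thm:plotting_def} to realize it as a set of points $(x_i, y_i)$ with distinct $x$-coordinates and distinct $y$-coordinates, joined exactly when their connecting line has negative slope. Sending each point $(x_i, y_i)$ to the segment from $(x_i, 1)$ to $(y_i, 0)$ produces segments between two parallel lines whose crossing pairs are precisely the negative-slope pairs, hence precisely the edges; the intersection graph of these segments is the original inversion graph.

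For the reverse direction, I begin with the generic case in which all $2n$ endpoints are distinct. Sending each segment to the point $(a_i, c_i)$ then yields points with distinct $x$-coordinates (from the distinct top endpoints) and distinct $y$-coordinates (from the distinct bottom endpoints), and by the central observation two points determine a negative-slope line exactly when the corresponding segments cross. Theorem~\ref{thm:plotting_def} then certifies that the intersection graph is an inversion graph.

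The main obstacle is the degenerate case where several segments share a common endpoint, since then two of the associated points would share a coordinate, which the plotting characterization forbids. I would resolve this by a perturbation argument. Note that distinct segments share at most one endpoint, so the segments are partitioned by their common top endpoints into ``top bundles'' and, independently, by their common bottom endpoints into ``bottom bundles,'' and any two segments sharing an endpoint lie in a common bundle and hence are adjacent. Within each top bundle the bottom coordinates are distinct, so I spread the shared top coordinate into distinct values ordered in reverse of those bottom coordinates; this makes every pair in the bundle cross, matching their adjacency, and for sufficiently small perturbations it disturbs no crossing relation with segments outside the bundle. Treating the bottom bundles symmetrically by perturbing bottom coordinates---an independent operation affecting the other coordinate axis---puts all endpoints in general position while preserving the intersection graph, and the generic case then applies.
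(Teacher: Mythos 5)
Your proof is correct and rests on the same central observation as the paper's: two segments cross precisely when their endpoints are oppositely ordered on the two lines, which is exactly the inversion (equivalently, negative-slope) condition; the paper reads the permutation directly off the two lines, while you route the same dictionary through Theorem~\ref{thm:plotting_def}. Your perturbation argument for segments sharing endpoints is a genuine point of added care---the paper's discussion tacitly assumes all $2n$ endpoints are distinct---and your handling of it (top and bottom bundles are pairwise adjacent, and reverse-ordering the spread coordinates preserves those crossings) is sound.
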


We see an example of such a construction and the resulting intersection graph in Figure~\ref{fig:intersection_graph}. On the left, we have five line segments with their endpoints lying on two horizontal lines, each corresponding to a vertex in the resulting graph on the right. The vertices in the resulting graph are connected if and only if their corresponding line segments cross at any point.

\def\nodesize{4}
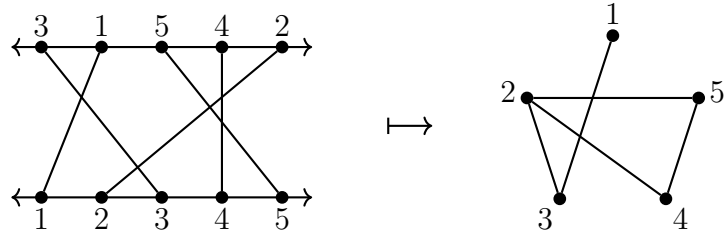
\begin{figure}[h]
\begin{center}
	\begin{tikzpicture}[scale=2]
		\draw [<->, line cap=round] (0,0)--(2,0);
		\draw [<->, line cap=round] (0,1)--(2,1);
		
		\node [circle, fill, minimum size = \nodesize pt, draw] (1) at (0.2,0) {};
		\node [circle, fill, minimum size = \nodesize pt, draw] (2) at (0.6,0) {};
		\node [circle, fill, minimum size = \nodesize pt, draw] (3) at (1,0) {};
		\node [circle, fill, minimum size = \nodesize pt, draw] (4) at (1.4,0) {};
		\node [circle, fill, minimum size = \nodesize pt, draw] (5) at (1.8,0) {};
		
		\labelnode[-90]{1}{$1$}
		\labelnode[-90]{2}{$2$}
		\labelnode[-90]{3}{$3$}
		\labelnode[-90]{4}{$4$}
		\labelnode[-90]{5}{$5$}
		
		\node [circle, fill, minimum size = \nodesize pt, draw] (p3) at (0.2,1) {};
		\node [circle, fill, minimum size = \nodesize pt, draw] (p1) at (0.6,1) {};
		\node [circle, fill, minimum size = \nodesize pt, draw] (p5) at (1,1) {};
		\node [circle, fill, minimum size = \nodesize pt, draw] (p4) at (1.4,1) {};
		\node [circle, fill, minimum size = \nodesize pt, draw] (p2) at (1.8,1) {};
		
		\labelnode[90]{p1}{$1$}
		\labelnode[90]{p2}{$2$}
		\labelnode[90]{p3}{$3$}
		\labelnode[90]{p4}{$4$}
		\labelnode[90]{p5}{$5$}
		
		\draw [] (1) to (p1);
		\draw [] (2) to (p2);
		\draw [] (3) to (p3);
		\draw [] (4) to (p4);
		\draw [] (5) to (p5);
		
	\end{tikzpicture}
	\hspace{8mm}
	\begin{tikzpicture}[scale=2]
	
	\draw[|->] (-1.5, 0) to (-1.2,0);
	
	\makenodescircle{v}{5}{0.6}[90][circle, fill, minimum size = \nodesize pt, draw]
	\labelnode[36/2+36*6]{v3}{$3$}
	\labelnode[36/2+36*4]{v2}{$2$}
	\labelnode[36/2]{v5}{$5$}
	\labelnode[90]{v1}{$1$}
	\labelnode[36/2+36*8]{v4}{$4$}
	
	\drawedges{v1/v3, v2/v3, v2/v4, v2/v5, v4/v5};
	\end{tikzpicture}
\end{center}
\caption{Five line segments with their endpoints lying on two parallel lines and the resulting intersection graph.}
\label{fig:intersection_graph}
\end{figure}

We see in this figure that we labeled the line segments by the order in which their endpoints land on the lower horizontal line. Then, reading the labels of the line segments as they appear on the upper horizontal line in the same direction, we see that the labels have been permuted. Indeed, we read the permutation $31542$ on the upper line, and it is clear that two lines intersect if and only if their labels are values that form an in version in this permutation. That is, the resulting graph is isomorphic to $G_{31542}$.

The other direction is also simple. That is, given a permutation $\pi$ of $[n]$, we can simply choose~$n$ points on a line and label them going in one direction with $1$, $2$, \dots, $n$, and then on a distinct parallel pick $n$ other points and label them going in the same direction with $\pi(1)$, $\pi(2)$, \dots, $\pi(n)$. It follows that by drawing a line segment between each pair of points with the same label, the resulting intersection graph is isomorphic to $G_{\pi}$.

The next geometric definition is essentially the same as this last one, but we include it as it makes clear that inversion graphs are contained in the class of circle graphs.

\begin{theorem}[Golumbic~\cite{golumbic:algorithmic-gra:}]\label{thm:circ_graph_def}
	A graph is the inversion graph of a permutation if and only if it is a circle graph that admits an equator.
\end{theorem}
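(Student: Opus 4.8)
The plan is to reduce both directions to Theorem~\ref{thm:parallel_lines}, exploiting that a circle carrying a distinguished equator is, up to a crossing-preserving deformation, exactly the configuration of two parallel lines. Recall that an \emph{equator} of a chord representation is a chord of the circle that crosses every chord representing a vertex; equivalently, it is a chord whose two endpoints split the circle into two arcs so that each vertex-chord has exactly one endpoint on each arc.

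First I would prove the forward direction. Suppose $G = G_\pi$ for some permutation $\pi$. By Theorem~\ref{thm:parallel_lines}, $G$ is the intersection graph of a family of line segments whose endpoints lie on two parallel lines $\ell_1$ and $\ell_2$. Since only finitely many endpoints are involved, I may assume they lie on two parallel segments $s_1 \subset \ell_1$ and $s_2 \subset \ell_2$. I would then apply a homeomorphism of the plane that bends $s_1$ into the upper arc and $s_2$ into the lower arc of a circle, carrying each line segment to the chord joining the images of its endpoints. The chord separating the two arcs serves as the equator, and since each original segment had one endpoint on $\ell_1$ and one on $\ell_2$, each resulting chord has one endpoint on each arc and therefore crosses this equator. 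It remains to check that the deformation preserves the intersection graph, which reduces to the elementary observation that two chords of a circle cross if and only if their four endpoints alternate around the circle, and that this alternation matches exactly the crossing condition for the corresponding segments between two parallel lines. Hence $G$ is a circle graph admitting an equator.

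Conversely, suppose $G$ has a chord representation admitting an equator $\varepsilon$. The endpoints of $\varepsilon$ divide the circle into two arcs $A$ and $B$, and by the defining property of an equator each vertex-chord has one endpoint on $A$ and one on $B$. I would cut the circle at the two endpoints of $\varepsilon$ and straighten $A$ and $B$ into two parallel segments, sending each chord to the straight segment joining the images of its endpoints. As in the forward direction, the cyclic alternation of chord endpoints around the circle translates into the parallel-lines crossing condition, so the intersection graph is unchanged. Thus $G$ is realized as in Theorem~\ref{thm:parallel_lines} and is therefore an inversion graph.

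The main obstacle is purely the bookkeeping behind the phrase ``preserves the intersection graph'': I must fix compatible orientations of the two arcs when opening the circle, so that the cyclic order of endpoints around the circle corresponds correctly to the left-to-right orders on the two parallel lines, and then verify that two chords alternate around the circle precisely when their images cross between the lines. This is a routine interleaving argument, but it is the one place where care is needed; everything else is a direct appeal to Theorem~\ref{thm:parallel_lines}.
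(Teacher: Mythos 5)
Your proposal is correct and follows essentially the same route as the paper, which likewise treats this theorem as a repackaging of Theorem~\ref{thm:parallel_lines}: the two-parallel-lines drawing and the circle-with-equator drawing are deformed into one another without changing which segments cross, as illustrated in Figure~\ref{fig:circle_graph_with_equator}. Your added care about orienting the two arcs so that cyclic alternation of chord endpoints matches the crossing condition between parallel lines is exactly the detail the paper leaves implicit with ``it should be clear how to transform both drawings \dots without changing the intersections.''
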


A circle graph is also the intersection graph of line segments from a geometric drawing. In this case, the line segments are chords of a circle, and the stipulation that it must admit an equator means that it must be possible to draw another chord that intersects the rest of the chords. Such a drawing can be seen on the left in Figure~\ref{fig:circle_graph_with_equator}, with the dashed gray line playing the role of such an equator.

\begin{figure}[h]
\begin{center}
	\begin{tikzpicture}[scale=1.2]
	
	\draw (0,0) circle (1);
	\draw [dashed, gray] (-1.1, 0) to (1.1,0);
	\makenodescircle{v}{10}{1}[90][circle, fill, minimum size = \nodesize pt, draw]
	
	\labelnode[36/2+36*4]{v3}{$3$}
	\labelnode[36/2+36*3]{v2}{$1$}
	\labelnode[36/2+36*2]{v1}{$5$}
	\labelnode[36/2+36]{v10}{$4$}
	\labelnode[36/2]{v9}{$2$}
	
	\labelnode[36/2+36*5]{v4}{$1$}
	\labelnode[36/2+36*6]{v5}{$2$}
	\labelnode[36/2+36*7]{v6}{$3$}
	\labelnode[36/2+36*8]{v7}{$4$}
	\labelnode[36/2+36*9]{v8}{$5$}
	
	\drawedges{v3/v6, v2/v4, v1/v8, v10/v7, v9/v5};

	\end{tikzpicture}
	\hspace{6mm}
	\begin{tikzpicture}[scale=2]
		\draw[<->] (-0.8, 0.5) to (-0.4, 0.5);
	
		\draw [<->, line cap=round] (0,0)--(2,0);
		\draw [<->, line cap=round] (0,1)--(2,1);
		
		\node [circle, fill, minimum size = \nodesize pt, draw] (1) at (0.2,0) {};
		\node [circle, fill, minimum size = \nodesize pt, draw] (2) at (0.6,0) {};
		\node [circle, fill, minimum size = \nodesize pt, draw] (3) at (1,0) {};
		\node [circle, fill, minimum size = \nodesize pt, draw] (4) at (1.4,0) {};
		\node [circle, fill, minimum size = \nodesize pt, draw] (5) at (1.8,0) {};
		
		\labelnode[-90]{1}{$1$}
		\labelnode[-90]{2}{$2$}
		\labelnode[-90]{3}{$3$}
		\labelnode[-90]{4}{$4$}
		\labelnode[-90]{5}{$5$}
		
		\node [circle, fill, minimum size = \nodesize pt, draw] (p3) at (0.2,1) {};
		\node [circle, fill, minimum size = \nodesize pt, draw] (p1) at (0.6,1) {};
		\node [circle, fill, minimum size = \nodesize pt, draw] (p5) at (1,1) {};
		\node [circle, fill, minimum size = \nodesize pt, draw] (p4) at (1.4,1) {};
		\node [circle, fill, minimum size = \nodesize pt, draw] (p2) at (1.8,1) {};
		
		\labelnode[90]{p1}{$1$}
		\labelnode[90]{p2}{$2$}
		\labelnode[90]{p3}{$3$}
		\labelnode[90]{p4}{$4$}
		\labelnode[90]{p5}{$5$}
		
		\draw [] (1) to (p1);
		\draw [] (2) to (p2);
		\draw [] (3) to (p3);
		\draw [] (4) to (p4);
		\draw [] (5) to (p5);
		
	\end{tikzpicture}
\end{center}
\caption{Constructions from Theorems~\ref{thm:parallel_lines} and~\ref{thm:circ_graph_def} with the same intersection graph.}
\label{fig:circle_graph_with_equator}
\end{figure}
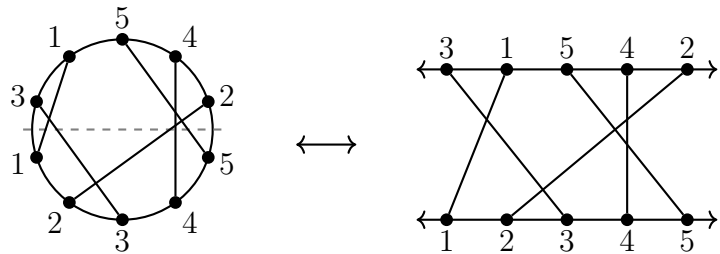

On the right of Figure~\ref{fig:circle_graph_with_equator}, we include the drawing from Figure~\ref{fig:intersection_graph} that also induces the inversion graph of the permutation~$31542$ as its intersection graph. Indeed, it should be clear how to transform both drawings in this figure into the other without changing the intersections, and further, how to go between the two types of drawings in general.

We mention that not all circle graphs are inversion graphs. For example, we see in Figure~\ref{fig:circle_graph_cycle} that the cycle $C_5$ is a circle graph, but the given circle drawing does not admit an equator. We see in Chapter~\ref{chap:uniqueness} that $C_5$ is not the inversion graph of any permutation, and thus there is no such circle drawing inducing $C_5$ that admits an equator.

\begin{figure}[h]
\begin{center}
	\begin{tikzpicture}[scale=1.2]
	
	\draw (0,0) circle (1);
	\makenodescircle{v}{10}{1}[90][circle, fill, minimum size = \nodesize pt, draw]
	
	\labelnode[36/2+36*2]{v1}{$1$}
	\labelnode[36/2+36*5]{v4}{$1$}
	
	\labelnode[36/2+36*4]{v3}{$2$}	
	\labelnode[36/2+36*7]{v6}{$2$}

	\labelnode[36/2+36*6]{v5}{$3$}
	\labelnode[36/2+36*9]{v8}{$3$}
	
	\labelnode[36/2+36*8]{v7}{$4$}
	\labelnode[36/2+36]{v10}{$4$}
	
	\labelnode[36/2+36*3]{v2}{$5$}
	\labelnode[36/2]{v9}{$5$}
	
	\drawedges{v1/v4, v3/v6, v5/v8, v7/v10, v9/v2};

	\end{tikzpicture}
	\hspace{6mm}
	\begin{tikzpicture}[scale=1.2]
	
	\draw[|->] (-2.3, 0) to (-1.8,0);
	
	\makenodescircle{v}{5}{1}[90][circle, fill, minimum size = \nodesize pt, draw]
	\labelnode[36/2+36*6]{v3}{$3$}
	\labelnode[36/2+36*4]{v2}{$2$}
	\labelnode[36/2]{v5}{$5$}
	\labelnode[90]{v1}{$1$}
	\labelnode[36/2+36*8]{v4}{$4$}
	
	\drawedges{v1/v2, v2/v3, v3/v4, v4/v5, v1/v5};
	\end{tikzpicture}
\end{center}
\caption{Chords drawn on a circle such that the induced intersection graph is the cycle~$C_5$.}
\label{fig:circle_graph_cycle}
\end{figure}
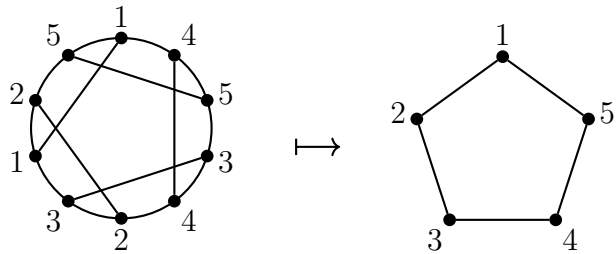 

A final geometric definition is as follows.

\begin{theorem}[Dushnik and Miller~\cite{dushnik:partially-order:}]
	A graph is the inversion graph of a permutation if and only if it is a containment graph of intervals.
\end{theorem}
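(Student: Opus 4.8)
The plan is to prove both directions by passing through the plotting characterization of inversion graphs (Theorem~\ref{thm:plotting_def}), exploiting the elementary observation that for a closed interval $[\ell, r]$ the correspondence $[\ell, r] \leftrightarrow (\ell, r)$ turns \emph{containment} of intervals into \emph{negative slope} of the associated points. Indeed, if $\ell_i < \ell_j$, then the point $(\ell_i, r_i)$ lies to the left of $(\ell_j, r_j)$, and the line through them has negative slope exactly when $r_i > r_j$, which is precisely the condition $[\ell_j, r_j] \subsetneq [\ell_i, r_i]$.

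For the forward direction, I would start from a permutation $\pi \in S_n$ and its plot, assigning to the entry $(i, \pi(i))$ the interval $I_i = [\,i,\ n + \pi(i)\,]$. The shift by $n$ guarantees that every left endpoint lies in $[1,n]$ and every right endpoint in $[n+1, 2n]$, so each $I_i$ is a genuine interval and all $2n$ endpoints are distinct; since it only translates the vertical coordinates, it leaves their relative order, and hence the inversions, unchanged. For $i < j$ the only possible containment is $I_j \subsetneq I_i$ (the left endpoints force this direction), and this occurs exactly when $n + \pi(j) < n + \pi(i)$, i.e.\ when $(i,j)$ is an inversion of $\pi$. Hence the containment graph of $\{I_1, \dots, I_n\}$ is exactly $G_\pi$.

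For the converse, I would take a graph $G$ presented as the containment graph of intervals $J_1, \dots, J_n$ and plot the points $p_i = (\ell_i, r_i)$. Once all $2n$ endpoints are distinct (see below), the $p_i$ have pairwise distinct $x$- and $y$-coordinates, and by the slope computation above two of them span a line of negative slope if and only if the corresponding intervals are nested, i.e.\ adjacent in $G$. Theorem~\ref{thm:plotting_def} then identifies the negative-slope graph of the $p_i$ as an inversion graph, and that graph is $G$, completing the argument.

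The one point requiring care---and the main obstacle---is the reduction to distinct endpoints in the converse: a priori several intervals may share an endpoint, in which case the plotted points would share a coordinate and Theorem~\ref{thm:plotting_def} would not apply directly. I would resolve this by a general-position perturbation that preserves the containment graph, breaking each tie consistently with the existing nestings: left endpoints sharing a common value are separated so that the interval with the larger right endpoint receives the smaller left endpoint (and dually for shared right endpoints), while endpoints of incomparable or coincident intervals are separated monotonically so as to create no new nesting. Since every containment is strict and, once the ties are broken, is witnessed by strict inequalities, a sufficiently small such perturbation alters no adjacency, reducing the general case to the distinct-endpoint case already handled.
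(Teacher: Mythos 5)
Your argument is correct and follows essentially the same route as the paper: the forward direction is the paper's interval construction (left endpoints ordered by index, right endpoints ordered according to $\pi$) made explicit as $I_i=[\,i,\,n+\pi(i)\,]$, and the converse is the paper's reading-off of a permutation from the two endpoint orderings, merely rephrased through Theorem~\ref{thm:plotting_def} via the correspondence $[\ell,r]\mapsto(\ell,r)$. The only place you go beyond the paper---which simply assumes distinct endpoints---is the perturbation step; that is sound in spirit, but note that two \emph{identical} intervals each contain the other and are therefore adjacent under the stated definition, so they must be perturbed into a proper nesting rather than ``separated so as to create no new nesting.''
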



More specifically, by containment graph of intervals we mean a graph that is realized by a set of intervals $\{ [l_i, r_i] \}_{i=1}^n$ of the real line such that each interval corresponds to a vertex and two vertices are adjacent if and only if one of their corresponding intervals is contained in the other. It is easy to find a set of intervals that realize the inversion graph of a given permutation $\pi$. If $\pi$ is a permutation of $[n]$, we simply define an interval $[l_i, r_i]$ with label~$i$ for each $i \in [n]$ such that 
\[
l_1 < l_2 < \dots < l_n < r_{\pi(1)} < r_{\pi(2)} < \dots < r_{\pi(n)}. 
\]
Such a construction for the permutation $\pi = 31542$ can be seen in Figure~\ref{fig:containment_of_intervals_graph}, and makes clear that choosing intervals in this way works for all permutations.

\def\vstretch{0.5}
\begin{figure}[h]
\begin{center}
	\begin{tikzpicture}[scale=0.8]
		
	\node [circle, fill, minimum size = \nodesize pt, draw] (l1) at (1,0) {};	
	\node [circle, fill, minimum size = \nodesize pt, draw] (r1) at (7,0) {};
	
	\node [circle, fill, minimum size = \nodesize pt, draw] (l2) at (2,\vstretch) {};	
	\node [circle, fill, minimum size = \nodesize pt, draw] (r2) at (10,\vstretch) {};
	
	\node [circle, fill, minimum size = \nodesize pt, draw] (l3) at (3,\vstretch*2) {};	
	\node [circle, fill, minimum size = \nodesize pt, draw] (r3) at (6,\vstretch*2) {};
	
	\node [circle, fill, minimum size = \nodesize pt, draw] (l4) at (4,\vstretch*3) {};	
	\node [circle, fill, minimum size = \nodesize pt, draw] (r4) at (9,\vstretch*3) {};
	
	\node [circle, fill, minimum size = \nodesize pt, draw] (l5) at (5,\vstretch*4) {};	
	\node [circle, fill, minimum size = \nodesize pt, draw] (r5) at (8,\vstretch*4) {};
	
	\labelnode[180]{l1}{$1$}
	\labelnode[0]{r1}{$1$}
	
	\labelnode[180]{l2}{$2$}
	\labelnode[0]{r2}{$2$}
	
	\labelnode[180]{l3}{$3$}
	\labelnode[0]{r3}{$3$}
	
	\labelnode[180]{l4}{$4$}
	\labelnode[0]{r4}{$4$}
	
	\labelnode[180]{l5}{$5$}
	\labelnode[0]{r5}{$5$}

	\drawedges{r1/l1, r2/l2, r3/l3, r4/l4, r5/l5};

	\end{tikzpicture}
\end{center}
\caption{A set of intervals that realizes $G_{31542}$ as its the containment graph.}
\label{fig:containment_of_intervals_graph}
\end{figure}
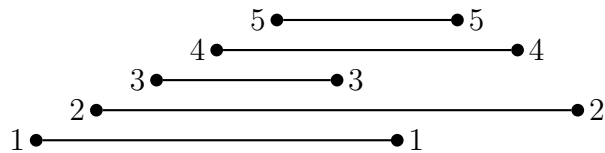 

Of course, many collections of intervals don't look like this, and thus showing that the containment graph of any interval system realizes an inversion graph requires slight attention. So, let~$\{[l_i, r_i] \}_{i=1}^{n}$ be any set of intervals of the real line, and we will further assume that all the left-endpoints and right-endpoints are unique in order to simplify our discussion. We can therefore assume without loss of generality that $l_1 < l_2 < \dots < l_n$, and also, that there exists a permutation $\tau$ of $[n]$ such that $r_{\tau(1)} < r_{\tau(2)} < \dots < r_{\tau(n)}$. We argue that the resulting containment graph is isomorphic to $G_{\tau}$. So, let $i < j$ and suppose $\tau^{-1}(j)< \tau^{-1}(i)$, i.e., $(\tau^{-1}(j), \tau^{-1}(i))$ is an inversion of $\tau$. It follows that $l_i < l_j < r_j < r_i$, or equivalently, $[l_j, r_j] \subseteq [l_i, r_i]$. Furthermore, if $\tau^{-1}(i) < \tau^{-1}(j)$, then the observations that $l_i < l_j$ and $r_i < r_j$ show that the intervals $[l_i, r_i]$ and $[l_j, r_j]$ are incomparable. This shows that the resulting graph is indeed $G_{\tau}$.

In summary, we label a collection of intervals with positive integers $1, 2, \dots$ by the order in which their left-endpoints appear on the real line from left to right, and then collect the permutation whose inversion graph is realized by the system by reading the labels of the intervals as their right-endpoints appear from left to right. For example, in Figure~\ref{fig:intervals_to_permutation} we read the permutation $14352$ from the labels that are in bold on the right-endpoint of each interval.

\def\vstretch{0.5}
\begin{figure}[h]
\begin{center}
	\begin{tikzpicture}[scale=0.8]
		
	\node [circle, fill, minimum size = \nodesize pt, draw] (l1) at (3,0) {};	
	\node [circle, fill, minimum size = \nodesize pt, draw] (r1) at (8,0) {};
	
	\node [circle, fill, minimum size = \nodesize pt, draw] (l2) at (1,\vstretch) {};	
	\node [circle, fill, minimum size = \nodesize pt, draw] (r2) at (4,\vstretch) {};
	
	\node [circle, fill, minimum size = \nodesize pt, draw] (l3) at (5,\vstretch*2) {};	
	\node [circle, fill, minimum size = \nodesize pt, draw] (r3) at (6,\vstretch*2) {};
	
	\node [circle, fill, minimum size = \nodesize pt, draw] (l4) at (2,\vstretch*3) {};	
	\node [circle, fill, minimum size = \nodesize pt, draw] (r4) at (10,\vstretch*3) {};
	
	\node [circle, fill, minimum size = \nodesize pt, draw] (l5) at (7,\vstretch*4) {};	
	\node [circle, fill, minimum size = \nodesize pt, draw] (r5) at (9,\vstretch*4) {};
	
	\labelnode[180]{l1}{$3$}
	\labelnode[0]{r1}{$\mathbf{3}$}
	
	\labelnode[180]{l2}{$1$}
	\labelnode[0]{r2}{$\mathbf{1}$}
	
	\labelnode[180]{l3}{$4$}
	\labelnode[0]{r3}{$\mathbf{4}$}
	
	\labelnode[180]{l4}{$2$}
	\labelnode[0]{r4}{$\mathbf{2}$}
	
	\labelnode[180]{l5}{$5$}
	\labelnode[0]{r5}{$\mathbf{5}$}

	\drawedges{r1/l1, r2/l2, r3/l3, r4/l4, r5/l5};

	\end{tikzpicture}
\end{center}
\caption{Identifying a permutation whose inversion graph is the containment graph of a collection of intervals.}
\label{fig:intervals_to_permutation}
\end{figure}
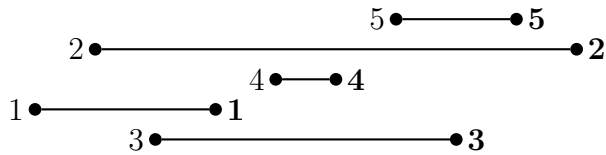 

\subsection{Partial Order Definitions}

As we have just seen, the class of inversion graphs arises from many geometric definitions. However, much of our understanding of this class comes from its relationship to the theory of partially ordered sets. To understand this connection, we need several definitions.

A \emph{partially ordered set}~\cite[Chp. 3]{stanley:enumerative-com:1}, or \emph{poset} for short, is a pair $(P, \leq)$ consisting of a set~$P$ and a binary relation $\leq$ on $P$ that is
\begin{enumerate}
	\item \emph{reflexive}, ($t \leq t$ for all $t \in P$),
	\item \emph{antisymmetric}, (if $s \leq t$ and $t \leq s$, then $s=t$), and
	\item \emph{transitive}, (if $s \leq t$ and $t \leq u$, then $s \leq u$). 
\end{enumerate}
We refer to a partially ordered set $(P, \leq )$ simply as $P$ when there is no confusion as to what the underlying relation can be. We write $s < t$ for the situation in which $s \leq t$ and $s \neq t$. Further, we say that two elements $s, t \in P$ are \emph{comparable} if $s \leq t$ or $t \leq s$. Otherwise, we say that $s$ and $t$ are \emph{incomparable}. A partial order is called a \emph{total order} if all of its elements are comparable.

A graph $G$ is called \emph{transitively orientable} if its edges can be directed in such a way that whenever~$a \rightarrow b$ and $b \rightarrow c$ are arcs (i.e., directed edges), then $a \rightarrow c$ is also an arc. These graphs are also called \emph{comparability graphs} as they correspond with the graphs obtained by taking the elements of a partially ordered set to be vertices, and connecting the vertices of two unique elements if and only if they are comparable.

The \emph{order dimension}~\cite{trotter:combinatorics-a:}, or simply \emph{dimension}, of a partially ordered set is the least number of total orders whose intersection yields the partial order. (Every partial order does arise from the intersection of some collection of total orders, and therefore dimension is indeed defined for all posets.) For example, a partially ordered set has order dimension 1 if and only if it is a total order. Further, the poset given in Figure~\ref{fig:2dimensional_poset} has order dimension 2 since it is not a total order and is given by 
\[
\{ a<b<c<d<e\} \cap \{ b<d<a<e<c\}.
\]
We note that the drawing of this poset is its \emph{Hasse diagram}. This graphical representation of a poset represents each element as a vertex, and draws the vertex $y$ above $x$ and connects them with an edge if $x < y$ is a \emph{cover relation}: there are no elements $t$ such that $x < t < y$. The rest of the relation can be extrapolated from the diagram by noting that it is the transitive closure of the cover relations.

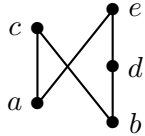
\begin{figure}[h]
\begin{footnotesize}
\begin{center}
	\begin{tikzpicture}[scale=1]
		\draw [] (0,0) -- (0,1) -- (1,-0.25) -- (1,0.5) -- (1,1.25) -- (0,0);
		
		\absdot{(0,0)};
		\absdot{(0,1)};
		\absdot{(1,-0.25)};
		\absdot{(1,0.5)};
		\absdot{(1, 1.25)}
		
		\node [] at (-0.3, 0) {\small$a$};
		\node [] at (-0.3, 1) {\small$c$};
		\node [] at (1.3, -0.25) {\small$b$};
		\node [] at (1.3, 0.5) {\small$d$};
		\node [] at (1.3, 1.25) {\small$e$};
		
	\end{tikzpicture}
\end{center}
\end{footnotesize}
\caption{The Hasse diagram of a poset with order dimension 2.}
\label{fig:2dimensional_poset}
\end{figure}

This next definition shows that inversion graphs are well-behaved comparability graphs.

\begin{theorem}[Baker, Fishburn, and Roberts~\cite{baker:partial-orders-:}]\label{thm:dimension_def}
	A graph is the inversion graph of a permutation if and only if it is the comparability graph of a partially ordered set of order dimension at most 2.
\end{theorem}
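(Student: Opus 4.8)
The plan is to prove both directions through a single correspondence: the inversions of a permutation are exactly the comparabilities of a natural poset obtained by intersecting two total orders, namely the index order and the reversed value order. First I would fix notation by writing each entry of a permutation $\pi \in S_n$ as a point $(i,\pi(i))$ in the plane, as in Theorem~\ref{thm:plotting_def}, and recall from the definition that a poset has order dimension at most $2$ exactly when it is the intersection of two total orders.

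For the forward direction, given $\pi \in S_n$ I would define a partial order $P_\pi$ on the $n$ entries by declaring $(i,\pi(i)) \le (j,\pi(j))$ precisely when $i \le j$ and $\pi(i) \ge \pi(j)$. This is manifestly the intersection of the total order $L_1$ that ranks entries by index ascending and the total order $L_2$ that ranks them by value descending, so $P_\pi$ has dimension at most $2$ directly from the definition. The key verification is that two distinct entries are comparable in $P_\pi$ if and only if they form an inversion: for $i<j$ they are comparable exactly when $\pi(i) \ge \pi(j)$, that is, when $\pi(i) > \pi(j)$. Hence the comparability graph of $P_\pi$ is $G_\pi$.

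For the reverse direction, suppose $G$ is the comparability graph of a poset $P$ of dimension at most $2$. By the definition of dimension I may write $P = L_1 \cap L_2$ for two total orders $L_1, L_2$ on its $n$ elements (padding with $L_2 = L_1$ if the dimension is $1$). I would then label the elements $1, \dots, n$ by their rank in $L_1$ and define $\tau(k)$ to be the rank, in the \emph{reverse} of $L_2$, of the element whose $L_1$-rank is $k$. The crucial observation is that two elements are comparable in $P = L_1 \cap L_2$ precisely when they occur in the same relative order in $L_1$ and $L_2$, hence in opposite relative orders in $L_1$ and the reverse of $L_2$; translating through the labeling, this says that comparable pairs are exactly the inversions of $\tau$, so $G \cong G_\tau$.

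The main obstacle is not any single hard computation but pinning down the comparability equivalence exactly: I must argue cleanly that, for $P = L_1 \cap L_2$, a pair $\{x,y\}$ is an edge of the comparability graph if and only if $x$ and $y$ are ordered identically by $L_1$ and $L_2$, so that reversing one order converts comparabilities into inversions and incomparabilities into non-inversions. I would also verify the boundary cases that the definition of dimension forces me to treat: an antichain of size at least $2$ must be realized as the intersection of a total order with its reverse (dimension $2$), and a total order, whose comparability graph is complete, corresponds to the reverse identity $n\cdots 21$, confirming that ``at most $2$'' is the correct bound. Finally, since the statement is up to isomorphism, the value-labeling convention for $G_\pi$ requires no special attention.
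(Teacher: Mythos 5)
Your proposal is correct and follows essentially the same route as the paper: both directions rest on realizing $G_\pi$ as the comparability graph of the intersection of one of the two natural total orders on the entries (index or value) with the reverse of the other, and conversely normalizing $P = L_1 \cap L_2$ so that one order is the standard one and reading the permutation off the second. The only difference from the paper's presentation is cosmetic (you label vertices by entries/indices where the paper uses values), and your attention to the dimension-$1$ and antichain boundary cases is a harmless refinement.
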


For a permutation $\pi$ of $[n]$, we consider the partial order given by
\[
\{ 1<2<\dots<n\} \cap \{ \pi(n) < \dots < \pi(2) < \pi(1) \}.
\]
Indeed, we have $i<j$ in this poset if and only if $(\pi^{-1}(j), \pi^{-1}(i))$ is an inversion of $\pi$. Perhaps more intuitively, this poset can be obtained by plotting the inversion graph of $\pi$, just as we did for the permutation $241375968$ in Figure~\ref{fig:sm_inversion_graph}, directing each edge so that it is an arc oriented towards the northwest, and then setting $x < y$ if and only if $x \rightarrow y$ is an arc. For example, in Figure~\ref{fig:PosetFromPerm} we direct all of the edges of the plotted inversion graph of $35142$ towards the northwest and obtain the poset from Figure~\ref{fig:2dimensional_poset}.

\begin{figure}[h]
\begin{center}
	\begin{tikzpicture}[scale=0.6]
		\draw [->-, line cap=round] (3,1)--(1,3);
		\draw [-->-, line cap=round] (5,2)--(1,3);
		\draw [-->-, line cap=round] (3,1)--(2,5);
		\draw [->-, line cap=round] (5,2)--(4,4);
		\draw [->-, line cap=round] (4,4)--(2,5);
		\draw [->-, line cap=round, out = 135, in = -90] (5,2)--(2,5);

		\plotperm{3,5,1,4,2};
		
	\end{tikzpicture}

\end{center}
\caption{By directing all edges of an inversion graph towards the northwest, we obtain a poset of order dimension 2.}
\label{fig:PosetFromPerm}
\end{figure}
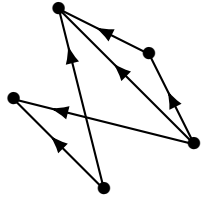

In the other direction, if a poset $P$ is obtained from the intersection of two total orders $T_1$ and $T_2$, then we can assume without loss of generality that $T_1 = \{ 1 < 2 <  \dots < n \}$, and also define a permutation $\sigma$ such that $T_2 = \{ \sigma(n) < \dots < \sigma(2) < \sigma(1) \}$. Thus, $P$ is the poset obtained from~$\sigma$ via the above construction.

The alert reader might have noticed that the term `dimension' is borrowed from geometry. In fact, this discussion intimates that this partial order definition for the class of inversion graphs is a thinly disguised rephrasing of the geometric definition given in Theorem~\ref{thm:plotting_def}. (That is, notice that we showed how to obtain the inversion graph of a permutation as the comparability graph of a poset with dimension at most 2 by looking at its plot in 2 dimensions.) In contrast, our final definition for this class of graphs makes no reference to geometry.

\begin{theorem}[Dushnik and Miller~\cite{dushnik:partially-order:}]\label{thm:comp_cocomp_def}
	A graph $G$ is the inversion graph of a permutation if and only if $G$ and $\overline{G}$ are comparability graphs.
\end{theorem}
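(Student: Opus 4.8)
The plan is to deduce this from Theorem~\ref{thm:dimension_def}, which already identifies inversion graphs with comparability graphs of posets of order dimension at most $2$. Granting that result, the forward direction is immediate: if $G \cong G_\pi$, then $G$ is a comparability graph by Theorem~\ref{thm:dimension_def}, and since $\overline{G}_\pi \cong G_{\pi^{\textnormal{r}}}$ the complement $\overline{G}$ is again an inversion graph, hence also a comparability graph. All the content therefore lies in the converse, where I would show that if $G$ and $\overline{G}$ are both comparability graphs then $G$ is the comparability graph of a poset of dimension at most $2$; Theorem~\ref{thm:dimension_def} then finishes the proof.

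For the converse, fix a transitive orientation $F$ of $G$ and a transitive orientation $F'$ of $\overline{G}$. The orientation $F$ realizes a poset $P$ on $V(G)$ whose comparability graph is $G$, and whose pairs of incomparable elements are exactly the edges of $\overline{G}$. The goal is to produce two total orders whose intersection is $P$. I would set $L_1 = F \cup F'$ and $L_2 = F \cup (F')^{-1}$, where $(F')^{-1}$ is the reverse orientation of $\overline{G}$ (which is transitive whenever $F'$ is). Each of $L_1$ and $L_2$ orients every pair of distinct vertices exactly once, so each is a tournament; the claim is that both are transitive, and hence total orders. Granting this, one checks that $P = L_1 \cap L_2$: every edge of $G$ is oriented the same way in $L_1$ and $L_2$ (namely as in $F$, matching $P$), while every edge of $\overline{G}$ is oriented oppositely in $L_1$ and $L_2$, so those pairs become incomparable in the intersection. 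Thus $P$ has dimension at most $2$.

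The crux---and the step I expect to be the main obstacle---is verifying that $L_1$ is transitive (the argument for $L_2$ being identical). Given arcs $a \to b$ and $b \to c$ of $L_1$, I would split into cases according to whether each arc comes from $F$ or from $F'$. When both arcs lie in $F$, or both lie in $F'$, transitivity of the respective orientation supplies the arc $a \to c$ directly. The delicate cases are the mixed ones, where one arc records a comparability of $P$ and the other an incomparability. Here I would lean on the strong form of transitive orientation used in the paper: the requirement is not merely that no directed $2$-path be reversed, but that whenever $x \to y$ and $y \to z$ are arcs, the edge $xz$ is present and oriented $x \to z$. Using this, the possibility that the edge $a$--$c$ is oriented the wrong way in $\overline{G}$ forces, via transitivity of $F'$, an arc of $F'$ between a comparable pair of $P$---that is, an edge of $\overline{G}$ joining two vertices that are adjacent in $G$---which is impossible. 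This contradiction pins down the orientation of $a$--$c$, and the cases where $a$ and $c$ are comparable in $P$ are ruled out similarly, using transitivity of $P$ itself. Once transitivity of $L_1$ and $L_2$ is established, the realization $P = L_1 \cap L_2$ together with Theorem~\ref{thm:dimension_def} completes the argument.
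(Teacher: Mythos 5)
Your proposal is correct and follows essentially the same route as the paper: both form the tournaments $F \cup F'$ and $F \cup (F')^{-1}$ from transitive orientations of $G$ and $\overline{G}$, verify they are total orders, and recover $G$ as the comparability graph of their intersection via Theorem~\ref{thm:dimension_def}. The only difference is that you spell out the mixed-case transitivity check that the paper dismisses with ``it is not too hard to see,'' and your case analysis there is sound.
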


Theorem~\ref{thm:dimension_def} tells us that all inversion graphs are comparability graphs, and thus it is clear that given a permutation $\pi$ both $G_{\pi}$ and $\overline{G}_{\pi} = G_{\pi^{\textnormal{r}}}$ are comparability graphs.

For the other direction, suppose that $G$ and $\overline{G}$ are comparability graphs. Then there exists a transitive orientation $\overrightarrow{E}(G)$ of the edges of $G$, and also a transitive orientation $\overrightarrow{E}(\overline{G})$ of the edges of $\overline{G}$. It is not too hard to see that the graph $(V(G), \overrightarrow{E}(G) \cup \overrightarrow{E}(\overline{G}))$ is an acyclic orientation of a complete graph, which thus yields a total order $T_1$ on the vertices of $G$ by translating each arc~$x \rightarrow y$ to mean $x < y$. Likewise, if we reverse each arc in $\overrightarrow{E}(\overline{G})$ we obtain another transitive orientation $\overleftarrow{E}(\overline{G})$, and it follows that the graph $(V(G), \overrightarrow{E}(G) \cup \overleftarrow{E}(\overline{G}))$ is another acyclic orientation of a complete graph, yielding another total order $T_2$ on the vertices of $G$. It follows that the comparability graph of the partial order $T_1 \cap T_2$ is $G$, and thus $G$ is the inversion graph of a permutation by Theorem~\ref{thm:dimension_def}.

\section{Content Preview}

In Chapter~\ref{chap:uniqueness}, we begin by discussing modular decomposition in graphs and its relationship to transitively orientable graphs. Next, we give a concise and novel proof of a result at the foundation of modular decomposition in graphs, which we contrast with the previously known proof. We then show how the proof method arises from the theory of permutations, and afterwards, relate the result back to permutations by discussing its structural and algebraic implications for inversion graphs.

In Chapter~\ref{chap:lettericity}, we see how the notion of geometric grid classes of permutations and the concept of letter graphs from structural graph theory are intimately related via inversion graphs. We then answer extremal questions about arbitrary graphs and inversion graphs in regard to a related parameter called lettericity, revealing that, as measured by this parameter, inversion graphs have very well-behaved structure. We then introduce a generalization of the letter graph construction, giving rise to further interesting combinatorial problems.

In Chapter~\ref{chap:reflections}, we observe how applying transpositions to a permutation can affect its inversion graph, thus enabling us to generalize transpositions to an operation on arbitrary graphs. In studying this operation, we make further connections between the subjects of permutation theory and graph theory.

\cleardoublepage
\typeout{******************}
\typeout{**  Chapter 2   **}
\typeout{******************}
\chapter{Modular Decomposition and Permutations}
\label{chap:uniqueness}

\newcommand{\rect}{\textnormal{rect}}
\newcommand{\Aut}{\textnormal{Aut}}
\newcommand{\TO}{\mathfrak{to}}
\newcommand{\rc}{\textnormal{rc}}

The modular decomposition of a graph is a hierarchical decomposition of its vertices that is key in designing efficient algorithms for finding transitive orientations, solving optimization problems, and graph drawing. Analogous to the factorization of integers into primes, modular decomposition canonically factors graphs into graphs that cannot be decomposed further. Accordingly, such indecomposable graphs are called prime. In this chapter, we study these prime graphs and their relationship to transitive orientability, and then describe some implications for the inversion graphs of permutations. As prime graphs are the building blocks of all graphs, these results have significance for all graphs.

\section{Preliminaries}

\subsection{Modular Decomposition and Edge Classes}

In his seminal 1967 paper \emph{Transitiv Orientierbare Graphen}~\cite{gallai:transitiv-orien:}, Gallai classifies the structure of all transitively orientable graphs, and in doing so, establishes modular decomposition in graphs. Originally in German, the developments in this paper paved the way for much of structural graph theory, and thus an English translation~\cite{gallai:a-translation-o:} has been added to the literature. Among the many consequences of this work is Theorem~\ref{thm:gallai}, the topic of this chapter. His proof of this fact is not explicitly stated, and does not give much further intuition about the structure of graphs beyond the result. We include here a concise and novel proof of this result, and then discuss its relationship to the theory of permutations. We begin with the definitions required to understand this result.

Given a graph $G$, a \emph{module} or \emph{homogeneous set} is a set $M \subseteq V(G)$ such that for all vertices $u,v \in M$, we have $N(u) \setminus M = N(v) \setminus M$. That is, each vertex not in $M$ is either adjacent or nonadjacent to every vertex in $M$. Every subset $M \subseteq V(G)$ of size $0$, $1$ or $|V(G)|$ is a module. We call these modules \emph{trivial}, and define a graph to be \emph{prime} if it contains no nontrivial modules. It is prime graphs that are the subject of Theorem~\ref{thm:gallai}.

Gallai's motivation for considering modules was to study the structure of comparability graphs and their possible transitive orientations. If we take an undirected graph and seek to give it a transitive orientation, we see that the orientation of one edge may imply the orientation of many other edges. For example, if $ab$ and~$bc$ are edges of $G$ and $ac$ is not, then in any transitive orientation of $G$, the only two possible orientations of these edges are
\[
	a \leftarrow b \rightarrow c
	\quad\text{and}\quad
	a \rightarrow b \leftarrow c.
\]
This means that the orientation of $ab$ determines the orientation of $bc$, and vice versa.

Following Gallai, we define a relation $\wedge$ on the edges of $G$ in which $ab \wedge ab$ for all edges $ab$ and
\[
	ab \wedge bc
\]
if $ab$ and $bc$ are edges and $ac$ is not. Thus, if $e\wedge f$, then in any transitive orientation of $G$, the orientation of one of these edges determines the orientation of the other.

Because $\wedge$ is reflexive and symmetric by definition, its transitive closure is an equivalence relation. We define the \emph{edge classes} of $G$ to be the equivalence classes of $E(G)$ with respect to this relation. That is, two edges $e$ and $f$ lie in the same edge class of $G$ if and only if there is a sequence of edges
\[
	e=e_0\wedge e_1\wedge\cdots\wedge e_t=f.
\]
Thus, clearly, if $e$ and $f$ lie in the same edge class of $G$, then in any transitive orientation of $G$, the orientation of one of these edges determines the orientation of the other. Furthermore, if a graph~$G$ is transitively orientable, then the edges of each edge class have exactly two possible transitive orientations. We obtain one of these orientations by directing one of the edges, hence deciding the rest of the directions of the edges in the edge class, and the other orientation by reversing all of the arcs in the first orientation. That is, the dual of the digraph induced by the arcs of the first orientation. We can now state the result of interest.

\begin{theorem}[Gallai~\cite{gallai:transitiv-orien:}]\label{thm:gallai}
	If the graph $G$ is prime, then all of its edges belong to the same edge class. That is, prime graphs have 0 or 2 transitive orientations.
\end{theorem}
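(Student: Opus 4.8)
The plan is to prove the contrapositive: assuming $G$ has at least two edge classes, I would produce a nontrivial module, so that $G$ fails to be prime. (The cases in which $G$ has at most one edge are immediate. I would treat the statement about edge classes as the substantive claim: once all edges lie in a single class, $G$ has at most two transitive orientations, namely none if $G$ is not transitively orientable and otherwise the two mutually reverse orientations of that class noted above.)

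The central tool would be the following lemma: for an edge class $F$, let $W(F)$ be the set of vertices incident to an edge of $F$; then $W(F)$ is a module. I would prove this in two steps. First, since $\wedge$-related edges always share a vertex, any chain $e = e_0 \wedge e_1 \wedge \cdots \wedge e_t = f$ is a walk in the graph $(W(F), F)$, so that graph is connected. Second, suppose some vertex $x \notin W(F)$ were adjacent to a vertex $v \in W(F)$ but not to a vertex $v' \in W(F)$. Using connectivity I would take an $F$-path from $v$ to $v'$ and locate consecutive vertices $p, q$ on it with $pq \in F$, $xp \in E(G)$, and $xq \notin E(G)$. Then $xp \wedge pq$, which forces $xp$ into the class $F$ and contradicts $x \notin W(F)$. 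Hence every vertex outside $W(F)$ is adjacent to all or to none of $W(F)$, so $W(F)$ is a module.

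Applying this lemma when $G$ is prime, every edge class $F$ satisfies $|W(F)| \geq 2$, so $W(F)$ would be a nontrivial module unless $W(F) = V(G)$; primeness thus forces every edge class to span all of $V(G)$. It then remains to rule out two distinct edge classes that each span $V(G)$. Here I would use the observation that two edges sharing a single vertex but lying in different classes cannot be $\wedge$-related, so the \emph{third side} of the triangle they span must be present: if $uv$ and $uw$ lie in different classes, then $vw \in E(G)$. Because both classes span $V(G)$, every vertex is incident to edges of each class, and I would propagate this triangle condition along the connected, spanning subgraphs of the two classes, aiming to isolate a pair of \emph{twins} (two vertices with identical neighborhoods off each other) or, more generally, a proper nonempty module, once again contradicting primeness.

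I expect this last step to be the main obstacle. The span lemma is robust, passing from local $\wedge$-links to a global homogeneity via connectivity, but eliminating two classes that each cover every vertex cannot be done by exhibiting a vertex outside a class's span, since there is none; it requires extracting a \emph{smaller} module from the global interaction of the two classes. The delicate point is controlling how the triangle condition propagates: one incidence forces a single triangle, but assembling these into a genuine homogeneous set requires carefully following alternating paths in the two class-subgraphs and checking that the adjacencies they force are mutually consistent. I would organize this by fixing one class, partitioning each neighborhood $N(v)$ according to the class of its incident edge, and analyzing how these partitions must align across adjacent vertices.
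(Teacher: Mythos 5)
Your module lemma is sound and is exactly the paper's Proposition~\ref{prop:edge_classes_induce_modules}: the graph $(V(E),E)$ is connected because $\wedge$-related edges share a vertex, and a vertex outside $V(E)$ that disagreed on $V(E)$ would yield an edge $pq\in E$ with $xp\in E(G)$, $xq\notin E(G)$, forcing $xp\wedge pq$ and hence $x\in V(E)$. Combined with primeness this correctly reduces the theorem to showing that a prime graph cannot have two distinct edge classes both spanning $V(G)$. Your triangle observation is also correct: if $uv$ and $uw$ lie in different classes they cannot be $\wedge$-related, so $vw\in E(G)$.

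The gap is the step you yourself flag as the main obstacle: you never actually extract a nontrivial module (or any other contradiction) from the hypothesis of two spanning classes. ``Propagating the triangle condition along alternating paths, aiming to isolate twins'' is a hope, not an argument; nothing in the sketch guarantees that the forced adjacencies assemble into a homogeneous set, and this uniqueness statement is precisely the hard core of Gallai's theorem (his Lemma 3.2.3, the paper's Proposition~\ref{prop:unique_spanning_edge_class}). The known ways to close it are genuinely different from what you propose: Gallai takes a \emph{minimal cutset} $L$ of $\overline{G}$, lets $K_1,\dots,K_q$ be the components of $\overline{G}[V(G)-L]$, shows via the complete-adjacency lemma that all edges leaving $K_1$ lie in a single class $E$, and then observes that a second spanning class $E'$ would have to have edges inside $K_1$ and inside $V(G)-K_1$ but none between them, contradicting the connectivity of $(V(E'),E')$. (The paper's alternative proof instead runs an induction along chains, using Theorem~\ref{thm:CKOS}.) Until you supply an argument of comparable force for the two-spanning-classes case, the proof is incomplete.
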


\subsection{Chains}

In 2006, Brignall, Huczynska, and Vatter~\cite{brignall:decomposing-sim:} introduced the geometric notion of the proper pin sequence to obtain enumerative results about permutations. Indeed, the permutations they studied are those that have prime inversion graphs. Proper pin sequences have very nice graph theoretic properties when their entries are considered as the vertices of the inversion graphs of the permutations in which they lie. We will discuss proper pin sequences in some depth in Section~\ref{sec:proper_pin_sequences}.

In 2015, Chudnovsky, Kim, Oum, and Seymour~\cite{chudnovsky:unavoidable-ind:} defined chains for arbitrary graphs to study the induced subgraphs that must exist in large prime graphs. (Note that these chains are different from the ``chains'' of the English translation of Gallai~\cite{gallai:a-translation-o:}, or the ``Ketten'' of Gallai's original German version~\cite{gallai:transitiv-orien:}.) These are the graphical analogs of proper pin sequences. That is, a chain is any sequence of vertices in a graph satisfying the same nice graph theoretic properties as a proper pin sequence. More explicitly, a sequence $p_1$, $p_2$, $\dots$,~$p_m$ of distinct vertices of a graph $G$ is called a \emph{chain} if for all~$i \geq 3$, we have either
\begin{enumerate}
	\item $p_i \sim p_{i-1}$ and $p_i \not\sim p_0, p_1, \dots, p_{i-2}$, or
	\item $p_i \not\sim p_{i-1}$ and $p_i \sim p_0, p_1, \dots, p_{i-2}$.
\end{enumerate}

We see from this definition that chains are generalizations of paths. Furthermore, just as there is a path between any two vertices in a graph if and only if it is connected, the following result shows that chains play the analogous role in relation to primeness.

\begin{theorem}[Chudnovsky, Kim, Oum, and Seymour~\text{\cite[Proposition 2.1]{chudnovsky:unavoidable-ind:}}]\label{thm:CKOS}
	The graph~$G$ is prime if and only if there exists a chain from each pair $u$, $v$ of vertices to every other vertex $w$.
\end{theorem}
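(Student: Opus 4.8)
The plan is to prove both implications by exploiting a single observation: say a vertex $x$ \emph{distinguishes} two vertices $y,z$ if $x$ is adjacent to exactly one of them. Each step of a chain produces exactly such a distinction (the vertex $p_i$ distinguishes $p_{i-1}$ from all earlier entries), while the defining property of a module forbids any outside vertex from distinguishing two of its members. The reverse implication (chains $\Rightarrow$ prime) is routine, so I would dispatch it first; the forward implication, which requires actually producing chains reaching every vertex, is where the work lies.

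For the reverse direction, suppose a chain exists from every prescribed pair to every target, but, for contradiction, that $G$ has a nontrivial module $M$, so $2 \le |M| \le |V(G)|-1$. I would pick distinct $u,v \in M$ and a vertex $w \notin M$, and take a chain $p_1 = u,\, p_2 = v,\, \dots,\, p_m = w$. Since the chain begins in $M$ and ends outside it, let $k$ be the least index with $p_k \notin M$; then $k \ge 3$ and $p_{k-2},p_{k-1} \in M$. Whichever of the two chain conditions holds at step $k$, the vertex $p_k$ is adjacent to exactly one of $p_{k-1},p_{k-2}$ (to $p_{k-1}$ and not $p_{k-2}$ in the first case, the reverse in the second). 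As $p_k \notin M$ while $p_{k-1},p_{k-2} \in M$, this contradicts $M$ being a module, so $G$ is prime.

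For the forward direction, suppose $G$ is prime and fix distinct $u,v$. I would let $R$ be the set of all vertices reachable by a chain beginning $p_1=u,\,p_2=v$; since every prefix of a chain is a chain (the conditions reference only earlier entries), every vertex on such a chain lies in $R$, and $u,v \in R$. The goal is $R=V(G)$, which I would obtain by showing that otherwise $R$ is a nontrivial module. Suppose some $x \notin R$ is adjacent to some $a \in R$ but not to some $b \in R$. Take chains $P_a,P_b$ from $(u,v)$ ending at $a,b$; if $x$ were \emph{uniform} (all-adjacent or all-nonadjacent) on each of $P_a$ and $P_b$, its adjacency to both $a$ and $b$ would agree with its adjacency to $u$, forcing $x \sim a \iff x \sim b$, a contradiction. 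Hence $x$ distinguishes two vertices of at least one chain from $(u,v)$.

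The crux is then a minimality argument, and this is the step I expect to be the main obstacle, since it requires exhibiting a single chain that witnesses the distinction and then recognizing it as one step short of reaching $x$. Among all chains from $(u,v)$ on which $x$ distinguishes two vertices, I would take one, $Q=(p_1,\dots,p_k)$, of minimum length. By minimality $x$ distinguishes no two of $p_1,\dots,p_{k-1}$, so it is uniform on this segment; since $x$ nonetheless distinguishes two vertices of $Q$, the offending pair must involve $p_k$, and uniformity forces $x$'s adjacency to $p_k$ to be opposite to its common value on $p_1,\dots,p_{k-1}$. But that is exactly the condition permitting $x$ to be appended, so $(p_1,\dots,p_k,x)$ is a chain from $(u,v)$ ending at $x$, giving $x \in R$ and contradicting $x \notin R$. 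Thus every vertex outside $R$ is uniform on $R$, so $R$ is a module; as $|R|\ge 2$ and $R \ne V(G)$, this contradicts primeness, forcing $R=V(G)$ and hence the desired chain to every $w$.
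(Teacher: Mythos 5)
Your proposal is correct and follows essentially the same route as the paper: the reverse direction uses the first chain entry leaving the module, and the forward direction shows the set reachable from $(u,v)$ is a module by taking a minimal chain (equivalently, a minimal prefix) on which an outside vertex fails to be uniform and appending that vertex to obtain a longer chain. The only differences are organizational (you minimize over all chains rather than over prefixes of a fixed chain to a given $z$), not mathematical.
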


\begin{proof}
First, suppose that the graph $G$ is not prime with a nontrivial module $M$. Since $M$ is nontrivial, we can choose distinct vertices $u,v \in M$ and another vertex $w \notin M$. We then suppose for a contradiction that there is a chain $p_1, p_2, \dots, p_m$ such that $p_1 = u$, $p_2 = v$ and $p_m = w$. Let~$i \in \{ 3, \dots , m \}$ be the smallest index such that $p_i \notin M$, which must exist since $p_m \notin M$. This implies that $p_i$ does not agree on the vertices $\{ p_1, \dots, p_{i-1}\} \subseteq M$, and thus $M$ cannot be a module, a contradiction. 

Now suppose that $G$ is prime and let $u$ and $v$ be any two distinct vertices. Further, let $Z$ denote the set of vertices of $G$ that can be reached by a chain starting with $u$ and $v$. If $Z = V(G) \setminus \{u,v \}$, then we are done, so we suppose not and let $w$ be any vertex not in $Z \cup \{ u,v\}$. Since $w \notin Z$, it follows that $u,v,w$ cannot be a chain, and thus $w$ agrees on $\{ u,v \}$. Furthermore, let~$z$ be any element in $Z$ and $p_1, p_2, \dots, p_m$ any chain such that $p_1 = u$, $p_2 = v$ and $p_m = z$. If $w$ disagrees on the set~$\{ u,v,z \}$, then there exists a smallest index $i \in \{ 3, \dots , m \}$ such that $w$ disagrees on $\{ p_1, p_2, \dots, p_i \}$, and it follows that $p_1, p_2, \dots, p_i, w$ is a chain, contradicting the fact that $w \notin Z$. Therefore, $w$ does agree on~$\{ u,v,z \}$. Repeating this argument for each $z \in Z$, it follows that $w$ agrees on $Z \cup \{u,v \}$. We can also repeat this argument for each $w \notin Z \cup \{ u,v \}$, revealing that each vertex not in $Z \cup \{ u,v \}$ agrees on every vertex in this set. That is, $Z \cup \{ u,v \}$ is a nontrivial module, contradicting the fact that $G$ is prime. This gives the result.
\end{proof}

This theorem and the following proposition are the only background results we need for our proof.

\begin{proposition}\label{prop:disconnected_chains}
For any chain $p_1, p_2, \dots, p_m$ in a graph $G$, either $G[p_1, \dots, p_m]$ is connected or~${p_1\not\sim p_2}$ and $G[p_1, \dots, p_m]\cong P_1\cup P_{m-1}$.
\end{proposition}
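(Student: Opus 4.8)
The plan is to prove the statement by induction on the length $m$ of the chain, but to make the induction close I would first strengthen the claim. Writing $H_k = G[p_1, \dots, p_k]$ for the induced subgraph on the first $k$ vertices, the invariant I would carry is that for each $k$ either (a) $H_k$ is connected, or (b) $p_1 \not\sim p_2$ and $H_k \cong P_1 \cup P_{k-1}$, where in case (b) the isolated vertex is one of $p_1, p_2$ and $p_k$ is an endpoint of the path component $P_{k-1}$. Recording which vertex is isolated and that the most recently added vertex sits at the end of the path is precisely the extra information that lets one control what happens when the next vertex is appended, so this strengthening is the heart of the argument.

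For the base cases, $H_2$ is $P_2$ when $p_1 \sim p_2$ and $P_1 \cup P_1$ otherwise, and I would dispatch $m = 3$ directly: if $p_1 \sim p_2$ then $H_3$ is connected, while if $p_1 \not\sim p_2$ then condition (1) produces the path $p_2 p_3$ with $p_1$ isolated and condition (2) produces the path $p_1 p_3$ with $p_2$ isolated, both instances of (b) with $p_3$ an endpoint. (The index $m = 3$ warrants separate attention because there condition (2) forces adjacency to $p_1$ alone rather than to both $p_1$ and $p_2$.)

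For the inductive step, with $m \geq 4$ the prefix $p_1, \dots, p_{m-1}$ is a chain of length $m-1 \geq 3$ satisfying the invariant, and I would split on whether $H_{m-1}$ is connected. If it is, then $p_m$ is adjacent to some earlier vertex in either case of the chain definition --- to $p_{m-1}$ under condition (1), and to $p_1$ under condition (2) --- so $p_m$ attaches to the lone component and $H_m$ stays connected. If instead $H_{m-1}$ is in case (b), I would split on the type of $p_m$. Under condition (1), $p_m$ is adjacent only to $p_{m-1}$ among $p_1, \dots, p_{m-1}$; since $p_{m-1}$ is an endpoint of the path and the isolated vertex lies in $\{p_1, \dots, p_{m-2}\}$, the vertex $p_m$ lengthens the path at its endpoint, introduces no chord, and leaves the isolated vertex alone, so $H_m \cong P_1 \cup P_{m-1}$ with $p_m$ the new endpoint and (b) is preserved. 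Under condition (2), $p_m$ is adjacent to all of $p_1, \dots, p_{m-2}$, hence to both $p_1$ and $p_2$; whichever of the two is isolated is thereby joined to $p_m$, which is itself joined to the path, and all vertices merge, so $H_m$ is connected.

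The main obstacle is conceptual rather than computational: the bare dichotomy cannot be inducted on as stated. Without knowing that $p_{m-1}$ is a path-endpoint one cannot rule out a condition-(1) vertex creating a branch vertex (which would destroy the path structure), and without knowing which vertex is isolated one cannot see that a condition-(2) vertex necessarily reconnects it. Once the strengthened invariant is adopted, each case reduces to a short adjacency check that uses exactly the two defining alternatives of a chain, and taking $k = m$ yields the proposition.
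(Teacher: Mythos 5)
Your proof is correct and follows essentially the same route as the paper's: both rest on the observation that a connected prefix stays connected under appending a chain vertex, and that the only way to avoid connectivity is for $p_1\not\sim p_2$ to hold and for every $p_i$ with $i\ge 4$ to satisfy condition (1), which forces the $P_1\cup P_{m-1}$ structure. Your strengthened invariant (recording which of $p_1,p_2$ is isolated and that $p_k$ is a path endpoint) simply makes explicit the final structural analysis that the paper leaves to the reader.
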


\begin{proof}
	Let  $p_1, p_2, \dots, p_m$ be any chain. If for some $j \in \{ 2, \dots, m-1\}$ the prefix $p_1, \dots, p_j$ of the chain is such that $G[p_1, \dots, p_j]$ is connected, then $G[p_1, \dots, p_j, p_{j+1}]$ is connected since $p_{j+1}$ is adjacent to at least one of $p_1, \dots, p_j$. Thus, we see that $G[p_1, \dots, p_m]$ is connected by iterating this argument. Hence, if $p_1 \sim p_2$ we have that $G[p_1, p_2]$ is connected, and we are done. Furthermore, if for some~$j \geq 4$ we have $p_j \not\sim p_{j-1}$ and $p_j \sim p_1, \dots, p_{j-2}$, then $G[p_1, \dots, p_j]$ is connected, and we are also done. The result then follows by analyzing the resulting induced graph in the case that both~$p_1 \not\sim p_2$, and $p_i \sim p_{i-1}$ with $p_i \not\sim p_1, \dots, p_{i-2}$ for each $i \geq 4$.
\end{proof}

In the next section, we give a concise proof of Theorem~\ref{thm:gallai} using chains, and also include Gallai's original proof for the convenience of the reader. Then, we return to proper pin sequences in Section~\ref{sec:proper_pin_sequences}, where we will give their definition and observe how these considerations in the general graph setting regarding chains can be understood geometrically in the permutation setting using proper pin sequences. We will then also discuss two important corollaries to Theorem~\ref{thm:gallai} for the theory of permutations in Section~\ref{sec:permutations}.

\section{Proofs That Prime Graphs Have One Edge Class}\label{sec:the_proof}

\subsection{Proof by Chains}

We break the proof up into simple lemmas.

\begin{lemma}\label{lemma:p1p2_pspm_same_edge_class}
Suppose that $p_1, p_2, \dots, p_m$ is a chain in a graph $G$ such that $p_1\sim p_2$ and~$s={\max\{i : p_i\sim p_{i+1}\}}$. Then, $p_s$ is adjacent to $p_m$, and the edge $p_s p_m$ lies in the same edge class as the edge $p_1 p_2$.
\end{lemma}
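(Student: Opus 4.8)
The plan is to prove the two assertions in stages: first pin down the structure of the chain past the index $s$, then treat the ``tail'' edges $p_sp_{s+1}, \ldots, p_sp_m$ as a single block, and finally connect that block back to $p_1p_2$ through the prefix $p_1, \ldots, p_{s+1}$ by an induction. Note first that $s$ is well defined and $1 \le s \le m-1$, since $p_1 \sim p_2$ puts $1$ in the set $\{i : p_i \sim p_{i+1}\}$. Throughout I will call a vertex $p_i$ (with $i \ge 3$) of \emph{type A} if it satisfies the first alternative of the chain definition and of \emph{type B} if it satisfies the second.

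First I would analyze the indices $i$ with $s+2 \le i \le m$. By the maximality of $s$, every such $i$ has $p_{i-1} \not\sim p_i$, so $p_i$ cannot be type A and must be type B; hence $p_i \sim p_j$ for all $j \le i-2$, and in particular $p_i \sim p_s$. Combined with $p_s \sim p_{s+1}$ (immediate from the definition of $s$), this gives $p_s \sim p_j$ for every $j$ with $s+1 \le j \le m$, and in particular $p_s \sim p_m$, which is the first assertion.

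For the tail block I would observe that for each $i$ with $s+1 \le i \le m-1$ the edges $p_sp_i$ and $p_sp_{i+1}$ share the vertex $p_s$, while their other endpoints satisfy $p_i \not\sim p_{i+1}$ (again by maximality of $s$). Thus $p_sp_i \wedge p_sp_{i+1}$, and chaining these relations puts all of $p_sp_{s+1}, p_sp_{s+2}, \ldots, p_sp_m$ into one edge class. It therefore suffices to show that $p_sp_{s+1}$ itself lies in the edge class of $p_1p_2$.

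The main step is the prefix argument, which I expect to be the crux. The obstacle is that $p_1, \ldots, p_{s+1}$ may interleave type-A and type-B vertices, so the consecutive pairs $p_jp_{j+1}$ need not all be edges and a naive ``walk along consecutive edges'' breaks down. The fix is an induction with the right invariant: for each $k$ with $2 \le k \le s+1$ there is an index $a < k$ with $p_a \sim p_k$ such that $p_ap_k$ lies in the edge class of $p_1p_2$. The base case $k=2$ is $p_1p_2$ itself. For the step, the hypothesis at $k-1$ supplies $b < k-1$ with $p_bp_{k-1}$ in the class; if $p_k$ is type A then $p_k \not\sim p_b$ yields $p_bp_{k-1} \wedge p_{k-1}p_k$, while if $p_k$ is type B then $p_{k-1} \not\sim p_k$ yields $p_bp_{k-1} \wedge p_bp_k$, and in either case the required edge incident to $p_k$ is produced. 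Finally, since $p_s \sim p_{s+1}$ forces $p_{s+1}$ to be type A (or else $s=1$, where the claim is trivial), the step at $k = s+1$ delivers exactly $p_sp_{s+1}$ in the edge class of $p_1p_2$. Combining this with the tail block shows $p_sp_m$ lies in the same edge class as $p_1p_2$, completing the proof.
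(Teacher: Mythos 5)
Your proof is correct and, despite the different packaging (an explicit structural analysis of the chain past $s$, a tail block, and a separate prefix induction, versus the paper's single induction on $m$), it rests on exactly the same chain of $\wedge$-relations: the index $a$ produced by your prefix induction coincides at every step with the paper's running value of $s$, and your tail links $p_sp_i \wedge p_sp_{i+1}$ are precisely the paper's type-B inductive steps. A small advantage of your version is that it makes the adjacency $p_s \sim p_m$ explicit, which the paper's induction leaves implicit.
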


\begin{proof}
We proceed by induction on $m$. If $m = 2$, then the result is trivial. Now suppose that~$p_1, \dots, p_m, p_{m+1}$ is such a chain and that the result holds for all chains of length $m \geq 2$. That is, considering the chain $p_1, \dots, p_m$, the edge $p_s p_m$ is in the same edge class as $p_1 p_2$ where $s$ is the greatest index less than $m$ such that $p_s \sim p_{s+1}$. We have two cases, based on which type of chain element $p_{m+1}$ is.

First consider the case where $p_{m+1}\sim p_m$ and $p_{m+1}\not\sim p_1,p_2,\dots,p_{m-1}$. Then, $m={\max\{i:p_i\sim p_{i+1}\}}$, so we seek to show that $p_m p_{m+1}$ lies in the same edge class as $p_1 p_2$. We know by induction that $p_s p_m$ lies in the same edge class as $p_1 p_2$, and by our hypothesis that~$p_m p_{m+1}\wedge p_s p_m$, so it follows that $p_m p_{m+1}$ lies in the same edge class as $p_1 p_2$, as desired.

Next suppose that $p_{m+1}\not\sim p_m$ and $p_{m+1}\sim p_1,p_2,\dots,p_{m-1}$. Then, $s={\max\{i:p_i\sim p_{i+1}\}}$, so we seek to show that $p_s p_{m+1}$ lies in the same edge class as $p_1 p_2$. We know by induction that~$p_s p_m$ lies in the same edge class as $p_1 p_2$, and by our hypothesis that $p_s p_m \wedge p_s p_{m+1}$, so it follows that~$p_s p_{m+1}$ lies in the same edge class as $p_1 p_2$, as desired.
\end{proof}

This lemma makes the next one easy to prove.

\begin{lemma}\label{lemma:final}
Suppose that $p_1, p_2, \dots, p_m$ is a chain in a graph $G$ such that $G[p_1,p_2,p_m]$ is isomorphic to $K_3$. Then, at least one of $p_1 p_m$ or $p_2 p_m$ is in the same edge class as $p_1 p_2$.
\end{lemma}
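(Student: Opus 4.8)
The plan is to feed the $K_3$ hypothesis into the chain condition on $p_m$ to pin down its neighbours, then invoke Lemma~\ref{lemma:p1p2_pspm_same_edge_class} and link the edge it produces to $p_1 p_m$ or $p_2 p_m$ by a short sequence of $\wedge$-relations, using only edges incident to $p_m$.

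First I would determine the type of the final chain vertex. Since $G[p_1,p_2,p_m]\cong K_3$ forces $p_m\sim p_1$ and $p_m\sim p_2$, and since no single chain vertex can be adjacent to both $p_1$ and $p_2$ (each alternative in the chain definition forbids one of these), the case $m=3$ is impossible, so $m\geq 4$. For $m\geq 4$ the first alternative at index $m$ would give $p_m\not\sim p_1,\dots,p_{m-2}$, hence $p_m\not\sim p_1$, a contradiction. Thus $p_m$ satisfies the second alternative: $p_m\not\sim p_{m-1}$ while $p_m\sim p_1,p_2,\dots,p_{m-2}$. In particular every $p_i$ with $i\le m-2$ is a neighbour of $p_m$, and this neighbourhood contains both $p_1$ and $p_2$.

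Next, since $p_1\sim p_2$, Lemma~\ref{lemma:p1p2_pspm_same_edge_class} applies; writing $s=\max\{i:p_i\sim p_{i+1}\}$ it gives that $p_s p_m$ lies in the same edge class as $p_1 p_2$. Because $p_{m-1}\not\sim p_m$, we have $s\le m-2$, so $p_s$ is one of the neighbours of $p_m$ identified above. If $s\in\{1,2\}$ then $p_s p_m$ is literally $p_1 p_m$ or $p_2 p_m$ and we are finished.

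The remaining, and main, difficulty is the case $s\geq 3$, where $p_s$ may be adjacent to both $p_1$ and $p_2$, so that no single $\wedge$-step links $p_s p_m$ to $p_1 p_m$ or to $p_2 p_m$. To handle it I would use the observation that two edges $p_i p_m$ and $p_j p_m$ with $i,j\leq m-2$ satisfy $p_i p_m \wedge p_j p_m$ exactly when $p_i\not\sim p_j$; hence it suffices to join $p_s$ to $\{p_1,p_2\}$ by a sequence of vertices of $\{p_1,\dots,p_{m-2}\}$ that are consecutively \emph{non}-adjacent in $G$. The chain condition supplies such a descent: for each $i\geq 3$ the first alternative forces $p_i\not\sim p_1$, while the second forces $p_i\not\sim p_{i-1}$. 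By induction on $i$ it follows that every $p_i$ with $3\le i\le m-2$ reaches $p_1$ or $p_2$ through non-edges of $G$ inside the neighbourhood of $p_m$: a first-alternative vertex reaches $p_1$ in one step, and a second-alternative vertex steps down to $p_{i-1}$ and invokes the inductive hypothesis, bottoming out at $p_3$ (which is non-adjacent to $p_1$ or to $p_2$). Translating this non-edge path back through $\wedge$ shows that $p_s p_m$, and therefore $p_1 p_2$, shares its edge class with $p_1 p_m$ or $p_2 p_m$, as required. I expect this descent to be the crux; the reduction pinning down the neighbours of $p_m$ and the appeal to Lemma~\ref{lemma:p1p2_pspm_same_edge_class} are routine by comparison.
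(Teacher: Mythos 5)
Your proof is correct, and its skeleton matches the paper's: both arguments force $p_m$ into the second alternative of the chain definition (so $m\ge 4$, $p_m\sim p_1,\dots,p_{m-2}$, $p_m\not\sim p_{m-1}$, hence $s\le m-2$), both invoke Lemma~\ref{lemma:p1p2_pspm_same_edge_class} to place $p_s p_m$ in the class of $p_1p_2$, and both finish by linking $p_s p_m$ to $p_k p_m$ for some $k\in\{1,2\}$ via a $\wedge$-sequence of edges $vp_m$ in which consecutive $v$'s are non-adjacent common neighbours of $p_m$. The one genuine difference is how the required non-edge path from $p_s$ to $\{p_1,p_2\}$ is produced: the paper notes that $p_1,\dots,p_{m-2}$ is also a chain in $\overline{G}$ and cites Proposition~\ref{prop:disconnected_chains} to get that $p_s$ lies in the same component of $\overline{G}[p_1,\dots,p_{m-2}]$ as $p_1$ or $p_2$, whereas you construct the path explicitly by descending through the chain (a pendant $p_i$ reaches $p_1$ along a non-edge; an antipendant steps to $p_{i-1}$ and recurses, bottoming out at $p_3$ or at $p_2$). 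Your version is self-contained and more constructive; the paper's reuses an already-proved structural fact and parallels the later pin-sequence treatment. One phrasing quibble: your parenthetical claim that no chain vertex can be adjacent to both $p_1$ and $p_2$ is false in general---an antipendant at any position $\ge 4$ is adjacent to both, and $p_m$ itself turns out to be such a vertex---but you only need, and only use, the statement for $p_3$, where it does hold, so the exclusion of $m=3$ stands.
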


\begin{proof}
Define $s=\max\{i:p_i\sim p_{i+1}\}$. By the previous result, $p_1 p_2$ and $p_s p_m$ lie in the same edge class, and by our hypotheses, $p_m\sim p_1, p_2$. Thus we must have that $m\ge 4$ and $p_m \not\sim p_{m-1}$ with $p_m \sim p_1, \dots, p_{m-2}$. Thus, we must have $s \leq m-2$. Given that the sequence of vertices $p_1$, $p_2$, $\dots$, $p_{m-2}$ comprises a chain in~$G$, it also comprises a chain in $\overline{G}$. By Proposition~\ref{prop:disconnected_chains}, we then know that $p_s$ is in the same component of $\overline{G}[p_1, \dots, p_{m-2}]$ as at least one of $p_1$ and $p_2$. If for $k=1$ or $k=2$, $p_k$ lies in the same component as $p_s$, then there exists a path $p_s, v_1, \dots, v_r, p_k$ in~$\overline{G}[p_1, \dots, p_{m-2}]$, and therefore $p_s p_m \wedge v_1 p_m \wedge \dots \wedge v_r p_m \wedge p_k p_m$. That is, $p_k p_m$ lies in the same edge class as $p_1 p_2$ since they are both in the same edge class as $p_s p_m$. This gives the result.
\end{proof}

Now, let $G$ be any prime graph. If two incident edges $e_1$, $e_2$ induce a $P_3$, then we are done since $e_1 \wedge e_2$. Now we deal with the case where $G[x,y,z] \cong K_3$. By Theorem~\ref{thm:CKOS}, there exists a chain $p_1 = x$, $p_2 = y$, $\dots$, $p_m=z$. By Lemma~\ref{lemma:final}, at least one of $xz$ and $yz$ is in the same edge class as $xy$. If they are both, then we are done.  Otherwise, suppose $yz$ is the one that is not necessarily in the same edge class as $xy$. Then, again using Theorem~\ref{thm:CKOS}, there exists a chain $p_1' = y$, $p_2' = z$, $\dots$, $p_{m'}' = x$, then we must have that $yz$ is in the same edge class as one of $xy$ or $xz$. In either case, we are done by transitivity. That is, all incident edges in $G$ are in the same edge class, and since $G$ must be connected, we have proved Theorem~\ref{thm:gallai}.

\subsection{Gallai's Proof}\label{section:gallai's_proof}

Theorem~\ref{thm:gallai} is not explicitly stated by Gallai in~\cite{gallai:transitiv-orien:}, and is instead a corollary to a combination of his results. We consolidate those results here for the convenience of the reader, as well as to emphasize the need for a proof like the one given above. To aid in our discussions, given a subset $E$ of edges of a graph $G$, we let $V(E) \subseteq V(G)$ denote the vertices that are incident to at least one edge in $E$. We note that the citations for the following propositions are directed to the corresponding results in the English translation~\cite{gallai:a-translation-o:} of~\cite{gallai:transitiv-orien:}.

\begin{proposition}[Gallai~\text{\cite[3.2.1]{gallai:a-translation-o:}}]\label{prop:shortest_ab-path}
	For any $a, b \in V(G)$, all edges of the shortest $ab$-path $W = (a = x_0, x_1, \dots, x_t = b)$ of $G$ belong to the same edge class of $G$.
\end{proposition}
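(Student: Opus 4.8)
The plan is to exploit the single defining property of a shortest path: it admits no local shortcut. First I would observe that for every interior index $i \in \{1, \dots, t-1\}$, the three consecutive vertices $x_{i-1}$, $x_i$, $x_{i+1}$ induce a $P_3$. Indeed, $x_{i-1}x_i$ and $x_ix_{i+1}$ are edges of $W$, and I claim the nonedge $x_{i-1}x_{i+1} \notin E(G)$ must hold. For if $x_{i-1}x_{i+1}$ were an edge, then replacing the two-edge subpath $x_{i-1}, x_i, x_{i+1}$ by the single edge $x_{i-1}x_{i+1}$ would yield a strictly shorter $ab$-path (it is still a path since the $x_j$ are distinct, and it is shorter by exactly one edge), contradicting the minimality of $W$.

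Given this, the relation $x_{i-1}x_i \wedge x_ix_{i+1}$ holds immediately from the definition of $\wedge$: the two edges share the vertex $x_i$, and their other endpoints $x_{i-1}$ and $x_{i+1}$ are nonadjacent. Hence consecutive edges along $W$ are directly related under $\wedge$, and iterating over all interior indices produces the chain
\[
	x_0x_1 \wedge x_1x_2 \wedge \cdots \wedge x_{t-1}x_t.
\]
Since the edge-class relation is by definition the transitive closure of $\wedge$, all edges of $W$ therefore belong to a single edge class, which is the assertion.

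I do not anticipate any serious obstacle. The entire content is the shortest-path shortcut argument establishing $x_{i-1}x_{i+1}\notin E(G)$, and this is precisely what upgrades each pair of consecutive edges from merely sharing a vertex to forming a genuine induced $P_3$, yielding a direct $\wedge$-relation rather than something requiring the machinery of chains. The only point deserving a word of care is the verification that the proposed shortcut is genuinely a shorter \emph{path}: this is automatic because a shortest path has pairwise distinct vertices, so deleting $x_i$ leaves a path and reduces the length by one.
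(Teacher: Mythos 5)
Your proposal is correct and matches the paper's proof: both establish that $x_{i-1}x_{i+1}\notin E(G)$ by the shortcut argument, deduce $x_{i-1}x_i \wedge x_i x_{i+1}$ for each interior index, and conclude by transitivity. The paper states this more tersely, but the argument is identical.
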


\begin{proof}
	It is trivial for $t=1$. For $t \geq 2$, we have that $x_{i-1} x_i \wedge x_i x_{i+1}$ for each $i \in \{ 1, \dots, t-1 \}$. That is, if $x_{i-1} x_{i+1}$ is an edge for some $i$, then $W$ is not a shortest $ab$-path.
\end{proof}

\begin{proposition}[Gallai~\text{\cite[3.2.2]{gallai:a-translation-o:}}]\label{prop:XY-edges_one_class}
	Let $X$ and $Y$ be two disjoint nonempty subsets of~$V(G)$ that are completely adjacent in $G$, and such that $\overline{G}[X]$ and $\overline{G}[Y]$ are both connected. Then all $XY$-edges belong to the same edge class.
\end{proposition}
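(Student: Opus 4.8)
The plan is to translate the connectivity of the complements $\overline{G}[X]$ and $\overline{G}[Y]$ directly into statements about the relation $\wedge$, leaning on the hypothesis that every vertex of $X$ is adjacent to every vertex of $Y$. The whole argument rests on one local observation: if $x_1, x_2 \in X$ are nonadjacent in $G$ (equivalently, adjacent in $\overline{G}$) and $y \in Y$ is arbitrary, then since $X$ and $Y$ are completely adjacent, both $x_1 y$ and $x_2 y$ are edges of $G$ while $x_1 x_2$ is not; hence $x_1 y \wedge x_2 y$, and these two $XY$-edges lie in the same edge class. This is exactly the mechanism by which complete adjacency converts a non-edge inside $X$ into an equivalence of two $XY$-edges.

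Next I would fix a vertex $y \in Y$ and show that all edges of $\{ xy : x \in X\}$ lie in a single edge class. Given $x_1, x_2 \in X$, connectivity of $\overline{G}[X]$ supplies a path $x_1 = u_0, u_1, \dots, u_k = x_2$ in $\overline{G}[X]$; each consecutive pair $u_{i-1}, u_i$ is nonadjacent in $G$, so the local observation yields $u_0 y \wedge u_1 y \wedge \cdots \wedge u_k y$, placing $x_1 y$ and $x_2 y$ in the same edge class. By the symmetric argument using connectivity of $\overline{G}[Y]$, for each fixed $x \in X$ all edges of $\{ xy : y \in Y\}$ lie in a single edge class.

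Finally I would combine these two facts. Given arbitrary $XY$-edges $x_1 y_1$ and $x_2 y_2$, fixing the endpoint $x_1$ and varying the $Y$-endpoint shows that $x_1 y_1$ and $x_1 y_2$ lie in the same edge class; fixing the endpoint $y_2$ and varying the $X$-endpoint shows that $x_1 y_2$ and $x_2 y_2$ lie in the same edge class. Transitivity of the edge-class relation then gives that $x_1 y_1$ and $x_2 y_2$ lie in the same edge class, which is the claim.

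I do not expect a serious obstacle, since all the content sits in the opening observation, which is what makes the completely-adjacent hypothesis do its work. The only points needing care are the degenerate cases in which $X$ or $Y$ is a singleton, so that the corresponding complement is a single vertex and the relevant chaining is vacuous, and the bookkeeping that every intermediate edge along each chain really is an $XY$-edge; this holds automatically because one endpoint is held fixed in $Y$ (respectively $X$) throughout the chain.
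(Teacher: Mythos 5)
Your proposal is correct and follows essentially the same argument as the paper: both use the observation that a non-edge inside $X$ (or $Y$) forces the two incident $XY$-edges at a common endpoint into the relation $\wedge$, chain this along paths in the connected complements, and then combine the two directions through an intermediate edge sharing an endpoint with each. The only difference is presentational (you fix the $Y$-endpoint first where the paper fixes the $X$-endpoint first).
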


\begin{proof}
	Let $x$ be any vertex in $X$ and $y$, $y'$ any two vertices in $Y$. Then since $\overline{G}[Y]$ is connected, there exists a path $y = y_0, y_1, \dots, y_s = y'$ through $\overline{G}[Y]$. That is, $xy_0 \wedge xy_1 \wedge \dots \wedge xy_s$, and therefore $xy$ and $xy'$ are in the same edge class. This shows that all $xY$-edges belong to the same edge class, and the same argument can be used to show all $yX$-edges belong to the same edge class for any $y \in Y$. Finally, if $xy$ and $x'y'$ are two nonincident $XY$-edges, then they both belong to the same edge class as $xy'$, and thus the same edge class as each other.
\end{proof}

The following is the key result to Gallai's proof, and at the core of modular decomposition in graphs.

\begin{proposition}[Gallai~\text{\cite[Lemma 3.2.3]{gallai:a-translation-o:}}]\label{prop:unique_spanning_edge_class}
	If $G$ and $\overline{G}$ are both connected and have at least two vertices, then $G$ has at least four vertices and possesses exactly one edge class $E$ such that~$V(E) = V(G)$.
\end{proposition}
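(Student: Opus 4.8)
The plan is to prove the two assertions in turn: first that $n=|V(G)|\ge 4$, and then that $G$ has exactly one edge class $E$ with $V(E)=V(G)$. The bound $n\ge 4$ I would dispatch by inspection: a direct check of the (very few) graphs on at most three vertices shows that in every case either $G$ or $\overline{G}$ is disconnected, so the hypothesis already forces $n\ge 4$.

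The heart of the argument is a structural observation about edge classes that I would isolate as a preliminary lemma: \emph{for every edge class $E$, the vertex set $V(E)$ is a module of $G$, and the spanning subgraph $(V(E),E)$ is connected.} Connectivity is immediate, since consecutive edges in a chain $e_0\wedge e_1\wedge\cdots\wedge e_t$ are by definition incident. For the module property, suppose $x\notin V(E)$ is adjacent to some $u\in V(E)$. If $uy\in E$, then $x\sim y$: otherwise $xu\wedge uy$ would place $xu$ in $E$ and force $x\in V(E)$. Propagating this along the connected subgraph $(V(E),E)$ shows that $x$ is adjacent to \emph{every} vertex of $V(E)$; hence each outside vertex is completely joined to or completely nonadjacent from $V(E)$, which is exactly the module condition. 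With this lemma in hand, the vertex sets of the edge classes are modules that cover $V(G)$.

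Existence and uniqueness would then follow by exploiting that, under the hypothesis, neither $G$ nor $\overline{G}$ is a disjoint union. For a proper class-module $M=V(E)$, the module condition splits the remaining vertices into those completely joined to $M$ and those completely nonadjacent from $M$; connectivity of $G$ makes the first set nonempty and connectivity of $\overline{G}$ makes the second nonempty. I would use these two facts, together with Proposition~\ref{prop:XY-edges_one_class} to merge the relevant families of cross edges (and Proposition~\ref{prop:shortest_ab-path} to route between vertices), to produce a class whose module is all of $V(G)$ and to show that any two such global classes must coincide. That both hypotheses are genuinely needed is visible already in the diamond $K_2\vee\overline{K_2}$: its edge classes have vertex sets $\{x,p,q\}$, $\{y,p,q\}$, and $\{x,y\}$, none spanning, precisely because $\overline{G}$ is disconnected.

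The main obstacle I anticipate is the merging step. The complete-join structure that appears at the boundary of a module (the very reason Proposition~\ref{prop:XY-edges_one_class} is phrased with completely adjacent sets whose complements are connected) blocks the naive attempt to link an edge inside $M$ to an edge leaving $M$ through a single $\wedge$-relation, so the class mergers must be carried out on carefully chosen completely adjacent pairs with connected complements. Getting these pairs to line up—so that the two connectivity hypotheses are used symmetrically, one to forbid a join and the other a co-join—is the delicate part, and is exactly the bookkeeping that makes Gallai's original route longer than the chain-based proof of Theorem~\ref{thm:gallai} given above.
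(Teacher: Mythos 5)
There is a genuine gap: the existence of a spanning edge class --- the heart of the proposition --- is never actually established. Your preliminary lemma is correct and well proved (it is essentially Proposition~\ref{prop:edge_classes_induce_modules} together with the easy observation that each $(V(E),E)$ is connected), and your deduction that a \emph{proper} class-module must have both a nonempty completely-joined side and a nonempty completely-nonadjacent side is also right. But from there you only announce a plan: ``use these two facts, together with Proposition~\ref{prop:XY-edges_one_class}, to produce a class whose module is all of $V(G)$,'' and you yourself flag the merging step as an unresolved obstacle. Knowing that every class-module is proper-or-spanning does not by itself produce a spanning one; for instance, a maximality argument on $|V(E)|$ stalls because $\overline{G}[V(E)]$ need not be connected, so Proposition~\ref{prop:XY-edges_one_class} cannot be applied to grow the class. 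The missing device is the one Gallai uses: take a \emph{minimal} cutset $L$ of $\overline{G}$ and let $K_1,\dots,K_q$ be the components of $\overline{G}[V(G)-L]$. These $K_i$ are pairwise completely adjacent in $G$ with each $\overline{G}[K_i]$ connected, so Proposition~\ref{prop:XY-edges_one_class} merges all cross-edges into one class $E$; minimality of $L$ guarantees that every vertex of $L$ has a non-neighbour (in $G$) in each $K_i$, which is exactly the non-adjacency needed to chain an $LK_1$-edge into $E$ via $a_1x_1\wedge a_1b_i$; and Proposition~\ref{prop:shortest_ab-path} then pulls every vertex of $L$ into $V(E)$. Without this construction (or an equivalent one), existence is unproved.

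Uniqueness is likewise only asserted. Note that your module lemma alone cannot give it: a graph can certainly carry two edge-disjoint connected spanning subgraphs, so something specific to edge classes is needed. The paper gets it from the same cut: since \emph{all} edges between $K_1$ and $V(G)-K_1$ lie in $E$, any other spanning class $E'$ would have vertices on both sides of this cut but no edges across it, contradicting the connectivity of $(V(E'),E')$. Your diamond example $K_2\vee\overline{K}_2$ is a correct sanity check that both connectivity hypotheses are needed, but it does not substitute for these two arguments. The $n\ge 4$ claim and the preliminary lemma are fine; the proposal as written proves neither existence nor uniqueness of the spanning class.
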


\begin{proof}
The fact that $G$ must have four vertices is trivial after examining all graphs on two and three vertices. Since $G$ and $\overline{G}$ are connected, neither of them are complete. Given~$\overline{G}$ is not complete, there exists a proper subset $X \subsetneq V$ such that $\overline{G}[V(G) - X]$ is not connected. In this case, we call $X$ a \emph{cutset} of~$\overline{G}$. We let $L$ be a minimal cutset of $\overline{G}$ and let $K_1, \dots, K_q$, $(q \geq 2)$, be the vertex sets of each of the components in $\overline{G}[V(G) - L]$. As $L$ is minimal, every vertex of $L$ is adjacent to at least one vertex in each $K_i$ in~$\overline{G}$. That is, if a vertex $v$ in $L$ is nonadjacent to every vertex in some $K_i$ in $\overline{G}$, then since~$\overline{G}$ is connected we must have $|L| > 1$, and further that $L - \{ v\}$ is also a cutset, contradicting our choice of $L$. 
	
For each pair $i \neq j$, the vertex sets $K_i$ and $K_j$ are completely adjacent in $G$, and furthermore, all~$K_i K_j$-edges belong to the same edge class of $G$ by Proposition~\ref{prop:XY-edges_one_class}. Next, since $G$ is connected, there exists an edge between $L$ and some $K_j$ in $G$, say $K_1$. We let $a_1 x_1$ be an edge of~$G$ such that $a_1 \in K_1$ and $x_1 \in L$. By the previous paragraph, we can choose a vertex $b_i \in K_i$ for each $i \in \{ 2, \dots, q \}$ such that $x_1 b_i$ is an edge in $\overline{G}$. Therefore $a_1 x_1 \wedge a_1 b_i$ for each $i \in \{ 2, \dots, q \}$, and we obtain that all $K_1 K_i$ edges are in the same edge class $E$ with $a_1 x_1$. Since this is true for any choice of $a_1 x_1$, we also have that all~$K_1 L$-edges are in $E$. Thus $V(G)-L \subseteq V(E)$.
	
To see $V(E) = V(G)$, we will show that $L \subseteq V(E)$. So let $x$ be any vertex in $L$ and $a_1$ any vertex in~$K_1$. Let $W = (x=x_0, x_1, \dots, x_t=a_1)$ be a shortest $x a_1$-path in $G$. From above, we saw that all edges between $K_1$ and $V(G)-K_1$ are in $E$, and since $x$ is in $V(G)-K_1$ and $a_1 \in K_1$, the path $W$ must contain an edge in $E$ between $K_1$ and $V(G)-K_1$. That is, by Proposition~\ref{prop:shortest_ab-path}, each edge in $W$ is in $E$, including the edge incident with $x$. This shows $L \subseteq V(E)$.

Finally, to see that $E$ is the only edge class such that $V(E) = V(G)$, we again use the fact that all $K_1(V(G) - K_1)$-edges are in $E$. That is, if $E'$ is another edge class such that $V(E') = V(G)$, then it must have edges within $K_1$ and within $V(G)-K_1$, but none between these two sets since $E \cap E' = \varnothing$. However, this would contradict the fact that the graph induced by the edges in the edge class $E'$, i.e., $(V(E'), E')$, must be connected. This gives the result.
\end{proof}

\begin{proposition}[Gallai~\text{\cite[Theorem 3.1.5]{gallai:a-translation-o:}}]\label{prop:edge_classes_induce_modules}
	If $E$ is an edge class of a graph $G$, then $V(E)$ is a module in $G$.
\end{proposition}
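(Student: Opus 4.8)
The plan is to verify the module condition directly: show that every vertex $w \notin V(E)$ is either adjacent to all vertices of $V(E)$ or to none of them, which is precisely the statement that $V(E)$ is a module. The key preliminary observation is that the edges of an edge class form a \emph{connected} subgraph on $V(E)$. Indeed, whenever $e \wedge f$ with $e \neq f$, the defining relation $ab \wedge bc$ forces $e$ and $f$ to share a vertex, so any sequence $e = e_0 \wedge e_1 \wedge \cdots \wedge e_t = f$ witnessing membership in the same edge class is a walk in which consecutive edges meet. Hence $(V(E), E)$ is connected---a fact already exploited at the end of the proof of Proposition~\ref{prop:unique_spanning_edge_class}.

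With this in hand, I would argue by contradiction. Suppose some $w \notin V(E)$ has both a neighbor $u \in V(E)$ and a non-neighbor $v \in V(E)$. Using connectivity of $(V(E), E)$, choose a path $u = a_0, a_1, \dots, a_k = v$ all of whose edges $a_{j-1} a_j$ lie in $E$. Since $w$ is adjacent to $a_0$ but not to $a_k$, there is a smallest index $j$ with $w \not\sim a_j$; for this $j$ we have $w \sim a_{j-1}$ and $w \not\sim a_j$, so the adjacency to $w$ flips across the edge $a_{j-1} a_j$.

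The crux is then a single application of the definition of $\wedge$: the edges $w a_{j-1}$ and $a_{j-1} a_j$ share the vertex $a_{j-1}$, while $w a_j$ is a non-edge, so $w a_{j-1} \wedge a_{j-1} a_j$. Because $a_{j-1} a_j \in E$, this places $w a_{j-1}$ in the edge class $E$ as well, whence $w \in V(E)$, contradicting the choice of $w$. Therefore no such $w$ exists, every vertex outside $V(E)$ is adjacent to all or to none of $V(E)$, and $V(E)$ is a module.

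I expect no serious obstacle here. The only point requiring care is the connectivity claim, and even that reduces to the remark that $\wedge$ only ever relates edges sharing an endpoint; once the offending adjacency flip along the path is located, the conclusion is immediate.
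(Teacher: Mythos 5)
Your proof is correct and follows essentially the same route as the paper's: both arguments rest on the connectivity of $(V(E),E)$ and locate an edge of $E$ joining a neighbor of the offending vertex $w$ to a non-neighbor, then apply the relation $\wedge$ once to force $w$ into $V(E)$. The only cosmetic difference is that you find this crossing edge by walking along a path and detecting where the adjacency flips, whereas the paper extracts it directly from the bipartition of $V(E)$ into $N(w)\cap V(E)$ and its complement; you also spell out the connectivity of $(V(E),E)$, which the paper takes as known.
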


\begin{proof}
	Let $E$ be an arbitrary edge class of $G$. The result is trivial if $V(E) = V(G)$, so we may assume that $V(E)$ is a proper subset of $V(G)$. If $V(E)$ is not a module, there exists a vertex~$z$ in $V(G) - V(E)$ such that both sets $X = V(E) \cap N(Z)$ and $Y = V(E) - N(Z)$ are nonempty. Since the graph $(V(E), E)$ is connected, there must exist an edge $xy$ of $G$ such that $x \in X$ and $y \in Y$. That is, $xy \wedge xz$ and therefore $z \in V(E)$, a contradiction.
\end{proof}

To see how these results imply Theorem~\ref{thm:gallai}, we let $G$ be a prime graph. If $G$ has one or two vertices then the result is trivial, and there are no prime graphs on three vertices. So we suppose that $G$ has at least four vertices, and note that both $G$ and $\overline{G}$ must be connected. Then by Proposition~\ref{prop:unique_spanning_edge_class}, there exists exactly one edge class $E$ of $G$ such that $V(E) = V(G)$. Now, suppose that $E'$ is any edge class of $G$. Since $E'$ contains at least one edge, we have that $V(E') \geq 2$. Since $V(E')$ is a module by Propostion~\ref{prop:edge_classes_induce_modules}, it must be that $V(E') = V(G)$ because $G$ does not contain any non-trivial modules. That is, $E'$ must be $E$.

\section{The Geometric Origin of Chains}\label{sec:proper_pin_sequences}

\subsection{Modular Decomposition in Inversion Graphs}

As previously mentioned, chains have a geometric precursor in proper pin sequences. Indeed, chains are generalizations of proper pin sequences as they are sequences of entries of a permutation that correspond to chains in the induced inversion graph. Before we can define proper pin sequences, we require several definitions.

For a permutation $\pi$ of $[n]$, an \emph{interval} is a set of contiguous indices $I = [a,b]$ such that the set of values $\pi(I) = \{\pi(i): i \in  I\}$ also forms an interval of natural numbers. It is clear that every subset of $[n]$ of size $0$, $1$ or $n$ forms an interval of $\pi$. We call these intervals \emph{trivial}, and define a \emph{simple permutation} to be a permutation that has no nontrivial intervals. 

\begin{figure}[h]
\begin{center}
	\begin{tikzpicture}[scale=0.32]
		\node [] at (5.5,-1) {(a) A nonsimple permutation};
		
		\draw [color = gray, line width = 0.6pt, dotted, thick, line cap=round] (0.5,4)--(3,4);
		\draw [color = gray , line width = 0.6pt, dotted, thick, line cap=round] (6,4)--(10.5,4);
		\draw [color = gray , line width = 0.6pt, dotted, thick, line cap=round] (0.5,7)--(3,7);
		\draw [color = gray , line width = 0.6pt, dotted, thick, line cap=round] (6,7)--(10.5,7);
		\draw [color = gray , line width = 0.6pt, dotted, thick, line cap=round] (3,0.5)--(3,4);
		\draw [color = gray , line width = 0.6pt, dotted, thick, line cap=round] (3,7)--(3,10.5);
		\draw [color = gray , line width = 0.6pt, dotted, thick, line cap=round] (6,0.5)--(6,4);
		\draw [color = gray , line width = 0.6pt, dotted, thick, line cap=round] (6,7)--(6,10.5);
		\draw [line width = 0.6pt] (3,4) rectangle (6,7);

		\plotperm{3,1,6,4,7,5,9,2,10,8};
		
		\draw [line cap=round] (1,3) -- (2,1);
		\draw [line cap=round] (1,3) -- (8,2);
		
		\draw [line cap=round] (3,6) -- (8,2);
		\draw [line cap=round] (4,4) -- (8,2);
		\draw [line cap=round] (6,5) -- (8,2);
		\draw [in = 110, out = -30] (5,7) to (8,2);
		\draw [line cap=round] (7,9) -- (8,2);
		\draw [line cap=round] (7,9) -- (10,8);
		\draw [line cap=round] (9,10) -- (10,8);
		
		\draw [line cap=round] (3,6) -- (4,4);
		\draw [line cap=round] (3,6) -- (6,5);
		\draw [line cap=round] (5,7) -- (6,5);
		
	\end{tikzpicture}
	\hspace{15mm}
	\begin{tikzpicture}[scale=0.32]
		\node [] at (5.5,-1) {(b) A simple permutation};

		\plotpermgraph{2,4,1,9,3,5,8,6,10,7};
		\draw [in = 135, out = 0] (4,9) to (10,7);
		
	\end{tikzpicture}
\end{center}
\caption{The inversion graphs of two permutations plotted in the plane.}
\label{fig:NonSimpleAndSimplePermutations}
\end{figure}
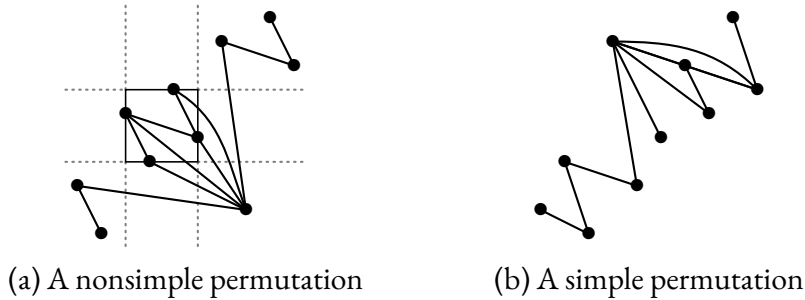

Figure~\ref{fig:NonSimpleAndSimplePermutations} contains the inversion graphs of two permutations drawn on their plots in the plane. The permutation in Figure~\ref{fig:NonSimpleAndSimplePermutations}(a) can be seen to have a nontrivial interval given by the four vertices in the black square, and thus is not a simple permutation. That is, the permutation maps $[3,6]$ to $[4,7]$. 

We let $\rect(S)$ denote the smallest axis-parallel rectangle containing the set $S$ of points in the plane. For example, the black square in Figure~\ref{fig:NonSimpleAndSimplePermutations}(a) is $\rect([4,7])$, (recall that we are labeling vertices by the values of their corresponding entries). As emphasized by the gray dotted lines in the figure, since the values $[4,7]$ constitute an interval, there are no points	 lying directly left, right, above or below $\rect([4,7])$. Furthermore, since the permutation plotted in Figure~\ref{fig:NonSimpleAndSimplePermutations}(b) is simple, for any subset $K$ of its vertices such that $|K| > 1$ and $\rect(K) \neq [1,10] \times [1,10]$, there is at least one entry lying directly left, right, above or below $\rect(K)$.

Now, given a permutation $\pi$, it is straightforward to see that every interval corresponds to a module of $G_{\pi}$. For example, observe the black square containing the entries of the interval of the nonsimple permutation in Figure~\ref{fig:NonSimpleAndSimplePermutations}(a). As previously remarked, no entries of the permutation can lie directly left, right, above or below this square. Hence, every entry not in the interval either forms an inversion with each entry in the interval (that is, the vertices above and to the left or below and to the right), or does not form an inversion with any entry of the interval (that is, the vertices above and to the right or below and to the left). Thus, every vertex not in the interval agrees on every vertex in the interval. The following is the correspondence that one might hope to be true.

\begin{proposition}\label{prop:simple_is_prime}
	The permutation $\pi$ is simple if and only if its inversion graph~$G_{\pi}$ is prime.
\end{proposition}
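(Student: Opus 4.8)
The plan is to prove both implications by contraposition, leaning on the geometric observation recorded just before the statement. The easy direction---that a prime inversion graph forces a simple permutation---is essentially already in hand: the discussion preceding the proposition shows that every nontrivial interval of $\pi$ yields a nontrivial module of $G_\pi$, since no entry can lie directly left, right, above, or below the rectangle spanned by the interval, and hence every outside entry either inverts with all or with none of the interval's entries. Thus if $\pi$ is not simple then $G_\pi$ is not prime, which is the contrapositive of ``$G_\pi$ prime implies $\pi$ simple.''

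The substance of the proof is the reverse implication: if $G_\pi$ is not prime then $\pi$ is not simple. So I would fix a nontrivial module $M$ of $G_\pi$, with $2 \le |M| \le n-1$ (forcing $n \ge 3$), and write $\rect(M) = [a,b] \times [c,d]$ for the smallest axis-parallel rectangle containing the entries of $M$ (labelling, as usual, by value, so $a,b$ are the least and greatest indices and $c,d$ the least and greatest values occurring in $M$). Since the entries of $M$ have distinct indices and distinct values, $a < b$ and $c < d$, and each side of $\rect(M)$ is attained by an entry of $M$. The key step is to show that the module condition forbids any entry of $\pi$ directly north, south, east, or west of $\rect(M)$. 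An entry lying directly north, say, has index strictly between $a$ and $b$ (the boundary indices are occupied by $M$) and value above $d$, so it lies to the northeast of the leftmost entry of $M$ but to the northwest of the rightmost entry of $M$; thus it inverts with exactly one of these two entries of $M$, contradicting that it must invert with all or with none of $M$. The remaining three directions follow by the same argument applied to the extreme entries on the relevant pair of sides.

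Once no entry lies directly beside $\rect(M)$, the index block $[a,b]$ maps bijectively onto the value block $[c,d]$, so $I = [a,b]$ is an interval of $\pi$ containing all of $M$; in particular $|I| \ge |M| \ge 2$. If $I \ne [n]$ then $I$ is a nontrivial interval and $\pi$ is not simple, as desired. The one remaining case, which I expect to be the main obstacle, is when $\rect(M)$ is the whole grid, i.e. $I = [n]$: this genuinely occurs (for instance a clique $\{1,n\}$ of ``twins'' can be a module spanning the entire grid), so the rectangle of $M$ need not itself be a nontrivial interval.

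To handle this final case I would exploit that $M$ then contains the entries of values $1$ and $n$ and of indices $1$ and $n$. Checking the adjacency of an arbitrary outside entry to these four extreme entries of $M$ shows that every entry outside $M$ inverts with all of $M$ or with none of $M$, and that these two possibilities cannot both occur (one forces the value-$1$ entry to lie left of the value-$n$ entry, the other forces the reverse). If every outside entry inverts with none of $M$ then $G_\pi$ is disconnected; if every outside entry inverts with all of $M$ then $\overline{G}_\pi \cong G_{\pi^{\textnormal{r}}}$ is disconnected. In either case $\pi$ is a nontrivial direct sum or skew sum---using the identities $G_{\sigma \oplus \tau} \cong G_\sigma \cup G_\tau$ and $G_{\sigma \ominus \tau} \cong G_\sigma \vee G_\tau$ together with the standard fact that a permutation is a nontrivial direct (respectively skew) sum exactly when its inversion graph (respectively its complement) is disconnected---and any nontrivial direct or skew sum of total size $n \ge 3$ has a nontrivial interval. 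Hence $\pi$ is not simple, completing the contrapositive.
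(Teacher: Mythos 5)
Your proof is correct, and its skeleton matches the paper's: the easy direction is exactly the preceding discussion, and the core of the converse is the same geometric observation that the bounding rectangle $\rect(M)$ of a nontrivial module admits no entry directly to its north, south, east, or west, whence the indices it spans form an interval of $\pi$. The only genuine divergence is in the degenerate case where $\rect(M)$ is the whole grid. The paper handles this by passing to the complementary point set $\overline{M}$ and its rectangle $\rect(\overline{M})$, observing that the entries outside $\rect(\overline{M})$ must lie entirely in the ``non-inverting'' corner regions or entirely in the ``inverting'' ones, and then reading off a nontrivial interval from that picture; you instead use the four extreme entries of $M$ to show that the vertices outside $M$ are either uniformly adjacent or uniformly non-adjacent to $M$, so that $G_{\pi}$ or $\overline{G}_{\pi}$ is disconnected and $\pi$ is a nontrivial direct or skew sum, which always has a nontrivial interval once $n \geq 3$. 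The two arguments rest on the same dichotomy, but yours is the more explicit write-up: the paper's closing ``it is then clear that $\pi$ contains a nontrivial interval'' leaves to the reader the small case analysis needed when, say, $|\overline{M}| = 1$ and $\rect(\overline{M})$ by itself spans only a trivial interval, whereas your sum-decomposition argument covers all subcases uniformly, at the mild cost of invoking the standard converse that a disconnected inversion graph (respectively, co-inversion graph) forces a nontrivial direct (respectively, skew) sum.
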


\begin{proof}
	As we have already shown, if the inversion graph $G_{\pi}$ is prime, then $\pi$ cannot have a nontrivial interval, and therefore $\pi$ is simple. We now show that if $G_{\pi}$ isn't prime, then~$\pi$ must have a nontrivial interval.
	
	So suppose that $G_{\pi}$ has a nontrivial module $M$. With the entries of $\pi$ plotted in the usual way, we see that there are no entries lying directly left, right, above or below $\rect(M)$. That is, an entry directly below or above $\rect(M)$ would disagree on the elements of $M$ lying on the left and right sides of $\rect(M)$, and similarly, an entry directly to the right or left would disagree on the elements of $M$ lying on the top and bottom sides. 

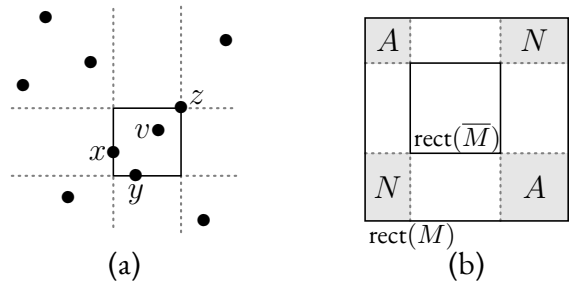
\begin{figure}[h]
\begin{center}
	\begin{tikzpicture}[scale=0.3]
		\node [] at (5.5,-1) {(a)};
		
		\draw [color = gray ,line width = 0.6pt, dotted, thick, line cap=round] (0.5,3)--(5,3)--(5,0.5);
		\draw [color = gray ,line width = 0.6pt, dotted, thick, line cap=round] (8,0.5)--(8,3)--(10.5,3);
		\draw [color = gray ,line width = 0.6pt, dotted, thick, line cap=round] (0.5,6)--(5,6)--(5,10.5);
		\draw [color = gray ,line width = 0.6pt, dotted, thick, line cap=round] (8,10.5)--(8,6)--(10.5,6);
		\draw [line width = 0.6pt] (5,3) rectangle (8,6);

		\plotperm{7,10,2,8,4,3,5,6,1,9};
		
		\node [] at (4.3, 4) {$x$};
		\node [] at (6, 2.2) {$y$};
		\node [] at (8.7, 6.5) {$z$};
		
		\node [] at (6.3, 5) {$v$};
	\end{tikzpicture}
	\hspace{15mm}
	\begin{tikzpicture}[scale=0.3]
		\node [] at (5.5,-1) {(b)};
		
		\fill [color = gray!20] (1,1) rectangle (3,4);
		\node [] at (2,2.5) {$N$};
		\fill [color = gray!20] (1,8) rectangle (3,10);
		\node [] at (2,9) {$A$};
		\fill [color = gray!20] (7,1) rectangle (10,4);
		\node [] at (8.5,2.5) {$A$};
		\fill [color = gray!20] (7,8) rectangle (10,10);
		\node [] at (8.5,9) {$N$};
		
		\draw [color = gray, line width = 0.6pt, dotted, thick, line cap=round] (1,4)--(3,4);
		\draw [color = gray, line width = 0.6pt, dotted, thick, line cap=round] (1,8)--(3,8);
	
		\draw [color = gray, line width = 0.6pt, dotted, thick, line cap=round] (3,1)--(3,4);
		\draw [color = gray, line width = 0.6pt, dotted, thick, line cap=round] (7,1)--(7,4);
		\draw [color = gray, line width = 0.6pt, dotted, thick, line cap=round] (7,4)--(10,4);
		\draw [color = gray, line width = 0.6pt, dotted, thick, line cap=round] (7,8)--(10,8);
		\draw [color = gray, line width = 0.6pt, dotted, thick, line cap=round] (7,8)--(7,10);
		\draw [color = gray, line width = 0.6pt, dotted, thick, line cap=round] (3,8)--(3,10);
		
		\draw [line width = 0.6pt] (1,1) rectangle (10,10);
		\node [anchor = west] at (1, 0.3) {\footnotesize$\rect(M)$};
		
		
		\draw [line width = 0.6pt] (3,4) rectangle (7,8);
		\node [anchor = west] at (3, 4.7) {\footnotesize$\rect(\overline{M})$};
		
	\end{tikzpicture}
\end{center}
\vspace{-4mm}
\caption{Cases in the proof of Proposition~\ref{prop:simple_is_prime}.}
\label{fig:simple_iff_prime}
\end{figure}

	If $\rect(M)$ does not contain all of the entries of $\pi$, then it is clear that all of the entries inside and along the border of $\rect(M)$ together form an nontrivial interval. For example, in Figure~\ref{fig:simple_iff_prime}(a) the vertices~$\{ x,y,z \}$ comprise a module in the inversion graph, but we must include the vertex $v$ lying inside $\rect(x,y,z )$ to obtain an interval in the permutation.
	
	Otherwise, $\rect(M)$ does contain all of the entries of $\pi$. Let $\overline{M}$ denote the points not in $M$, which is nonempty since $M$ is nontrivial. We note $\rect(\overline{M})$ must lie strictly inside $\rect(M)$, and that all entries not inside $\rect(\overline{M})$ all lie inside either the gray regions labeled~$N$ in Figure~\ref{fig:simple_iff_prime}(b), or the gray regions labeled $A$. Crucially, these entries cannot lie in both. It is then clear that~$\pi$ contains a nontrivial interval.
\end{proof}

\subsection{Proper Pin Sequences}

We are now ready to discuss pin sequences. Taking two points $p_1$ and $p_2$ in the plot of some permutation $\pi$, if the points contained in $\rect(p_1, p_2)$ do not form an interval, then there exists at least one point that `slices' it. That is, there is a point that lies above or below $\rect(p_1, p_2)$ that slices it vertically, or a point to the  left or right that slices it horizontally. We call such a point a \emph{pin}. We begin a sequence by selecting such a pin and labeling it $p_3$. If now $\rect(p_1, p_2, p_3)$ does not constitute an interval, we can find another pin $p_4$ outside $\rect(p_1, p_2, p_3)$ that slices it. Continuing in this way, we obtain a sequence $p_1, p_2, p_3, \dots$ called a \emph{pin sequence}. See Figure~\ref{fig:PinSequenceExample} for an example.

\begin{figure}[h]
\begin{center}
	\begin{tikzpicture}[scale=0.4]
		\draw[line width = 0.6pt] (3,1) rectangle (4,4);
		
		\node [] at (2.6, 0.5) {\small$p_1$};
		\node [] at (3.4, 4.5) {\small$p_2$};
		\node [] at (0.32, 3) {\small$p_3$};
		
		\draw [gray, line cap=round] (1,3) to (3.5,3);
		
		\plotperm{3,6,1,4,2,5};
		
	\end{tikzpicture}
	\hspace{12mm}
	\begin{tikzpicture}[scale=0.4]
		\draw[line width = 0.6pt] (1,1) rectangle (4,4);
		
		\node [] at (2.6, 0.5) {\small$p_1$};
		\node [] at (3.4, 4.5) {\small$p_2$};
		\node [] at (0.32, 3) {\small$p_3$};
		\node [] at (5.8, 2) {\small$p_4$};
		
		\draw [gray, line cap=round] (1,3) to (3.5,3);
		\draw [gray, line cap=round] (5,2) to (2.5,2);
		
		\plotperm{3,6,1,4,2,5};
		
	\end{tikzpicture}
	\hspace{12mm}
	\begin{tikzpicture}[scale=0.4]
		\draw[line width = 0.6pt] (1,1) rectangle (5,4);
		
		\node [] at (2.6, 0.5) {\small$p_1$};
		\node [] at (3.4, 4.5) {\small$p_2$};
		\node [] at (0.32, 3) {\small$p_3$};
		\node [] at (5.8, 2) {\small$p_4$};
		\node [] at (1.7, 6.7) {\small$p_5$};
		
		\draw [gray, line cap=round] (1,3) to (3.5,3);
		\draw [gray, line cap=round] (5,2) to (2.5,2);
		\draw [gray, line cap=round] (2,6) to (2,2.5);
		
		\plotperm{3,6,1,4,2,5};
		
	\end{tikzpicture}
	\hspace{12mm}
	\begin{tikzpicture}[scale=0.4]
		\draw[line width = 0.6pt] (1,1) rectangle (5,6);
		
		\node [] at (2.6, 0.5) {\small$p_1$};
		\node [] at (3.4, 4.5) {\small$p_2$};
		\node [] at (0.32, 3) {\small$p_3$};
		\node [] at (5.8, 2) {\small$p_4$};
		\node [] at (1.7, 6.7) {\small$p_5$};
		\node [] at (6.8, 5) {\small$p_6$};
		
		\draw [gray, line cap=round] (1,3) to (3.5,3);
		\draw [gray, line cap=round] (5,2) to (2.5,2);
		\draw [gray, line cap=round] (2,6) to (2,2.5);
		\draw [gray, line cap=round] (6,5) to (1.5,5);
		
		\plotperm{3,6,1,4,2,5};
		
	\end{tikzpicture}
\end{center}
\caption{A pin sequence.}
\label{fig:PinSequenceExample}
\end{figure}

A \emph{proper pin sequence} is simply a pin sequence $p_1, p_2, \dots$ that satisfies the \emph{separation condition}: for each $i \geq 2$, the pin $p_{i+1}$ must separate $p_i$ from $\rect (p_1, \dots, p_{i-1})$. That is, there is a vertical or horizontal line through $p_{i+1}$ that lies between $\rect (p_1, \dots, p_{i-1})$ and $p_i$. (Note that some authors~\cite{brignall:decomposing-sim:} have previously included the maximality condition in their definition of proper pin sequence, but this condition is not necessary for this work. Other authors~\cite{brignall:simple-permutat:decide:} have included the externality condition, but this condition is implicit from the definition of pin sequences.) For example, we see in the pin sequence in Figure~\ref{fig:PinSequenceExample} that $p_6$ separates $p_5$ (above $p_6$) from $p_1$, $p_2$, $p_3$ and $p_4$ (all below $p_6$). However, $p_5$ doesn't separate $p_4$ from $\rect (p_1, p_2,p_3)$, and $p_4$ does not separate $p_3$ from $\rect(p_1,p_2)$, and thus the sequence is not a proper pin sequence. It is not too difficult to see that proper pin sequences are chains.

\begin{proposition}\label{prop:pins_adjacencies}
	In the inversion graph of a permutation containing a proper pin sequence $p_1, p_2, \dots$, the pin $p_{i+1}$ satisfies either $p_{i+1} \sim p_1, \dots, p_{i-1}$ and $p_{i+1} \not\sim p_{i}$, or $p_{i+1} \not\sim p_1, \dots, p_{i-1}$ and $p_{i+1} \sim p_{i}$, (for all~$i \geq 2$).
\end{proposition}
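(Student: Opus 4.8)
The plan is to argue entirely from the plotted permutation, using the characterization from Theorem~\ref{thm:plotting_def} that two entries are adjacent in $G_\pi$ exactly when the line through their points has negative slope, i.e.\ when one lies to the upper-left (northwest) of the other. Write $R_k = \rect(p_1, \dots, p_k)$. First I would reduce to a single configuration by symmetry: the reverse, complement, and inverse operations each carry a proper pin sequence to a proper pin sequence and carry the conclusion to itself up to interchanging the two alternatives (which leaves the statement invariant, since swapping $\sim$ with $\not\sim$ turns alternative~(1) into alternative~(2)). Reflection over the line $y = x$ realizes the inverse, preserving every nonzero finite slope as noted after Theorem~\ref{thm:plotting_def}, and it interchanges ``$p_{i+1}$ lies to the right of $R_i$'' with ``$p_{i+1}$ lies above $R_i$''; a $180^\circ$ rotation (reverse-complement) preserves $G_\pi$ and interchanges right with left and above with below. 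These together let me reduce all four possible directions of $p_{i+1}$ to the single case in which $p_{i+1}$ lies to the right of $R_i$.

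In that case, since $p_{i+1}$ is a pin it slices $R_i$, and lying to the right it must slice \emph{horizontally}; this forces the $y$-coordinate of $p_{i+1}$ to lie strictly between the least and greatest $y$-coordinates among $p_1, \dots, p_i$. Next I would invoke the separation condition. Because $p_{i+1}$ lies to the right of every one of $p_1, \dots, p_i$, a vertical line through $p_{i+1}$ has all of $p_i$ and $R_{i-1}$ on one side and so cannot lie between them; hence the separating line must be the horizontal line through $p_{i+1}$. This line places $p_i$ on one side and all of $\rect(p_1, \dots, p_{i-1})$---and therefore all of $p_1, \dots, p_{i-1}$---strictly on the other. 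Consequently $p_i$ is either the unique topmost or the unique bottommost of $p_1, \dots, p_i$, with $p_1, \dots, p_{i-1}$ all lying strictly beyond the pin in the opposite $y$-direction.

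Finally I would read off the adjacencies directly in the two sub-cases. If $p_i$ is topmost, then $p_i$ lies to the upper-left of $p_{i+1}$ (smaller $x$, larger $y$) while each $p_j$ with $j < i$ lies to the lower-left of $p_{i+1}$ (smaller $x$, and $y_j$ below the separating line), so $p_{i+1} \sim p_i$ and $p_{i+1} \not\sim p_1, \dots, p_{i-1}$. If instead $p_i$ is bottommost, the roles of $p_i$ and the earlier pins reverse, yielding $p_{i+1} \not\sim p_i$ and $p_{i+1} \sim p_1, \dots, p_{i-1}$. These are precisely the two alternatives in the statement, so the result follows in the reduced case and hence, by the symmetries above, in general.

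The step I expect to be the crux is the use of the separation condition to conclude that $p_i$ is extremal in the relevant coordinate while \emph{all} earlier pins lie strictly on the opposite side. A bare pin sequence only guarantees that $p_{i+1}$ slices $R_i$, which is not enough to force the clean all-or-nothing adjacency pattern; it is the separation condition that pins $p_1, \dots, p_{i-1}$ together on one side of the line through $p_{i+1}$. Translating ``the separating line lies between $p_i$ and $\rect(p_1, \dots, p_{i-1})$'' into this precise geometric statement is where care is needed, but once it is established the slope computations are immediate.
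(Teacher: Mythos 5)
Your proposal is correct and follows essentially the same route as the paper's proof: reduce without loss of generality to the case of a right pin, use the separation condition to conclude that the horizontal line through $p_{i+1}$ has $p_i$ on one side and all of $p_1, \dots, p_{i-1}$ on the other, and then read off the adjacencies from relative positions in the plot. Your additional justifications (why the separating line must be horizontal, and the symmetry argument behind the WLOG) are correct elaborations of steps the paper leaves implicit.
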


\begin{proof}
	Suppose without loss of generality that $p_{i+1}$ is a right pin as the proof will make the result clear for pins in all directions. From the definition of pin sequences, $p_{i+1}$ is to the right of each of $p_1, \dots, p_{i}$. By the separation condition, one of the pin sets $\{ p_1, \dots, p_{i-1}\}$ or $\{ p_{i} \}$ is above the horizontal line through $p_{i+1}$, and the other set is below it. Then $p_{i+1}$ is adjacent to the pin(s) that lie above this line, and nonadjacent to the pin(s) that lie below. This gives the result.
\end{proof}

The following is the analogous result to Theorem~\ref{thm:CKOS}, understood instead via some geometric reasoning.

\begin{proposition}[Brignall, Huczynska, and Vatter~\text{\cite[Lemma 3.6]{brignall:decomposing-sim:}}]\label{prop:BHV_k-reaching}
	Let $\pi$ be a simple permutation and~$x$ and $y$ any two vertices of $G_{\pi}$. Then for any point $z$ not lying inside $\rect(x,y)$ in the plot of $\pi$, there is a proper $z$-reaching pin sequence beginning with $x$ and $y$.
\end{proposition}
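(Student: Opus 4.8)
The plan is to transport the reachable-set argument behind Theorem~\ref{thm:CKOS} into the geometric language of pins. Fix the simple permutation $\pi$ and the points $x$ and $y$, and let $Z$ be the set of points that arise as the final pin of some proper pin sequence beginning $x, y$. Writing $\rect(Z \cup \{x,y\})$ for the smallest axis-parallel rectangle containing $Z \cup \{x,y\}$, let $M$ be the set of all entries of $\pi$ lying inside this rectangle. I would argue by contradiction: assuming the target point $z$ is \emph{not} in $Z$, I would show that $M$ is a nontrivial interval of $\pi$ not containing $z$, contradicting the simplicity of $\pi$ via Proposition~\ref{prop:simple_is_prime}.

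The engine of the proof is the following geometric claim: if $p_1, \dots, p_{i-1}$ is a proper pin sequence, $p_1, \dots, p_i$ is a proper pin sequence extending it, and a point $w$ lies in a corner region of $\rect(p_1, \dots, p_{i-1})$ (so that $w$ does not slice it) yet $w$ slices $\rect(p_1, \dots, p_i)$, then $p_1, \dots, p_i, w$ is again a proper pin sequence. To prove it I would take $w$ to lie above and to the right of $\rect(p_1, \dots, p_{i-1})$ and examine the four directions in which the pin $p_i$ can slice $\rect(p_1, \dots, p_{i-1})$. If $p_i$ is a left or bottom pin, the rectangle grows away from $w$ and $w$ cannot slice $\rect(p_1, \dots, p_i)$; so $p_i$ is a right or top pin, and in those two cases one checks that $w$ becomes a top pin or a right pin, respectively, whose separating line falls between $p_i$ and $\rect(p_1, \dots, p_{i-1})$---exactly the separation condition demanded of the new pin.

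Granting the claim, the remaining steps fall into place. Any point that slices $\rect(x,y)$ is the last pin of the proper pin sequence $x, y, w$, since at the first step the separation condition is automatic (a line slicing $\rect(x,y)$ already separates $x$ from $y$); hence such points lie in $Z$. Now suppose $z \notin Z$, fix any proper pin sequence $p_1 = x, p_2 = y, \dots, p_m$ with $p_m \in Z$, and consider $w = z$. By induction on the prefix length one sees that $w$ lies in a corner region of every $\rect(p_1, \dots, p_j)$: it does not slice $\rect(p_1, \dots, p_j)$, for otherwise the least index at which it first slices a prefix rectangle would, through the claim, exhibit $w$ as a reachable pin; and it is not enclosed by $\rect(p_1, \dots, p_j)$ either, since each pin enlarges the rectangle in only one direction while $w$ sits beyond it in two. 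Because $w$ is held to the same corner of every such rectangle and $\rect(x,y) \subseteq \rect(p_1, \dots, p_m)$, the point $w$ lies strictly beyond every entry of $Z \cup \{x,y\}$ in both coordinates; consequently $w$ lies in a corner region of $\rect(Z \cup \{x,y\}) = \rect(M)$. As this holds for every entry outside $M$, no such entry slices $\rect(M)$ and $\rect(M)$ encloses nothing beyond $M$, so $M$ is an interval; it is nontrivial because it contains $x$ and $y$ but omits $z$.

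The step I expect to be the main obstacle is the verification of the geometric claim, and specifically that the \emph{separation} condition (not merely slicing and externality) is inherited when $w$ is promoted to a pin; this is the geometric analogue of the delicate minimal-index step in the proof of Theorem~\ref{thm:CKOS}, and it is what forces the case analysis on the direction of $p_i$. A related point requiring care is the monotonicity observation that a single pin grows the bounding rectangle in only one direction, which is what prevents an unreachable point from slipping inside the rectangle and is ultimately responsible for $M$ being rectangularly closed.
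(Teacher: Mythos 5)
Your proof is correct, but it takes a genuinely different route from the paper's. You transport the reachable-set argument from the proof of Theorem~\ref{thm:CKOS} into the pin-sequence setting: letting $Z$ be the set of points reachable by proper pin sequences starting $x,y$, you show that if $z \notin Z$ then the entries inside $\rect(Z \cup \{x,y\})$ form a nontrivial interval, contradicting simplicity via Proposition~\ref{prop:simple_is_prime}. The paper instead argues constructively: simplicity is used up front to extend $x,y$ to a (not necessarily proper) pin sequence whose bounding rectangle is the entire plot and which passes through $z$, and a proper $z$-reaching sequence is then extracted backwards by repeatedly choosing the \emph{least} index $i_2$ such that $p_1, \dots, p_{i_2}, z$ is a valid pin sequence --- minimality of that index is what forces the separation condition, playing exactly the role of your geometric claim that a corner point inherits the separation condition at the first prefix rectangle it slices. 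Your version makes the analogy between chains and pins fully explicit and reuses the interval/module dictionary, at the cost of the case analysis on the direction of $p_i$ (which you correctly identify as the delicate step and verify) and the monotonicity observation that a pin grows the rectangle in only one direction, so an unreachable corner point must pass through a slicing position before it could be enclosed; the paper's version is shorter and actually exhibits the desired pin sequence. Both arguments are sound.
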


\begin{proof}
	Letting $p_1$ and $p_2$ be the points $x$ and $y$, it is clear that we can find a pin sequence $p_1, p_2, \dots, p_m$ in the plot of $\pi$ such that $\rect(p_1, \dots, p_m) = [1,n] \times [1,n]$ where $[n]$ is the set that~$\pi$ permutes. Letting $p_{i_1}$ denote the point $z$, let $i_2$ be the least index such that $p_1, \dots, p_{i_2}, p_{i_1}$ is a valid pin sequence. We must have that $p_{i_1}$ separates $p_{i_2}$ from $\rect(p_1, \dots, p_{i_2-1})$ since $p_1, \dots, p_{i_2-1}, p_{i_1}$ is not a valid pin sequence. 
	
	Next, find the smallest index $i_3$ such that $p_1, \dots, p_{i_3}, p_{i_2}$ is a valid pin sequence. Again, we have that~$p_{i_2}$ separates $p_{i_3}$ from $\rect(p_1, \dots, p_{i_3-1})$. We continue in this way until we reach $p_{i_{m+1}} = p_2$, and thus we obtain the proper pin sequence \[ p_1 (= x), p_2 (= y), p_{i_m}, \dots, p_{i_2}, p_{i_1} (= z).\] This gives the result. 
\end{proof}

The following is the analog of Proposition~\ref{prop:disconnected_chains}.

\begin{proposition}\label{prop:disconnected_pin_sequences}
	For any proper pin sequence $p_1, p_2, \dots, p_m$ in a permutation $\pi$, either $G_{\pi}[p_1, \dots, p_m]$ is connected, or $p_1 \not\sim p_2$ and for every  $i \in \{3, \dots, m-1 \}$ we have $p_{i+1} \sim p_{i}$ and $p_{i+1} \not\sim p_1, \dots, p_{i-1}$. In this latter case, there exists $k \in \{1,2 \}$ such that $p_k$ is isolated and the pins $\{p_1, \dots, p_m \} \setminus \{ p_k \}$ comprise a component in $G_{\pi}[p_1, \dots, p_m]$
\end{proposition}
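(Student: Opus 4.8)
The plan is to exploit that a proper pin sequence is a chain, so that most of the dichotomy is already supplied by Proposition~\ref{prop:disconnected_chains}, and then to refine the disconnected case using the explicit adjacency alternative of Proposition~\ref{prop:pins_adjacencies}. First I would record that, by Proposition~\ref{prop:pins_adjacencies}, each pin $p_{i+1}$ (for $i \geq 2$) is either \emph{type 1} (adjacent to $p_i$ and to none of $p_1, \dots, p_{i-1}$) or \emph{type 2} (adjacent to all of $p_1, \dots, p_{i-1}$ and not to $p_i$), so that $p_1, p_2, \dots, p_m$ is a chain in $G_{\pi}$. If $G_{\pi}[p_1, \dots, p_m]$ is connected we are in the first alternative and there is nothing to prove, so I would assume it is disconnected. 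Proposition~\ref{prop:disconnected_chains} then immediately yields $p_1 \not\sim p_2$, the first assertion of the disconnected case.

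The heart of the argument is to show that in the disconnected case every $p_j$ with $j \geq 4$ is of type 1. I would argue by contradiction: suppose some $p_j$ with $j \geq 4$ is type 2, so that $p_j \sim p_1, \dots, p_{j-2}$ and $p_j \not\sim p_{j-1}$. Then all of $p_1, \dots, p_{j-2}$ lie in one component of $G_{\pi}[p_1, \dots, p_j]$ together with $p_j$, since each is adjacent to $p_j$. It remains only to place $p_{j-1}$ in this same component: if $p_{j-1}$ is type 1 it is adjacent to $p_{j-2}$, and if $p_{j-1}$ is type 2 it is adjacent to $p_1$ (here $j \geq 4$ is exactly what guarantees that $p_{j-2}$, respectively $p_1$, is among the already-connected vertices). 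Hence $G_{\pi}[p_1, \dots, p_j]$ is connected. But a connected prefix forces the entire induced subgraph to be connected, since every later pin is adjacent to at least one earlier pin --- this is precisely the iteration observation opening the proof of Proposition~\ref{prop:disconnected_chains} --- contradicting disconnectedness. Therefore no such $p_j$ exists, and $p_{i+1} \sim p_i$ with $p_{i+1} \not\sim p_1, \dots, p_{i-1}$ for all $i \in \{3, \dots, m-1\}$, as claimed.

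Finally I would read off the component structure. With $p_4, \dots, p_m$ all of type 1, the only edges among $p_3, \dots, p_m$ are the consecutive ones $p_3 p_4, p_4 p_5, \dots, p_{m-1} p_m$, and the single edge incident to $p_3$ points back to exactly one of $p_1, p_2$ according to its type: if $p_3$ is type 1 then $p_3 \sim p_2$ and $p_1$ is isolated, giving the path $p_2, p_3, \dots, p_m$ and $k = 1$; if $p_3$ is type 2 then $p_3 \sim p_1$ and $p_2$ is isolated, giving the path $p_1, p_3, \dots, p_m$ and $k = 2$. In both cases $\{p_1, \dots, p_m\} \setminus \{p_k\}$ induces a path, hence a single component, completing the proof. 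The main obstacle is the contradiction step for $j \geq 4$: one must verify that a type-2 pin at a late position is adjacent to enough earlier pins (in particular to both $p_1$ and $p_2$, with $p_{j-1}$ reattaching through $p_{j-2}$ or $p_1$) to collapse the whole prefix into one component. The index bound $j \geq 4$ is exactly what makes this reattachment available, which is why $p_3$ is deliberately left unconstrained in the statement.
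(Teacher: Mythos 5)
Your proof is correct and follows exactly the route the paper intends: the paper states this proposition without a formal proof, presenting it as the analog of Proposition~\ref{prop:disconnected_chains}, whose proof contains precisely your key step (a type-2 pin at position $j \geq 4$ forces the prefix, and hence the whole induced subgraph, to be connected). You have simply supplied the details the paper leaves to the reader, including the case analysis on $p_3$ that determines which of $p_1$, $p_2$ is the isolated vertex.
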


We see a pin sequence that induces a disconnected inversion graph in Figure~\ref{fig:disconnected_pin_sequences}, with the entire sequence alternating between down and left pins. The other proper pin sequences inducing disconnected subgraphs are essentially the same, with one of $p_1$ or $p_2$ being isolated and the rest of the pins alternating between down and left pins, or up and right pins, resulting in a path. 

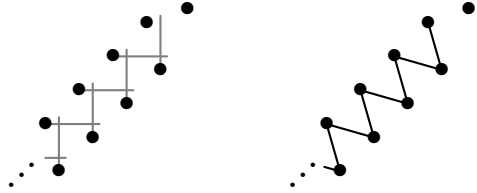
\begin{figure}[h]
\begin{footnotesize}
\begin{center}
	\begin{tikzpicture}[scale=0.9]

		\absdot{(3,2.8)}; 
		\absdot{(2.4,2.6)}; 
		
		\draw [gray, line cap=round] (2.6,1.9)--(2.6,2.7);
		\absdot{(2.6, 1.9)};
		
		\draw [gray, line cap=round] (1.9,2.1)--(2.7,2.1);
		\absdot{(1.9,2.1)};
		
		\draw [gray, line cap=round] (2.1,1.4)--(2.1,2.2);
		\absdot{(2.1, 1.4)};
		
		\draw [gray, line cap=round] (1.4,1.6)--(2.2,1.6);
		\absdot{(1.4,1.6)};
		
		\draw [gray, line cap=round] (1.6,0.9)--(1.6,1.7);
		\absdot{(1.6,0.9)};
		
		\draw [gray, line cap=round] (0.9,1.1)--(1.7,1.1);
		\absdot{(0.9, 1.1)};
		
		\draw [gray, line cap=round] (1.1,0.4)--(1.1,1.2);
		\absdot{(1.1, 0.4)};
		
		\draw [gray, line cap=round] (0.9,0.6)--(1.2,0.6);
		
		\node (x) at (0.7, 0.5) {\Large$.$};
		\node (x) at (0.55, 0.35) {\Large$.$};
		\node (x) at (0.4, 0.2) {\Large$.$};
		
	\end{tikzpicture}
	\hspace{10mm}
	\begin{tikzpicture}[scale=0.9]

		\absdot{(3,2.8)}; 
		\absdot{(2.4,2.6)}; 
		
		\absdot{(2.6, 1.9)};
		
		\absdot{(1.9,2.1)};
		
		\absdot{(2.1, 1.4)};
		
		\absdot{(1.4,1.6)};
		
		\absdot{(1.6,0.9)};
		
		\absdot{(0.9, 1.1)};
		
		\absdot{(1.1, 0.4)};
		
		\draw [thick, line cap=round] (2.4,2.6)--(2.6,1.9)--(1.9,2.1)--(2.1,1.4)--(1.4,1.6)--(1.6,0.9)--(0.9,1.1)--(1.1,0.4) --(0.8667,0.4667) ;
		
		\node (x) at (0.7, 0.5) {\Large$.$};
		\node (x) at (0.55, 0.35) {\Large$.$};
		\node (x) at (0.4, 0.2) {\Large$.$};
		
	\end{tikzpicture}

\end{center}
\end{footnotesize}
\caption{A proper pin sequence that induces a disconnected inversion graph.}
\label{fig:disconnected_pin_sequences}
\end{figure}

We can now see how our proof of Theorem~\ref{thm:gallai} utilizing chains plays out in the case of inversion graphs with proper pin sequences. The following is essentially the same as Lemma~\ref{lemma:p1p2_pspm_same_edge_class}, and thus we omit its proof, except we can now visualize the argument in the plane with Figure~\ref{fig:cases_p_s}. This figure portrays the inductive step in the event of $p_m$ being a right pin, and thus we consider the cases in which $p_{m+1}$ is a down pin and an up pin. The other cases for $p_m$ can be visualized similarly.

\begin{lemma}\label{lemma:p1p2_pspm_same_edge_class_pins}
	Suppose $p_1, p_2, \dots, p_m$ is a proper pin sequence in a permutation $\pi$ such that in~$G_{\pi}$ we have $p_1 \sim p_2$ and $s$ is the greatest index such that $p_s \sim p_{s+1}$. Then the edges $p_1 p_2$ and $p_s p_m$ are in the same edge class in $G_{\pi}$.
\end{lemma}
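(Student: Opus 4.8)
The plan is to reduce the statement to Lemma~\ref{lemma:p1p2_pspm_same_edge_class} via the adjacency structure of pins, and then reinterpret that lemma's inductive proof geometrically. By Proposition~\ref{prop:pins_adjacencies}, the entries $p_1, p_2, \dots, p_m$ of a proper pin sequence form a chain in $G_\pi$, so the assertion is literally an instance of Lemma~\ref{lemma:p1p2_pspm_same_edge_class}; the point of restating it here is that every adjacency invoked in that argument can now be read off from the positions of the pins in the plot of $\pi$. Accordingly, I would run the same induction on $m$, justifying each step geometrically rather than combinatorially.

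The base case $m=2$ is immediate, since then $s=1$ and $p_s p_m = p_1 p_2$. For the inductive step I would assume the claim for the prefix $p_1, \dots, p_m$, writing $s$ for the greatest index below $m$ with $p_s \sim p_{s+1}$, so that $p_s p_m$ already lies in the edge class of $p_1 p_2$, and then append $p_{m+1}$. Fixing the orientation of Figure~\ref{fig:cases_p_s}, suppose $p_m$ is a right pin; then $p_m$ sits to the right of $\rect(p_1, \dots, p_{m-1})$ with its value inside the vertical span of that rectangle, and the separation condition forces the line cutting $p_{m+1}$ off from $p_m$ to be vertical, so that $p_{m+1}$ is either an up pin or a down pin of $\rect(p_1, \dots, p_m)$ lying horizontally between the rectangle and $p_m$.

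In the up-pin case $p_{m+1}$ is up-left of $p_m$ but up-right of $\rect(p_1, \dots, p_{m-1})$, so the negative-slope segment to $p_m$ and the positive-slope segments to the earlier pins give $p_{m+1} \sim p_m$ and $p_{m+1} \not\sim p_1, \dots, p_{m-1}$; the full-sequence maximum becomes $m$, the relation $p_m p_{m+1} \wedge p_s p_m$ holds because $p_s p_{m+1}$ is a non-edge, and so $p_m p_{m+1}$ joins the class of $p_1 p_2$. In the down-pin case $p_{m+1}$ is down-left of $p_m$ but down-right of $\rect(p_1, \dots, p_{m-1})$, giving $p_{m+1} \not\sim p_m$ and $p_{m+1} \sim p_1, \dots, p_{m-1}$; here $s$ is unchanged, $p_s p_m \wedge p_s p_{m+1}$ holds because $p_m p_{m+1}$ is a non-edge, and $p_s p_{m+1}$ joins the class of $p_1 p_2$.

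The main obstacle is purely the geometric bookkeeping: I must verify that a right pin can be followed only by an up or a down pin under the separation condition, and that the slopes of the connecting segments recover exactly the two chain alternatives of Proposition~\ref{prop:pins_adjacencies}. The remaining orientations of $p_m$ follow from the evident reflective symmetries of the plot, so no new argument is required, and this is precisely why the formal proof can be omitted.
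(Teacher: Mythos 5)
Your proposal is correct and matches the paper's intent exactly: the paper omits the proof of Lemma~\ref{lemma:p1p2_pspm_same_edge_class_pins} precisely because, as you observe via Proposition~\ref{prop:pins_adjacencies}, a proper pin sequence is a chain and the statement is an instance of Lemma~\ref{lemma:p1p2_pspm_same_edge_class}, with Figure~\ref{fig:cases_p_s} depicting the same right-pin inductive step with an up pin or a down pin that you work out. Your geometric bookkeeping (the separation condition forcing $p_{m+1}$ to be an up or down pin after a right pin, and the slope arguments recovering the two chain alternatives) is accurate, so nothing is missing.
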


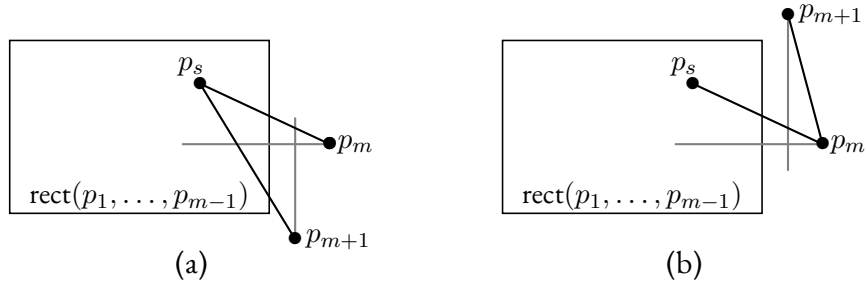
\begin{figure}[h]
\begin{center}
	\begin{tikzpicture}[scale=1.15]
		\draw[line width = 0.6pt] (-1,0) rectangle (2,2);
		\node [] at (0.5, 0.2) {\small$\rect(p_1, \dots, p_{m-1})$};
		
		\node [] at (1.1, 1.7) {\small$p_s$};
		\node [] at (3, 0.8) {\small$p_{m}$};
		\node [] at (2.8, -0.3) {\small$p_{m+1}$};
		
		\absdot{(2.7,0.8)};
		\draw [gray, line cap = round] (1,0.8) to (2.7,0.8);
		\absdot{(2.7,0.8)};
		\draw [gray, line cap = round] (2.3,-0.3) to (2.3,1.1);
		\absdot{(2.3,-0.3)};
		
		\absdot{(1.2,1.5)};
		\draw [] (2.7,0.8) -- (1.2,1.5) -- (2.3,-0.3);
		
		\node [rotate=0] (y) at (1.1, -0.6) {(a)};
		
	\end{tikzpicture}
	\hspace{15mm}
	\begin{tikzpicture}[scale=1.15]
		\draw[line width = 0.6pt] (-1,0) rectangle (2,2);
		\node [] at (0.5, 0.2) {\small$\rect(p_1, \dots, p_{m-1})$};
		
		\node [] at (1.1, 1.7) {\small$p_s$};
		\node [] at (3, 0.8) {\small$p_{m}$};
		\node [] at (2.8, 2.3) {\small$p_{m+1}$};
		
		\draw [gray, line cap = round] (1,0.8) to (2.7,0.8);
		\absdot{(2.7,0.8)};
		\draw [gray, line cap = round] (2.3,0.5) to (2.3,2.3);
		\absdot{(2.3,2.3)};
		
		\absdot{(1.2,1.5)};
		\draw [] (1.2,1.5) -- (2.7,0.8) -- (2.3,2.3);
		
		\node [rotate=0] (y) at (1.1, -0.6) {(b)};
		
	\end{tikzpicture}
\end{center}
\caption{Cases in the omitted proof of Lemma~\ref{lemma:p1p2_pspm_same_edge_class_pins}.}
\label{fig:cases_p_s}
\end{figure}

The following is the analog to Lemma~\ref{lemma:p1p2_pspm_same_edge_class}, except now we can be more specific about which edges must be in the same edge class.

\begin{lemma}\label{lemma:final_pins}
	Suppose $p_1, p_2, \dots, p_m$ is a proper pin sequence in a permutation $\pi$ such that $i<j<k$ are the indices of the pins $p_1$, $p_2$ and $p_m$, respectively, and $\pi(i) > \pi(j) > \pi(k)$. Then the edges $p_1 p_2$ and $p_1 p_m$ are in the same edge class in $G_{\pi}$. 
\end{lemma}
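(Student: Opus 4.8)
The plan is to geometrize the proof of Lemma~\ref{lemma:final}, using Lemma~\ref{lemma:p1p2_pspm_same_edge_class_pins} in place of Lemma~\ref{lemma:p1p2_pspm_same_edge_class}, and then to use the decreasing configuration $\pi(i)>\pi(j)>\pi(k)$ to decide \emph{which} of the two candidate edges is forced. First I would record the structural consequences. Since $i<j<k$ and $\pi(i)>\pi(j)>\pi(k)$, the three entries are pairwise decreasing, so $G_\pi[p_1,p_2,p_m]\cong K_3$; in particular $p_1\sim p_2$, so with $s=\max\{t:p_t\sim p_{t+1}\}$, Lemma~\ref{lemma:p1p2_pspm_same_edge_class_pins} places $p_1p_2$ and $p_sp_m$ in the same edge class. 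Because $p_m\sim p_1$, Proposition~\ref{prop:pins_adjacencies} forces $p_m\sim p_1,\dots,p_{m-2}$ and $p_m\not\sim p_{m-1}$, which gives $m\ge 4$ and $s\le m-2$. The point of these facts is that every $p_t$ with $t\le m-2$ is adjacent to $p_m$, so $p_ap_m\wedge p_bp_m$ holds precisely when $p_a\not\sim p_b$. Consequently, any path $p_s=q_0,q_1,\dots,q_r=p_1$ in $\overline{G}_\pi[p_1,\dots,p_{m-2}]$ produces a sequence $p_sp_m\wedge q_1p_m\wedge\cdots\wedge p_1p_m$, putting $p_1p_m$ in the edge class of $p_1p_2$. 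Thus the whole lemma reduces to showing that $p_1$ and $p_s$ lie in a common component of $\overline{G}_\pi[p_1,\dots,p_{m-2}]$.

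For most values of $s$ this can be shown with no geometry. If $s=1$ then $p_sp_m=p_1p_m$ and we are done. If $3\le s\le m-3$, then by Proposition~\ref{prop:pins_adjacencies} the pin $p_{s+1}$ realizes the alternative with $p_{s+1}\sim p_s$ (as $p_s\sim p_{s+1}$), so $p_{s+1}\not\sim p_1,\dots,p_{s-1}$ and hence $p_{s+1}$ is adjacent in $\overline{G}_\pi$ to all of $p_1,\dots,p_{s-1}$; meanwhile the dichotomy of Proposition~\ref{prop:pins_adjacencies} applied to $p_s$ produces a nonneighbor $p_t$ with $t<s$ (namely $p_1$ or $p_{s-1}$). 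Then $p_s\sim p_t\sim p_{s+1}\sim p_1$ in $\overline{G}_\pi$, so $p_1$ and $p_s$ share a component. This confines the genuinely geometric work to the two extreme values $s=2$ and $s=m-2$.

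For $s\in\{2,m-2\}$ I would view $p_1,\dots,p_{m-2}$ as a proper pin sequence of $\pi^{\textnormal{r}}$, so that $G_{\pi^{\textnormal{r}}}[p_1,\dots,p_{m-2}]=\overline{G}_\pi[p_1,\dots,p_{m-2}]$, and invoke Proposition~\ref{prop:disconnected_pin_sequences}: either this graph is connected, and we are done, or it is disconnected with exactly one of $p_1,p_2$ isolated and the remaining $m-3$ pins forming one path component. A short check shows that the only disconnected configurations separating $p_1$ from $p_s$ for these two values of $s$ are $p_2$ isolated with $s=2$ and $p_1$ isolated with $s=m-2$; in every other disconnected case $p_s$ lies on the path component alongside $p_1$.

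The hard part is ruling out those two configurations, and this is exactly where the hypothesis that $p_m$ lies strictly below and to the right of $p_2$ is indispensable. My plan is to exploit the rigidity of proper pin sequences: consecutive pins are perpendicular (the separation condition makes a horizontal pin follow a vertical one and vice versa) and the rectangles $\rect(p_1,\dots,p_t)$ grow monotonically, so the pins spiral outward and are confined to a region relative to $p_2$ determined by the type of $p_3$. In the first forbidden configuration $s=2$ forces $p_3\sim p_2$ and $p_3\not\sim p_1$, making $p_3$ an ``up'' or ``left'' pin and trapping the ensuing spiral—and with it $p_m$—in the closed upper-left region determined by $p_2$, contradicting the placement of $p_m$. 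In the second, $p_1$ isolated forces $p_3$ to be a ``down'' or ``right'' pin, and attaining $s=m-2$ requires the spiral to reverse direction at $p_{m-1}$, which likewise throws $p_m$ into the upper-left. Carrying out this direction-tracking carefully—enumerating the admissible pin directions at each step and checking the quadrant containment of $\rect(p_1,\dots,p_{m-2})$ relative to $p_2$—is the principal obstacle; once it is dispatched, every surviving case places $p_1$ and $p_s$ in a common component, and the argument is complete.
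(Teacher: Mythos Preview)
Your approach is correct and essentially matches the paper's: both reduce to connecting $p_s$ to $p_1$ in $\overline{G}_\pi[p_1,\dots,p_{m-2}]$ and handle the disconnected case by geometric direction-tracking of the pin spiral. The paper skips your $3\le s\le m-3$ detour (disconnectedness via Proposition~\ref{prop:disconnected_pin_sequences} already forces $s\in\{1,2,m-2\}$) and frames the geometry positively---showing only two disconnected configurations survive, one with $s=1$ and one with $s=m-2$ and $p_2$ isolated---rather than ruling out your two bad cases, but the content is the same.
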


\begin{proof}
	By Lemma~\ref{lemma:p1p2_pspm_same_edge_class_pins}, $p_1 p_2$ and $p_s p_m$ are in the same edge class where $s$ is the greatest index such that $p_s \sim p_{s+1}$. Since $p_{m} \sim p_1, p_2$ by our hypotheses, Proposition~\ref{prop:pins_adjacencies} tells us that $p_m \sim p_1, \dots, p_{m-2}$ and $p_m \not\sim p_{m-1}$. Thus $s \neq {m-1}$. By reversing the permutation induced by the pins $p_1, \dots, p_{m-2}$, or simply flipping that pin sequence over a vertical axis to obtain a new pin sequence $p_1', p_2', \dots, p_{m-2}'$, we obtain a permutation with an inversion graph isomorphic to $\overline{G}_{\pi}[p_1, \dots, p_{m-2}]$, the complement of the the subgraph induced by the pins $p_1, \dots, p_{m-2}$.
	
	If this graph is connected, then there exists a path $p_s, p_{q_1}, \dots, p_{q_r}, p_1$ through $\{ p_1, \dots, p_{m-2} \} \subseteq N_{G_{\pi}}(p_m)$ in the complement of $G_{\pi}$. That is, we have $p_s p_m \wedge p_{q_1} p_m \wedge \dots \wedge p_{q_r} p_m \wedge p_1 p_m$, and thus~$p_1 p_m$ is in the same edge class as $p_1 p_2$ as they are both in the same edge class as $p_s p_m$.
	
	It remains to analyze the cases in which $\overline{G}_{\pi}[p_1, \dots, p_{m-2}]$ is not connected. By Proposition~\ref{prop:disconnected_pin_sequences}, the pin sequence $p_1', p_2', \dots, p_{m-2}'$ is one of the pin sequences described in Figure~\ref{fig:disconnected_pin_sequences}. Hence, the pin sequence $p_1, p_2, \dots, p_m$ must be one of the cases described in Figure~\ref{fig:disconnected_cases}.
	
	\begin{figure}[h]
\begin{footnotesize}
\begin{center}
	\begin{tikzpicture}[scale=1]
	
		
		\node [rotate=0] (x) at (0.1, 2.4) {$p_1$};
		\node [rotate=0] (y) at (1, 2.65) {$p_2$};
		\node [rotate=0] (y) at (2.9, 0.65) {$p_m$};
		\node [rotate=0] (y) at (2.65, 1.17) {$p_{m-2}$};
		\absdot{(0.2,2.6)}; 
		\absdot{(0.8,2.4)}; 
		
		\draw [gray, line cap=round] (0.6,1.7)--(0.6,2.5);
		\absdot{(0.6,1.7)}; 
		
		\draw [gray, line cap=round] (0.5,1.9)--(1.3,1.9);
		\absdot{(1.3,1.9)}; 

		\draw [gray, line cap=round] (1.1,2)--(1.1,1.2);
		\absdot{(1.1,1.2)}; 
		
		\draw [gray, line cap=round] (1,1.4)--(1.8,1.4);
		\absdot{(1.8,1.4)}; 
		
		\draw [gray, line cap=round] (1.6,0.7)--(1.6,1.5);
		\absdot{(1.6,0.7)}; 
		
		\draw [gray, line cap=round] (1.5,0.9)--(2.3,0.9);
		\absdot{(2.3,0.9)}; 
		
		\draw [gray, line cap=round] (2.1,1)--(2.1,0.2);
		\absdot{(2.1,0.2)}; 
		
		\draw [gray, line cap=round] (2,0.4)--(2.8,0.4);
		\absdot{(2.8,0.4)}; 
		
		\node [rotate=0] (y) at (1.5, -0.3) {(a)};
	\end{tikzpicture}
	\hspace{15mm}
	\begin{tikzpicture}[scale=1]
	
		
		\node [rotate=0] (x) at (1.3, 0.9) {$p_1$};
		\node [rotate=0] (y) at (1.9, 0.6) {$p_2$};
		\node [rotate=0] (y) at (2.75, 0.4) {$p_m$};
		\node [rotate=0] (y) at (0.6, 3.05) {$p_{m-2}$};
		\absdot{(1.4,1.1)}; 
		\absdot{(2,0.8)}; 
		
		\draw [gray, line cap=round] (1.6,1)--(1.6,1.8);
		\absdot{(1.6,1.8)}; 
		
		\draw [gray, line cap=round] (0.9,1.6)--(1.7,1.6);
		\absdot{(0.9,1.6)}; 
		
		\draw [gray, line cap=round] (1.1,1.5)--(1.1,2.3);
		\absdot{(1.1,2.3)}; 
		
		\draw [gray, line cap=round] (1.2,2.1)--(0.4,2.1);
		\absdot{(0.4,2.1)}; 
		
		\draw [gray, line cap=round] (0.6,2)--(0.6,2.8);
		\absdot{(0.6,2.8)}; 
		
		\draw [gray, line cap=round] (0.5,2.6)--(2.6,2.6);
		\absdot{(2.6,2.6)}; 
		
		\draw [gray, line cap=round] (2.4,2.7)--(2.4,0.4);
		\absdot{(2.4,0.4)}; 
		
		\node [rotate=0] (y) at (1.5,-0.1) {(b)};
	\end{tikzpicture}

\end{center}
\end{footnotesize}
\caption{Cases in the proof of Lemma~\ref{lemma:final_pins}.}
\label{fig:disconnected_cases}
\end{figure}
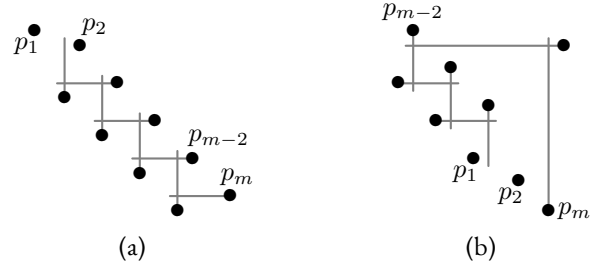 
	
	In Figure~\ref{fig:disconnected_cases}(a), we have that the entire pin sequence $p_1, p_2, \dots, p_m$ alternates between right and down pins. That is, $p_s = p_1$ and we are done.
	
	In Figure~\ref{fig:disconnected_cases}(b), we have that the pin sequence $p_1, p_2, \dots, p_{m-2}$ alternates between left and up pins, and then $p_{m-1}$ and $p_{m}$ are a right and a down pin, (not necessarily in that order). The figure reveals that $p_s = p_{m-2}$, and furthermore that $p_{m-2}, p_{m-3}, \dots, p_3, p_1 (\in N_{G_{\pi}}(p_m))$ is a path through the complement of $G_{\pi}$. That is, $p_{m-2} p_m \wedge p_{m-3} p_m \wedge \dots \wedge p_3 p_m \wedge p_1 p_m$, and therefore $p_1 p_2$ and $p_1 p_m$ are in the same edge class as $p_{m-2} p_m$, and thus the same edge class as each other. This gives the result.
\end{proof}

With this result, we can also be more precise with finishing the proof. We show that any two incident edges in the inversion graph $G_{\pi}$ of a simple permutation $\pi$ are in the same edge class, and again, there is nothing to be done in the case that two incident edges induce a $P_3$. So we consider any three indices $i<j<k$ such that $\pi(i) > \pi(j) > \pi(k)$, thus inducing a $K_3$. We know that we can find a proper pin sequence starting with $\pi(i)$ and $\pi(j)$ and reaching $\pi(k)$ by Proposition~\ref{prop:BHV_k-reaching}, and thus we have that $\pi(i) \pi(j)$ is in the same edge class as $\pi(i) \pi(k)$ by Lemma~\ref{lemma:final_pins}. To finish, we simply take the `dual' of this argument, showing that $\pi(j) \pi(k)$ is in the same edge class as $\pi(i) \pi(k)$, and thus all three of these edges are in the same edge class by transitivity. We are then done by observing that $G_{\pi}$ must be connected.

\section{Implications for Permutations}\label{sec:permutations}

\subsection{The Uniqueness of Inversion Graphs}

We have now seen how these results can be understood geometrically in the permutation setting. However, there is much more that can be said about permutations. The following corollary to Theorem~\ref{thm:gallai} is very important to the theory of permutations. Since it is not immediate, we show how it follows from Gallai's result in the subsequent discussion.

\begin{theorem}[Gallai~\cite{gallai:transitiv-orien:}]\label{thm:simple_perms_are_unique}
	If $\pi$ is a simple permutation, then there are $1$, $2$ or $4$ permutations~$\sigma$ such that $G_{\pi} \cong G_{\sigma}$. Moreover, the four possible permutations $\sigma$ such that $G_{\pi} \cong G_{\sigma}$ are $\pi$, $\pi^{-1}$, $\pi^{\rc}$, and $(\pi^{\rc})^{-1}$, where $\pi^{-1}$ is the inverse of $\pi$, and $\pi^{\rc}$ is its reverse complement. 
\end{theorem}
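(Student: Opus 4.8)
The plan is to prove the result in three movements: first exhibit the four candidate permutations, then bound the total number from above by $4$ using Theorem~\ref{thm:gallai}, and finally argue that these two facts force equality.

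For the first movement, I would argue geometrically from the plot of $\pi$. The eight symmetries of the square act on the plot of a permutation, and a pair of points forms an inversion exactly when the line through them has negative slope; thus a symmetry yields an isomorphic inversion graph precisely when it preserves the sign of slopes. The identity, the reflection across the main diagonal (which produces $\pi^{-1}$, cf.\ Proposition~\ref{prop:isomorphic_to_inverse}), the rotation by $180^\circ$ (which produces $\pi^{\rc}$), and the reflection across the anti-diagonal (which produces $(\pi^{\rc})^{-1}=(\pi^{-1})^{\rc}$) each send negative slopes to negative slopes, so all four of these permutations have inversion graph isomorphic to $G_{\pi}$; the remaining four symmetries negate slope and send $G_{\pi}$ to $\overline{G}_{\pi}=G_{\pi^{\textnormal{r}}}$. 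Since rotation by $180^\circ$ is central among the square's symmetries, these four operations form a copy of the Klein four-group $\Gamma$, and the candidates constitute the orbit of $\pi$ under $\Gamma$. By orbit--stabilizer, the size of this orbit divides $|\Gamma|=4$, which already yields the claimed count of $1$, $2$, or $4$ once we know these are all of the relevant permutations.

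For the upper bound, I would use that a permutation can be reconstructed from its inversion graph together with a choice of transitive orientation of the graph and a transitive orientation of its complement, exactly as in the construction in the proof of Theorem~\ref{thm:comp_cocomp_def}: the two orientations assemble into total orders $T_1,T_2$ whose associated permutation has the prescribed inversion graph. Concretely, every $\sigma$ with $G_{\sigma}\cong G_{\pi}$ is recovered, via any fixed isomorphism $G_{\sigma}\to G_{\pi}$, from the transported pair consisting of the ``toward larger value'' orientation of $G_{\sigma}$ and the analogous orientation of $\overline{G}_{\sigma}$. Hence the number of such $\sigma$ is at most the number of pairs (transitive orientation of $G_{\pi}$, transitive orientation of $\overline{G}_{\pi}$). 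Now since $\pi$ is simple, $G_{\pi}$ is prime by Proposition~\ref{prop:simple_is_prime}, and $\overline{G}_{\pi}$ is prime as well, because a set $M$ is a module of a graph if and only if it is a module of its complement. Both $G_{\pi}$ and $\overline{G}_{\pi}=G_{\pi^{\textnormal{r}}}$ are inversion graphs, hence comparability graphs, so each admits at least one transitive orientation; combined with Theorem~\ref{thm:gallai}, which says a prime graph has $0$ or $2$ transitive orientations, each has exactly $2$. This gives at most $2\times 2 = 4$ permutations $\sigma$ with $G_{\sigma}\cong G_{\pi}$.

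Finally I would knit the two bounds together. The four candidate permutations already lie in the set of $\sigma$ with $G_{\sigma}\cong G_{\pi}$, and the upper bound shows this set has at most four elements; it remains to check that the four reconstructions coming from the four orientation-pairs are exactly the four candidates. This is the step I expect to be the main obstacle, since it requires tracking how the operations act on the orientation data: I would verify that inversion reverses the orientation of $G_{\pi}$ while fixing that of $\overline{G}_{\pi}$, that reverse-complement reverses both, and hence that their composite fixes $G_{\pi}$ and reverses $\overline{G}_{\pi}$, so that the four operations realize all four combinations of orientations. With this bookkeeping in hand, the image of the reconstruction map is precisely the orbit of $\pi$ under $\Gamma$, and since that map is onto the set of all $\sigma$ with $G_{\sigma}\cong G_{\pi}$, the set equals $\{\pi,\pi^{-1},\pi^{\rc},(\pi^{\rc})^{-1}\}$, whose cardinality is $1$, $2$, or $4$ by the orbit argument above.
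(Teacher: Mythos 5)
Your proposal is correct and follows essentially the same route as the paper: exhibit the four symmetry candidates via slope-preserving symmetries of the plot, use Proposition~\ref{prop:simple_is_prime} and Theorem~\ref{thm:gallai} to get exactly two transitive orientations of each of $G_{\pi}$ and $\overline{G}_{\pi}$, and observe that a permutation is reconstructed from an orientation pair, giving at most $2\times 2=4$ possibilities realized by the four symmetries. The only cosmetic differences are your orbit--stabilizer phrasing of the ``$1$, $2$, or $4$'' count and your use of the fact that modules are preserved under complementation (the paper instead notes $\overline{G}_{\pi}\cong G_{\pi^{\textnormal{r}}}$ with $\pi^{\textnormal{r}}$ simple); both are sound.
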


That is, $\pi^{-1}$ is obtained from $\pi$ by reflecting the plot of $\pi$ over the line $y=x$, $(\pi^{\rc})^{-1}$ is obtained by reflecting it over the line $y = n+1-x$, and $\pi^{\rc}$ is obtained by reflecting it over both. Indeed, since the reflections over these lines both preserve the sign of the slope of any line with nonzero, finite slope, it follows that all of these permutations have the same inversion graph. For example, in Figure~\ref{fig:PermutationSymmetries} we see the four symmetries of the simple permutation given in Figure~\ref{fig:NonSimpleAndSimplePermutations}(b). That is, Theorem~\ref{thm:simple_perms_are_unique} guarantees that these are the only permutations with this inversion graph.

\begin{figure}[h]
\begin{center}
	\begin{tikzpicture}[scale=0.25]

		\plotpermgraph{2,4,1,9,3,5,8,6,10,7};
		\draw [in = 135, out = 0] (4,9) to (10,7);
		
		\node [] at (5.5,-1) {$\pi$};
		
	\end{tikzpicture}
	\hspace{10mm}
	\begin{tikzpicture}[scale=0.25]
		\draw [color = gray!60, line width = 0.5mm, dotted] (0.75,0.75) to (10.25,10.25);
		\plotpermgraph{3,1,5,2,6,8,10,7,4,9};
		\draw [in = -45, out = 90] (9,4) to (7,10);
		
		\node [] at (5.5,-1) {$\pi^{-1}$};
		
	\end{tikzpicture}
	\hspace{10mm}
	\begin{tikzpicture}[scale=0.25]
		\draw [color = gray!60, line width = 0.5mm, dotted] (0.75,10.25) to (10.25,0.75);
		\draw [color = gray!60, line width = 0.5mm, dotted] (0.75,0.75) to (10.25,10.25);
		\plotpermgraph{4,1,5,3,6,8,2,10,7,9};
		\draw [in = -45, out = 180] (7,2) to (1,4);
		
		\node [] at (5.5,-1) {$\pi^{\rc}$};
		
	\end{tikzpicture}
	\hspace{10mm}
	\begin{tikzpicture}[scale=0.25]
		\draw [color = gray!60, line width = 0.5mm, dotted] (0.75,10.25) to (10.25,0.75);
		\plotpermgraph{2,7,4,1,3,5,9,6,10,8};
		\draw [in = 135, out = -90] (2,7) to (4,1);
		
		\node [] at (5.5,-1) {$(\pi^{\rc})^{-1}$};
		
	\end{tikzpicture}
\end{center}
\caption{A simple permutation and its four symmetries; the three permutations on the right are obtained by reflecting $\pi$ over the dashed line(s).}
\label{fig:PermutationSymmetries}
\end{figure}
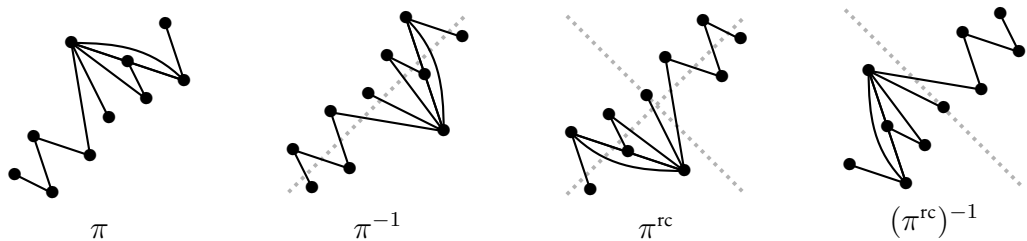

Furthermore, if a simple permutation is equal to exactly one of its inverse or reverse complement, then there are exactly two unique permutations with its inversion graph, and if it's equal to both, then it is the unique permutation with that inversion graph. For example, $3142$ is simple and equal to its reverse complement, but is not equal to its inverse, and thus there is exactly one other permutation with an isomorphic inversion graph: $2413$, its inverse.

 We are now ready to see how Theorem~\ref{thm:gallai} implies Theorem~\ref{thm:simple_perms_are_unique}. We begin by recalling from Theorem~\ref{thm:dimension_def} that inversion graphs are the comparability graphs of posets with order dimension~2. Throughout this discussion, we keep in mind Figure~\ref{fig:PosetFromPerm}, which captures the notion of giving a transitive orientation to an inversion graph drawn on the plot of its permutation by directing all of the edges to be arcs pointing towards the northwest.
 
So, we let $G_{\pi}$ be the inversion graph of a simple permutation $\pi$, and then show that the only permutations we can possibly recover from $G_{\pi}$ are $\pi$, $\pi^{-1}$, $\pi^{\rc}$, and $(\pi^{\rc})^{-1}$. By Proposition~\ref{prop:simple_is_prime} we have that $G_{\pi}$ is prime, and by Theorem~\ref{thm:dimension_def} it is transitively orientable. Since it is prime, it has one edge class and exactly two transitive orientations by Theorem~\ref{thm:gallai}. So, we begin recovering a permutation with this inversion graph by selecting one of the two possible orientations of its edges, and just as in Figure~\ref{fig:PosetFromPerm}, for each vertex, we interpret its out-neighborhood to be the entries above and to its left, and its in-neighborhood as the vertices below and to its right in the plot.

However, for each vertex we need also know which entries are above and to its right, or below and to its left, in order to recover a permutation. Fortunately, we recall Theorem~\ref{thm:comp_cocomp_def}, which informs us that the complement of $G_{\pi}$ is also transitively orientable. Indeed, we have that $\overline{G}_{\pi}$ is isomorphic to $G_{\pi^{\text{r}}}$, where $\pi^{\text{r}}$ is the reverse of $\pi$ and is also a simple permutation. Thus, applying Theorem~\ref{thm:simple_perms_are_unique} again we know that $\overline{G}_{\pi}$ has one edge class, and we can choose one of the two possible orientations of its edges, letting the out-neighborhood of each vertex determine the entries above and to its right, and its in-neighborhood determine the entries below and to its left in the plot. See Figure~\ref{fig:PermsFromAnInvGraph} for an example.

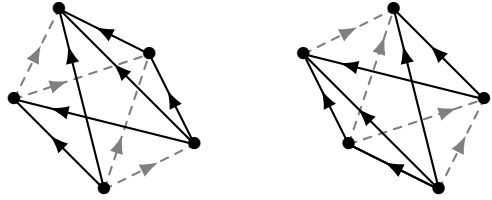
\begin{figure}[h]
\begin{center}
	\begin{tikzpicture}[scale=0.6]
		\draw [->-, line cap=round, dashed, color = gray] (1,3)--(2,5);
		\draw [->--, line cap=round, dashed, color = gray] (1,3)--(4,4);
		\draw [->--, line cap=round, dashed, color = gray] (3,1)--(4,4);
		\draw [->-, line cap=round, dashed, color = gray] (3,1)--(5,2);
		
		\draw [->-, line cap=round] (3,1)--(1,3);
		\draw [-->-, line cap=round] (5,2)--(1,3);
		\draw [-->-, line cap=round] (3,1)--(2,5);
		\draw [->-, line cap=round] (5,2)--(4,4);
		\draw [->-, line cap=round] (4,4)--(2,5);
		\draw [->-, line cap=round] (5,2)--(2,5);
		
		\plotperm{3,5,1,4,2};
		
	\end{tikzpicture}
	\hspace{10mm}
	\begin{tikzpicture}[scale=0.6]
		\draw [->-, line cap=round, dashed, color = gray] (1,4)--(3,5);
		\draw [-->-, line cap=round, dashed, color = gray] (2,2)--(3,5);	
		\draw [-->-, line cap=round, dashed, color = gray] (2,2)--(5,3);
		\draw [->-, line cap=round, dashed, color = gray] (4,1)--(5,3);		
	
		\draw [->-, line cap=round] (4,1)--(2,2);
		\draw [->-, line cap=round] (4,1)--(1,4);
		\draw [-->-, line cap=round] (4,1)--(3,5);
		\draw [-->-, line cap=round] (5,3)--(1,4);
		\draw [->-, line cap=round] (5,3)--(3,5);
		\draw [->-, line cap=round] (4,1)--(2,2);
		\draw [->-, line cap=round] (2,2)--(1,4);
		
		\plotperm{4,2,5,1,3};
		
	\end{tikzpicture}
\end{center}
\caption{Obtaining different permutations from the inversion graph of a simple permutation.}
\label{fig:PermsFromAnInvGraph}
\end{figure}

That is, given these choices, it is easy to recover the resulting permutation. The index of the entry corresponding to each vertex in the permutation can be recovered by counting the number of vertices that are to its `left': its in-degree in $\overline{G}_{\pi}$ plus its out-degree in~$G_{\pi}$. Furthermore, the value of the entry corresponding to each vertex is recovered by counting the number of vertices that are `below' it: its in-degree in $\overline{G}_{\pi}$ plus its in-degree in $G_{\pi}$. Certainly, there exists a choice for the edges of $G_{\pi}$ and for $\overline{G}_{\pi}$ such that we would recover the permutation~$\pi$. If we made the opposite choice for the edges of $G_{\pi}$, each arc would be reversed and it would be equivalent to reflecting the plot of the permutation over the line $y=x$, thus we would recover $\pi^{-1}$. Similarly, if we also made the opposite choice for the edges of $\overline{G}_{\pi}$, we would recover~$\pi^{\rc}$. 

A permutation with nontrivial intervals can have an inversion graph that is isomorphic to that of many other permutations. To see this, we observe that applying reflections `to' any interval preserves the inversion graph. For example, the only interval in the nonsimple permutation given in Figure~\ref{fig:NonSimpleAndSimplePermutations}(a) contains four entries that are order isomorphic to $3142$. As we mentioned above, its inverse is $2413$, and thus interchanging these permutations `at' that interval does not alter the inversion graph. We see this operation in Figure~\ref{fig:ElemMove}.

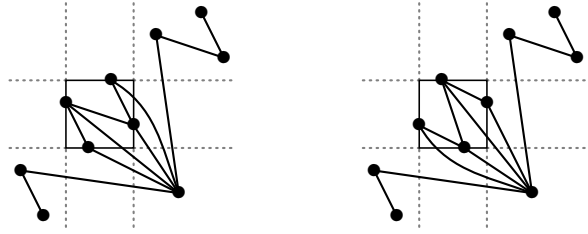
\begin{figure}[h]
\begin{center}
	\begin{tikzpicture}[scale=0.3]
		\draw [color = gray, line width = 0.6pt, dotted, thick, line cap=round] (0.5,4)--(3,4);
		\draw [color = gray , line width = 0.6pt, dotted, thick, line cap=round] (6,4)--(10.5,4);
		\draw [color = gray , line width = 0.6pt, dotted, thick, line cap=round] (0.5,7)--(3,7);
		\draw [color = gray , line width = 0.6pt, dotted, thick, line cap=round] (6,7)--(10.5,7);
		\draw [color = gray , line width = 0.6pt, dotted, thick, line cap=round] (3,0.5)--(3,4);
		\draw [color = gray , line width = 0.6pt, dotted, thick, line cap=round] (3,7)--(3,10.5);
		\draw [color = gray , line width = 0.6pt, dotted, thick, line cap=round] (6,0.5)--(6,4);
		\draw [color = gray , line width = 0.6pt, dotted, thick, line cap=round] (6,7)--(6,10.5);
		\draw [line width = 0.6pt] (3,4) rectangle (6,7);

		\plotperm{3,1,6,4,7,5,9,2,10,8};
		
		\draw [line cap=round] (1,3) -- (2,1);
		\draw [line cap=round] (1,3) -- (8,2);
		
		\draw [line cap=round] (3,6) -- (8,2);
		\draw [line cap=round] (4,4) -- (8,2);
		\draw [line cap=round] (6,5) -- (8,2);
		\draw [in = 110, out = -30] (5,7) to (8,2);
		\draw [line cap=round] (7,9) -- (8,2);
		\draw [line cap=round] (7,9) -- (10,8);
		\draw [line cap=round] (9,10) -- (10,8);
		
		\draw [line cap=round] (3,6) -- (4,4);
		\draw [line cap=round] (3,6) -- (6,5);
		\draw [line cap=round] (5,7) -- (6,5);
		
	\end{tikzpicture}
	\hspace{15mm}
	\begin{tikzpicture}[scale=0.3]
		\draw [color = gray, line width = 0.6pt, dotted, thick, line cap=round] (0.5,4)--(3,4);
		\draw [color = gray , line width = 0.6pt, dotted, thick, line cap=round] (6,4)--(10.5,4);
		\draw [color = gray , line width = 0.6pt, dotted, thick, line cap=round] (0.5,7)--(3,7);
		\draw [color = gray , line width = 0.6pt, dotted, thick, line cap=round] (6,7)--(10.5,7);
		\draw [color = gray , line width = 0.6pt, dotted, thick, line cap=round] (3,0.5)--(3,4);
		\draw [color = gray , line width = 0.6pt, dotted, thick, line cap=round] (3,7)--(3,10.5);
		\draw [color = gray , line width = 0.6pt, dotted, thick, line cap=round] (6,0.5)--(6,4);
		\draw [color = gray , line width = 0.6pt, dotted, thick, line cap=round] (6,7)--(6,10.5);
		\draw [line width = 0.6pt] (3,4) rectangle (6,7);

		\plotperm{3,1, 5,7,4,6 ,9,2,10,8};
		
		\draw [line cap=round] (1,3) -- (2,1);
		\draw [line cap=round] (1,3) -- (8,2);
		
		\draw [line cap=round] (5,4) -- (8,2);
		\draw [line cap=round] (4,7) -- (8,2);
		\draw [line cap=round] (6,6) -- (8,2);
		\draw [in = 160, out = -60] (3,5) to (8,2);
		\draw [line cap=round] (7,9) -- (8,2);
		\draw [line cap=round] (7,9) -- (10,8);
		\draw [line cap=round] (9,10) -- (10,8);
		
		\draw [line cap=round] (4,7) -- (6,6);
		\draw [line cap=round] (4,7) -- (5,4);
		\draw [line cap=round] (3,5) -- (5,4);
		
	\end{tikzpicture}
\end{center}
\caption{Two permutations with the same inversion graph that can't be obtained from each other by reflections of the entire plot.}
\label{fig:ElemMove}
\end{figure}

Although we don't go into much detail, it follows from Theorem~\ref{thm:simple_perms_are_unique} and some other results in Gallai's paper~\cite{gallai:transitiv-orien:} regarding modular decomposition that any two permutations with the same inversion graph can be obtained from each other by applying a series of these reflections to their intervals. For example, since the permutations in Figure~\ref{fig:ElemMove} have only that one nontrivial interval, it follows that there are exactly eight permutations with this inversion graph, (neither permutation in Figure~\ref{fig:ElemMove} is equal to its inverse or reverse complement).

\subsection{The Automorphism Group of Inversion Graphs}

We see next that it also follows from Theorem~\ref{thm:gallai} that the automorphisms of the inversion graph of a simple permutation are completely determined by its symmetries. We outline the argument given by Klav\'ik and Zeman~\cite{klavik:automorphism-gr:}. To see this, we let $\pi$ be a simple permutation and $\TO(G_{\pi})$ the set of transitive orientations of its inversion graph. Then for some $\tau \in \Aut(G_{\pi})$ and $\rightarrow \in \TO(G_{\pi})$, we define~$\tau(\rightarrow)$ by 
\[
x \rightarrow y \implies \tau(x) \tau (\rightarrow) \tau (y) \textnormal{ for all } x,y \in V(G_{\pi}),
\]
and it follows that $\tau(\rightarrow) \in \TO(G_{\pi})$. Hence, $\Aut(G_{\pi})$ induces an action on $\TO(G_{\pi})$, and similarly on~$\TO(\overline{G}_{\pi})$. 

If $\tau$ is in the stabilizer of $\rightarrow \in \TO(G_{\pi})$, then it can only interchange nonadjacent vertices of~$G_{\pi}$. This is clear since if, for example, a vertex $x$ is mapped to some vertex $y$ such that $x \rightarrow y$, then $y$ must map to some vertex $z$ such that $y \rightarrow z$, etc. This is not possible since $\rightarrow$ is a transitive orientation of a finite graph.

So, we define $\TO(G_{\pi}, \overline{G}_{\pi}) = \TO(G_{\pi}) \times \TO(\overline{G}_{\pi})$, and suppose that $\tau \in \Aut(G_{\pi})$ is in the stabilizer of $(\rightarrow, \overline{\rightarrow}) \in \TO(G_{\pi}, \overline{G}_{\pi})$. Therefore, from above we have that $\tau$ cannot interchange any adjacent vertices in $G_{\pi}$, but also cannot interchange any adjacent vertices in $\overline{G}_{\pi}$. This can only happen if~$\tau$ is the identity automorphism, and thus the action of $\Aut(G_{\pi})$ on $\TO(G_{\pi}, \overline{G}_{\pi})$ is \emph{free}---the stabilizer of each element contains only the identity.

Thus, if we fix an element $(\rightarrow, \overline{\rightarrow}) \in \TO(G_{\pi}, \overline{G}_{\pi})$, it follows that each element of $\Aut(G_{\pi})$ applied to $(\rightarrow, \overline{\rightarrow})$ determines a unique element of $\TO(G_{\pi}, \overline{G}_{\pi})$. Furthermore, it follows from Theorem~\ref{thm:gallai} that $\TO(G_{\pi}) = \{ \rightarrow, \leftarrow \}$ where $\leftarrow$ is obtained by reversing each arc in $\rightarrow$, and similarly, $\TO(\overline{G}_{\pi}) = \{ \overline{\rightarrow}, \overline{\leftarrow} \}$. We have $\pi = \pi^{-1}$ if and only if there exists $\varphi \in \Aut(G_{\pi})$ such that $\varphi(\rightarrow, \overline{\rightarrow}) = (\leftarrow, \overline{\rightarrow})$, and~$\pi = (\pi^{\rc})^{-1}$ if and only if there exists $\psi \in \Aut(G_{\pi})$ such that $\psi(\rightarrow, \overline{\rightarrow}) = (\rightarrow, \overline{\leftarrow})$. If $\pi$ has both of these symmetries, then we have $\psi \circ \varphi(\rightarrow, \overline{\rightarrow}) = (\leftarrow, \overline{\leftarrow})$. Since these are all the elements of $\TO(G_{\pi}, \overline{G}_{\pi})$, we have considered all of the possible elements of $\Aut(G_{\pi})$, and thus we have the following.

\begin{theorem}[Klav\'ik and Zeman~\text{\cite[Lemma 6.6]{klavik:automorphism-gr:}}]
\label{prop:prime:inv:symmetries}
	If $\pi$ is a simple permutation, then the automorphism group of $G_{\pi}$ is a subgroup of $\mathbb{Z}_2 \times \mathbb{Z}_2$. That is, every automorphism of $G_{\pi}$ corresponds to a symmetry of $\pi$, in particular, one of $\pi$, $\pi^{-1}$, $\pi^{\rc}$, or~$(\pi^{\rc})^{-1}$.
\end{theorem}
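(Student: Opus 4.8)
The plan is to make $\Aut(G_\pi)$ act on the transitive orientations of $G_\pi$ and of $\overline{G}_\pi$ and to exploit Theorem~\ref{thm:gallai}, which guarantees that each of these has exactly two. Since $\pi$ is simple, Proposition~\ref{prop:simple_is_prime} shows $G_\pi$ is prime, and Theorem~\ref{thm:dimension_def} shows that $G_\pi$ and its complement $\overline{G}_\pi \cong G_{\pi^{\textnormal{r}}}$ are comparability graphs; hence $\TO(G_\pi) = \{\rightarrow, \leftarrow\}$ and $\TO(\overline{G}_\pi) = \{\overline{\rightarrow}, \overline{\leftarrow}\}$ each consist of a fixed orientation together with its reversal. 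Noting that an automorphism of $G_\pi$ is simultaneously an automorphism of $\overline{G}_\pi$, I would first check that the coordinatewise rule $x \rightarrow y \implies \tau(x)\, \tau(\rightarrow)\, \tau(y)$ turns each $\tau \in \Aut(G_\pi)$ into a permutation of the four-element set $\TO(G_\pi) \times \TO(\overline{G}_\pi)$, giving a group action.

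The heart of the argument is that this action is \emph{free}. If $\tau$ fixes an orientation $\rightarrow$ of $G_\pi$ yet swaps two adjacent vertices, say $\tau(x) = y$ with $x \rightarrow y$, then applying $\tau$ repeatedly produces the forward chain $x \rightarrow y \rightarrow \tau(y) \rightarrow \cdots$, which is impossible in a finite acyclic orientation; thus any element stabilizing $\rightarrow$ can only interchange nonadjacent vertices of $G_\pi$. Because $G_\pi$ and $\overline{G}_\pi$ are complements, every pair of distinct vertices is adjacent in exactly one of them, so an automorphism fixing both $\rightarrow$ and $\overline{\rightarrow}$ interchanges no pair at all and must be the identity. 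Hence the stabilizer of $(\rightarrow, \overline{\rightarrow})$ is trivial and the action is free.

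Freeness makes $\tau \mapsto \tau \cdot (\rightarrow, \overline{\rightarrow})$ an injection of $\Aut(G_\pi)$ into a four-element set, so $|\Aut(G_\pi)| \le 4$. To pin the group down as a subgroup of $\mathbb{Z}_2 \times \mathbb{Z}_2$ rather than $\mathbb{Z}_4$, I would observe that every $\tau$ is an involution: reversal is an involution in each coordinate, so $\tau^2$ fixes $(\rightarrow, \overline{\rightarrow})$ and therefore equals the identity by freeness. A group of order at most four in which every element squares to the identity is elementary abelian, hence embeds in $\mathbb{Z}_2 \times \mathbb{Z}_2$. Finally, using the permutation-recovery procedure of Section~\ref{sec:permutations} — recover each vertex's index from its out-degree in $G_\pi$ plus its in-degree in $\overline{G}_\pi$, and its value analogously — reversing $\rightarrow$ reflects the plot over $y = x$ and so yields $\pi^{-1}$, reversing $\overline{\rightarrow}$ yields $(\pi^{\rc})^{-1}$, and reversing both yields $\pi^{\rc}$. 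I expect the main obstacle to be the freeness step, namely the finite-acyclicity argument forbidding an orientation-fixing automorphism from swapping adjacent vertices, together with making the dictionary between the four orientation pairs and the four named symmetries precise enough to conclude.
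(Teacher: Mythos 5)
Your proposal is correct and follows essentially the same route as the paper's argument (which it outlines following Klav\'ik and Zeman): the free action of $\Aut(G_{\pi})$ on $\TO(G_{\pi})\times\TO(\overline{G}_{\pi})$, the finite-acyclicity argument showing an orientation-preserving automorphism cannot move a vertex along an arc, and Theorem~\ref{thm:gallai} bounding the orbit set by four elements identified with the four symmetries. Your added observation that every automorphism is an involution (since $\tau^2$ fixes each orientation and freeness forces $\tau^2=\mathrm{id}$) is a nice explicit justification of the $\mathbb{Z}_2\times\mathbb{Z}_2$ structure that the paper leaves implicit in the correspondence with reflections.
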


Note, as a corollary to Proposition~\ref{prop:prime:inv:symmetries}, we see immediately that $C_n$ for $n\ge 5$ and $T_2$ (see Figure~\ref{fig:T_2}), are not inversion graphs. This is because these graphs are prime, and if they were inversion graphs, they would correspond to simple permutations by Proposition~\ref{prop:simple_is_prime}, and thus their automorphism groups would be subgroups of $\mathbb{Z}_2\times \mathbb{Z}_2$. However, their automorphism groups are larger dihedral groups, ($\Aut(T_2) = D_3$ and $\Aut(C_n) = D_n$ for all $n \geq 3$).

\begin{figure}[h]
\begin{center}
	\begin{tikzpicture}[scale=0.5]
		\draw [] (0,0) -- (0, 2);
		\draw [] (0,0) -- (1.732, -1);
		\draw [] (0,0) -- (-1.732, -1);
		
		\absdot{(0,0)};
		\absdot{(0,1)};
		\absdot{(0,2)};
		
		\absdot{(0.866, -0.5)};
		\absdot{(1.732, -1)};
		
		\absdot{(-0.866, -0.5)};
		\absdot{(-1.732, -1)};
		
	\end{tikzpicture}
\end{center}
\caption{The graph $T_2$.}
\label{fig:T_2}
\end{figure}

We mention here a consequence of this observation that inversion graphs contain no induced cycles of length at least 5, (nor their complements). A major area of research in the field of graph theory is the study of perfect graphs~\cite{golumbic:algorithmic-gra:}. First defined by Claude Berge in 1961~\cite{berge:f:arbung-von-gr:}, a \emph{perfect graph} is one such that in every induced subgraph, the chromatic number is equal to the size of the largest maximal clique. In 2006, Chudnovsky, Robertson, Seymour and Thomas proved the strong perfect graph theorem~\cite{chudnovsky:the-strong-perf:}, stating that a graph is perfect if and only if it contains no induced cycles of odd length greater than or equal to~5, nor their complements. Thus, we have shown the following, which was first observed by Berge in the 1960's.

\begin{theorem}[Berge~\cite{berge:some-classes-of:}]
	Inversion graphs are perfect.
\end{theorem}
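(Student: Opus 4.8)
The plan is to deduce the result immediately from the strong perfect graph theorem of Chudnovsky, Robertson, Seymour, and Thomas~\cite{chudnovsky:the-strong-perf:}, which asserts that a graph is perfect if and only if it contains no induced odd hole (induced cycle of odd length at least $5$) and no induced odd antihole (the complement of such a cycle). Thus it suffices to rule out both of these as induced subgraphs of any inversion graph. In fact we already have in hand the slightly stronger observation recorded just above, that $C_n$ is not an inversion graph for any $n \geq 5$ (a corollary of Proposition~\ref{prop:prime:inv:symmetries}), which will handle the holes, and a short complementation argument will handle the antiholes.

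First I would record that the class of inversion graphs is \emph{hereditary}, i.e., closed under taking induced subgraphs. This follows at once from the plotting characterization of Theorem~\ref{thm:plotting_def}: an induced subgraph of $G_\pi$ corresponds to restricting the plot of $\pi$ to a subset of its points, and this restriction still satisfies the hypotheses of that theorem. Consequently, if some inversion graph contained an induced copy of $C_n$ with $n \geq 5$, then $C_n$ would itself be an inversion graph, contradicting the observation above. Hence no inversion graph has an induced $C_n$ for $n \geq 5$; in particular, no inversion graph contains an induced odd hole.

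For the antiholes I would use that the class of inversion graphs is \emph{closed under complementation}: for every permutation $\pi$ we have $\overline{G}_\pi \cong G_{\pi^{\textnormal{r}}}$, so the complement of an inversion graph is again an inversion graph. Now suppose an inversion graph $G_\pi$ had an induced antihole, say $G_\pi[S] \cong \overline{C_n}$ with $n \geq 5$. Since the complement of an induced subgraph is the induced subgraph of the complement on the same vertex set, $\overline{G}_\pi[S] = \overline{G_\pi[S]} \cong C_n$. But $\overline{G}_\pi$ is an inversion graph, and by heredity so is its induced subgraph $\overline{G}_\pi[S]$; this would make $C_n$ an inversion graph, again a contradiction. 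Therefore no inversion graph contains an induced odd antihole, and applying the strong perfect graph theorem completes the proof. The only point requiring any care is the bookkeeping in this last paragraph—namely, combining closure under complementation with closure under induced subgraphs and the identity $\overline{G_\pi[S]} = \overline{G}_\pi[S]$—but this is routine rather than a genuine obstacle.
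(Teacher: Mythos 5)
Your proof is correct and follows essentially the same route as the paper: invoke the strong perfect graph theorem, rule out odd holes via the earlier observation that $C_n$ is not an inversion graph for $n \geq 5$, and rule out odd antiholes via closure under complementation ($\overline{G}_\pi \cong G_{\pi^{\textnormal{r}}}$) together with heredity. The paper states this argument more tersely, but your spelled-out bookkeeping with hereditary closure and the identity $\overline{G_\pi[S]} = \overline{G}_\pi[S]$ is exactly what it implicitly relies on.
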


\cleardoublepage
\typeout{******************}
\typeout{**  Chapter 3   **}
\typeout{******************}
\chapter{Graph Encoding}
\label{chap:lettericity}

\def\nodesize{3.5}

In this dissertation, we explore objects at the intersection of permutation theory and graph theory---namely, inversion graphs. In this chapter, we examine two notions that originated independently in these `separate' fields and were later discovered to be intimately related via inversion graphs. These notions are the geometric grid classes of permutations, and letter graphs from structural graph theory. Once we define these notions, we discuss this connection, answer some extremal questions, and then introduce and investigate a generalization of the letter graph construction.

\section{Letter Graphs and Geometric Grid Classes}

\subsection{Letter Graphs and Lettericity}

We begin with letter graphs and the associated invariant lettericity. For a finite alphabet~$\Sigma$, we consider a set of ordered pairs $D \subseteq \Sigma^2$ which we refer to as a \textit{decoder}. Then for a word $w$ with letters $w(1)$, $\dots$, $w(n)\in\Sigma$, we define the \textit{letter graph} of~$w$ with respect to~$D$ to be the graph $\Gamma_D(w)$ with the vertices $\{1, \dots,n \}$ and the edges $ij$ for all $i<j$ with $(w(i), w(j)) \in D$.  If $|\Sigma| = k$ then we say that $\Gamma_D(w)$ is a $k$-letter graph. Finally, for any graph~$G$, the least integer $k$ such that~$G$ is (isomorphic to) a $k$-letter graph is called the \textit{lettericity} of~$G$, denoted by $\ell (G)$.

We include some additional terminology here that will aid in the subsequent discussions. A word~$w$ is called a \textit{lettering} of a graph~$G$ if $\Gamma_D(w) = G$ for some decoder~$D$. We further say that each letter~$a\in\Sigma$ \textit{encodes} the vertices corresponding to the instances of $a$ in the word $w$. More precisely,~$a$ encodes the set $\{1\le i\le n: w(i) = a \} \subseteq V(\Gamma_D(w))$. The set of vertices encoded by a given letter $a\in\Sigma$ must either form a clique, (if $(a,a)\in D$), or an anticlique, (if $(a,a)\notin D$). Thus, letterings of graphs are special types of cocolorings, (a concept introduced by Lesniak-Foster and Straight~\cite{lesniak-foster:the-cochromatic:}), and the lettericity of a graph is bounded below by its cochromatic number. However, as we will see, lettericity is typically much greater than cochromatic number.

A notable example of a class of graphs with lettericity 2 is the class of threshold graphs~\cite{mahadev:threshold-graph:}. These graphs can be defined in various ways, but for our purposes, the most useful definition is as follows: a \emph{threshold graph} is constructed by iteratively adding either dominating vertices, (adjacent to all previously added vertices), or isolated vertices, (adjacent to none of the previously added vertices). Thus, threshold graphs are precisely the letter graphs on the alphabet $\Sigma=\{a,b\}$ with the decoder $D=\{(a,b),(b,b)\}$. To see this, simply encode vertices based on their order of addition to the graph, using~$a$ for isolated vertices and~$b$ for dominating vertices.

We remark that lettericity was first introduced by Petkov\v{s}ek~\cite{petkovsek:letter-graphs-a:} to investigate well-quasi-order~\cite{cherlin:forbidden-subst:, huczynska:well-quasi-orde:} in the induced subgraph order. Indeed, lettericity is a monotone parameter with respect to the induced subgraph relation since any induced subgraph of a letter graph~$\Gamma_D(w)$ can be obtained as~$\Gamma_D(w')$ for some not necessarily contiguous subword $w'$ of $w$. In other words,~$\ell(H) \leq \ell(G)$ if~$H$ is an induced subgraph of $G$. 

Recently, there has been significant interest in the lettericity parameter, both for its own sake~\mbox{\cite{%
	alecu:understanding-l:,
	ferguson:letter-graphs-a:,
	ferguson:on-the-letteric:,
	alecu:the-micro-world:,
	alecu:lettericity-of-:%
}},
and in its connections~\cite{%
	alecu:letter-graphs-a:iff,
	alecu:letter-graphs-a:,
	alecu:letter-graphs-a:abstract%
}
to geometric grid classes of permutations~\cite{%
	albert:geometric-grid-:,
	bevan:growth-rates-of:geom,
	albert:inflations-of-g:,
	elizalde:schur-positive-:%
}. In the rest of this section, we define geometric grid classes and then discuss the connection between these notions.

\subsection{Geometric Grid Classes}

The general idea of a geometric grid class is a set of permutations whose plots can be drawn in the plane on a specified shape. We begin with a $0/\pm 1$ matrix $M$ that specifies how to divide the plane into \emph{cells}. That is, the \emph{standard figure} of the matrix $M = M(i,j)$, with its entries indexed by cartesian coordinates, is the set of points in $\mathbb{R}^2$ consisting of 
\begin{itemize}
	\item the increasing line segment from $(i-1, j-1)$ to $(i,j)$ if $M(i,j) = 1$, or
	\item the decreasing line segment from $(i-1, j)$ to $(i, j-1)$ if $M(i,j) = -1$.
\end{itemize}

For example, we see the standard figure of $\left( \begin{smallmatrix} 0 & -1 & 1 \\ 1 & 0 & -1 \end{smallmatrix} \right)$ in Figure~\ref{fig:standard_figure}.

\begin{figure}[h]
\begin{center}
	\begin{tikzpicture}[scale=0.75]
	
		\draw[step=1cm,very thin] (0,0) grid (3,2);
		
		\draw[line cap = round] (0,0) to (1,1);
		\draw[line cap = round] (1,2) to (2,1);
		\draw[line cap = round] (3,2) to (2,1);
		\draw[line cap = round] (2,1) to (3,0);
				
	\end{tikzpicture}
\end{center}
\caption{The standard figure of a $0 / \pm 1$ matrix.}
\label{fig:standard_figure}
\end{figure}
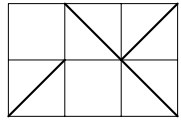

Next, we see that we can draw the permutation $13 86 5472$ on this standard figure in Figure~\ref{fig:perm_on_sf} so that each pair of entries are in the same relative positions as in its plot. We call such a drawing of a permutation $\pi$ on the standard figure of a $0/ \pm 1$ matrix $M$, (which is simply a subset of $\mathbb{R}^2$), an \emph{$M$-drawing} of $\pi$. As evidenced by the two subfigures, if there exists an $M$-drawing of a permutation, then it is not unique.

\begin{figure}[h]
\begin{center}
	\def\nodesize{2.5}
	\begin{tikzpicture}[scale=1.3]
	
		\draw[step=1cm,very thin] (0,0) grid (3,2);
		
		\draw[line cap = round] (0,0) to (1,1);
		\draw[line cap = round] (1,2) to (2,1);
		\draw[line cap = round] (3,2) to (2,1);
		\draw[line cap = round] (2,1) to (3,0);
		
		\node [draw, circle, fill, minimum size = \nodesize, label = \tiny$1$] at (0.2,0.2) {};
		\node [draw, circle, fill, minimum size = \nodesize, label = \tiny$3$] at (0.65,0.65) {};
		
		\node [draw, circle, fill, minimum size = \nodesize, label = \tiny$8$] at (1.28,1.72) {};
		\node [draw, circle, fill, minimum size = \nodesize, label = \tiny$6$] at (1.6,1.4) {};
		\node [draw, circle, fill, minimum size = \nodesize, label = \tiny$5$] at (1.85,1.15) {};
		
		\node [draw, circle, fill, minimum size = \nodesize, label = \tiny$4$] at (2.25,0.75) {};
		\node [draw, circle, fill, minimum size = \nodesize, label = \tiny$7$] at (2.62,1.62) {};
		
		\node [draw, circle, fill, minimum size = \nodesize, label = \tiny$2$] at (2.7,0.3) {};

	\end{tikzpicture}
	\hspace{10mm}
	\begin{tikzpicture}[scale=1.3]
	
		\draw[step=1cm,very thin] (0,0) grid (3,2);
		
		\draw[line cap = round] (0,0) to (1,1);
		\draw[line cap = round] (1,2) to (2,1);
		\draw[line cap = round] (3,2) to (2,1);
		\draw[line cap = round] (2,1) to (3,0);
		
		\node [draw, circle, fill, minimum size = \nodesize, label = \tiny$1$] at (0.1,0.1) {};
		\node [draw, circle, fill, minimum size = \nodesize, label = \tiny$3$] at (0.5,0.5) {};
		
		\node [draw, circle, fill, minimum size = \nodesize, label = \tiny$8$] at (1.3,1.7) {};
		
		\node [draw, circle, fill, minimum size = \nodesize, label = \tiny$6$] at (2.1,1.1) {};
		
		\node [draw, circle, fill, minimum size = \nodesize, label = \tiny$5$] at (2.2,0.8) {};
		\node [draw, circle, fill, minimum size = \nodesize, label = \tiny$4$] at (2.35,0.65) {};
		
		\node [draw, circle, fill, minimum size = \nodesize, label = \tiny$7$] at (2.6,1.6) {};
		
		\node [draw, circle, fill, minimum size = \nodesize, label = \tiny$2$] at (2.75,0.25) {};
				
	\end{tikzpicture}
\end{center}
\caption{The permutation $13 86 5472$ drawn on a standard figure in two different ways.}
\label{fig:perm_on_sf}
\end{figure}
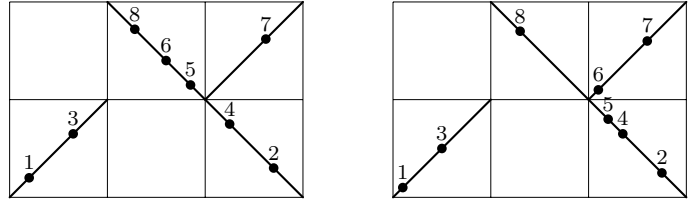

The \emph{geometric grid class} of $M$ is then the set of permutations for which there exists an $M$-drawing. We denote this geometric grid class by $\Geom(M)$. Further, we say that a class $\mathcal{C}$ of permutations is \emph{geometrically griddable} if there exists a $0/ \pm 1$ matrix $M$ such that $\mathcal{C} \subseteq \Geom(M)$. We will define more specifically what a class is below.

It turns out that threshold graphs are inversion graphs, and that they all belong to the class $\Geom \left( \begin{smallmatrix} 1 \\ -1 \end{smallmatrix} \right)$. We can see this with the example given in Figure~\ref{fig:threshold_perm}, where we can read the entries from left to right and we see that any entry in the upper cell can be interpreted as adding an isolated vertex, and any entry in the lower cell can be interpreted as adding a dominating vertex. If we return to how we encoded threshold graphs above with the decoder $\{(a,b), (b,b)\}$, (with $a$ encoding isolated vertices and $b$ encoding dominating vertices), the inversion graph of the permutation in this figure can be encoded with the word $abaabb$.

\begin{figure}[h]
\begin{center}
	\def\nodesize{2.5}
	\begin{tikzpicture}[scale=1.4]
	
		\draw[step=1cm,very thin] (0,0) grid (1,2);
		
		\draw[line cap = round] (0,1) to (1,2);
		\draw[line cap = round] (0,1) to (1,0);
		
		\node [draw, circle, fill, minimum size = \nodesize, label = \tiny$4$] at (0.1,1.1) {};
		\node [draw, circle, fill, minimum size = \nodesize, label = \tiny$3$] at (0.25,0.75) {};
		\node [draw, circle, fill, minimum size = \nodesize, label = \tiny$5$] at (0.4,1.4) {};
		\node [draw, circle, fill, minimum size = \nodesize, label = \tiny$6$] at (0.55,1.55) {};
		\node [draw, circle, fill, minimum size = \nodesize, label = \tiny$2$] at (0.7,0.3) {};	
		\node [draw, circle, fill, minimum size = \nodesize, label = \tiny$1$] at (0.9,0.1) {};		
				
	\end{tikzpicture}
\end{center}
\caption{A permutation drawn on the standard figure of $\left( \begin{smallmatrix} 1 & -1 \end{smallmatrix} \right)^T$.}
\label{fig:threshold_perm}
\end{figure}

As we have just seen that threshold graphs can be obtained from the inversion graphs of permutations drawn on the shape $<$, it is also not too difficult to see that they can also be obtained from the permutations drawn on $\wedge$, $>$, or $\vee$. In any of these cases, we can encode their inversion graphs as $2$-letter graphs, with each of the vertices drawn in the same cell being encoded by the same letter. We generalize this idea below.

\subsection{The Connection}

Common to both permutations and graphs is the notion of \emph{class} or \emph{hereditary property}. To understand these notions, we recall the notion of induced subgraph and define the analogous concept of pattern containment in permutations. That is, if we have two permutations $\pi$ and $\sigma$, then we say that $\pi$ \emph{contains} $\sigma$ as a pattern if some subsequence of the entries of $\pi$ are in the same relative order as the entries of $\sigma$. Otherwise, we say that $\pi$ \emph{avoids} $\sigma$. For example, the permutation $25134$ contains the pattern $132$ given by the substring $253$ or $254$, however it avoids the pattern $321$. Furthermore, we recall that a subgraph $H$ of a graph $G$ is an induced subgraph if $E(H) = V(H)^2 \cap E(G)$. That is, any two vertices in $H$ are adjacent in $H$ if and only if they are adjacent in $G$. In either the permutation or graph case, the general idea is that one object contains another if the latter can be obtained from the former by deleting a subset of the vertices/entries, and the relationships between the remaining vertices/entries are unchanged.

Then, a \emph{permutation class} is a set of permutations $\mathcal{C}$ such that if $\pi \in \mathcal{C}$, then any pattern~$\sigma$ contained in $\pi$ is also contained in $\mathcal{C}$. A \emph{graph class} is a set of graphs $\mathcal{G}$ such that if~$G \in \mathcal{G}$, then any induced subgraph $H$ of $G$ is also contained in $\mathcal{G}$. It follows that given a class $\mathcal{C}$ of permutations, the set $G_{\mathcal{C}} = \{ G_{\pi} : \pi \in \mathcal{C} \}$ of inversion graphs is a class of graphs.

The class of all inversion graphs has unbounded lettericity. Indeed, we can obtain the perfect matching $m K_2$ for any integer $m \geq 1$ by taking the inversion graph of the permutation $21 \oplus 21 \oplus \dots \oplus 21 = 214365 \dots (2m)(2m-1)$, and we see that any class containing these graphs has unbounded lettericity.

\begin{theorem}[Alecu, Lozin, and de Werra~\text{\cite[Lemma 3]{alecu:the-micro-world:}}]\label{thm:matching_lettericity}
	For $m \geq 1$, the lettericity of the perfect matching $m K_2$ is $m$.
\end{theorem}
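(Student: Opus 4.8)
The plan is to establish the value $m$ by proving the two inequalities $\ell(mK_2) \le m$ and $\ell(mK_2) \ge m$ separately. The upper bound is the easy half: I would take the alphabet $\Sigma = \{a_1, \dots, a_m\}$ together with the decoder $D = \{(a_t, a_t) : 1 \le t \le m\}$, and encode $mK_2$ by the word $w = a_1 a_1 a_2 a_2 \cdots a_m a_m$. Since $D$ contains only ``diagonal'' pairs, the two occurrences of each $a_t$ become adjacent while occurrences of different letters never produce an edge; hence $\Gamma_D(w)$ is exactly $m$ disjoint copies of $K_2$, and $m$ letters suffice.

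The substance is the lower bound, for which I would argue that any lettering of $mK_2$ must use at least $m$ distinct letters. Suppose $mK_2 = \Gamma_D(w)$ for a word $w$ over an alphabet $\Sigma$. For each of the $m$ edges $e$, its two endpoints occupy two positions in $w$; let $u_e$ be the earlier position and $v_e$ the later, and write $\ell_e = w(u_e)$ and $r_e = w(v_e)$ for the corresponding letters. Because $u_e < v_e$ and the endpoints are adjacent, the definition of $\Gamma_D$ forces $(\ell_e, r_e) \in D$. The goal is to produce an injection from the edge set into $\Sigma$; the natural candidate is $e \mapsto \ell_e$, the ``left letter'' of each edge.

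To show $e \mapsto \ell_e$ is injective I would order the edges as $e_1, \dots, e_m$ so that $u_{e_1} < \cdots < u_{e_m}$, and exploit the non-adjacencies across distinct edges. The key observation is that for $i < j$ we have $u_{e_i} < u_{e_j} < v_{e_j}$, so the vertex at position $u_{e_i}$ lies to the left of the vertex at position $v_{e_j}$; as these belong to different edges they are non-adjacent, which by the definition of $\Gamma_D$ means $(\ell_{e_i}, r_{e_j}) \notin D$. If now $\ell_{e_i} = \ell_{e_j}$, this would read $(\ell_{e_j}, r_{e_j}) \notin D$, contradicting $(\ell_{e_j}, r_{e_j}) \in D$. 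Hence the left letters are pairwise distinct and $|\Sigma| \ge m$. I expect the main obstacle to be locating this argument rather than executing it: naive counting fails because a single letter encoding an independent set can cover many vertices, and the first-instinct injection sending each edge to the ordered pair $(\ell_e, r_e) \in \Sigma^2$ only yields $|\Sigma| \ge \sqrt{m}$. The crucial idea is that ordering edges by left endpoint lets one compare a left letter against a strictly later right letter, collapsing the pair-bound down to the sharp bound on single letters.
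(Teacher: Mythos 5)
Your proof is correct. Note that the dissertation does not actually prove this statement: it is quoted as an external result, cited to Alecu, Lozin, and de Werra, so there is no in-paper argument to compare against. Your two halves both check out. The upper bound via $w = a_1a_1a_2a_2\cdots a_ma_m$ with the diagonal decoder is the standard construction. For the lower bound, the injection $e \mapsto \ell_e$ works exactly as you say: ordering the edges by left endpoint gives $u_{e_i} < u_{e_j} < v_{e_j}$ for $i<j$, the endpoints at positions $u_{e_i}$ and $v_{e_j}$ lie in distinct edges of the matching and are therefore non-adjacent, so $(\ell_{e_i}, r_{e_j}) \notin D$, which is incompatible with $\ell_{e_i} = \ell_{e_j}$ since $(\ell_{e_j}, r_{e_j}) \in D$. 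Your closing remark is also the right diagnosis of why the naive pair-based count only gives $|\Sigma| \ge \sqrt{m}$; the left-endpoint ordering is precisely the idea (essentially the one in the cited source) that sharpens this to $m$. One could alternatively phrase the lower bound via the monotonicity of lettericity under induced subgraphs together with a minimal counterexample, but your direct injection is cleaner and self-contained.
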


It is well known that the class of inversion graphs also contains all paths. The class of paths must have unbounded lettericity since it contains all perfect matchings, but the exact lettericity of paths is also known.

\begin{theorem}[Ferguson \cite{ferguson:on-the-letteric:}]
	For $n \geq 3$, the lettericity of the $n$-vertex path $P_n$ is $\lfloor \frac{n+4}{3} \rfloor$.
\end{theorem}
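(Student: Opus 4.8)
The plan is to prove matching upper and lower bounds. Write $\mu=\lfloor(n+1)/3\rfloor$ for the size of a largest induced matching of $P_n$, realized by the edges $v_1v_2,\,v_4v_5,\,v_7v_8,\dots$ (consecutive chosen edges separated by one vertex), and note the identity $\lfloor(n+4)/3\rfloor=\mu+1$. For the lower bound the first step is immediate: $P_n$ contains $\mu K_2$ as an induced subgraph, so the monotonicity of lettericity under the induced subgraph relation, together with Theorem~\ref{thm:matching_lettericity}, gives $\ell(P_n)\ge\ell(\mu K_2)=\mu$. The entire difficulty is to improve this by exactly one, i.e.\ to show that the induced-matching bound is never tight for a connected path.

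To obtain this final $+1$, I would exploit a structural feature of letterings. If two vertices $x,y$ receive the same letter and occupy word-positions $p<q$, then every vertex $z$ lying outside the interval $(p,q)$ has its adjacency to $x$ and to $y$ governed by the same ordered pair of letters, so $z\sim x\iff z\sim y$; same-letter vertices thus behave like a module except possibly on what lies strictly between them. Since $P_n$ has no false twins for $n\ge 4$, this already forces a foreign vertex between any two equally-lettered vertices of an anticlique letter. More usefully, paths are claw-free, so if $x_1,x_2,x_3$ are three anticlique-letter vertices in word-order then every neighbor of $x_2$ must lie strictly between $x_1$ and $x_3$ in the word — otherwise it would be adjacent to all three pairwise-nonadjacent $x_i$, giving an induced $K_{1,3}$. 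Combining these confinement facts with the degree-$2$ structure of internal path vertices, I run a contradiction argument: assuming a lettering with only $\mu$ letters, restrict it to $\mu K_2$; by Theorem~\ref{thm:matching_lettericity} all $\mu$ letters already appear on those $2\mu$ vertices, and a modest sharpening of that lower bound (extractable from its proof) pins down how rigidly they do. The $n-2\mu$ connector vertices $v_3,v_6,\dots$ must then reuse these saturated letters while each remaining adjacent to exactly its two matched path-neighbors; tracking one connector through the confinement constraints yields either a forbidden claw, a false-twin pair, or a letter forced simultaneously to be clique and anticlique — a contradiction.

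For the upper bound I would argue by induction, establishing $\ell(P_n)\le\ell(P_{n-3})+1$ with the base cases $\ell(P_3)=\ell(P_4)=2$ checked by hand. Given a lettering of $P_{n-3}$ of a suitable structured form, I extend it to $P_n$ by appending $v_{n-2},v_{n-1},v_n$: a single fresh clique-letter encodes the terminal edge $v_{n-1}v_n$, while the attaching vertex $v_{n-2}$ reuses an already-present letter, placed at the right end of the word so that the decoder makes it adjacent to $v_{n-3}$ alone among the old vertices. The only care required is maintaining, throughout the induction, the invariant guaranteeing that such a reusable ``hook'' letter is always available; this direction is routine.

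The main obstacle is squarely the $+1$ in the lower bound. The clean induced-matching argument via Theorem~\ref{thm:matching_lettericity} stops exactly one letter short, and closing the gap requires the finer analysis of how a path's connectivity and claw-freeness interact with the positional (ordering) constraints of a lettering, as opposed to the purely set-theoretic constraints that already suffice for matchings.
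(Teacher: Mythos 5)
This theorem is quoted from Ferguson's paper and is not proved in the dissertation, so there is no internal proof to compare against; I am assessing your argument on its own. Your framing is sensible --- the identity $\lfloor(n+4)/3\rfloor=\lfloor(n+1)/3\rfloor+1=\mu+1$ is correct, the easy bound $\ell(P_n)\ge\mu$ via Theorem~\ref{thm:matching_lettericity} is fine, and your two positional observations (a vertex outside the word-interval spanned by two equally-lettered vertices cannot distinguish them; claw-freeness confines the neighbors of the middle of three equally-lettered independent vertices) are both correct. But the two steps that carry all the weight are not actually done. For the lower bound, the promised ``modest sharpening'' of Theorem~\ref{thm:matching_lettericity} that ``pins down how rigidly'' the $\mu$ letters sit on the induced matching is never stated, and it is not clear what it should say: minimum letterings of $\mu K_2$ are not structurally unique (nested clique pairs, crossing clique pairs, and mixed clique/anticlique classes all occur), and in the ambient lettering of $P_n$ a single anticlique letter may cover vertices from several matching edges together with several connectors, so ``saturated'' has no defined meaning. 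The closing sentence --- that tracking a connector yields a claw, a false-twin pair, or a clique-and-anticlique letter --- announces the outcome of a case analysis that is never performed. This is precisely the step you yourself identify as the whole difficulty, and it is absent.

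The upper bound has a concrete error rather than just an omission: the inductive step as described fails at its first application. Up to symmetry $P_4$ has exactly two $2$-letterings, namely the word $v_3v_4v_1v_2$ with letters $abab$ and decoder $\{(a,b)\}$ (classes $\{v_1,v_3\}$ and $\{v_2,v_4\}$, both anticliques), and the word $v_1v_3v_2v_4$ with letters $abab$ and decoder $\{(a,a),(b,b),(b,a)\}$ (classes $\{v_1,v_2\}$ and $\{v_3,v_4\}$, both cliques). In the first, a reused letter appended at the end is adjacent either to nothing (letter $a$) or to both of $v_1,v_3$ (letter $b$); in the second, to all four vertices or to both of $v_3,v_4$. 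In no case is it adjacent to a single path endpoint, so the scheme ``fresh clique letter on $v_{n-1}v_n$, reused letter on $v_{n-2}$ at the right end of the word'' cannot produce a $3$-lettering of $P_7$ from a $2$-lettering of $P_4$. A valid extension does exist --- for instance, let the fresh letter encode the \emph{anticlique} $\{v_5,v_7\}$ with one occurrence inserted just after $v_4$ and the other prepended, and let $v_6$ reuse $b$ at the front, giving the word $v_6v_7v_3v_4v_5v_1v_2$ with letters $bcabcab$ and decoder $\{(a,b),(b,c)\}$ --- but designing and maintaining such a structured word through the induction (together with the missing base case $P_5$) is the actual content of the construction, not a routine afterthought. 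So both directions have genuine gaps; the skeleton is plausible but the proof is not there.
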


It is now understood exactly when a class of inversion graphs has bounded lettericity, and it can now be very elegantly stated given our previous discussion of geometric grid classes. This connection between geometric griddability and lettericity is highlighted in the following result, demonstrating that these two concepts capture essentially the same information about their respective objects.

\begin{theorem}[Alecu, Ferguson, Kant\'e, Lozin, Vatter, and Zamaraev~\cite{alecu:letter-graphs-a:iff}]\label{thm:gg_equals_lettericity}
	The permutation class~$\mathcal{C}$ is geometrically griddable if and only if the corresponding graph class $G_{\mathcal{C}}$ has bounded lettericity.
\end{theorem}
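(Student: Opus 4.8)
The plan is to prove both implications through a single dictionary between letterings of $G_\pi$ and partitions of the entries of $\pi$ into monotone subsequences. Since vertices are labelled by value and edges record inversions, a clique in $G_\pi$ is exactly a decreasing subsequence of $\pi$ and an independent set is exactly an increasing subsequence. Thus in any lettering of $G_\pi$, a letter encoding a clique names a decreasing run of entries and a letter encoding an independent set names an increasing run; a $k$-lettering of $G_\pi$ is therefore the same data as a partition of the plot of $\pi$ into $k$ monotone pieces, together with a prescription, recorded by the decoder, of how any two pieces interact. Geometric griddability says precisely that $\pi$ can be cut into monotone pieces laid out on a fixed standard figure, so the two notions should match once the interaction data on either side is reconciled.

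For the forward direction, suppose $\mathcal{C} \subseteq \Geom(M)$. I would first invoke the structure theory of geometric grid classes \cite{albert:geometric-grid-:}: after refining $M$, its cells admit a consistent orientation, furnishing for every $\pi \in \Geom(M)$ and every $M$-drawing of $\pi$ a canonical order in which to read the entries. Assign to each entry the letter naming the cell in which it lies, so the alphabet has size equal to the (constant) number of cells of the refined matrix. The decisive point is that two entries form an inversion if and only if the left one is higher: for entries in cells differing in both row and column this is settled by the cells alone, while for entries sharing a row or a column it is settled by the consistent reading order. Reading this off gives a single decoder $D$, independent of $\pi$, with $\Gamma_D(w_\pi)\cong G_\pi$; hence $\ell(G_\pi)$ is bounded by the number of cells and $G_{\mathcal{C}}$ has bounded lettericity.

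For the converse, suppose $\ell(G_\pi)\le k$ for all $\pi\in\mathcal{C}$, and fix for each $\pi$ a lettering over at most $k$ letters with decoder $D$. By the dictionary the entries split into at most $k$ monotone runs, and for any two runs the decoder forces their interaction to be one of a few types realizable on a standard figure: the runs may be completely crossing, completely non-crossing, or form a staircase (half-graph), the last of which is exactly the interaction produced by two cells sharing a row or a column. Each decoder type therefore determines a gridding matrix, and since there are only finitely many decoders on at most $k$ letters, only finitely many gridding types occur across $\mathcal{C}$; as a finite union of geometrically griddable classes is again geometrically griddable (arrange the constituent matrices in block-diagonal position), $\mathcal{C}$ is geometrically griddable.

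The main obstacle sits in two places. In the forward direction it is the well-definedness of the reading order for entries sharing a row or a column: the naive choices of position order or value order fail, as the one-row matrix $\left(\begin{smallmatrix}1&1\end{smallmatrix}\right)$ already illustrates, where value order is needed for the shared row and in a larger matrix conflicts with the position order demanded by shared columns; it is exactly the consistent-orientation refinement of \cite{albert:geometric-grid-:} that reconciles these demands. In the converse the delicate step is upgrading the per-permutation rigid interactions into a single gridding matrix valid for the entire infinite class, and in particular ruling out classes that are monotone- but not geometrically-griddable; the guiding principle is that any obstruction to geometric griddability forces arbitrarily large substructures whose inversion graphs have unbounded lettericity, with the perfect matchings of Theorem~\ref{thm:matching_lettericity} as the prototypical witnesses.
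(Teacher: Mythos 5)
Your forward direction is essentially the argument the paper itself gives: it is Proposition~\ref{prop:lettericity_bound_on_geom} together with the expansion of an arbitrary $0/\pm1$ matrix into a partial multiplication matrix, with the cell alphabet, the sign-induced reading order, and a decoder depending only on the refined matrix. That half is fine. Note, however, that the paper explicitly does \emph{not} prove Theorem~\ref{thm:gg_equals_lettericity}; it proves only this forward inequality and defers the rest to the cited source, so the burden of your proposal falls entirely on the converse --- and that is where there is a genuine gap.

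The converse as written does not go through. A $k$-lettering of $G_{\pi}$ gives a partition of the entries into at most $k$ monotone subsequences \emph{plus an abstract linear order} (the word order) such that adjacency is decoded from letter pairs and word order. Your ``dictionary'' correctly classifies the induced bipartite graphs between two letter classes (complete, empty, or half-graph), but that is information about the inversion graph only; it does not by itself locate the entries of $\pi$ on a standard figure, because the word order is not a priori the position order, the value order, or any geometrically meaningful order, and reconciling it with the plot of $\pi$ is precisely the hard content of the theorem. The concluding heuristic is also wrong in substance: perfect matchings ($mK_2$, Theorem~\ref{thm:matching_lettericity}) and their complements only witness the failure of \emph{monotone} griddability, whereas the real difficulty is the gap between monotone and geometric griddability. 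For instance, the monotone grid class of $\left(\begin{smallmatrix}1&1\\1&1\end{smallmatrix}\right)$ (whose cell graph contains a cycle) is monotone griddable but not geometrically griddable, and the obstructions living inside such classes are increasing-oscillation/long-path--type configurations, not matchings; ruling these out from a bounded-lettericity class requires a separate argument (this is where the cited proof does its real work). As it stands, your converse establishes at best that $\mathcal{C}$ is monotone griddable, which is strictly weaker than the claim.
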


Though not central to this work, we mention that both geometric grid classes and $k$-letter graphs, (for a fixed integer $k$), have the important property of being \emph{labelled well-quasi ordered}~\cite{atminas:labelled-induce:, brignall:labelled-well-q:}. In particular, this property implies that these classes can be defined by a finite set of obstructions. In other words, there is a finite set of the respective objects such that an object is in the class if and only if it does not contain any of the objects in that set.

We do not include a full proof of either direction of Theorem~\ref{thm:gg_equals_lettericity}, however the following discussion will make clear that these notions are intimately related. This will be done by proving Proposition~\ref{prop:lettericity_bound_on_geom}. Before that, we must address some minor technicalities.

A matrix $M$ of size $s \times t$ is called a \emph{partial multiplication matrix} if there exists \emph{column} and \emph{row signs}
\[
c_1, \dots, c_s, r_1, \dots, r_t \in \{ 1, -1\}
\]
such that if $M(i,j)$ is nonzero then it is equal to $c_i r_j$. These matrices are necessarily $0/ \pm 1$ matrices, and the correspondence between their geometric grid classes and letter graphs can be much more simply stated than for $0/ \pm 1$ matrices in general.

Fortunately, there is a simple way to turn any $0/ \pm 1$ matrix into a partial multiplication matrix such that the corresponding geometric grid class is preserved. Indeed, we can `expand' each entry of a matrix using the substitution rules
\[
0 \rightarrow \left( \begin{matrix}
	0 & 0 \\ 0 & 0 
\end{matrix} \right), \hspace{3mm} 1 \rightarrow \left( \begin{matrix}
	0 & 1 \\ 1 & 0 
\end{matrix} \right), \text{ and } -1 \rightarrow \left( \begin{matrix}
	-1 & 0 \\ 0 & -1 
\end{matrix} \right).
\]
For example, the matrix $\left( \begin{smallmatrix}
	1 & 1 \\ 1 & -1
\end{smallmatrix} \right)$ can be expanded to the partial multiplication matrix
\[
\begin{pmatrix}
	0 & 1 & 0 & 1 \\
	1 & 0 & 1 & 0 \\
	0 & 1 & -1 & 0 \\
	1 & 0 & 0 & -1
\end{pmatrix},
\]
and their standard figures only differ in scale. This can be seen in Figure~\ref{fig:scale_to_pmm}, and we note that every matrix obtained in this manner is indeed a partial multiplication matrix as witnessed by the column and row signs $c_k = (-1)^{k}$ and $r_{\ell} = (-1)^{\ell}$, respectively. 

\begin{figure}[h]
\begin{center}
	\begin{tikzpicture}[scale=0.5]
	
		\draw[step=1cm,very thin] (0,0) grid (2,2);
		
		\draw[line cap = round] (0,0) to (1,1);
		\draw[line cap = round] (0,1) to (1,2);
		\draw[line cap = round] (1,1) to (2,2);
		\draw[line cap = round] (1,1) to (2,0);
				
	\end{tikzpicture}
	\hspace{10mm}
	\begin{tikzpicture}[scale=1]
	
		\draw[step=0.5cm,very thin] (0,0) grid (2,2);
		
		\draw[line cap = round] (0,0) to (1,1);
		\draw[line cap = round] (0,1) to (1,2);
		\draw[line cap = round] (1,1) to (2,2);
		\draw[line cap = round] (1,1) to (2,0);
				
	\end{tikzpicture}
\end{center}
\caption{Scaling the standard figure of a $0/ \pm 1$ matrix to that of a partial multiplication matrix.}
\label{fig:scale_to_pmm}
\end{figure}
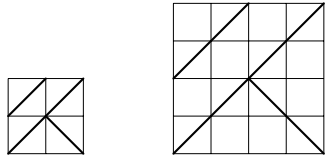

We are now able to state the result we will prove.

\begin{proposition}[Alecu, Lozin, De Werra, and Zamaraev~\cite{alecu:letter-graphs-a:}]\label{prop:lettericity_bound_on_geom}
	Suppose that $M$ is a partial multiplication matrix. If $\pi \in \Geom(M)$, then 
	\[
	\ell(G_{\pi}) \leq \sum_{i,j} |M(i,j)|.
	\]
\end{proposition}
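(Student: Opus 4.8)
The plan is to exhibit an explicit lettering of $G_{\pi}$ over an alphabet with exactly $\sum_{i,j}|M(i,j)|$ letters. First I would fix an $M$-drawing of $\pi$, which exists since $\pi\in\Geom(M)$, and introduce one letter $a_{ij}$ for each nonzero cell $(i,j)$ of $M$; the number of such letters is precisely $\sum_{i,j}|M(i,j)|$, so the alphabet has the desired size. Each entry of $\pi$ lies on exactly one segment of the standard figure, and I label the corresponding vertex of $G_{\pi}$ by the letter of the cell containing it. It then remains to choose a linear order on the vertices (the positions of a word $w$) together with a decoder $D$ so that $\Gamma_D(w)=G_{\pi}$, recalling that two vertices are adjacent exactly when their points determine a line of negative slope.

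The key point is that adjacency should not be read off in index order but in an order adapted to the grid. Two points lying in cells that share neither a row nor a column have both their horizontal and their vertical orders fixed by their cells, so their adjacency depends only on the ordered pair of cells; two points in a common cell lie on one monotone segment and hence form a clique when $M(i,j)=-1$ and an anticlique when $M(i,j)=1$, which I record by placing $(a_{ij},a_{ij})\in D$ exactly when $M(i,j)=-1$. The only delicate comparisons are between points sharing a row but not a column, or a column but not a row: for a same-row pair the adjacency is determined by the two columns together with the relative heights of the points, and for a same-column pair by the two rows together with the relative horizontal positions. I would therefore take the word order to be a linear order $\prec$ that, restricted to each row, sorts the points by height and, restricted to each column, sorts them by horizontal position, with the direction of each sort chosen according to the row sign $r_j$ and the column sign $c_i$. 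Given such a $\prec$, the decoder $D$ can be specified cell-pair by cell-pair from the geometry just described, and a direct check then yields $\Gamma_D(w)=G_{\pi}$.

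The main obstacle is showing that such a linear order $\prec$ exists at all. The within-row (height) and within-column (horizontal) requirements together define a binary relation on the points, and this relation must be proved acyclic before it can be extended to a linear order. A violating cycle would alternate between row-comparisons and column-comparisons around a collection of cells, and excluding this is exactly where the partial multiplication hypothesis $M(i,j)=c_ir_j$ enters: the column and row signs prescribe consistent sort directions, so that the induced segment orientations in the cells cannot close up such a cycle. I expect this acyclicity argument—equivalently, the verification that the sign-induced height and width orders are mutually compatible—to be the crux of the proof. Once it is in hand, the well-definedness of $D$ and the identity $\Gamma_D(w)=G_{\pi}$, and hence the bound $\ell(G_{\pi})\le\sum_{i,j}|M(i,j)|$, follow by checking the handful of cell-position cases (same cell, shared row, shared column, and otherwise) catalogued above.
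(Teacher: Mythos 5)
Your plan is the paper's plan: one letter per nonzero cell, vertices labelled by cell, a word order that sorts each row by height and each column by horizontal position according to the signs, and a decoder built case by case (same cell, shared row, shared column, neither). The decoder cases you list are exactly the ones the paper checks. The one thing you do not supply is the one thing that actually needs an argument: you write that the existence of the linear order $\prec$ is ``the crux'' and that the partial multiplication hypothesis ``cannot close up such a cycle,'' but this is an assertion of the conclusion, not a proof. As written, the proposal has a hole precisely at the step you identify as the main obstacle.

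The paper closes this hole not with an acyclicity argument but with an explicit potential function, and it is worth seeing why that works. Use the column sign $c_k$ to orient every segment in column $k$ (rightward if $c_k=1$, leftward if $c_k=-1$); using the row signs instead gives the \emph{same} orientations, and this is exactly where $M(k,\ell)=c_k r_\ell$ is used --- on an increasing segment the two prescriptions agree because $c_k=r_\ell$, and on a decreasing one because $c_k=-r_\ell$. Call the endpoint from which each arrow originates the base point, and let $d(p)$ be the Euclidean distance from $p$ to the base point of its segment. Since segments have slope $\pm 1$, sorting a row by height in the direction of $r_\ell$ and sorting a column by horizontal position in the direction of $c_k$ are both the same as sorting by $d$; after a small perturbation making all distances distinct, ordering the points by $d$ is a linear order refining both families of constraints simultaneously. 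So no cycle can occur --- every constraint $p\prec q$ forces $d(p)<d(q)$ --- and the ``collection of cells'' cycles you worry about never arise; the only possible obstruction is a conflict between the two sort directions \emph{inside a single cell}, which is precisely what the partial multiplication property rules out. With $\prec$ in hand your decoder case analysis goes through verbatim and yields the bound.
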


Thus, if $M$ is not necessarily a partial multiplication matrix, the expansion process depicted in an example in Figure~\ref{fig:scale_to_pmm} shows that the lettericity of each inversion graph in $\Geom (M)$ is at most \emph{double} the number of nonzero entries of $M$.

The general idea behind this result is that if we fix an $M$-drawing of a permutation on the standard figure of a partial multiplication matrix $M$, then we can encode its inversion graph in a word such that all of the entries in the same cell are encoded by the same letter. So we begin by supposing that~$\Pi$ is a fixed $M$-drawing of a permutation $\pi$ on the standard figure of the partial multiplication matrix $M$ with fixed column and row signs~$(c_k)$ and~$(r_{\ell})$, respectively. 

We then use the column or row signs to give an orientation to the line segments in each of the cells containing one. If $c_k = 1$ we direct each of the line segments in column $k$ to the right, or to the left if $c_k = -1$. If we instead use the row signs, we direct the line segments upwards in row $\ell$ if $r_{\ell} = 1$, and downwards if $r_{\ell} = -1$, but the orientations obtained will be the same. This can clearly be seen in Figure~\ref{fig:orienting_sf}, continuing with the example from above. 

\begin{figure}[h]
\begin{center}
	\def\nodesize{2.5}
	\begin{tikzpicture}[scale=1.5]
	
		\draw[step=1cm, very thin] (0,0) grid (3,2);
		
		\draw[->, thin] (0.1,-0.1) to (0.9,-0.1);
		\node[] at (0.5,-0.3) {\small$c_1 = 1$};
		\draw[->, thin] (1.1,-0.1) to (1.9,-0.1);
		\node[] at (1.5,-0.3) {\small$c_2 = 1$};
		\draw[->, thin] (2.9,-0.1) to (2.1,-0.1);
		\node[] at (2.5,-0.3) {\small$c_3 = -1$};
		
		\draw[->, thin] (-0.1,0.1) to (-0.1,0.9);
		\node[anchor = east] at (-0.25,0.5) {\small$r_1 = 1$};
		\draw[->, thin] (-0.1,1.9) to (-0.1,1.1);
		\node[anchor = east] at (-0.25,1.5) {\small$r_2 = -1$};
		
		\draw[->, line cap = round] (0,0) to (0.97,0.97);
		\draw[->, line cap = round] (1,2) to (1.97,1.03);
		\draw[->, line cap = round] (3,2) to (2.03,1.03);
		\draw[->, line cap = round] (3,0) to (2.03,0.97);
		
		\node [draw, circle, fill, minimum size = \nodesize, label = \tiny$1$] at (0.2,0.2) {};
		\node [draw, circle, fill, minimum size = \nodesize, label = \tiny$3$] at (0.65,0.65) {};
		
		\node [draw, circle, fill, minimum size = \nodesize, label = \tiny$8$] at (1.28,1.72) {};
		\node [draw, circle, fill, minimum size = \nodesize, label = \tiny$6$] at (1.6,1.4) {};
		\node [draw, circle, fill, minimum size = \nodesize, label = \tiny$5$] at (1.85,1.15) {};
		
		\node [draw, circle, fill, minimum size = \nodesize, label = \tiny$4$] at (2.25,0.75) {};
		\node [draw, circle, fill, minimum size = \nodesize, label = \tiny$7$] at (2.62,1.62) {};
		
		\node [draw, circle, fill, minimum size = \nodesize, label = \tiny$2$] at (2.7,0.3) {};
		
	\end{tikzpicture}
\end{center}
\caption{Orienting the cells of a standard figure according to row and column signs.}
\label{fig:orienting_sf}
\end{figure}
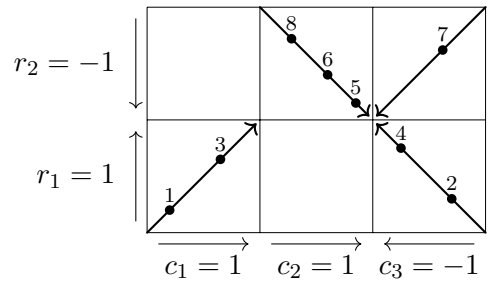

Now we define the \emph{base point} of each of these oriented line segments to be the endpoint of the line from which the arrow originates. Next, we consider the euclidean distance $d(i)$ of each point~$i \in \Pi$ from the base point in the cell in which it lies. That is, every $d(i)$ is between 0 and $\sqrt{2}$, and it is clear that for any two entries sharing a column or row of the standard figure that their distances are pairwise unique. For entries not sharing a row or column, some distances may be equivalent. We can avoid this situation by slightly perturbing the points in $\Pi$, so we assume without loss of generality that all of the distances are, in fact, pairwise unique. That is, emphasizing that we are identifying the points in $\Pi$ with the values of their corresponding entries in the permutation $\pi$, there exists a permutation $\tau$ of the values such that 
\[
0 < d(\tau(1)) < d(\tau(2)) < \dots < d(\tau(n)) < \sqrt{2}.
\]
From our running example in Figure~\ref{fig:orienting_sf}, we obtain $\tau = 18276345$. We remark that this ordering of the entries is not the only satisfactory one. All that matters is that the entries in each row and column are in the proper order according to their row or column sign, respectively. For example, since $r_1 = 1$, we read the entries in the first row from bottom to top, and so we must have that $\tau$ contains the substring $1234$. Further, since $r_2 = -1$, we read the entries drawn in that row from top to bottom, and thus $\tau$ must contain the substring $8765$. Likewise, since $c_3 = -1$, it must also contain the substring $274$. It should now be evident that these column and row relationships are captured by the distance-to-base-point ordering.

It is this permutation $\tau$ that gives us a viable ordering of the entries of $\pi$ with which to encode $G_{\pi}$ in a word. So, we first define a cell alphabet 
\[
\Sigma = \{ a_{ij} : M(i,j) \neq 0 \},
\]
and then the letters of our word $w$ to be
\[
w(i) = a_{k\ell} \hspace{1mm} \text{ if the entry with value $\tau(i)$ is plotted in the cell $(k, \ell)$ in the $M$-drawing $\Pi$.}
\]
Returning again to our running example, we obtain $w = a_{11} a_{22} a_{31} a_{32} a_{22} a_{11} a_{31} a_{22}$. Given this word $w$, it is straightforward to construct a proper decoder $D$ such that~$\Gamma_D(w) = G_{\pi}$.
\begin{itemize}
	\item Regarding entries within the same cell:
	\begin{itemize}
		\item If $M(k, \ell) = 1$, then the entries in the cell $(k,\ell)$ form an anticlique, and thus $(a_{k \ell}, a_{k \ell}) \notin D$.
		\item If $M(k, \ell) = -1$, then the entries in the cell $(k,\ell)$ form a clique, and thus $(a_{k \ell}, a_{k \ell}) \in D$.
		\item[E.g.,] we require $(a_{22}, a_{22}), (a_{31}, a_{31}) \in D$ for our running example.
	\end{itemize}
	\item Regarding cells not sharing a row or column:
	\begin{itemize}
		\item If $k<k'$ and $\ell<\ell'$, then no entry in the cell $(k, \ell)$ forms an inversion with an entry in the cell $(k', \ell')$. Thus $(a_{k \ell}, a_{k' \ell'}), (a_{k' \ell'}, a_{k \ell}) \notin D$.
		\item If $k<k'$ and $\ell>\ell'$, then every entry in the cell $(k, \ell)$ forms an inversion with every entry in the cell $(k',\ell')$. Thus $(a_{k \ell}, a_{k' \ell'}), (a_{k' \ell'}, a_{k \ell}) \in D$.
		\item[E.g.,] we require $(a_{22}, a_{31}), (a_{31}, a_{22}) \in D$ for our running example.
	\end{itemize}
	\item Regarding distinct cells sharing either a row or a column:
	\begin{itemize}
		\item Suppose $k < k'$. If $r_{\ell} = 1$ then the entries read from bottom to top in row $\ell$ are encoded from left to right in $w$, and thus $(a_{k' \ell}, a_{k \ell}) \in D, (a_{k \ell}, a_{k' \ell}) \notin D$. If $r_{\ell} = -1$ then the entries read from top to bottom in row $\ell$ are encoded from left to right in $w$, and thus $(a_{k \ell}, a_{k' \ell}) \in D, (a_{k' \ell}, a_{k \ell}) \notin D$.
		\item Suppose $\ell < \ell'$. If $c_k = 1$ then the entries read from left to right in column $k$ are encoded from left to right in $w$, and thus $(a_{k \ell'}, a_{k \ell}) \in D, (a_{k \ell}, a_{k \ell'}) \notin D$. If $c_k = -1$ then the entries read from right to left in column $k$ are encoded from left to right in $w$, and thus $(a_{k \ell}, a_{k \ell'}) \in D, (a_{k \ell'}, a_{k \ell}) \notin D$.
		\item[E.g.,] for our running example, $r_1 = 1$ requires we have $(a_{31},a_{11}) \in D$, $r_2 = -1$ requires we have $(a_{22}, a_{32}) \in D$, and $c_3 = -1$ requires we have $(a_{31}, a_{32}) \in D$.
	\end{itemize}
\end{itemize}

It should now be evident that there is a very close connection between letter graphs and geometric grid classes of permutations. We again point the reader toward \cite{alecu:letter-graphs-a:iff} for a further discussion of this connection and a full proof of Theorem~\ref{thm:gg_equals_lettericity}. We will return to this connection in Section~\ref{sec:lettericity_of_perms}, where we will make specific use of Proposition~\ref{prop:lettericity_bound_on_geom} to answer questions about the extremal and expected lettericity of inversion graphs.


\section{Lettericity of Almost All Graphs}\label{sec:BOTLOG}

\subsection{Petkov\v{s}ek's Problem 3}

This section closely follows the paper Mandrick and Vatter~\cite{mandrick:bounds-on-the-l:}.

In Section~5.3 of his paper~\cite{petkovsek:letter-graphs-a:} introducing the letter graph construction and lettericity, Petkov\v{s}ek shows that there are $n$-vertex graphs with lettericity at least $0.707n$, and then Problem~3 of his conclusion asks to
``find the maximal possible lettericity of an $n$-vertex graph, and the corresponding extremal graphs.'' Despite the significant recent interest in lettericity, this question remained unaddressed until~\cite{mandrick:bounds-on-the-l:}. The results in this paper demonstrate that the answer to the question is much greater than~$0.707n$. In particular, the greatest lettericity of an $n$-vertex graph lies between approximately $n-2\log_2 n$ and $n-\tfrac{1}{2}\log_2 n$.

Every $n$-vertex graph is an $n$-letter graph, as one can simply encode each vertex with its own letter and then add the appropriate pairs to the decoder. From this perspective, one `saves' letters by encoding multiple vertices with the same letter. It isn't difficult to see that the first and last vertices can always be encoded by the same letter, provided that no other vertices are encoded by that letter, and thus we can always save at least one letter. In other words, we have the elementary bound $\ell(G) \leq n-1$ for all $n$-vertex graphs~$G$. This gives rise to the following questions that we look to answer in this paper:
\begin{itemize}
	\item How many letters can we save in all graphs?
	\item How many letters can we expect to save in a random graph?
\end{itemize}

In the next subsection, we use a Ramsey-type approach to show that we can save at least $k \approx \tfrac{1}{2} \log_2 n$ letters for every $n$-vertex graph, and thus the lettericity of every $n$-vertex graph is bounded above by approximately $n-\tfrac{1}{2} \log_2 n$. Then in the following subsection, we show almost all $n$-vertex graphs have lettericity at least $n-(2\log_2 n+2\log_2\log_2 n)$.

Before getting to these results, the following proposition outlines the construction that will be used throughout this section. We establish the upper bound in by exploring ways to find induced subgraphs satisfying the hypotheses of Proposition~\ref{mainprop}. Then in establishing our lower bound, we show that for almost all graphs, the only way to save letters is by finding induced subgraphs that satisfy these hypotheses.

\begin{proposition}
\label{mainprop}
Suppose~$G$ is a graph with $n$ vertices containing an induced subgraph~$H$ with~$2k$ vertices that is a $k$-letter graph for a word of the form
\[
	w = \ell_1 \hspace{1mm} \ell_2 \dots \ell_k \hspace{1mm} \ell_{\pi(1)} \hspace{1mm} \ell_{\pi(2)} \dots \ell_{\pi(k)}
\]
for some permutation $\pi$ of $\{1, \dots, k \}$. Then, $w$ can be extended to a lettering of~$G$ by inserting new letters into the middle of $w$, and thus $\ell(G) \leq n-k$. 
\end{proposition}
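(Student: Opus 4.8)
The plan is to take the given decoder for $H$ and enlarge it by giving each of the $n-2k$ vertices of $G$ outside $H$ its own brand-new letter, inserting all of these letters into the gap between the two halves of $w$. Because each fresh letter is used exactly once, the adjacencies it must record can be chosen independently, and the whole content of the argument is that the ordered-pair structure of a decoder provides exactly enough room to record them.

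Concretely, I would let $D_0$ be a decoder on $\Sigma_0 = \{\ell_1, \dots, \ell_k\}$ with $\Gamma_{D_0}(w) = H$, fix an enumeration $v_1, \dots, v_{n-2k}$ of $V(G) \setminus V(H)$, assign $v_b$ a new letter $m_b$, and form
\[
w' = \ell_1 \cdots \ell_k \; m_1 m_2 \cdots m_{n-2k} \; \ell_{\pi(1)} \cdots \ell_{\pi(k)}.
\]
I then extend $D_0$ to a decoder $D$ on $\Sigma_0 \cup \{m_1, \dots, m_{n-2k}\}$ by (i) keeping every pair of $D_0$; (ii) for $a < b$, placing $(m_a, m_b) \in D$ exactly when $v_a v_b \in E(G)$; (iii) for each $i$, placing $(\ell_i, m_b) \in D$ exactly when $v_b$ is adjacent in $G$ to the vertex encoded by the \emph{first} occurrence of $\ell_i$ in $w$; and (iv) for each $i$, placing $(m_b, \ell_i) \in D$ exactly when $v_b$ is adjacent to the vertex encoded by the \emph{second} occurrence of $\ell_i$.

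The verification that $\Gamma_D(w') = G$ splits into three cases. Two vertices of $H$ keep both their relative order and their governing pairs from $D_0$, so their adjacency is unchanged, matching $G[V(H)] = H$. Two new vertices $v_a, v_b$ with $a < b$ (so $v_a$ precedes $v_b$ in $w'$) are joined iff $(m_a, m_b) \in D$, which is exactly $v_a v_b \in E(G)$. The crucial case is a new vertex $v_b$ against an $H$-vertex: since $v_b$ sits strictly between the two halves, the first occurrence of each $\ell_i$ lies to its left and the second to its right, so adjacency to the first copy is governed by $(\ell_i, m_b)$ while adjacency to the second copy is governed by the \emph{distinct} ordered pair $(m_b, \ell_i)$. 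One should also note that each of these added pairs is consulted for only the intended vertex pair, so no conflicts arise.

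The point I would emphasize, and what I expect to be the only real content of the proof, is precisely this separation: the two vertices sharing a letter $\ell_i$ lie on opposite sides of $v_b$, so the two potential edges from $v_b$ to them are controlled by two different entries of the decoder and can be set independently. This is exactly why $w$ must present each letter once before and once after the insertion gap. Everything else is bookkeeping, and since $w'$ uses $k + (n-2k) = n-k$ letters, we conclude $\ell(G) \le n-k$.
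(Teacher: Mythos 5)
Your construction is the same as the paper's: give each vertex outside $H$ its own fresh letter, insert them all in the gap between the two halves of $w$, and use the ordered pairs $(\ell_i, m_b)$ versus $(m_b, \ell_i)$ to independently control adjacency to the left and right occurrences of $\ell_i$. The argument is correct and matches the paper's proof essentially verbatim, including the key observation about why the two halves of $w$ must straddle the inserted letters.
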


\begin{proof}
Suppose~$H$ and $w$ are as in the hypothesis and that $D_1 \subseteq \{\ell_1, \ell_2, \dots, \ell_k \}^2$ is a decoder for which
\[
\Gamma_{D_1}(w) = H.
\]

Label the vertices of $G-H$ as $v_1,v_2,\dots,v_{n-2k}$ and let $\lambda_1$, $\dots$, $\lambda_{n-2k}$ be a set of distinct new letters disjoint from $\ell_1$, $\dots$, $\ell_k$. By choosing as our decoder the set $D_2 = \{ (\lambda_i, \lambda_j) : 1 \leq i < j \leq n-2k \textnormal{ and } v_i v_j \in E(G) \}$, we see immediately that
\[
\Gamma_{D_2}(\lambda_1 \lambda_2 \dots \lambda_{n-2k}) = G-H.
\]
Next define the word 
\[
w' = \ell_1 \dots \ell_k \hspace{1mm} \lambda_1 \lambda_2 \dots \lambda_{n-2k} \hspace{1mm} \ell_{\pi(1)} \dots \ell_{\pi(k)},
\]
and for each $1\le i\le k$, let $x_i$ and $y_i$ be the vertices encoded by the left and right instances of $\ell_i$ in~$w$, respectively. Now define the sets 
\begin{align*}
    D_x & = \{(\ell_i, \lambda_j):
    	\text{$1\le i\le k$, $1\le j\le n-2k$, and $x_i v_j \in E(G)$}\},\\
    D_y & = \{(\lambda_j, \ell_i):
    	\text{$1\le i\le k$, $1\le j\le n-2k$, and $v_j y_i \in E(G)$}\}.
\end{align*}
Letting $D = D_1 \cup D_2 \cup D_x \cup D_y$, it follows that $\Gamma_D(w') = G$, which proves the result.
\end{proof}

\subsection{Saving Letters in All Graphs}
\label{sec:upper-bound}

One way to satisfy the hypothesis of Proposition~\ref{mainprop} is to take~$H$ to be a clique or anticlique. Letting $R(k)$ denote the $k$th diagonal Ramsey number, we know that every graph on at least~$R(2k)$ vertices has such a subgraph, and so we obtain the following.

\begin{proposition}
\label{ramseyprop}
For each $k$ and any graph~$G$ on $n \geq R(2k)$ vertices,~$G$ has an induced subgraph with $2k$ vertices that is a $k$-letter graph on the word
\[
	w = \ell_1 \hspace{1mm} \ell_2 \dots \ell_k \hspace{1mm} \ell_{\pi(1)} \hspace{1mm} \ell_{\pi(2)} \dots \ell_{\pi(k)}
\]
for \emph{any} permutation $\pi$ of $\{1, \dots, k \}$. Thus, $\ell(G) \leq n - k$ by Proposition~\ref{mainprop}.
\end{proposition}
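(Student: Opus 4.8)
The plan is to invoke the defining property of the diagonal Ramsey number and then simply check that both of the extremal subgraphs it produces admit the required encoding; the work is already done by the definition of $R(2k)$ together with Proposition~\ref{mainprop}.

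First I would recall that $R(2k)$ is precisely the least integer such that every graph on $R(2k)$ vertices contains either an induced clique $K_{2k}$ or an induced anticlique $\overline{K}_{2k}$ on $2k$ vertices. Since $n \ge R(2k)$, the graph $G$ therefore contains such an induced subgraph $H$. Next I would exhibit the encoding in each case. Fix any permutation $\pi$ of $\{1,\dots,k\}$ and set
\[
	w = \ell_1\, \ell_2 \dots \ell_k\, \ell_{\pi(1)}\, \ell_{\pi(2)} \dots \ell_{\pi(k)},
\]
a word of length $2k$ over the alphabet $\Sigma = \{\ell_1, \dots, \ell_k\}$ in which each letter occurs exactly twice. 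If $H$ is a clique, I take the decoder $D = \Sigma^2$; then $(w(i), w(j)) \in D$ for every pair $i < j$, so $\Gamma_D(w) = K_{2k} = H$. If instead $H$ is an anticlique, I take $D = \varnothing$; then no pair of positions is adjacent, so $\Gamma_D(w) = \overline{K}_{2k} = H$. In either case $H$ is realized as a $k$-letter graph on a word of exactly the form demanded by Proposition~\ref{mainprop}. Finally I would apply that proposition to $G$, $H$, and $w$, concluding $\ell(G) \le n - k$.

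The only point worth flagging—and it is the reason the statement holds for \emph{any} permutation $\pi$ rather than merely for some particular one—is that the two extremal subgraphs guaranteed by Ramsey's theorem are exactly the graphs encodable with the trivial decoders $\Sigma^2$ and $\varnothing$, both of which are completely insensitive to the order in which the letters of $w$ appear. Consequently the choice of $\pi$ imposes no constraint whatsoever, and there is no real obstacle in the argument: all of the combinatorial content is supplied by the definition of $R(2k)$ and by Proposition~\ref{mainprop}.
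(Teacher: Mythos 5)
Your proof is correct and follows essentially the same route as the paper: apply the defining property of the diagonal Ramsey number $R(2k)$ to extract an induced clique or anticlique on $2k$ vertices, observe that either one is a $k$-letter graph on the prescribed word (with decoder $\Sigma^2$ or $\varnothing$, both order-insensitive, so any $\pi$ works), and conclude via Proposition~\ref{mainprop}. The paper leaves these details implicit; you have merely written them out.
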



As it is known that
\[
	\sqrt{2}^k < R(k) \leq 4^k
\]
for all $k$, Proposition~\ref{ramseyprop} implies that for all graphs on $n \approx 4^{2k}$ vertices, we can save $k \approx \tfrac{1}{4} \log_2 n$ letters. We show below that we can save twice this many letters in every~$n$-vertex graph.

\begin{theorem}
\label{mainconstruction}
For every $k$ and each graph~$G$ on $n \geq 2(k-1) + 2^{2(k-1)} + 1$ vertices,~$G$ has an induced subgraph with $2k$ vertices that is a $k$-letter graph on the word
\[
w = \ell_1 \hspace{1mm} \ell_2 \dots \ell_k \hspace{1mm} \ell_k \dots \ell_2 \hspace{1mm} \ell_1.
\]
Thus, $\ell(G) \leq n - k$ by Proposition~\ref{mainprop}.
\end{theorem}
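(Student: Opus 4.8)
The plan is to recognize that the word $w = \ell_1 \cdots \ell_k \, \ell_k \cdots \ell_1$ is exactly the instance $\pi(i) = k+1-i$ (the reversal) of the word appearing in Proposition~\ref{mainprop}. Consequently the entire content of the theorem is to locate, inside any sufficiently large graph $G$, an induced subgraph $H$ on $2k$ vertices that is lettered by this palindrome; Proposition~\ref{mainprop} then delivers $\ell(G) \le n-k$ for free, so no direct argument about extending letterings is needed here.

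First I would determine precisely which graphs $H$ are lettered by $w$. Listing the $2k$ vertices in word order as $u_1, \dots, u_k, v_k, \dots, v_1$ and assigning the letter $\ell_i$ to both $u_i$ and $v_i$, a realizing decoder exists if and only if the edge status is constant across every repeated ordered letter-pair. Checking which ordered pairs $(\ell_a, \ell_b)$ actually recur among these positions, one finds the only resulting constraints are, for every $i < j$, that $u_i \sim u_j \iff u_i \sim v_j$ and $v_i \sim u_j \iff v_i \sim v_j$. In words, both $u_i$ and $v_i$ must be homogeneous to each \emph{inner} pair $\{u_j, v_j\}$; equivalently, the $k$ pairs are nested with each inner pair forming a two-vertex module over all of the outer pairs. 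So the task reduces to finding $k$ nested pairs satisfying this homogeneity.

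Second, I would extract such nested pairs by iterated pigeonhole, working from the outermost pair inward while maintaining a shrinking \emph{candidate set} from which all not-yet-chosen vertices will be drawn. At stage $i$ I pick $u_i$ and $v_i$ from the current candidate set and then refine it to a subset to which both $u_i$ and $v_i$ are \emph{uniformly} adjacent or nonadjacent, via a pigeonhole on the at most four adjacency types. Uniformity to the entire refined set is stronger than homogeneity to each future pair, but it suffices, since every later-chosen vertex lies inside that set and hence is seen uniformly by $u_i$ and $v_i$. The innermost pair $\{u_k, v_k\}$ needs no refinement, as there are no inner pairs after it, so only $k-1$ refinement stages are required.

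The arithmetic of the refinement is where the stated threshold $2(k-1) + 2^{2(k-1)} + 1$ must be pinned down, and this is the step I expect to be the main obstacle. A naive independent split for each of $u_i$ and $v_i$ loses a factor of $4$ per stage and overshoots the bound by a constant factor; reaching exactly $2^{2(k-1)}$ requires the more careful bookkeeping of selecting the two vertices of each pair so that they behave as twins with respect to the retained set, and of accounting the $2(k-1)$ consumed pivot vertices together with the single final vertex separately from the geometric core. Once the nested structure is in hand, assigning letters by pairs and reading the decoder directly off the homogeneity conditions yields the lettering $w$, and Proposition~\ref{mainprop} completes the proof.
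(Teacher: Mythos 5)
Your setup is right: the palindrome is the case $\pi = k\cdots 21$ of Proposition~\ref{mainprop}, and your analysis of which graphs it letters is exactly correct --- the $k$ pairs must be nested, with each inner pair forming a two-vertex module over all outer vertices. The gap is in the extraction step, and it is more than the bookkeeping you flag. Your outside-in refinement, which forces each of $u_i$ and $v_i$ to be homogeneous to the \emph{entire} retained pool, needs the recursion $m_i \geq 4m_{i+1}-1$ (largest of four adjacency classes), which unwinds from $m_k \geq 2$ to $m_1 \geq \tfrac{1}{3}\bigl(5\cdot 4^{k-1}+1\bigr)$ --- strictly more than the stated $2(k-1)+2^{2(k-1)}+1$ for every $k \geq 3$. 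Worse, the repair you propose does not repair it: making $u_i$ and $v_i$ ``twins with respect to the retained set'' is the \emph{reverse} homogeneity direction (it makes the outer pair a module over the future vertices, whereas the constraint you yourself derived requires each \emph{inner} pair to be a module over the outer vertices), and in any case two vertices that are twins over a retained set of size $m$ cannot be produced by pigeonhole, since there are about $2^{m}$ possible profiles and only $m$ candidates.

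The paper's proof builds the nest in the same outermost-first order but pigeonholes in the other direction, so no candidate pool ever shrinks. By induction it has an induced subgraph $H$ on $2k$ vertices lettered by $\ell_1\cdots\ell_k\,\ell_k\cdots\ell_1$; among the $n-2k \geq 2^{2k}+1$ vertices outside $H$, two must share their adjacency profile to the $2k$ vertices of $H$, since only $2^{2k}$ profiles exist. Those two become the new \emph{innermost} pair $\ell_{k+1}$, and sharing a profile to the already-chosen vertices is precisely your condition that the inner pair be a module over the outer vertices, so the decoder extends with no further refinement. The cost at the final step, $2(k-1)$ chosen vertices plus $2^{2(k-1)}+1$ candidates, is exactly the stated threshold. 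If you reorganize your stage-$i$ step so that each newly chosen pair is homogenized only against the $2(i-1)$ vertices already picked, rather than against everything still unpicked, your argument becomes the paper's.
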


\begin{proof}
We use induction on $k$. For the base case of $k=1$, we have a graph~$G$ on $n\ge 2$ vertices, and the desired induced subgraph can be obtained by taking any two vertices.

Now suppose the result holds for some $k \geq 1$ and that~$G$ is a graph on $n \geq 2k + 2^{2k} + 1$ vertices. By our hypotheses, we have that~$G$ has an induced subgraph~$H$ that is a $k$-letter graph on the word $w = \ell_1 \hspace{1mm} \ell_2 \dots \ell_k \hspace{1mm} \ell_k \dots \ell_2 \hspace{1mm} \ell_1$, say with decoder $D$. Since there are $2^{2k} + 1$ vertices in~$G$ that are not in~$H$, the pigeonhole principle tells us that two of these vertices, call them $u$ and $v$, must agree on all of the vertices in~$H$. Let $H'$ be the induced subgraph of~$G$ on the vertex set $V(H) \cup \{ u,v \}$. 

We claim that $H'$ is a letter graph on the word $w' = \ell_1 \hspace{1mm} \ell_2 \dots \ell_k \hspace{1mm} \ell_{k+1} \hspace{1mm} \ell_{k+1} \hspace{1mm} \ell_k \dots \ell_2 \hspace{1mm} \ell_1$. For each ${1\le i\le k}$, let $x_i$ denote the vertex in~$H$ that is encoded by the left occurrence of $\ell_i$ in $w$, and similarly, let $y_i$ be the vertex that is encoded by the right occurrence of $\ell_i$, (just as in the proof of Proposition~\ref{mainprop}). Now let $X = \{ (\ell_i, \ell_{k+1}): x_i u \in E(G), (\textnormal{equivalently } x_i v \in E(G)) \}$ and $Y = \{ (\ell_{k+1}, \ell_i): u y_i \in E(G), (\textnormal{equivalently } v y_i \in E(G)) \}$. Next, let~$Z$ be the set $\{(\ell_{k+1}, \ell_{k+1}) \}$ if ${uv \in E(G)}$ and~$\varnothing$ otherwise. Then it follows that~$H'$ is a letter graph on the word~$w'$ with decoder $D' = D \cup X \cup Y \cup Z$, that is, $\Gamma_{D'} (w') = H'$. This gives the result.
\end{proof}

\subsection{Failing to Save Letters in Almost All Graphs}
\label{sec:lower-bound}

We now focus on demonstrating that almost all graphs have large lettericity, indicating that there is little room for improvement on the upper bound given in Theorem~\ref{mainconstruction}. To do so, we use the random graph $G(n, \nicefrac{1}{2})$, which is the graph on $n$ vertices where each edge appears independently with probability $\nicefrac{1}{2}$. Thus, every labeled $n$-vertex graph occurs with equal probability. We show that with probability tending to $1$ as $n\to\infty$, the lettericity of $G(n, \nicefrac{1}{2})$ is at least $n-(2\log_2 n + 2\log_2\log_2 n)$. First, we prove two results that greatly restrict the possible letterings of almost all graphs.

\begin{proposition}\label{threeverts}
For almost all graphs~$G$, no three vertices can be encoded by the same letter in a lettering of~$G$. 
\end{proposition}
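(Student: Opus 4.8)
The plan is to pair a structural constraint on same-letter vertices with a union-bound estimate over the random graph $G(n,\nicefrac12)$.

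First I would record the structural obstruction. Suppose some lettering $(w, D)$ of $G$ encodes three vertices by a common letter $a$, and list these vertices as $P_1$, $P_2$, $P_3$ according to the order in which their occurrences appear in $w$. Writing the adjacency vector of any other vertex $y$ as $(y \sim P_1,\, y \sim P_2,\, y \sim P_3)$ with a $1$ for adjacency and a $0$ for nonadjacency, I claim that this vector can never equal $010$ or $101$. To see this, suppose $y$ has letter $b$ and let $\alpha$ record whether $y$ is adjacent to an $a$-occurrence lying to its right (that is, $\alpha = 1$ iff $(b,a) \in D$) and $\beta$ whether it is adjacent to an $a$-occurrence lying to its left (so $\beta = 1$ iff $(a,b) \in D$). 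According as the occurrence of $y$ falls before $P_1$, between $P_1$ and $P_2$, between $P_2$ and $P_3$, or after $P_3$, its adjacency vector is $(\alpha,\alpha,\alpha)$, $(\beta,\alpha,\alpha)$, $(\beta,\beta,\alpha)$, or $(\beta,\beta,\beta)$, respectively; a direct inspection shows that for all $\alpha,\beta \in \{0,1\}$ none of these equals $010$ or $101$. The case $b = a$, which arises if $a$ encodes more than three vertices, yields only $(0,0,0)$ or $(1,1,1)$ and is likewise harmless. Thus the only feature of the lettering that matters is which of the three vertices is the middle occurrence $P_2$, and for the triple to be encodable there must be a choice of this middle vertex relative to which no other vertex is adjacent to exactly it among the three, and none is adjacent to exactly the other two.

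Next I would make this probabilistic. Fix a triple $T$ and a designated middle $m \in T$, and let $B_{T,m}$ be the event that in $G(n,\nicefrac12)$ no vertex outside $T$ is adjacent to exactly $m$ within $T$ (pattern $010$) and none is adjacent to exactly the other two vertices of $T$ (pattern $101$). The three potential edges from an external vertex into $T$ are independent fair coins, so such a vertex realizes one of these two forbidden patterns with probability $\tfrac18 + \tfrac18 = \tfrac14$, independently over the $n-3$ external vertices; hence $\Pr{B_{T,m}} = \big(\tfrac34\big)^{n-3}$. By the structural step, if any lettering of $G$ encodes the three vertices of $T$ by one letter, then $B_{T,m}$ must hold for the middle vertex $m$ of that lettering, so the probability that $T$ is encodable by a single letter is at most $3\big(\tfrac34\big)^{n-3}$.

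Finally I would union-bound over all triples and the three choices of middle vertex, giving
\[
\binom{n}{3}\cdot 3 \cdot \Big(\tfrac34\Big)^{n-3}
\]
as an upper bound for the probability that $G(n,\nicefrac12)$ admits any lettering with a letter encoding three vertices. Since the exponential factor $\big(\tfrac34\big)^{n-3}$ dominates the polynomial $\binom n3$, this bound tends to $0$ as $n \to \infty$, which is exactly the claim. I expect the main obstacle to be the structural step: one must check that the decoder relations of the external vertex---its own $\alpha$ and $\beta$, which may vary from vertex to vertex---can never produce the patterns $010$ or $101$, so that the forbidden configuration depends only on the identity of the middle vertex and not on the full word order or on the decoder. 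Once that is established, the counting is routine.
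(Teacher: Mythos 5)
Your proof is correct and follows essentially the same route as the paper: identify that the only obstruction is an external vertex adjacent to exactly the middle vertex or exactly the two outer ones (probability $\nicefrac{1}{4}$ each trial), then union-bound $O(n^3)\cdot(\nicefrac{3}{4})^{n-3}\to 0$. Your explicit $\alpha,\beta$ derivation of which adjacency vectors a same-decoder letter can realize is a slightly more careful justification of the structural step than the paper gives, but the argument is the same.
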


\begin{proof}
Letting $G=G(n, \nicefrac{1}{2})$, we show that the probability that three vertices can be encoded with the same letter in a lettering of~$G$ tends to $0$ as $n \to \infty$. Assume that we have a lettering $w$ of~$G$ using the alphabet $\Sigma$, and that three vertices are encoded by the letter $a \in \Sigma$. Then there exist four possibly empty words $w_1$, $w_2$, $w_3$, and $w_4$, such that
	\[
	w = w_1 \hspace{1mm} a \hspace{1mm} w_2 \hspace{1mm} a \hspace{1mm} w_3 \hspace{1mm} a \hspace{1mm} w_4.
	\]

Let $x$, $y$ and $z$ be the vertices encoded by the left, middle and right instances of $a$ in~$w$, respectively. If a vertex is encoded by the instance of some letter in $w_1$ or $w_4$, then it must agree on each of the vertices $x,y$ and $z$. If a vertex is encoded in one of $w_2$ or $w_3$, then it either must agree on $y$ and $z$, or on $x$ and $y$, respectively.

For any vertex $v \in V(G) \setminus \{x,y,z\}$, there are four possible ways it can agree or disagree with~$x$,~$y$, and~$z$: it either agrees on all three vertices, agrees on~$\{x,y\}$ but not on~$z$, agrees on~$\{y,z\}$ but not on~$x$, or agrees on~$\{x,z\}$ but not on~$y$. Since $G=G(n, \nicefrac{1}{2})$, these four possibilities are equally likely, and only the last option prevents~$w$ from having a place in which $v$ can be encoded. Thus, the probability that~$v$ can be encoded in~$w$ is $\nicefrac{3}{4}$. It follows that the probability that every vertex in $V(G)\setminus\{x,y,z\}$ can be encoded in $w$ is~$(\nicefrac{3}{4})^{n-3}$.
	
Now let $A_{(x,y,z)}$ be the event that the vertices $x$, $y$ and $z$  can be encoded, in that order, by the same letter in a lettering of~$G$. We see from above that $\Pr{A_{(x,y,z)}} \leq (\nicefrac{3}{4})^{n-3}$. (In fact, the probability is~$0$ if $x$, $y$ and $z$ do not form a clique or anticlique.) Next, define the event
\[
	A = \bigcup_{(x,y,z)} A_{(x,y,z)},
\]
where the union is taken over all sequences $(x,y,z)$ of distinct vertices of~$G$. Thus, $A$ is the event that any three vertices can be encoded by the same letter in a lettering of~$G$. We have that
\[
	\Pr{A} \leq \sum_{(x,y,z)} \Pr{A_{(x,y,z)}} \leq n(n-1)(n-2) \cdot (\nicefrac{3}{4})^{n-3}, 
\]
and therefore $\Pr{A}\to 0$ as $n \to \infty$.
\end{proof}

Because of this result, we may henceforth assume that we are not able to encode three or more vertices using the same letter. As a consequence, if we are to save letters, we must do so in pairs. With this assumption, we see next that in any lettering of almost every graph~$G$, the letters that appear in pairs are crossing or nested. That is, for almost all graphs~$G$, if the letters $a,b \in \Sigma$ both appear twice in a lettering $w$ of~$G$, then it never happens that they appear \textit{separated} as $\cdots a\cdots a\cdots b\cdots b\cdots$, but rather they must appear \emph{crossing} as $\cdots a\cdots b\cdots a\cdots b\cdots$ or \emph{nested} as $\cdots a\cdots b\cdots b\cdots a\cdots$.

\begin{proposition}\label{prop:crossornest}
For almost all graphs~$G$, if two letters appear twice in a lettering of~$G$, they must appear in a crossing or nested pattern.
\end{proposition}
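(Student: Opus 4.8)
The plan is to mirror the proof of Proposition~\ref{threeverts}: I work with the random graph $G = G(n, \nicefrac{1}{2})$ and show that the probability that some pair of letters appears in a separated pattern in a lettering of $G$ tends to $0$ as $n \to \infty$. By Proposition~\ref{threeverts}, I may assume no letter is used three times, so the only remaining bad configuration is two letters $a$ and $b$ each appearing exactly twice in the order $\cdots a \cdots a \cdots b \cdots b \cdots$. Writing $x_1, x_2$ for the vertices encoded by the two occurrences of $a$ (in word order) and $y_1, y_2$ for those encoded by $b$, I would define $A_{(x_1,x_2,y_1,y_2)}$ to be the event that $G$ admits a lettering realizing this separated pattern on these four vertices, and then bound $\Pr{\cdot}$ of the bad event by a union over all ordered $4$-tuples of distinct vertices.

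The crux is a deterministic necessary condition analogous to the $\nicefrac{3}{4}$ computation in Proposition~\ref{threeverts}. Fix the separated word $w = w_0\, a\, w_1\, a\, w_2\, b\, w_3\, b\, w_4$ and any other vertex $v$, encoded by some letter $c$ at some position $t$. Because adjacency between two positions is decided by the ordered pair of their letters, the adjacencies $v \sim x_1$ and $v \sim x_2$ are both governed by $(c,a)$ when $t$ precedes both occurrences of $a$, and both by $(a,c)$ when $t$ follows both; they can differ only when $t$ lies strictly between the two occurrences of $a$. Symmetrically, $v$ can disagree on $\{y_1, y_2\}$ only when $t$ lies strictly between the two occurrences of $b$. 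In a separated pattern these two positional intervals are disjoint, so $v$ cannot disagree on both pairs at once. Hence a necessary condition for such a lettering to exist is that \emph{every} vertex $v \notin \{x_1,x_2,y_1,y_2\}$ agrees on $\{x_1,x_2\}$ or agrees on $\{y_1,y_2\}$.

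Granting this, the probability estimate is routine. In $G(n,\nicefrac{1}{2})$ the four indicators $\mathbb{1}[v \sim x_1]$, $\mathbb{1}[v \sim x_2]$, $\mathbb{1}[v \sim y_1]$, $\mathbb{1}[v \sim y_2]$ are independent fair coins, so $v$ disagrees on $\{x_1,x_2\}$ with probability $\nicefrac{1}{2}$ and independently disagrees on $\{y_1,y_2\}$ with probability $\nicefrac{1}{2}$; thus it disagrees on both---violating the necessary condition---with probability $\nicefrac{1}{4}$, and satisfies it with probability $\nicefrac{3}{4}$. These events are independent across the $n-4$ choices of $v$, since they involve disjoint edge sets, so $\Pr{A_{(x_1,x_2,y_1,y_2)}} \leq (\nicefrac{3}{4})^{n-4}$. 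Summing over the fewer than $n^4$ ordered $4$-tuples yields a bound of $n^4 (\nicefrac{3}{4})^{n-4}$ on the probability that any pair of letters appears separated, and this tends to $0$.

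I expect the main obstacle to be pinning down the necessary-condition step cleanly: one must argue that disagreeing on a same-letter pair forces $v$ into the open interval between that pair's two occurrences, and then observe that separatedness makes the two required intervals disjoint, whereas in the crossing or nested cases they overlap. This overlap is precisely why those patterns are permitted---a vertex placed in the shared interval may disagree on both pairs---and it is worth a one-line remark to contrast with the separated case. Everything else, namely the reduction via Proposition~\ref{threeverts}, the independence of the coin flips, and the union bound, follows the template already set by Proposition~\ref{threeverts}.
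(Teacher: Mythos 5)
Your proposal is correct and follows essentially the same route as the paper's proof: the same necessary condition (every other vertex must agree on at least one of the two pairs, since the two ``disagreement intervals'' of a separated pattern are disjoint), the same $(\nicefrac{3}{4})^{n-4}$ estimate per $4$-tuple, and the same union bound over roughly $n^4$ ordered tuples. The only cosmetic differences are that you invoke Proposition~\ref{threeverts} up front (the paper's proof does not need it internally) and you phrase the necessary condition via positions between letter occurrences rather than via the subwords $w_1,\dots,w_5$, but these are equivalent.
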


\begin{proof}
Letting $G=G(n, \nicefrac{1}{2})$, we show that the probability that~$G$ has a lettering in which two pairs of letters appear in a separated pattern tends to~$0$ as $n \to \infty$. This will yield the result since the only other possibility is that the pairs of letters are crossing or nested. Suppose that we have a lettering~$w$ of~$G$ over the alphabet $\Sigma$ containing $a$ and $b$ given by
\[
	w = w_1 \hspace{1mm} a \hspace{1mm} w_2 \hspace{1mm} a \hspace{1mm} w_3 \hspace{1mm} b \hspace{1mm} w_4 \hspace{1mm} b \hspace{1mm} w_5,
\]
for some possibly empty words $w_1$, $w_2$, $w_3$, $w_4$, and $w_5$. Let $x$, $y$, $s$, and~$t$ be the vertices of~$G$ encoded in $w$ by these instances of $a$ and $b$, reading left to right.
Fix a vertex in $V(G) \setminus \{x,y,s,t\}$.
This vertex can be encoded in $w_1$, $w_3$, or $w_5$ only if it agrees on~$\{x,y\}$ and on~$\{s,t\}$. Further, it can be encoded in~$w_2$ or~$w_4$ only if it agrees on~$\{s,t\}$ or on~$\{x,y\}$, respectively.

For each vertex $v \in V(G) \setminus \{x, y, s, t\}$, there are four possible ways it can agree or disagree on the pairs~$\{x,y\}$ and~$\{s,t\}$: it either agrees on both pairs, agrees only on~$\{x,y\}$, agrees only on~$\{s,t\}$, or disagrees on both pairs. These possibilities are equally likely because $G = G(n, \nicefrac{1}{2})$, and only the last case prevents~$w$ from having a place in which~$v$ can be encoded. Thus, the probability that~$v$ can be encoded somewhere in~$w$ is~$\nicefrac{3}{4}$. Hence, the probability that every vertex in $V(G) \setminus \{x,y,s,t\}$ can be encoded in~$w$ is $(\nicefrac{3}{4})^{n-4}$.

Let $B_{(x,y,s,t)}$ be the event that there is a lettering of~$G$ in which the vertices $x$, $y$, $s$ and $t$ are encoded in that order, $x$ and $y$ are encoded by the same letter, and $s$ and $t$ are encoded by a second letter. Thus, this is the event that these four vertices can correspond to a separated pattern encoded in the given order. From above, we have that $\Pr{B_{(x,y,s,t)}} \le (\nicefrac{3}{4})^{n-4}$. (In fact, this probability is $0$ if $x$ and $y$ do not agree on $s$ and $t$, and vice versa.) Next, define the event
\[
B = \bigcup_{(x,y,s,t)} B_{(x,y,s,t)},
\]
where the union is over all sequences $(x,y,s,t)$ of distinct vertices of~$G$. Thus,~$B$ is the event that a lettering of~$G$ has two pairs of letters in a separated pattern. We have that 
\[
\Pr{B} \leq \sum_{(x,y,s,t)} \Pr{B_{(x,y,s,t)}} \le n(n-1)(n-2)(n-3) \cdot (\nicefrac{3}{4})^{n-4}, 
\]
and therefore $\Pr{B}\to 0$ as $n \to \infty$.
\end{proof}

By Propopsition~\ref{threeverts}, we know that for almost all graphs~$G$ on $n$ vertices, if~$G$ has a lettering with~${n-k}$ letters, then it will have $k$ letters that appear twice and $n-2k$ letters that appear once. Suppose that the letters appearing twice are $\ell_1$, $\ell_2$, $\dots$, $\ell_k$. By Proposition~\ref{prop:crossornest}, we know that for almost all graphs that have such a lettering, there is a permutation~$\pi$ of $\{1, \dots, k \}$ such that the subword of $w$ containing all of these letters is
\begin{equation}\label{word}\tag{$\dagger$}
    \ell_1 \hspace{1mm} \ell_2 \dots  \ell_k \hspace{1mm} \ell_{\pi(1)} \ell_{\pi(2)} \dots \ell_{\pi(k)}.
\end{equation}
Note that this is the same construction considered in Proposition~\ref{mainprop}.

It remains to analyze the probability that there is an induced subgraph that can be lettered by a word such as that of (\ref{word}). To this end, let~$G = G(n, \nicefrac{1}{2})$ and $k$ an integer satisfying $2k\le n$. Further, let $(v_i)=(v_1,\dots,v_{2k})$ be a sequence of distinct vertices of~$G$ and $\pi$ a permutation of~$\{1,\dots,k\}$. We define $C_{(v_i),\pi}$ to be the event that there exists a decoder $D\subseteq\{\ell_1,\dots,\ell_k\}^2$ such that the mapping $v_i\mapsto i$ is an isomorphism between the induced subgraph $G[\{v_1,\dots,v_{2k}\}]$ and the letter graph $\Gamma_D(\ell_1\cdots\ell_k \hspace{1mm} \ell_{\pi(1)}\cdots\ell_{\pi(k)})$.

To evaluate $\Pr{C_{(v_i), \pi}}$, for every pair $i < j$ we handle the case of the $\ell_i$'s and $\ell_j$'s being either crossing or nested in the word $\ell_1\cdots\ell_k\hspace{1mm}\ell_{\pi(1)}\cdots\ell_{\pi(k)}$. If these letters are crossing, then, as is indicated by the three lines in Figure~\ref{fig:cross}, there are three potential edges that must agree. That is, all three of these edges will be decided by the presence or absence of $(\ell_i, \ell_j)$ in the decoder, and hence they must all be edges or nonedges. With each edge decided with probability $\nicefrac{1}{2}$, the probability that the three potential edges agree is $\nicefrac{1}{4}$.

\begin{figure}[ht]
    \centering
    \begin{tikzpicture}[scale=0.7]
        \node (w) at (0,0) {$\ell_i$};
        \node (x) at (2,0) {$\ell_j$};
    
        \node (y) at (4,0) {$\ell_i$};
        \node (z) at (6,0) {$\ell_j$};

        \draw [line width = 0.3mm]   (w) to[out=45,in=135] (z);
        \draw [line width = 0.3mm]   (w) to[out=35,in=145] (x);
        \draw [line width = 0.3mm]   (y) to[out=35,in=145] (z);
        
       \end{tikzpicture}
       \caption{The potential edges that must agree in a crossing pattern.}
       \label{fig:cross}
\end{figure}
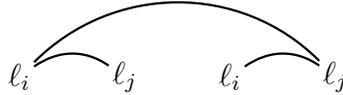

If the letters $\ell_i$ and $\ell_j$ are nested, then we see in Figure \ref{fig:nest} that there are two pairs of potential edges that must agree. That is, the solid lines must agree since they are both determined by the presence or absence of $(\ell_i, \ell_j)$ in the decoder, and the dotted lines must agree because they are both determined by the presence or absence of $(\ell_j, \ell_i)$ in the decoder. Again, with each of these edges decided with probability $\nicefrac{1}{2}$, the probability that both of these pairs of potential edges agree is $\nicefrac{1}{4}$. 

\begin{figure}[ht]
    \centering
    \begin{tikzpicture}[scale=0.7]
        \node (w) at (0,0) {$\ell_i$};
        \node (x) at (2,0) {$\ell_j$};
    
        \node (y) at (4,0) {$\ell_j$};
        \node (z) at (6,0) {$\ell_i$};
    
        \draw [line width = 0.3mm]   (w) to[out=45,in=135] (y);
        \draw [line width = 0.3mm]   (w) to[out=35,in=145] (x);
        \draw [line width = 0.5mm, dotted]   (y) to[out=35,in=145] (z);
        \draw [line width = 0.5mm, dotted]   (x) to[out=45,in=135] (z);
        \end{tikzpicture}
        \caption{The potential edges that must agree in a nested pattern.}
    \label{fig:nest}
\end{figure}
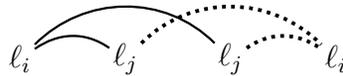

For each pair $i<j$, which of these two cases must be satisfied is determined by $\pi$, and since each case has probability~$\nicefrac{1}{4}$ of being satisfied, it follows that
\[
	\Pr{C_{(v_i), \pi}} = (\nicefrac{1}{4})^{\binom{k}{2}} = 2^{-k(k-1)}.
\]
Next, we define the event
\[
	C = \bigcup_{(v_i), \pi} C_{(v_i), \pi},
\]
where the union is over all sequences $(v_i)$ of $2k$ distinct vertices of~$G$ and all permutations~$\pi$ of $\{1, \dots, k \}$. Thus, $C$ is the event that an induced subgraph of~$G$ can be encoded in the form of (\ref{word}). In light of the preceding arguments, we can regard $C$ as the only event in which $k$ letters can be saved in a lettering of almost all graphs~$G$. To obtain the main result of this section, we simply need to minimize the value of $k$ so that the probability of~$C$ still goes to zero as $n\to\infty$.

\begin{theorem}\label{thm:lettericity_lb}
	For almost all graphs~$G$ with $n$ vertices, we have 
	\[
	\ell(G) \geq n - (2 \log_2 n + 2 \log_2 \log_2 n).
	\]
\end{theorem}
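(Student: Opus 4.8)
The plan is to apply the first moment method, using the structural reductions already established in Propositions~\ref{threeverts} and~\ref{prop:crossornest} to collapse the analysis onto the single event~$C$. First I would note that the reduction work is complete: a graph~$G$ that avoids the two $o(1)$-probability bad events of those propositions can have a lettering saving~$k$ letters only if the~$k$ doubled letters are pairwise crossing or nested, i.e.\ only if~$G$ contains an induced subgraph on~$2k$ vertices that is a $k$-letter graph for a word of the form~(\ref{word}). This is exactly event~$C$. Hence it suffices to choose~$k$ as large as possible so that $\Pr{C}\ra 0$, and then conclude $\ell(G)\ge n-(k-1)$ for almost all~$G$.

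To bound~$\Pr{C}$ I would use a union bound over the defining data,
\[
\Pr{C}\le\sum_{(v_i),\pi}\Pr{C_{(v_i),\pi}}\le n^{2k}\cdot k!\cdot 2^{-k(k-1)},
\]
since there are at most $n^{2k}$ ordered sequences $(v_1,\dots,v_{2k})$, exactly $k!$ permutations~$\pi$, and $\Pr{C_{(v_i),\pi}}=2^{-k(k-1)}$ as computed above. Taking base-two logarithms and using $\log_2 k!\le k\log_2 k$ gives
\[
\log_2\Pr{C}\le k\bigl(2\log_2 n+\log_2 k-k+1\bigr),
\]
so it remains only to verify that the bracketed factor tends to $-\infty$ for the chosen~$k$.

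The choice I would make is $k=\lfloor 2\log_2 n+2\log_2\log_2 n\rfloor+1$. For this value $\log_2 k=\log_2\log_2 n+O(1)$, so the bracket equals $-\log_2\log_2 n+O(1)\ra-\infty$, whence $\log_2\Pr{C}\ra-\infty$ and $\Pr{C}\ra 0$. Since the union of the two bad events from Propositions~\ref{threeverts} and~\ref{prop:crossornest} together with event~$C$ still has probability tending to~$0$, almost every~$G$ avoids all three and therefore cannot save~$k$ letters; as $k-1\le 2\log_2 n+2\log_2\log_2 n$, this yields $\ell(G)\ge n-(k-1)\ge n-(2\log_2 n+2\log_2\log_2 n)$ for almost all~$G$.

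The main obstacle is the asymptotic bookkeeping in this last step: one must take~$k$ large enough that $n-(k-1)$ meets the target bound, yet small enough that the $2\log_2 n$ term from the vertex count and—crucially—the $\log_2 k!\approx\log_2\log_2 n$ contribution do not swamp the $-k^2$ savings coming from $\Pr{C_{(v_i),\pi}}=2^{-k(k-1)}$. The coefficient~$2$ in front of $\log_2\log_2 n$ in the statement is precisely the slack that lets the $k!$ factor be absorbed, so the estimate must be carried out carefully enough to see that it is this coefficient (rather than~$1$) that makes the argument close.
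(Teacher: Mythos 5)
Your proposal is correct and follows essentially the same route as the paper: the same reduction via Propositions~\ref{threeverts} and~\ref{prop:crossornest} to the single event $C$, the same union bound $\Pr{C}\le n^{2k}\,k!\,2^{-k(k-1)}$, and the same choice $k\approx 2\log_2 n+2\log_2\log_2 n$. The only difference is that you explicitly carry out the ``simple computations'' the paper leaves to the reader, correctly identifying that the coefficient $2$ on $\log_2\log_2 n$ is what absorbs the $k!$ term.
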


\begin{proof}
It is clear from above that
\begin{equation*}
	\Pr{C}
	\leq
	\sum_{(v_i), \pi} \Pr{C_{(v_i), \pi}}
	=
	n(n-1)\cdots (n-2k+1) \cdot k! \cdot 2^{-k(k-1)}.
\end{equation*}
Using the straightforward inequalities $n(n-1)\cdots (n-2k+1)\le n^{2k}$ and $k! \leq k^k$, we see that
\[
	\Pr{C}
	\leq
	n^{2k} k^k 2^{-k(k-1)}
	=
	(n^2 k 2^{-k+1})^{k}.
\]
Setting $k = 2 \log_2 n + 2 \log_2 \log_2 n$, simple computations show that $\Pr{C}$ tends to $0$ as $n \to \infty$, and therefore the result follows from the preceding arguments.
\end{proof}


\subsection{Possible Improvement}

By Proposition~\ref{mainprop}, we see that we can save letters in any graph by encoding induced subgraphs with a word of the form given in~(\ref{word}). Then, in establishing our upper bound on the maximum lettericity in graphs in Theorem~\ref{mainconstruction}, we only consider finding induced subgraphs encoded by the word $\ell_1 \hspace{1mm} \ell_2 \dots \ell_k \hspace{1mm} \ell_k \dots \ell_2 \hspace{1mm} \ell_1$ for some integer $k$. This is quite narrow in scope since there are many more possibilities outlined by~(\ref{word}). These considerations suggest that it might be possible to improve on this upper bound by expanding our understanding of the graphs that can be encoded by words of the form~(\ref{word}), (in fact, Propositions~\ref{threeverts} and \ref{prop:crossornest} indicate that this is essentially the only way we can hope to do so).

In this direction, we return to the chains introduced in Chapter~\ref{chap:uniqueness}. We recall that they are sequences of unique vertices $p_1, \dots, p_m$ of a graph such that for each $p_i$ such that $i \geq 3$, the vertex $p_{i-1}$ is its unique neighbor or nonneighbor amongst $p_1, \dots, p_{i-1}$. We see with the next proposition that because of their nice edge behavior, we are able to encode them with words of the form~(\ref{word}) in a more interesting way.

\begin{proposition}\label{prop:encoding_chain}
	If $G$ is a graph with a chain $p_1, p_2, \dots, p_{2k}$, then we can encode the induced subgraph $G[p_1, p_2, \dots, p_{2k}]$ with a word
	\[
	w = \ell_1 \hspace{1mm} \ell_2 \dots \ell_k \hspace{1mm} \ell_{\pi(1)} \hspace{1mm} \ell_{\pi(2)} \dots \ell_{\pi(k)}
	\]
	for some permutation $\pi$ of $\{1, \dots, k \}$. Thus, $\ell(G) \leq n - k$ by Proposition~\ref{mainprop}.
\end{proposition}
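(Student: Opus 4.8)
The plan is to reduce everything to Proposition~\ref{mainprop}: if I can exhibit a word of the form~\eqref{word} that letters the induced subgraph $H = G[p_1, \dots, p_{2k}]$ with $k$ letters, each occurring exactly twice, then Proposition~\ref{mainprop} immediately gives $\ell(G) \le n-k$. So the entire task is to encode $H$ itself by such a word. The engine of the construction is the defining property of a chain: for every $i \ge 3$ the vertex $p_i$ is adjacent to all of $p_1, \dots, p_{i-2}$ but not to $p_{i-1}$, or to none of them but to $p_{i-1}$. Thus each $p_i$ is \emph{uniform} on its predecessors---adjacent to all or to none of them---apart from the single immediate predecessor $p_{i-1}$, on which its adjacency is reversed. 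I would record this as the key structural fact. (Since a chain in $G$ is also a chain in $\overline{G}$ and a form-\eqref{word} lettering of $G$ is one of $\overline{G}$, one may freely complement to reduce cases.)

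First I would pair the vertices into the $k$ consecutive blocks $P_i = \{p_{2i-1}, p_{2i}\}$ and let each block furnish the two occurrences of one letter. Then I would analyze, for any two blocks $P_i$ and $P_j$ with $i<j$, the $2\times 2$ pattern of adjacencies between them. Because $2i \le 2j-2$, the later vertex $p_{2j}$ is uniform on all of $P_i$, so this pattern always has a constant column; in particular it is \emph{never} the ``diagonal'' pattern (an induced $2K_2$ or $C_4$) that is the sole obstruction to encoding two vertex-pairs with two letters. Consequently every inter-block pattern is individually realizable, either by placing the two letters in a \emph{crossing} arrangement or in a \emph{nested} one (Figures~\ref{fig:cross} and~\ref{fig:nest}), after an appropriate choice of which vertex of each block is its first occurrence and of the relative order of the two letters.

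The remaining---and genuinely hard---step is to make all of these local choices simultaneously. Assembling the word~\eqref{word} amounts to choosing two linear orders on the $k$ blocks (one governing the first half, one governing the second half, which together determine $\pi$) along with a first/second orientation inside each block, so that every inter-block pattern is realized at once: each pair of blocks is \emph{forced} to be crossing or nested, or is free, according to its pattern, and these forced relations must be jointly realizable by a single permutation $\pi$. I expect this global consistency to be the main obstacle. Two pitfalls show that care is required. A naive rule such as ``crossing when $j=i+1$ and nested when $j\ge i+2$'' already fails at $k=3$, because the resulting relation is not the inversion set of any permutation. Moreover, a straightforward induction that peels off the last block $P_k$ fails outright whenever $p_{2k-1}$ and $p_{2k}$ lie in opposite cases: the pattern between $P_k$ and an earlier block is then two distinct constant columns, which is unrealizable once $P_k$ is forced to be the innermost (largest-indexed) letter.

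The crux, then, is to produce the two linear orders and the within-block orientations explicitly. I would exploit the extra slack that whenever the two vertices of a block share a case the associated patterns become constant and so may be taken either crossing or nested, using this freedom to reconcile the forced relations into a legitimate inversion set. Concretely, I would organize the argument either as a direct construction of the two orders from the sequence of chain-cases $t_3, \dots, t_{2k}$, or as an induction that inserts each new letter's second occurrence at a carefully chosen interior position of the second half---rather than at its end---so that the new letter is crossing with exactly the earlier blocks that demand it. Verifying that such a placement always exists, i.e.\ that the forced nested-pairs form the inversion set of a genuine permutation, is the step I expect to require the most work.
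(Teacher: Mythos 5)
You set the problem up correctly---pairing the chain into blocks $\{p_{2i-1},p_{2i}\}$, one letter per block, observing that each inter-block adjacency pattern has a constant row and so is individually realizable, and correctly locating the real difficulty in choosing the permutation $\pi$ and the within-block orientations so that all forced crossing/nested relations hold simultaneously. But the proof stops exactly there: you never produce the two linear orders, and you explicitly defer the verification (``the step I expect to require the most work''). What you have is a reduction of the statement to an unproved combinatorial claim, not a proof.

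Moreover, the one concrete mechanism you float for the hard case does not work. When $p_{2k+1}$ and $p_{2k+2}$ are a pendant/antipendant pair they disagree on every earlier vertex except $p_{2k}$, and then---as you note---the new letter cannot be innermost relative to an earlier block; but it also cannot be crossing with one, since in the pattern $\ell_i\,\ell_{k+1}\,\ell_i\,\ell_{k+1}$ the edges from the first occurrence of $\ell_i$ to \emph{both} occurrences of $\ell_{k+1}$ are decided by the same decoder pair $(\ell_i,\ell_{k+1})$, forcing the two new vertices to agree on that earlier vertex. So merely ``inserting the second occurrence of $\ell_{k+1}$ at a carefully chosen interior position of the second half'' is unrealizable no matter where you put it: the \emph{first} occurrence of the new letter must move to the front of the word so that $\ell_{k+1}$ becomes (essentially) outermost, whence its two occurrences are read by the two different decoder pairs $(\ell_{k+1},\ell_i)$ and $(\ell_i,\ell_{k+1})$ and may disagree. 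This is precisely what the paper's proof arranges: it strengthens the inductive hypothesis to guarantee that $p_{2k}$ is encoded by the first or last letter of $w$, and then inserts $\ell_{k+1}$ either at (middle, end) when $p_{2k+1},p_{2k+2}$ are of the same type, or at (very start, just before the letter encoding $p_{2k}$) when they are of opposite types (Figures~\ref{fig:both_pendants} and~\ref{fig:one_pendant_one_antipendant}). That strengthened hypothesis is the missing idea; with it the induction closes in two short cases, and without it (or an explicit global construction of $\pi$) your argument does not go through.
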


\begin{proof}
	We prove the result by induction on $k$, with the base case of $k=1$ being trivial. For our induction to work, we strengthen the hypothesis by adding the stipulation that, in encoding a chain $p_1, \dots, p_{2k}$ by such a word $w$, we can encode the last vertex $p_{2k}$ with the last or first letter of $w$. This is of course true for our base case $k=1$.
	
	So suppose the result is true for some $k \geq 1$ and that $p_1, \dots, p_{2k+2}$ is a chain in a graph $G$. By our inductive hypothesis, we can take the prefix $p_1, \dots, p_{2k}$ of this chain and encode its induced subgraph $G[p_1, \dots, p_{2k}]$ with a word $w = \ell_1 \dots \ell_k \hspace{1mm} \ell_{\pi(1)} \dots \ell_{\pi(k)}$ for a permutation~$\pi$ and such that $p_{2k}$ is encoded by the first or last letter of $w$. We can assume without loss of generality that $p_{2k}$ is encoded by the last letter of $w$, (that is, given any letter graph~$\Gamma_D(w)$, we can reverse the word $w$ and each element of the decoder $D$ to obtain~$w^{\text{r}}$ and~$D^{\text{r}}$, respectively, and it follows that $\Gamma_D(w) = \Gamma_{D^{\text{r}}}(w^{\text{r}})$). 
	
	We prove the result by extending the word $w$ to a word $w'$ in such a way that the vertices~$p_{2k+1}$ and $p_{2k+2}$ are both encoded by the same letter $\ell_{k+1}$, one of which lies in the first half of $w'$, the other in the second half, and $p_{2k+2}$ is encoded by the first or last letter of~$w'$. We do so in two cases, for which we define the vertex $p_i$, for each $i \geq 3$, of a chain $p_1, p_2, \dots$ to be
	\begin{itemize}
		\item a \emph{pendant} if $p_i \sim p_{i-1}$ and $p_i \not\sim p_1, \dots, p_{i-2}$, or
		\item an \emph{antipendant} if $p_i \not\sim p_{i-1}$ and $p_i \sim p_1, \dots, p_{i-2}$.
	\end{itemize}
	
	In the first case, we assume $p_{2k+1}$ and $p_{2k+2}$ are either both pendants or both antipendants. That is, $p_{2k+1}$ and $p_{2k+2}$ agree on each of $p_1, \dots, p_{2k-1}$, but then disagree on $p_{2k}$. In either case, we insert $\ell_{k+1}$ in the middle of $w$ to encode $p_{2k+1}$, and also at the end to encode $p_{2k+2}$ as in Figure~\ref{fig:both_pendants}, yielding a satisfactory $w'$.
	
	\begin{figure}[ht]
    \centering
    \begin{tikzpicture}[scale=1]
    	\node (w') at (-1,0.05) {$w'=$};
    
        \node (1) at (0,0) {$\ell_1$};
        \node (2) at (1,0) {$\ell_2$};
        \node (3) at (2,0) {$\dots$};
        \node (4) at (3,0) {$\ell_k$};
        
        \node (2k+1) at (4,1) {$p_{2k+1}$};
        \draw[->] (4,0.8) to (4,0.4);
        \node (5) at (4,0) {$\ell_{k+1}$};
        \draw[thin] ($(5.south west) + (-0.02,-0.02)$) rectangle ($(5.north east) + (0.02,0.02)$); 
        
        \draw[thick] (5,-0.5) to (5, 0.5);
        
        \node (6) at (6,0) {$\ell_{\pi(1)}$};
        \node (7) at (7,0) {$\ell_{\pi(2)}$};
        \node (8) at (8,0) {$\dots$};
        \node (2k) at (9,1) {$p_{2k}$};
        \draw[->] (9,0.8) to (9,0.4);
        \node (9) at (9,0) {$\ell_{\pi(k)}$};
        
        \node (2k+2) at (10,1) {$p_{2k+2}$};
        \draw[->] (10,0.8) to (10,0.4);
        \node (10) at (10,0) {$\ell_{k+1}$};
        \draw[thin] ($(10.south west) + (-0.02,-0.02)$) rectangle ($(10.north east) + (0.02,0.02)$); 

        \end{tikzpicture}
        \vspace{-2mm}
        \caption{Expanding the word $w$ in the proof of Proposition~\ref{prop:encoding_chain} in the case that $p_{2k+1}$ and~$p_{2k+2}$ are both pendants or antipendants.}
    \label{fig:both_pendants}
	\end{figure}
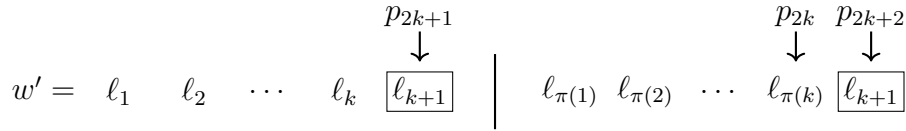
	
	In the case that they are both pendants, we need only add $(\ell_{k+1}, \ell_{\pi(k)})$ and $(\ell_{k+1}, \ell_{k+1})$ to obtain a viable decoder. If they are both antipendants, then we add $(\ell_{i}, \ell_{k+1})$ for each $i \in \{ 1, \dots, k\}$, and $(\ell_{k+1}, \ell_{\pi(j)})$ for each $j \in \{ 1, \dots, k-1 \}$.
	
	In the second case, we assume that exactly one of $p_{2k+1}$ and $p_{2k+2}$ is a pendant, and the other is an antipendant. That is, $p_{2k+1}$ and $p_{2k+2}$ agree only on $p_{2k}$, and disagree on each of $p_1, \dots, p_{2k-1}$. In either case, we insert $\ell_{k+1}$ to encode $p_{2k+1}$ just to the left of the last letter in $w$ that encodes~$p_{2k}$, and also at the very start to encode $p_{2k+2}$ as in Figure~\ref{fig:one_pendant_one_antipendant}, yielding a satisfactory~$w'$.
	
	\begin{figure}[ht]
    \centering
    \begin{tikzpicture}[scale=1]
    	\node (w') at (-1,0.05) {$w'=$};
    	
    	\node (2k+2) at (0,1) {$p_{2k+2}$};
        \draw[->] (0,0.8) to (0,0.4);
        \node (5) at (0,0) {$\ell_{k+1}$};
        \draw[thin] ($(5.south west) + (-0.02,-0.02)$) rectangle ($(5.north east) + (0.02,0.02)$); 
    
        \node (1) at (1,0) {$\ell_1$};
        \node (2) at (2,0) {$\ell_2$};
        \node (3) at (3,0) {$\dots$};
        \node (4) at (4,0) {$\ell_k$};
        
        \draw[thick] (5,-0.5) to (5, 0.5);
        
        \node (6) at (6,0) {$\ell_{\pi(1)}$};
        \node (7) at (7,0) {$\ell_{\pi(2)}$};
        \node (8) at (8,0) {$\dots$};
        
        \node (2k+1) at (9,1) {$p_{2k+1}$};
        \draw[->] (9,0.8) to (9,0.4);
        \node (10) at (9,0) {$\ell_{k+1}$};
        \draw[thin] ($(10.south west) + (-0.02,-0.02)$) rectangle ($(10.north east) + (0.02,0.02)$); 
        
        \node (2k) at (10,1) {$p_{2k}$};
        \draw[->] (10,0.8) to (10,0.4);
        \node (9) at (10,0) {$\ell_{\pi(k)}$};

        \end{tikzpicture}
        \vspace{-2mm}
        \caption{Expanding the word $w$ in the proof of Proposition~\ref{prop:encoding_chain} in the case that exactly one of $p_{2k+1}$ and~$p_{2k+2}$ is a pendant.}
    \label{fig:one_pendant_one_antipendant}
	\end{figure}
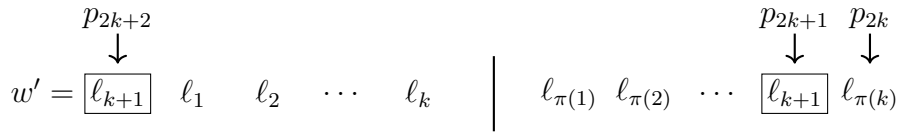
	
	In the case that $p_{2k+1}$ is the pendant, we need to add $(\ell_{k+1}, \ell_{i})$ for each $i \in \{ 1, \dots, k \}$ to obtain a viable decoder. Then in the case that $p_{2k+2}$ is the pendant, we add $(\ell_{j}, \ell_{k+1})$ for each $j \in \{ 1, \dots, k+1 \}$. This gives the result. 
\end{proof}

We now discuss some ideas that might contribute to improving on the upper bound given in Theorem~\ref{mainconstruction} in light of Proposition~\ref{prop:encoding_chain}. In~\cite{chudnovsky:unavoidable-ind:}, it is shown that large enough prime graphs must contain long chains or some other `well-behaved' structures. Is it possible that these other structures, (or some generalization of them), are also encodable by words of the form~(\ref{word})? Is it then possible that we can use their ideas to improve on our bound in the case of prime graphs?

For the case of nonprime graphs, we include another idea to approach lowering this upper bound. For example, suppose that a graph $G$ has disjoint nontrivial modules~$M_1$ and~$M_2$. Then for any two unique vertices $p_1, p_2 \in M_1$, any chain starting with these vertices has all of its vertices contained in $M_1$, and likewise for $M_2$. So we can take the longest even-length chain $p_1, p_2, \dots, p_{2s}$ beginning with two vertices $p_1, p_2 \in M_1$, and further take the longest even-length chain $q_1, q_2, \dots, q_{2t}$ such that $q_1, q_2 \in M_2$. By Proposition~\ref{prop:encoding_chain}, we can encode the chain $p_1, p_2, \dots, p_{2s}$ with the a word $\ell_1 \dots \ell_s \ell_{\sigma(1)} \dots \ell_{\sigma(s)}$ for some permutation $\sigma$ of $\{ 1, \dots, s\}$, and the chain $q_1, q_2, \dots, q_{2t}$ with another word $\ell_{s+1} \dots \ell_{s+t} \ell_{\tau(s+1)} \dots \ell_{\tau(s+t)}$ for a permutation $\tau$ of $\{ s+1, \dots, s+t\}$. Because they are contained in disjoint modules, it isn't difficult to see that $G[p_1, \dots, p_{2s},q_1, \dots, q_{2t}]$ can be encoded by the word
\[
w^* = \ell_1 \dots \ell_s \hspace{1mm} \ell_{s+1} \dots \ell_{s+t} \hspace{1mm} \ell_{\tau(s+1)} \dots \ell_{\tau(s+t)} \hspace{1mm} \ell_{\sigma(1)} \dots \ell_{\sigma(s)},
\]
which we note is also in the form of~(\ref{word}). It also isn't difficult to see that if there is another module disjoint from $M_1$ and $M_2$, we can find another chain that can be encoded by words of the form~(\ref{word}), and then extend this word $w^*$ by nesting it in the middle. Can we then improve on our upper bound in the case of graphs that can be partitioned into many disjoint modules?

\section{Lettericity of Inversion Graphs}\label{sec:lettericity_of_perms}

\subsection{Maximum Lettericity of Inversion Graphs}

As we have seen with Theorem~\ref{thm:matching_lettericity}, there are $n$-vertex inversion graphs with lettericity~$\nicefrac{n}{2}$. However, as we have seen in Section~\ref{sec:BOTLOG}, almost all $n$-vertex graphs have lettericity near $n$, essentially double. It is natural to ask if there exists $n$-vertex inversion graphs that also have lettericity near $n$. It turns out that the answer is negative, and we can prove it by utilizing geometric griddings of permutations. Moreover, the following upper bound on the lettericity of inversion graph is tight to Theorem~\ref{thm:matching_lettericity}, and thus perfect matchings are extremal.

\begin{theorem}\label{thm:inv_lettericity_ub}
	For any permutation $\pi$ of $[n]$, the inversion graph $G_{\pi}$ has lettericity at most~$\lceil \nicefrac{n}{2} \rceil$.
\end{theorem}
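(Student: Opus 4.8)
The plan is to deduce the bound directly from Proposition~\ref{prop:lettericity_bound_on_geom}, which turns a geometric gridding of $\pi$ by a partial multiplication matrix $M$ into the bound $\ell(G_\pi) \le \sum_{i,j}|M(i,j)|$, i.e. the number of nonzero cells. Since Theorem~\ref{thm:matching_lettericity} shows the perfect matching already requires $\lceil n/2 \rceil$ letters, and since a monotone box can hold at most two entries of such a matching, there is no hope of beating two points per cell. So the goal is to produce, for \emph{every} $\pi \in S_n$, a partial multiplication matrix $M$ with exactly $\lceil n/2 \rceil$ nonzero entries and $\pi \in \Geom(M)$; Proposition~\ref{prop:lettericity_bound_on_geom} then yields $\ell(G_\pi) \le \sum_{i,j}|M(i,j)| = \lceil n/2 \rceil$ immediately.

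The construction pairs entries by consecutive index. I would group the indices as $\{1,2\}, \{3,4\}, \dots$, giving $\lceil n/2 \rceil$ groups (the last a singleton when $n$ is odd), and build the matrix $M$ with a single row, of size $\lceil n/2 \rceil \times 1$, whose $k$th entry is $+1$ if $\pi(2k-1) < \pi(2k)$ and $-1$ otherwise (either sign works for a final singleton). Every entry is nonzero, so $\sum_{i,j}|M(i,j)| = \lceil n/2 \rceil$, which is the count we need.

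Two verifications remain. First, $M$ is a partial multiplication matrix: because it has only one row, setting the row sign $r_1 = 1$ and the column signs $c_k = M(k,1)$ gives $M(k,1) = c_k r_1$ for every $k$, as required. Second, $\pi \in \Geom(M)$: here I would write down an explicit $M$-drawing, placing the point of index $i$ in column $k = \lceil i/2 \rceil$ at height equal to its value $\pi(i)$. Because $M$ has a single row, each cell's segment spans the full vertical extent of the standard figure, so any assignment of heights is legal and the global vertical order of the plotted points automatically matches the value order; the only remaining requirement is that within a column the two points respect the slope of their cell, and this holds precisely by our choice of sign. Thus the drawn configuration is order-isomorphic to $\pi$, certifying $\pi \in \Geom(M)$, and Proposition~\ref{prop:lettericity_bound_on_geom} completes the argument.

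The one delicate point is the membership $\pi \in \Geom(M)$, since belonging to a geometric grid class is in general strictly stronger than cell-wise monotonicity. This is exactly where restricting to a single row of cells pays off: no two cells share both a row and a column, so none of the usual geometric consistency obstructions arise and the heights of points in distinct columns are entirely free. Finally, I would note that the bound is sharp by comparison with Theorem~\ref{thm:matching_lettericity}: the perfect matchings $m K_2 = G_{21 \oplus 21 \oplus \dots \oplus 21}$ on $n = 2m$ vertices attain $\ell = m = \lceil n/2 \rceil$, so the inequality cannot be improved in general.
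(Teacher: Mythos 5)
Your proposal is correct and follows essentially the same route as the paper: pair consecutive entries, build the $\lceil \nicefrac{n}{2} \rceil \times 1$ row matrix whose $k$th sign records whether $\pi(2k-1) < \pi(2k)$, exhibit the obvious $M$-drawing, and invoke Proposition~\ref{prop:lettericity_bound_on_geom}. Your extra care in verifying the partial-multiplication-matrix property and the legality of the drawing is a welcome tightening of details the paper leaves implicit, but the argument is the same.
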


\begin{proof}
	We prove the result by finding a row $0/ \pm 1$ matrix $M$---which is thus automatically a partial multiplication matrix---of size $\lceil \nicefrac{n}{2} \rceil \times 1$, (recall that we are indexing our matrices by cartesian coordinates), for which there exists an $M$-drawing of $\pi$. We will choose the entries of $M$ such that the first two entries of $\pi$ can be drawn in the cell $(1,1)$ of the standard figure, the next two entries can be drawn in the cell $(2,1)$, and so on in an $M$-drawing of $\pi$. That is, each cell contains two entries except for the last one in the case that $n$ is odd. 
	
	So, if $\pi(1) < \pi(2)$ we set $M(1,1) = 1$, and $M(1,1) = -1$ otherwise. Similarly, we set $M(2,1)$ equal to 1 if $\pi(3) < \pi (4)$, and $-1$ otherwise. We continue in this way until we reach the end of $\pi$, where in the case that $n$ is odd we can set $M(\lceil \nicefrac{n}{2} \rceil, 1)$ to either $1$ or $-1$. For example, letting $\pi = 38 49 61 27 5$, since $\pi(1) < \pi(2)$ we set $M(1,1) = 1$, then set $M(2,1)=1$ since $\pi(3) < \pi(4)$, and next $M(3,1) = -1$ since $\pi(5) > \pi(6)$, etc., resulting in $M = \begin{pmatrix} 1 & 1 & -1 & 1 & -1 \end{pmatrix}$.
	
	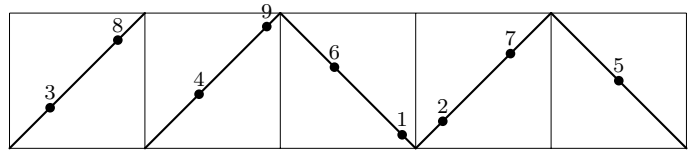
\begin{figure}[h]
	\begin{center}
		\def\nodesize{2.5}
		\begin{tikzpicture}[scale=1.8]
	
			\draw[step=1cm,very thin] (0,0) grid (5,1);
		
			\draw[line cap = round] (0,0) to (1,1);
			\draw[line cap = round] (1,0) to (2,1);
			\draw[line cap = round] (2,1) to (3,0);
			\draw[line cap = round] (3,0) to (4,1);
			\draw[line cap = round] (4,1) to (5,0);

			\node [draw, circle, fill, minimum size = \nodesize, label = \tiny$3$] at (0.3,0.3) {};	
			\node [draw, circle, fill, minimum size = \nodesize, label = \tiny$8$] at (0.8,0.8) {};	
		
			\node [draw, circle, fill, minimum size = \nodesize, label = \tiny$4$] at (1.4,0.4) {};	
			\node [draw, circle, fill, minimum size = \nodesize, label = \tiny$9$] at (1.9,0.9) {};
		
			\node [draw, circle, fill, minimum size = \nodesize, label = \tiny$6$] at (2.4,0.6) {};	
			\node [draw, circle, fill, minimum size = \nodesize, label = \tiny$1$] at (2.9,0.1) {};	
		
			\node [draw, circle, fill, minimum size = \nodesize, label = \tiny$2$] at (3.2,0.2) {};	
			\node [draw, circle, fill, minimum size = \nodesize, label = \tiny$7$] at (3.7,0.7) {};	
			
			\node [draw, circle, fill, minimum size = \nodesize, label = \tiny$5$] at (4.5,0.5) {};	
				
		\end{tikzpicture}
	\end{center}
	\caption{Drawing the permutation $384961275$ on the standard figure of a row matrix.}
	\label{fig:inv_lettericity_ub}
	\end{figure}
	
	It is then clear how to construct such $M$-drawings for permutations in general, as seen for this example in Figure~\ref{fig:inv_lettericity_ub}. The result then follows from Proposition~\ref{prop:lettericity_bound_on_geom}.
\end{proof}

\subsection{Expected Lettericity of Inversion Graphs}

We now turn our interests towards the expected lettericity of inversion graphs, and thus we define the random variable $X_{\ell}:S_n \to \mathbb{R}$ by setting $X_{\ell}(\pi) = \ell(G_{\pi})$. We will use the notation~$\Ex{X}$ to denote the expectation (i.e., mean) of a random variable $X$, and thus we are interested in the value of~$\Ex{X_{\ell}}$.

Given this approach of finding standard figures of row matrices on which to draw permutations, it seems as though we could often expect to do better. For example, given the permutation $347156982$, we can partition it into three monotone runs: $347$, $1569$ and $82$, and thus we can find an $M$-drawing of this permutation for $M = \begin{pmatrix} 1 & 1 & -1 \end{pmatrix}$ as seen in Figure~\ref{fig:perm_on_small_rm}. That is, we have $\ell(G_{347156982})$ is at most~$3$, as opposed to $\lceil \nicefrac{9}{2} \rceil = 5$ as given by Theorem~\ref{thm:inv_lettericity_ub}.

\begin{figure}[h]
	\begin{center}
		\def\nodesize{2.5}
		\begin{tikzpicture}[scale=1.8]
	
			\draw[step=1cm,very thin] (0,0) grid (3,1);
		
			\draw[line cap = round] (0,0) to (1,1);
			\draw[line cap = round] (1,0) to (2,1);
			\draw[line cap = round] (2,1) to (3,0);

			\node [draw, circle, fill, minimum size = \nodesize, label = \tiny$3$] at (0.3,0.3) {};	
			\node [draw, circle, fill, minimum size = \nodesize, label = \tiny$4$] at (0.4,0.4) {};	
			\node [draw, circle, fill, minimum size = \nodesize, label = \tiny$7$] at (0.7,0.7) {};
				
			\node [draw, circle, fill, minimum size = \nodesize, label = \tiny$1$] at (1.1,0.1) {};
			\node [draw, circle, fill, minimum size = \nodesize, label = \tiny$5$] at (1.5,0.5) {};
			\node [draw, circle, fill, minimum size = \nodesize, label = \tiny$6$] at (1.6,0.6) {};	
			\node [draw, circle, fill, minimum size = \nodesize, label = \tiny$9$] at (1.9,0.9) {};	
		
			\node [draw, circle, fill, minimum size = \nodesize, label = \tiny$8$] at (2.2,0.8) {};	
			\node [draw, circle, fill, minimum size = \nodesize, label = \tiny$2$] at (2.8,0.2) {};	
				
		\end{tikzpicture}
	\end{center}
	\caption{Drawing the permutation $347 1569 82$ on the standard figure of a row matrix.}
	\label{fig:perm_on_small_rm}
\end{figure}

We can then rephrase the problem as follows: `how few bars do we need to place between entries of a permutation written in one-line notation so that the permutation is broken up into monotone runs?' For our example above we require only two bars,
\[
347|1569|82,
\]
thus breaking up the permutation into three monotone runs. So, we define the random variable $X_{\text{r}}: S_n \to \mathbb{R}$ such that $X_{\text{r}}(\pi)$ is the minimum possible number of monotone runs obtained by breaking up~$\pi$ in this way. Our discussions show that $X_{\ell} \leq X_{\text{r}}$, and thus it follows that $\Ex{X_{\ell}} \leq \Ex{X_{\text{r}}}$. 

Despite its natural definition, the author is not aware of any existing results about $\Ex{X_{\text{r}}}$ in the literature. Here we include some simple arguments to obtain an upper bound on $\Ex{X_{\text{r}}}$, and thus also on $\Ex{X_{\ell}}$. We begin by noting that for a permutation $\pi$ of $[n]$, if we place a bar between $\pi(i)$ and $\pi(i+1)$ for each $i \leq n-1$ such that $\pi(i) > \pi(i+1)$---in which case we call $i$ a \emph{descent}---then~$\pi$ is partitioned into blocks of ascending runs. We see, for example, the bars inserted in this manner here:
\[
\sigma = 67|5|4|19|8|23.
\]

Therefore, if $X_d:S_n \to \mathbb{R}$ is the random variable such that $X_d(\pi)$ is the number of descents of~$\pi$, then $X_{\text{r}} \leq 1 + X_{d}$. It is well known that $\Ex{X_{d}} = \frac{n-1}{2}$, and thus $\Ex{X_{\text{r}}} \leq \frac{n+1}{2}$. However, it is clear that by inserting bars at each descent there might be many expendable bars for our objective. For example, in our annotated permutation $\sigma$ given above, we can simply delete the bar between $5$ and $4$ and the remaining bars still break up $\sigma$ into monotone runs.

The reason we can do this is because that bar demarcates the middle of three descents occurring in a row, and thus is in the middle of a descending run. We thus delete all such bars, and therefore we can define the random variable $X_{ddd}: S_n \to \mathbb{R}$ such that $X_{ddd}(\pi)$ is the number of occurrences of three descents in a row in $\pi$, and it follows that $X_{\text{r}} \leq 1 + X_d - X_{ddd}$. Further, it is a simple exercise to show \[ \Ex{X_{ddd}}=\frac{n-3}{24}. \]

Even with those bars removed, there are further possible ways to remove bars. Indeed, we see that after deleting the bar between $5$ and $4$ we have the substring $54|19|8|2$. We can replace two of the bars with one and obtain $541|98|2$, still breaking up the permutation into monotone runs, as desired.

The reason we can do this is because we have a series of at least two descents, then a single ascent, followed by another series of at least two descents. The two entries participating in the single ascent are enclosed between two bars, however the first such entry can be added to the descending run preceding it, and the second entry can be added to the descending run succeeding it. 

It follows that we can shed one bar in each such case, and so we define the random variable $X_{ddadd}: S_n \to \mathbb{R}$ such that $X_{ddadd}(\pi)$ is the number of occurrences of $i \leq n-5$ such that $\pi(i) > \pi(i+1) > \pi(i+2) < \pi(i+3) > \pi(i+4) > \pi(i+5)$, and it follows that $X_{\text{r}} \leq 1 + X_d - X_{ddd} - X_{ddadd}$. It is another simple exercise to verify that \[\Ex{X_{ddadd}} = \frac{n-5}{6!} \left( \binom{6}{3}-1 \right).\] In summary, we have proved the following.

\begin{theorem}
	For all $n \geq 6$, the expected lettericity $\Ex{X_{\ell}}$ of the inversion graph of a permutation of $[n]$ is at most $0.432n + 0.758$.
\end{theorem}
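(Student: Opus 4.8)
The plan is to assemble the chain of inequalities already established for the random variables $X_\ell$, $X_{\text{r}}$, $X_d$, $X_{ddd}$, and $X_{ddadd}$, and then pass to expectations using linearity. Stringing together $X_\ell \leq X_{\text{r}}$ with the successive refinements $X_{\text{r}} \leq 1 + X_d$, then $X_{\text{r}} \leq 1 + X_d - X_{ddd}$, and finally $X_{\text{r}} \leq 1 + X_d - X_{ddd} - X_{ddadd}$, we obtain the pointwise bound
\[
X_\ell \leq 1 + X_d - X_{ddd} - X_{ddadd}
\]
on all of $S_n$. Taking expectations and invoking linearity, the problem reduces to substituting the four known expected values.

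Next I would record the three expectations computed above: $\Ex{X_d} = \tfrac{n-1}{2}$, $\Ex{X_{ddd}} = \tfrac{n-3}{24}$, and $\Ex{X_{ddadd}} = \tfrac{n-5}{6!}\left(\binom{6}{3}-1\right)$. Each follows from a routine indicator-variable argument: a fixed window of $j$ consecutive entries realizes a prescribed relative order with probability $1/j!$, and one multiplies by the number of admissible starting positions. The only count requiring a moment's thought is $\binom{6}{3}-1 = 19$ for the length-$6$ shape $ddadd$, which is seen by choosing the $3$-subset of values occupying the first three (decreasing) positions and observing that the ascent constraint $\pi(i+2) < \pi(i+3)$ fails for exactly the single subset $\{4,5,6\}$.

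With the substitutions in hand, the remaining work is purely arithmetic. Writing everything over the common denominator $720 = 6!$, the coefficient of $n$ is $\tfrac{1}{2} - \tfrac{1}{24} - \tfrac{19}{720} = \tfrac{311}{720}$ and the constant term is $1 - \tfrac{1}{2} + \tfrac{3}{24} + \tfrac{95}{720} = \tfrac{545}{720}$, giving
\[
\Ex{X_\ell} \leq \frac{311}{720}\,n + \frac{545}{720}.
\]
Since $\tfrac{311}{720} = 0.4319\ldots < 0.432$ and $\tfrac{545}{720} = 0.7569\ldots < 0.758$, the stated bound $\Ex{X_\ell} \leq 0.432n + 0.758$ follows for every $n$.

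The main obstacle is not conceptual but lies in getting the refinements of $X_{\text{r}}$ and the boundary counts exactly right: I must ensure the patterns $ddd$ and $ddadd$ are counted over the correct ranges of starting indices, so that no bar is removed twice and no window runs off the end of $\pi$, and that the final rounding is performed \emph{upward} in both coefficients so that the inequality is genuinely preserved rather than merely approximately true.
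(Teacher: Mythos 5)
Your proposal is correct and follows essentially the same route as the paper: chain the pointwise inequalities $X_\ell \leq X_{\text{r}} \leq 1 + X_d - X_{ddd} - X_{ddadd}$, apply linearity of expectation, and substitute the three known expectations (your constant $\tfrac{545}{720}$ equals the paper's $\tfrac{109}{144}$). The only addition is that you spell out the indicator-variable computations and the count $\binom{6}{3}-1=19$, which the paper leaves as exercises; both are verified correctly.
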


\begin{proof}
	We simply collect the above arguments and make use of the linearity of expectation:
	\begin{align*}
	\Ex{X_{\ell}} & \leq \Ex{X_{\text{r}}} \\
	& \leq 1 + \Ex{X_d} - \Ex{X_{ddd}} - \Ex{X_{ddadd}} \\
	& = \frac{311}{720} \cdot n + \frac{109}{144} \\
	& \approx 0.432 n + 0.758.
	\end{align*}
	This gives the result.
\end{proof}

Although the calculations become somewhat involved, we can continue to lower the coefficient on $n$ here by using essentially the same technique. For example, if we have a contiguous substring of a permutation $\pi$ that reads `descent, descent, ascent, descent, ascent, descent, descent' from left to right, then we can replace three bars with two, very similarly as to how we replaced two bars with one in the case of $X_{ddadd}$ above. We then need to compute the expectation of the random variable $X_{ddadadd}: S_n \to \mathbb{R}$ counting such occurrences, and we can subtract that from this upper bound. From here, it is evident how to generalize this argument to continue lowering this coefficient by constructing more random variables, and then assessing their expectations using inclusion-exclusion~\cite[Chp. 2]{stanley:enumerative-com:1}. More explicitly, continuing with our notation for the random variables constructed above, we have
\[
X_{\ell} \leq X_{\text{r}} \leq 1 + X_d - \sum_{k \geq 0} X_{dd(ad)^k d}.
\]

\section{Permutation Letter Graphs}

\subsection{Encoding on Permutations}

Throughout this chapter, we answered several extremal questions regarding the number of letters required to encode a graph `in a string' with a decoder mechanism. It is then natural to ask if we can significantly lower the number of letters required to encode a graph `on a permutation,' utilizing the same type of decoder mechanism. That is, given a finite alphabet $\Sigma$, we take two sets~$I, N \subseteq \Sigma^2$ to be our decoders, $I$ to decode inversions and $N$ to decode noninversions. More specifically, given a fixed integer $n$, we let $w$ be a word with letters $w(1), \dots, w(n) \in \Sigma$, $\pi$ a permutation of $[n]$, and define the \emph{permutation letter graph} of $w$ with respect to the permutation $\pi$, inversion decoder~$I$ and noninversion decoder $N$ to be the graph $\Gamma_{IN}^{\pi}(w)$ with vertices $\{ 1, \dots, n \}$ and edges $ij$ for all $i<j$ such that 
\begin{itemize}
	\item $(w(i), w(j)) \in I$ if $\pi(i) > \pi(j)$, or
	\item $(w(i), w(j)) \in N$ if $\pi(i) < \pi(j)$.
\end{itemize}

We see, for example, the permutation letter graph $\Gamma_{IN}^{\pi}(w)$ given by $\pi = 253614$, $w = aabccb$, $I = \{ (a,b), (b,c) \}$ and $N = \{ (a,c), (b,c) \}$ drawn on the plot of $\pi$ in Figure~\ref{fig:perm_letter_graph_ex}.

\begin{figure}[h]
\begin{center}
	\begin{tikzpicture}[scale=0.6]
		
		\node [draw, circle, fill, minimum size = \nodesize] at (1,2) {};
		\node [] at (0.65,2) {$a$};
		
		\node [draw, circle, fill, minimum size = \nodesize]at (2,5) {};
		\node [] at (1.65,5) {$a$};
		
		\node [draw, circle, fill, minimum size = \nodesize] at (3,3) {};
		\node [] at (2.75,2.75) {$b$};
		
		\node [draw, circle, fill, minimum size = \nodesize] at (4,6) {};
		\node [] at (4.35,6) {$c$};
		
		\node [draw, circle, fill, minimum size = \nodesize] at (5,1) {};
		\node [] at (5.35,1) {$c$};
		
		\node [draw, circle, fill, minimum size = \nodesize] at (6,4) {};
		\node [] at (6.35,4) {$b$};
		
		\draw [] (2,5) to (3,3);
		\draw [] (2,5) to (6,4);
		\draw [] (3,3) to (5,1);
		
		\draw [] (1,2) to (4,6);
		\draw [] (2,5) to (4,6);
		\draw [] (3,3) to (4,6);
		
	\end{tikzpicture}
\end{center}
\caption{An example of a permutation letter graph $\Gamma_{IN}^{\pi}(w)$ drawn on the plot of~$\pi$.}
\label{fig:perm_letter_graph_ex}
\end{figure}
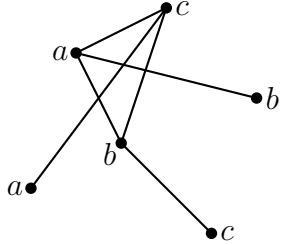

If $|\Sigma|=k$, then we say that $\Gamma_{IN}^{\pi}(w)$ is a \emph{permutation $k$-letter graph}, and we denote the least $k$ such that $G$ is a permutation $k$-letter graph by $\ell_{\perm}(G)$.

Just as lettericity is monotone with respect to the induced subgraph relation, so is the parameter~$\ell_{\perm}$. That is, given an $n$-vertex inversion letter graph $\Gamma_{IN}^{\pi}(w)$, for any vertex $i \in \{ 1, \dots, n\}$ we have that $\Gamma_{IN}^{\pi}(w)[1, \dots, i-1, i+1, \dots, n] = \Gamma_{IN}^{\pi'}(w')$ where~$\pi'$ and~$w'$ are obtained by deleting the entry at the $i^{\text{th}}$ index from both of $\pi$ and $w$, respectively. In other words, $\ell_{\perm}(H) \leq \ell_{\perm} (G)$ if $H$ is an induced subgraph of $G$.

Furthermore, the class of permutation $k$-letter graphs is also closed under complements. Indeed, for any permutation letter graph $\Gamma_{IN}^{\pi}(w)$ on the alphabet $\Sigma$, it follows that $\overline{\Gamma}_{IN}^{\pi}(w) = \Gamma_{\overline{I} \hspace{0.5mm} \overline{N}}^{\pi}(w)$ where $\overline{I} = \Sigma^2 \setminus I$ and $\overline{N} = \Sigma^2 \setminus N$. We see next that the permutation $1$-letter graphs are a very familiar class.

\begin{theorem}
	If $G$ is a graph on $n$ vertices, then $\ell_{\perm}(G) = 1$ if and only if $G$ is the inversion graph of a permutation.
\end{theorem}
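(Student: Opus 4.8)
The plan is to prove both implications by simply unwinding the definition of a permutation $1$-letter graph and enumerating the few possibilities that arise when the alphabet has size one. The key observation is that when $\Sigma = \{a\}$, the only available word is $w = a^n$, and both decoders $I$ and $N$ are subsets of the single-element set $\{(a,a)\}$. Hence each of $I$ and $N$ is either empty or equal to $\{(a,a)\}$, yielding exactly four cases. This reduces the theorem to matching these four cases against the class of inversion graphs.

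First I would establish the (easier) converse. Suppose $G \cong G_{\pi}$ is the inversion graph of a permutation $\pi$ of $[n]$. Taking $\Sigma = \{a\}$, $w = a^n$, $I = \{(a,a)\}$, and $N = \varnothing$ produces a permutation letter graph $\Gamma_{IN}^{\pi}(w)$ whose edges, for $i<j$, are exactly the pairs with $\pi(i) > \pi(j)$, that is, the inversions of $\pi$. Relabeling vertices by value rather than by index shows this graph is isomorphic to $G_{\pi} \cong G$, so $\ell_{\perm}(G) \le 1$; since at least one letter is always required, $\ell_{\perm}(G) = 1$.

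For the forward direction I would run through the four choices of $(I, N)$. When $I = N = \varnothing$ there are no edges, so $G$ is edgeless; when $I = N = \{(a,a)\}$ every pair $i<j$ is an edge, so $G$ is complete. In the two remaining cases $G$ is either $G_{\pi}$ (edges precisely at inversions) or its complement (edges precisely at noninversions). It then remains to observe that all four resulting graphs are inversion graphs: the edgeless graph is $G_e$ for the identity $e = 12 \cdots n$, the complete graph is $G_{e^{\textnormal{r}}}$ for the reverse identity $e^{\textnormal{r}} = n \cdots 21$, the inversion case is $G_{\pi}$ itself, and the complement case is an inversion graph by the earlier proposition giving $\overline{G}_{\pi} \cong G_{\pi^{\textnormal{r}}}$.

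The main obstacle here is bookkeeping rather than conceptual difficulty. The temptation is to argue only about the cases where $G$ is literally realized as the inversions or the noninversions of $\pi$ and to overlook the two degenerate decoder choices that collapse the construction to the empty or complete graph; these must be explicitly recognized as the inversion graphs $G_e$ and $G_{e^{\textnormal{r}}}$. Keeping the index-versus-value labeling conventions straight when passing between $\Gamma_{IN}^{\pi}(w)$ and $G_{\pi}$, and invoking the complementation proposition for the noninversion case, are the only other points that demand care.
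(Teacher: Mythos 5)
Your proposal is correct and follows essentially the same route as the paper: both enumerate the four possible decoder pairs $(I,N)$ over a one-letter alphabet and identify the resulting graphs as $\overline{K}_n \cong G_{12\cdots n}$, $G_{\pi}$, $\overline{G}_{\pi} \cong G_{\pi^{\textnormal{r}}}$, and $K_n \cong G_{n\cdots 21}$. Your explicit attention to the two degenerate decoder choices is exactly the bookkeeping the paper's proof relies on.
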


\begin{proof}
	We let $\Sigma = \{a \}$ be our alphabet and we fix the set $A = \{(a,a) \}$. Then for any permutation~$\pi$, the only permutation letter graphs on this alphabet that can be drawn on~$\pi$ are then 
	\begin{itemize}
		\item $\Gamma_{\varnothing \varnothing}^{\pi}(aa \dots a) = \overline{K}_n$, (which is isomorphic to $G_{12 \dots n}$),
		\item $\Gamma_{A \varnothing}^{\pi}(aa \dots a) = G_{\pi}$, 
		\item $\Gamma_{\varnothing A}^{\pi}(aa \dots a) = \overline{G}_{\pi}$, (which is isomorphic to $G_{\pi^{\text{r}}}$), and 
		\item $\Gamma_{A A}^{\pi}(aa \dots a) = K_n$, (which is isomorphic to $G_{n \dots 21}$).
	\end{itemize}
	This gives the result.
\end{proof}

As we saw above, the classes of perfect matchings and paths have unbounded lettericity. With the added flexibility of being able to encode on a permutation we have $\ell_{\perm}(nK_2) = 1$ and $\ell_{\perm}(P_n) = 1$ for all positive integers $n$. Furthermore, the class of all cycles has unbounded lettericity, (this can be seen since it contains all paths), and we showed in the previous chapter that~$C_n$ is not an inversion graph for all $n \geq 5$. The drawing in Figure~\ref{fig:cycle_as_perm_letter_graph} makes it clear that $\ell_{\perm}(C_n) = 2$ for $n \geq 5$. Indeed, we have the permutation $2$-letter graph $\Gamma_{IN}^{\pi}(w) = C_n$ for all $n \geq 5$ where~$\pi$ is such that $G_{\pi} = P_n$, the word $w$ is such that the two entries of $\pi$ corresponding to the leaves in its inversion graph are encoded with $b$ and the rest of the entries are $a$, and with decoders $I = \{ (a,a), (a,b), (b,a)\}$ and $N = \{(b,b) \}$.

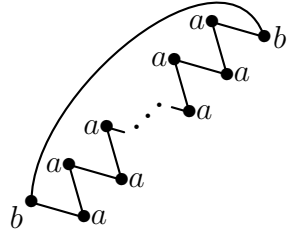
\begin{figure}[h]
\begin{center}
	\begin{tikzpicture}[scale=1]
		
		\absdot{(2.1, 1.4)};
		\node [] at (2.3,1.4) {$b$};
		
		\absdot{(1.4,1.6)};
		\node [] at (1.2,1.6) {$a$};
		
		\absdot{(1.6,0.9)};
		\node [] at (1.8,0.9) {$a$};
		
		\absdot{(0.9, 1.1)};
		\node [] at (0.7,1.1) {$a$};
		
		\absdot{(1.1, 0.4)};
		\node [] at (1.3,0.4) {$a$};
		
		
		\absdot{(0, 0.2)};
		\node [] at (-0.2,0.2) {$a$};
		
		\absdot{(0.2, -0.5)};
		\node [] at (0.4,-0.5) {$a$};
		
		\absdot{(-0.5, -0.3)};
		\node [] at (-0.7,-0.3) {$a$};
		
		\absdot{(-0.3, -1)};
		\node [] at (-0.1,-1) {$a$};
		
		\absdot{(-1, -0.8)};
		\node [] at (-1.2,-1) {$b$};
		
		\draw [thick, line cap=round] (2.1,1.4)--(1.4,1.6)--(1.6,0.9)--(0.9,1.1)--(1.1,0.4) --(0.8667,0.4667) ;
		
		\draw [thick, line cap=round] (-1, -0.8)--(-0.3, -1)--(-0.5, -0.3)--(0.2, -0.5)--(0, 0.2) -- (0.2333,0.1333);
		
		\draw [out = 90, in = 110] (-1,-0.8) to (2.1,1.4);
		
		\node (x) at (0.7, 0.5) {\Large$.$};
		\node (x) at (0.55, 0.35) {\Large$.$};
		\node (x) at (0.4, 0.2) {\Large$.$};
		
	\end{tikzpicture}
\end{center}
\caption{Encoding a cycle as a permutation letter graph.}
\label{fig:cycle_as_perm_letter_graph}
\end{figure}

\subsection{Extremal Considerations}

The next step is to investigate the maximum value of $\ell_{\perm}$ over all $n$-vertex graphs. We saw with letters graph that since almost all $n$-vertex graphs have lettericity near $n$, we can't expect to save much data at all by representing a graph $G$ as a $\ell(G)$-letter graph. That is, since almost every vertex receives its own letter, almost every possible decoder element decides one edge in the resulting graph. The question is now whether or not we can save data with the permutation letter graph construction. The following upper bound on $\ell_{\perm}(G)$ for all $n$-vertex graphs $G$ is obtained by finding a word and permutation on which each edge is determined by a unique inversion or noninversion decoder element, thus we do not save any data at all.

\begin{theorem}
	If $G$ is a graph on $n$ vertices, then $\ell_{\perm}(G) \leq \lceil \nicefrac{n}{2} \rceil$.
\end{theorem}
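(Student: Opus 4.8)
The plan is to exhibit, for each even $n=2k$, a single word $w$ and permutation $\pi$ (depending only on $n$, not on $G$) for which the map sending each pair of vertices to the decoder element that \emph{governs} it is injective: the pair $\{i,j\}$ with $i<j$ is governed by the ordered letter pair $(w(i),w(j))$ together with the relevant decoder, namely $I$ if $(i,j)$ is an inversion of $\pi$ and $N$ otherwise. Once injectivity is established, an arbitrary graph $G$ on $[2k]$ is realized by putting each ordered letter pair into $I$ (resp.\ $N$) exactly when its unique governing vertex pair is an inversion-edge (resp.\ noninversion-edge) of $G$; injectivity guarantees there are no conflicting demands and that non-edges stay non-edges. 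The odd case $n=2k-1$ then comes for free: every $(2k-1)$-vertex graph is an induced subgraph of a $2k$-vertex graph (adjoin an isolated vertex), and since $\ell_{\perm}$ is monotone under induced subgraphs we get $\ell_{\perm}(G)\le k=\lceil n/2\rceil$.

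For the construction I take the alphabet $\{a_1,\dots,a_k\}$, the palindromic word $w=a_1a_2\cdots a_k\,a_k\cdots a_2a_1$ (so $a_m$ occupies precisely positions $m$ and $2k+1-m$), and the permutation $\pi=k\,(k-1)\cdots 1\,(2k)\,(2k-1)\cdots(k+1)$, the direct sum $e^{\textnormal{r}}\oplus e^{\textnormal{r}}$ of two reverse identities of length $k$. The purpose of this choice is that letter $a_m$ receives the value pair $\{k+1-m,\,k+m\}$, so that as $m$ grows these value intervals are strictly nested, $\{k,k+1\}\subset\{k-1,k+2\}\subset\cdots\subset\{1,2k\}$.

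The heart of the argument is to verify that, for any two letters $a_m,a_{m'}$ with $m<m'$, the four vertex pairs joining an occurrence of $a_m$ to an occurrence of $a_{m'}$ realize the four slots $\big((a_m,a_{m'}),I\big)$, $\big((a_m,a_{m'}),N\big)$, $\big((a_{m'},a_m),I\big)$, $\big((a_{m'},a_m),N\big)$, each exactly once. Comparing positions shows that exactly two of the four pairs place the $a_m$-occurrence first and two place the $a_{m'}$-occurrence first; comparing values, the nesting of the value intervals forces exactly one inversion within each of those two pairs. Together with the observation that the two occurrences of $a_m$ form the unique pair governed by $(a_m,a_m)$, this gives injectivity of the pair-to-slot map; on the off-diagonal it is in fact a bijection, the $2k(k-1)$ mixed-letter pairs matching the $2k(k-1)$ slots $\big((a_m,a_{m'}),\cdot\big)$ with $m\ne m'$. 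The small cases $k=2$ (with $\pi=2143$, $w=a_1a_2a_2a_1$) and $k=3$ make the pattern transparent and should be recorded.

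I expect the main obstacle to be exactly this four-slot condition. The naive attempt of pairing consecutive entries, as in the proof of Theorem~\ref{thm:inv_lettericity_ub}, fails: there the earlier letter always precedes the later one in position, so all four cross pairs share a single ordered letter pair and collide, leaving only two available slots for four pairs. This is precisely what dictates the two features of the construction—the palindromic word, which splits the four pairs evenly between the two letter orderings, and the nested reverse-identity permutation, which splits each ordering evenly between an inversion and a noninversion. Checking that both splittings occur \emph{simultaneously} for every pair of letters is the only genuine computation in the proof.
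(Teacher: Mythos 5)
Your proposal is correct and is essentially the paper's own proof: you use the same permutation $e^{\textnormal{r}} \oplus e^{\textnormal{r}}$ and the same palindromic word (up to the relabeling $a_m \leftrightarrow \ell_{k+1-m}$), and your verification that the four cross pairs between two distinct letters occupy the four available decoder slots is the same counting the paper performs, merely organized by letter pairs rather than by halves of the word. The only cosmetic difference is the odd case, which you dispatch by adjoining an isolated vertex and invoking monotonicity of $\ell_{\perm}$ under induced subgraphs, whereas the paper writes down an explicit odd-length construction.
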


\begin{proof}
	We can encode every graph on $n = 2k$ vertices with the permutation $k \dots 321 \oplus k \dots 321$ with the word $\ell_k \dots \ell_3 \ell_2 \ell_1 \ell_1 \ell_2 \ell_3 \dots \ell_k$ as seen in Figure~\ref{fig-allgraphs}. For each pair of vertices on the first half of this word, the corresponding edge is uniquely determined by the presence of some tuple $(\ell_{j}, \ell_i)$, $i<j$, in the inversion decoder. Similarly, edges between vertices in the second half of the word are determined by tuples $(\ell_i, \ell_j)$, $i<j$, in the inversion encoder. Finally, the edges between the two halves are uniquely determined by tuples in the noninversion decoder.
	
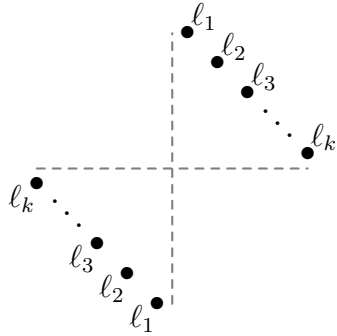
\begin{figure}[h]
\begin{center}
	\begin{tikzpicture}[scale=0.4]
		\absdot{(1,5)};
		\node at (1.6, 4.4) {\large $.$};
		\node at (2, 4) {\large $.$};
		\node at (2.4, 3.6) {\large $.$};
		\absdot{(3,3)};
		\absdot{(4,2)};
		\absdot{(5,1)};
		
		\absdot{(6,10)};
		\absdot{(7,9)};
		\absdot{(8,8)};
		\node at (8.6, 7.4) {\large $.$};
		\node at (9, 7) {\large $.$};
		\node at (9.4, 6.6) {\large $.$};
		\absdot{(10,6)};
		\draw [gray, dashed, thick, cap = round] (5.5,1)--(5.5,10);
		\draw [gray, dashed, thick, cap=round] (1,5.5)--(10,5.5);
		\node [label = {}] at (0.5,4.5) {$\ell_k$};
		\node [label = {}] at (2.5,2.5) {$\ell_3$};
		\node [label = {}] at (3.5,1.5) {$\ell_2$};
		\node [label = {}] at (4.5,0.5) {$\ell_1$};
		
		\node [label = {}] at (6.55,10.55) {$\ell_1$};
		\node [label = {}] at (7.55,9.55) {$\ell_2$};
		\node [label = {}] at (8.55,8.55) {$\ell_3$};
		\node [label = {}] at (10.55,6.55) {$\ell_k$};
	\end{tikzpicture}
\end{center}
\caption{A word $w$ and permutation $\pi$ such that for all graphs $G$ there exists a permutation letter graph $\Gamma_{IN}^{\pi}(w)$ isomorphic to $G$.}
\label{fig-allgraphs}
\end{figure}
	
	Similarly, it is easy to see that every graph on $n = 2k+1$ vertices is a permutation letter graph with the permutation $(k+1)k \dots 21 \oplus k \dots 21$ with the word $\ell_{k+1} \ell_k \dots \ell_2 \ell_1 \ell_1 \ell_2 \dots \ell_k$.
\end{proof}

It doesn't take too much work to show that this bound is tight. Interestingly, despite being able to save many letters when encoding classes such as paths, matchings and cycles with the permutation letter graph construction over the letter graph construction, this means that there exists graphs for which we can save no data at all with the permutation letter graph construction, (Theorem~\ref{mainconstruction} showed that we can always save at least some data when encoding a graph as a letter graph). To prove this upper bound, we use the same exact type of argument as Petkov\v{s}ek~\cite{petkovsek:letter-graphs-a:} used to show that for large enough $n$ that there exists an $n$-vertex graph $G$ such that $\ell(G)$ is at least $0.707n$. The author thanks Vince Vatter for recommending this argument.

\begin{theorem}
	For each $\alpha < \nicefrac{1}{2}$, there is an $N$ such that for all $n > N$ there are $n$-vertex graphs~$G$ with $\ell_{\perm}(G) > \alpha n$.
\end{theorem}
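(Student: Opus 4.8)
The plan is to adapt Petkovšek's counting argument: bound the number of distinct graphs realizable as permutation $k$-letter graphs on $n$ vertices, and show this count is too small to exhaust all $2^{\binom{n}{2}}$ labelled $n$-vertex graphs when $k \leq \alpha n$ with $\alpha < \nicefrac{1}{2}$. The comparison between a counting upper bound and $2^{\binom{n}{2}}$, taking logarithms, is exactly the mechanism by which such ``large lettericity'' existence results are typically proved, so I expect the skeleton to transfer cleanly. The author's hint in the surrounding text confirms this is the intended approach.

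First I would fix an alphabet size $k$ and count the data that determines a permutation $k$-letter graph $\Gamma_{IN}^{\pi}(w)$ on vertex set $\{1,\dots,n\}$. Such a graph is specified by a permutation $\pi \in S_n$ (at most $n!$ choices), a word $w \in \Sigma^n$ with $|\Sigma| = k$ (at most $k^n$ choices), and the two decoders $I, N \subseteq \Sigma^2$ (at most $2^{k^2} \cdot 2^{k^2} = 2^{2k^2}$ choices). Hence the number of graphs on a fixed labelled vertex set that arise as permutation $k$-letter graphs is at most $n! \, k^n \, 2^{2k^2}$. Every graph $G$ with $\ell_{\perm}(G) \leq k$ is counted here, so if this product is strictly less than $2^{\binom{n}{2}}$, some labelled $n$-vertex graph must satisfy $\ell_{\perm}(G) > k$.

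Next I would set $k = \lfloor \alpha n \rfloor$ and verify the inequality asymptotically by taking base-$2$ logarithms. Using $\log_2 n! \leq n \log_2 n$ and $\log_2 k^n = n \log_2 k \leq n \log_2 n$, the logarithm of the upper bound is at most
\[
2 n \log_2 n + 2 \alpha^2 n^2 + O(n),
\]
whereas $\log_2 2^{\binom{n}{2}} = \binom{n}{2} \sim \tfrac{1}{2} n^2$. Since $\alpha < \nicefrac{1}{2}$ gives $2\alpha^2 < \tfrac{1}{2}$, the dominant quadratic term $2\alpha^2 n^2$ is strictly smaller than $\tfrac{1}{2} n^2$, and the lower-order $n \log_2 n$ contributions are negligible. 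Thus there exists $N$ so that for all $n > N$ the counting bound is beaten, forcing an $n$-vertex graph with $\ell_{\perm}(G) > \alpha n$.

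The main obstacle is not any single estimate but ensuring the quadratic coefficients compare correctly: everything hinges on the decoder contribution $2^{2k^2}$ producing exponent $2\alpha^2 n^2$ and on checking that $2\alpha^2 < \tfrac12$ is exactly the content of $\alpha < \nicefrac12$. One subtlety worth a careful remark is that the crude count $n! \, k^n \, 2^{2k^2}$ overcounts—the same graph can arise from many $(\pi, w, I, N)$ tuples—but overcounting only \emph{strengthens} the upper bound, so it is harmless for an existence result. I would close by noting that the argument counts labelled graphs up to the fixed vertex set, which suffices since isomorphism classes are only fewer, and that the threshold $N$ depends on $\alpha$ through how much slack $\tfrac12 - 2\alpha^2$ leaves to absorb the $O(n \log_2 n)$ terms.
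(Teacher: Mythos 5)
Your proposal is the paper's proof: the same Petkov\v{s}ek-style count of tuples $(\pi, w, I, N)$ measured against $2^{\binom{n}{2}}$, with the decoder term $2^{2k^2}$ driving the decisive comparison $2\alpha^2 < \nicefrac{1}{2}$. The one bookkeeping point your closing remark does not quite resolve is that $\ell_{\perm}(G)\le k$ only asserts $G$ is \emph{isomorphic} to some $\Gamma_{IN}^{\pi}(w)$, and the set of labeled graphs literally equal to such a construction is not closed under relabeling (the decoders are order-sensitive), so the count needs an extra factor of $n!$ for labelings; the paper accordingly bounds by $(n!)^2\cdot k^n\cdot 2^{2k^2}$. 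Since this only perturbs the $O(n\log_2 n)$ term, your asymptotic conclusion is unaffected.
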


\begin{proof}
	Assume that $\ell_{\perm}(G) \le \alpha n$ for all graphs $G$ on $n$ vertices and write $k = \lfloor \alpha n \rfloor$. Then, by assumption, all $2^{\binom{n}{2}}$ labeled graphs on $n$ vertices are permutation $k$-letter graphs.
	
	Over a $k$-letter alphabet, there are $k^2$ pairs of letters, $2^{2k^2}$ ways to choose the decoders $I$ and $N$, $k^n$ words of length $n$, $n!$ choices for the underlying permutation, and at most $n!$ possible labelings of a graph on $n$ vertices. Hence, there are no more than $(n!)^2 \cdot k^n \cdot 2^{2k^2}$ labeled $k$-letter graphs on $n$ vertices. Therefore
	\[
	2^{\binom{n}{2}} \le (n!)^2 \cdot k^n \cdot 2^{2k^2} \le n^{2n} \cdot (\alpha n)^n \cdot 2^{2(\alpha n)^2}.
	\] 
	After taking logarithms and making some small manipulations, we obtain
	\[
	(\nicefrac{1}{2} - 2 \alpha^2) \cdot n^2 \leq 3n \cdot \log_2n + (\nicefrac{1}{2} + \log_2\alpha ) \cdot n.
	\]
	However, since $\alpha < \nicefrac{1}{2}$ we have that $\nicefrac{1}{2} - 2 \alpha^2$ is positive, and so this inequality doesn't hold for large $n$.
\end{proof}

\subsection{Further Directions}

We now have a much tighter extremal bound on $\ell_{\perm}$ for all $n$-vertex graphs than we do for lettericity. However, our lower bound on lettericity given in Theorem~\ref{thm:lettericity_lb} describes the behavior of almost all graphs. This inspires the following problem.

\begin{problem}
	Find the optimal function $f(n)$ such that $\ell_{\perm}(G) \geq f(n)$ for almost all $n$-vertex graphs $G$.
\end{problem}

This permutation letter graph construction leads to many structural problems which can be phrased along the lines of the following.

\begin{problem}
	For fixed decoders $I$ and $N$ on a small alphabet $\Sigma$, classify all graphs $\Gamma_{IN}^{\pi}(w)$ spanning over all $w$ and $\pi$. For a more controlled problem, fix further some permutation $\pi \neq e^{\text{r}}$ and let only $w$ vary, or vice versa.
\end{problem}

The following problem suggests another graph parameter that lies somewhere between lettericity and~$\ell_{\perm}$.

\begin{problem}
	Consider the graph parameter defined by the least number of letters required to encode a graph as a permutation letter graph $\Gamma_{IN}^{\pi}(w)$ such that $N$ is necessarily empty.
\end{problem}

\cleardoublepage
\typeout{******************}
\typeout{**  Chapter 4   **}
\typeout{******************}
\chapter{Reflections in Graphs}
\label{chap:reflections}

\newcommand{\flipOmega}{\rotatebox[origin=c]{180}{$\Omega$}}
\newcommand{\flipPsi}{\rotatebox[origin=c]{180}{$\Psi$}}
\def\nodesize{3.5}

Fundamental to algebraic combinatorics is the systematic study of symmetry via Coxeter groups. The structure of a Coxeter group is captured in its Bruhat graph, with arcs representing the reflections that generate its elements. Permutations, with transpositions playing the role of reflections, form a Coxeter system. In this chapter, we begin by studying how transpositions interact with the inversion graph representation of permutations, allowing us to generalize these reflections to an operation on arbitrary simple graphs. This enables us to define a graph on the set of all $n$-vertex graphs that is analogous to the Bruhat graph on permutations. We investigate this graph throughout this chapter.

\section{Preliminaries}

\subsection{Reflections in Permutations}

The set of permutations $S_n$ with the operation of function composition forms a group called the \emph{symmetric group}~\cite{sagan:the-symmetric-g:}. The symmetric group is generated by the set of \emph{simple reflections} $S = \{s_1, s_2, \dots, s_{n-1}\}$, where~$s_i = (i \hspace{1mm} (i\mathrm{+}1)) \in S_n$ is the adjacent transposition that swaps $i$ and $i+1$ and fixes all other elements. The symmetric group with these generators form a Coxeter system~\cite{bjorner:combinatorics-o:}.

In a general Coxeter system, the set of all \emph{reflections} is the set of all conjugates of the simple reflections. Thus, in the symmetric group, the set of reflections is given by 
\[
	T = \bigcup_{\pi \in S_n} \pi S \pi^{-1} = \{t_{ij}: 1 \leq i < j \leq n \},
\]
where $t_{ij} = (i \hspace{1mm} j) \in S_n$ is the transposition swapping $i$ and $j$ and leaving all other elements fixed. That is, the reflections in $S_n$ are exactly the transpositions. Applying the reflection~$t_{ij}$ on the left of a permutation $\pi \in S_n$ results in a permutation with the values~$i$ and~$j$ swapped in one-line notation. Furthermore, applying $t_{ij}$ on the right swaps the indices $i$ and $j$. For example, given $\pi = 24315$ we have $t_{1,4} \circ \pi = 2 \mathbf{1}3\mathbf{4}5$ and $\pi \circ t_{1,4} = \mathbf{1}43\mathbf{2}5$. 

Combining the notions of reflection and inversion, the \emph{Bruhat graph} $\Omega_n$ is the directed graph with vertex set $S_n$, and in which $(\pi, \sigma)$ is an arc if and only if $\sigma = \pi\circ t_{ij}$ for some reflection $t_{ij}$ such that $(i,j)$ is an inversion of $\sigma$. That is, $\sigma$ is obtained from $\pi$ by applying a reflection that creates more inversions. Similarly, the \emph{weak Bruhat graph} $\Psi_n$ is the directed graph with vertex set $S_n$, and in which $(\pi, \sigma)$ is an arc if and only if $\sigma = \pi \circ s_i$ for some simple reflection $s_i$ such that $(i, i+1)$ is an inversion of $\sigma$. It is clear that~$\Psi_n$ is a subgraph of $\Omega_n$. We remark that the (strong) \emph{Bruhat order} and \emph{weak Bruhat order}  on~$S_n$ are obtained by taking the transitive closures of the arc sets of $\Omega_n$ and $\Psi_n$, respectively, (and taking $x \rightarrow y$ to mean $x \leq y$).

Coxeter systems employ reflections to generate the elements of the underlying Coxeter group. This notion of generating elements corresponds with the paths that begin at the identity element and travel along the arcs of the Bruhat graph. For the symmetric group~$S_n$, we can equivalently consider sorting permutations, starting at an element and traveling backwards along the arcs of the Bruhat graph until we reach the identity. This is the point of view taken throughout this chapter, and thus we consider the \emph{flipped Bruhat graph} $\flipOmega_n$ and \emph{flipped weak Bruhat graph} $\flipPsi_n$, obtained by taking the converse, (i.e., dual), of the graphs $\Omega_n$ and $\Psi_n$, respectively. In other words, we reverse the arcs in both of these graphs. In either of the flipped Bruhat graphs, an arc $(\pi, \sigma)$ implies that there is an inversion in $\pi$ whose indices swap values to obtain $\sigma$. We refer to a reflection that swaps two values of an inversion as a \emph{reduction}; this is equivalent to defining a reduction as a reflection that reduces length. The graphs $\flipOmega_3$ and $\flipPsi_3$ can be seen in Figure~\ref{fig:BruhatGraphs}.

\begin{figure}[h]
	\begin{center}
	\begin{pic}
	\makenodescircle{v}{6}{0.5}[90][circle, fill, minimum size = \nodesize pt, draw]
	\labelnode[180]{v3}{$213$}
	\labelnode[180]{v2}{$231$}
	\labelnode[0]{v5}{$132$}
	\labelnode[0]{v6}{$312$}
	\labelnode[90]{v1}{$321$}
	\labelnode[-90]{v4}{$123$}
	
	\draw[->-] (v1) to (v2);
	\draw[->-] (v1) to (v6);
	\draw[->-] (v2) to (v3);
	\draw[->-] (v6) to (v5);
	\draw[->-] (v5) to (v4);
	\draw[->-] (v3) to (v4);
	
	\draw[-->-, dashed] (v1) to (v4);
	\draw[-->-, dashed] (v2) to (v5);
	\draw[-->-, dashed] (v6) to (v3);
	
	\end{pic}
\end{center}
\vspace{-2mm}
\caption{The flipped Bruhat graph $\flipOmega_3$; the flipped weak Bruhat graph $\flipPsi_3$ can be obtained by removing the dashed arcs.}	
\label{fig:BruhatGraphs}
\end{figure}

The \emph{length} of a permutation $\pi \in S_n$ is given by $l(\pi) = \min \{ k : \pi = s_{i_1} s_{i_2} \dots s_{i_k} \}$, i.e., the minimum number of factors required to write $\pi$ as a product of simple reflections. We note that these minimum length expressions are often referred to as \emph{reduced words} or \emph{reduced expressions}. Given that a simple reflection either increases or decreases the number of inversions of a permutation by 1, it follows that $l(\pi)$ is the number of inversions of $\pi$, and corresponds with the length of any path 
\[
\pi = \sigma_0 \rightarrow \sigma_1 \rightarrow \sigma_{2} \rightarrow \dots \rightarrow \sigma_{m} = e
\]
in the flipped weak Bruhat graph $\flipPsi_n$, where $e$ is the identity permutation.

Similarly, the \emph{absolute length} of a permutation $\pi \in S_n$ is given by $l'(\pi) = \min \{ k: \pi = t_{i_1 j_1} t_{i_2 j_2} \dots t_{i_k j_k} \}$, i.e., the minimum number of factors required to write $\pi$ as a product of reflections. Unlike the situation above, it is not immediate that $l'(\pi)$ corresponds with the minimum length of a path 
\[
\pi = \sigma_0 \rightarrow \sigma_1 \rightarrow \sigma_{2} \rightarrow \dots \rightarrow \sigma_{m'} = e
\]
in the flipped Bruhat graph $\flipOmega_n$. That is, it is not obvious that there isn't a shorter path from~$\pi$ to~$e$ in the \emph{undirected Bruhat graph} $\bar{\Omega}_n (= \bar{\flipOmega}_n)$, obtained by unorienting the arcs of~$\Omega_n$. The following confirms what one might hope to be true. 

\begin{theorem}[Dyer~\text{\cite[Theorem 1.2]{dyer:on-minimal-leng:}}]\label{thm:abs_length_in_bruhat_graphs}
	For any $\pi \in S_n$, the minimum length of a path from~$\pi$ to~$e$ in~$\flipOmega_n$ is equal to~$l'(\pi)$.
\end{theorem}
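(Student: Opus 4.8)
The plan is to prove the two inequalities $\min \ge l'(\pi)$ and $\min \le l'(\pi)$ separately, handling the harder second one through a ``selection-sort'' construction of an explicit path whose length I can compute. Throughout, write $c(\rho)$ for the number of cycles (counting fixed points) in the disjoint cycle decomposition of $\rho \in S_n$.

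The lower bound falls straight out of the definitions. Given any directed path $\pi = \sigma_0 \to \sigma_1 \to \cdots \to \sigma_m = e$ in $\flipOmega_n$, each arc has the form $\sigma_k = \sigma_{k-1} \circ t_{i_k j_k}$, so composing and using that reflections are involutions gives $\pi = t_{i_m j_m} \circ \cdots \circ t_{i_1 j_1}$, an expression of $\pi$ as a product of $m$ reflections; hence $l'(\pi) \le m$. As this holds for every path, the minimum path length is at least $l'(\pi)$.

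For the matching upper bound I would build a path along which $c$ can be tracked exactly. The key lemma is: if $\pi \ne e$ and $m$ is the largest non-fixed point of $\pi$, then with $i = \pi^{-1}(m)$ the pair $(i,m)$ is an inversion of $\pi$ and $i,m$ lie in a common cycle. Indeed, every index exceeding $m$ is fixed, so $\pi$ restricts to a permutation of $\{1,\dots,m\}$, forcing $\pi(m) < m = \pi(i)$ with $i < m$; this is exactly the inversion condition, and $m = \pi(i)$ makes $i$ and $m$ cycle-adjacent. Thus $\pi \to \pi \circ t_{im}$ is a legitimate arc of $\flipOmega_n$. Moreover, right-multiplication by $t_{im}$ reroutes only the images of $i$ and $m$, and since these two indices share a cycle, this \emph{splits} that cycle, so $c(\pi \circ t_{im}) = c(\pi)+1$ (concretely, $\pi \circ t_{im}$ now fixes $m$, which is precisely selection sort). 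Iterating produces a path from $\pi$ to $e$ along which $c$ strictly increases by $1$ at each step; as $c(e) = n$, this path has length exactly $n - c(\pi)$, so the minimum path length is at most $n - c(\pi)$.

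To close the loop I would note that $l'(\pi) \ge n - c(\pi)$: a single transposition changes the cycle count by exactly $1$, so tracking the partial products of a shortest reflection factorization of $\pi$ (starting from $e$, where $c = n$) shows $c(\pi) \ge n - l'(\pi)$. Chaining the three estimates yields
\[
n - c(\pi) \;\ge\; \min\text{ path length} \;\ge\; l'(\pi) \;\ge\; n - c(\pi),
\]
so all three quantities coincide, giving the theorem. The main obstacle is the upper bound: one must exhibit a reflection that is simultaneously length-reducing (a valid $\flipOmega_n$ arc) \emph{and} absolute-length-reducing, and the lemma on the largest displaced point supplies such a reflection uniformly. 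The remaining verifications---the cycle-splitting behaviour of right multiplication by a transposition and the inversion check---are routine bookkeeping on the functional digraph of $\pi$.
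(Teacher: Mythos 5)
Your proposal is correct and follows essentially the same route the paper sketches for the symmetric-group case: the dichotomy that a transposition either splits or merges cycles, the existence of a cycle-splitting inversion in every nonidentity permutation (which you make explicit via the largest non-fixed point), and the identity $l'(\pi)=n-c(\pi)$. Your write-up simply fills in the details the paper leaves to the reader.
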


In other words, any permutation can be optimally sorted by reflections using only reductions, (although sorting by reductions is not always optimal). Dyer~\cite[Theorem 1.2]{dyer:on-minimal-leng:} establishes Theorem~\ref{thm:abs_length_in_bruhat_graphs} by proving a more general result for all Coxeter systems using some extra machinery, but it is elementary to prove for the symmetric group. That is, one can show that applying a reflection to a permutation either splits one cycle into two, or joins two cycles into one. The result then follows from the observation that every nonidentity permutation has an inversion such that swapping its values splits a cycle. From this also follows the well-known fact that the absolute length of a permutation is equal to its number of entries minus the number of cycles.

The set of permutations~$\pi$ for which $l(\pi)=l'(\pi)$ were characterized by both Edelman~\cite{edelman:on-inversions-a:} and Petersen and Tenner~\cite{petersen:the-depth-of-a-:}. Edelman's characterization is in terms of the cycle structure of these permutations. More specifically, he describes them as those permutations for which each cycle is comprised of the elements of an interval of $[n]$ and can be written as $(c_1, c_2, \dots, c_k)$ where there exists $i \in [k]$ such that $c_1 < c_2 < \dots < c_i > \dots > c_k$. The characterization given by Petersen and Tenner, which is more relevant to this paper, is given below.

\begin{theorem}[Petersen and Tenner~\cite{petersen:the-depth-of-a-:}]
\label{thm:boolean_perms}
The length $l(\pi)$ and absolute length $l'(\pi)$ of a permutation $\pi \in S_n$ coincide if and only if~$\pi$  avoids the patterns $321$ and $3412$.
\end{theorem}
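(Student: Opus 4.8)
The plan is to reduce the statement to the equality cases of the Diaconis--Graham inequalities. Recall from the discussion above that $l(\pi) = \inv(\pi)$ and that $l'(\pi) = n - c(\pi)$, where $c(\pi)$ denotes the number of cycles of $\pi$; for brevity write $I(\pi) = \inv(\pi)$ and $T(\pi) = n - c(\pi)$, so that the theorem is exactly the assertion that $I(\pi) = T(\pi)$ if and only if $\pi$ avoids $321$ and $3412$. To attack this I would introduce the \emph{total displacement}
\[
D(\pi) = \sum_{i=1}^{n} |\pi(i) - i|
\]
and work with the two-sided bound $I(\pi) + T(\pi) \le D(\pi) \le 2\,I(\pi)$, the Diaconis--Graham inequalities.

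First I would record the simple observation that turns the two-sided bound into a dichotomy. Since $T(\pi) = l'(\pi) \le l(\pi) = I(\pi)$ always, the hypothesis $I(\pi) = T(\pi)$ forces $I(\pi) + T(\pi) = 2\,I(\pi)$, and the displacement sandwich then squeezes $D(\pi) = I(\pi) + T(\pi) = 2\,I(\pi)$, so \emph{both} inequalities are tight. Conversely, if both are equalities then $2\,I(\pi) = D(\pi) = I(\pi) + T(\pi)$, whence $I(\pi) = T(\pi)$. Thus $l(\pi) = l'(\pi)$ holds precisely when both Diaconis--Graham inequalities are tight, and the whole problem reduces to characterizing the tightness of each.

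The second step is then to prove the two equality conditions separately, namely
\[
D(\pi) = 2\,I(\pi) \iff \pi \text{ avoids } 321, \qquad D(\pi) = I(\pi) + T(\pi) \iff \pi \text{ avoids } 3412,
\]
after which the conclusion follows by intersecting: both hold exactly when $\pi$ avoids $321$ and $3412$. For the upper bound I would use the counting identity $\tfrac{1}{2} D(\pi) = \sum_{i}\#\{ m : i \le m < \pi(i)\}$ (summing over excedances), which lets one inject each unit of displacement into the set of inversions of $\pi$; tracing when this injection fails to be a bijection shows that the deficiency $2I(\pi) - D(\pi)$ is strictly positive exactly when $\pi$ contains a decreasing subsequence of length three, giving the $321$ characterization. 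The lower bound is the delicate one: here I would analyze how the three statistics $I$, $T$, and $D$ change under a single reduction, using the cycle-splitting reductions emphasized earlier (every non-identity permutation admits an inversion whose swap splits a cycle, dropping $T$ by exactly $1$), and show inductively that a sort by such reductions keeps $D = I + T$ tight unless and until a $3412$ occurrence is encountered.

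The hard part will be this last equality characterization for the lower Diaconis--Graham inequality, i.e. establishing that $D(\pi) - I(\pi) - T(\pi)$ is positive exactly in the presence of a $3412$ pattern. The obstruction is that deleting an entry to expose a pattern does not interact transparently with the cycle count $c(\pi)$, so a direct hereditary argument on $D - I - T$ is awkward; the cleaner route is the inductive one, where the main labor is a case analysis verifying that a well-chosen cycle-splitting reduction simultaneously lowers $D$ by the same amount it lowers $I + T$ and preserves avoidance of $3412$, with the failure of this synchronization pinned to a concrete $3412$ configuration. As a fallback that avoids the displacement statistic entirely, one could instead prove $l(\pi) = l'(\pi) \iff$ (avoid $321$ and $3412$) by a single strong induction on $\inv(\pi)$, selecting at each step a reduction that splits a cycle and lowers $\inv$ by exactly one while staying inside the class $\Av(321,3412)$; the same case analysis reappears there as the crux.
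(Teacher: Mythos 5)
Your reduction of $l(\pi)=l'(\pi)$ to the simultaneous tightness of both Diaconis--Graham inequalities is correct, and the first of your two claimed equality characterizations ($D(\pi)=2\,\inv(\pi)$ if and only if $\pi$ avoids $321$) is a genuine theorem of Petersen and Tenner. The proposal breaks, however, at the second characterization: it is \emph{false} that $D(\pi)=\inv(\pi)+l'(\pi)$ if and only if $\pi$ avoids $3412$. Take $\pi=53412$. It contains $3412$ (its last four entries form the pattern literally), yet $\inv(\pi)=8$, $\pi$ is a single $5$-cycle so $l'(\pi)=5-1=4$, and $D(\pi)=4+1+1+3+3=12=\inv(\pi)+l'(\pi)$, so the lower inequality is tight. (The theorem itself is untouched, since $l(53412)=8\neq 4=l'(53412)$; it is the upper inequality that fails to be tight here.) The set of permutations achieving equality in the lower Diaconis--Graham inequality --- Petersen and Tenner's ``shallow'' permutations --- is not even closed under pattern containment, as this example shows ($53412$ is in the set while its pattern $3412$ is not), so no characterization by classical pattern avoidance is possible, and the ``intersect the two equality classes'' step cannot be carried out. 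What you would actually need is the conditional statement that a $321$-\emph{avoiding} permutation satisfies $D=\inv+l'$ if and only if it avoids $3412$; that is essentially equivalent to the theorem itself, and your outline gives no independent route to it --- you flag exactly this lower-bound analysis as ``the hard part,'' and the counterexample shows the obstruction is not merely technical.

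Your fallback (strong induction on $\inv$, choosing a cycle-splitting reduction that lowers $\inv$ by exactly one) is sound in spirit for the direction [avoidance $\Rightarrow l=l'$], and is close to what is actually done here: avoidance of $321$ and $3412$ is equivalent to $G_\pi$ being a forest (Proposition~\ref{prop:321:3412:forest}); a forest with $m$ edges requires exactly $m$ edge reflections to reach the edgeless graph (Theorem~\ref{thm-forests-m-transpositions}); and at most $l'(\pi)$ edge reflections ever suffice (Theorem~\ref{thm:edge_reflections_abs_length}), which sandwiches $l(\pi)=\inv(\pi)\le l'(\pi)\le l(\pi)$. For the converse one argues directly that a $321$ occurrence can be destroyed by one reduction removing three inversions, and a $3412$ occurrence by two reductions removing four, so $l'(\pi)<\inv(\pi)=l(\pi)$. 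Neither half passes through the displacement statistic, and given the failure of your second lemma I would abandon the Diaconis--Graham route; note also that your fallback as stated addresses only one of the two implications.
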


Prior to the proof of Theorem~\ref{thm:boolean_perms}, Tenner~\cite{tenner:pattern-avoidan:} had established a different characterization of the class of permutations avoiding $321$ and $3412$. She proved that these are the \emph{boolean permutations}, meaning that their principal order ideals in the Bruhat order are isomorphic to boolean lattices. In proving this, Tenner observed that these are also precisely the permutations whose reduced words contain no repeated letters. This result is how Petersen and Tenner~\cite{petersen:the-depth-of-a-:} proved Theorem~\ref{thm:boolean_perms}.

Despite all this interest in the class of permutations avoiding $321$ and $3412$, one natural characterization of these permutations seems not to have been observed in the literature. Using this characterization, a simple graph-theoretic proof of Theorem~\ref{thm:boolean_perms} will follow from our results. Given the topic of this dissertation, the reader may have anticipated that this characterization is in terms of inversion graphs.

\begin{proposition}
\label{prop:321:3412:forest}
A permutation avoids $321$ and $3412$ if and only if its inversion graph is a forest.
\end{proposition}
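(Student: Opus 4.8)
The plan is to recast the biconditional as a statement purely about cycles: since a forest is exactly a graph with no cycle, the claim is equivalent to asserting that $G_{\pi}$ contains a cycle if and only if $\pi$ contains $321$ or $3412$. I would prove the two implications separately, and both reduce to understanding small induced cycles in inversion graphs.

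First I would dispatch the easy direction, that a forbidden pattern produces a cycle. If $\pi$ contains $321$, the three values involved are pairwise inverted, so they induce a triangle in $G_{\pi}$. If $\pi$ contains $3412$, let $c<d<a<b$ be the four values playing the roles of $1,2,3,4$; reading off the six pairs shows that the edge set among them is exactly $\{ac, ad, bc, bd\}$, with $ab$ and $cd$ non-edges, so these four values induce a copy of $C_4\cong K_{2,2}$. In either case $G_{\pi}$ has a cycle and is therefore not a forest.

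For the converse I would show that if $G_{\pi}$ is not a forest then $\pi$ contains $321$ or $3412$. Suppose $G_{\pi}$ has a cycle and let $C$ be a shortest one; then $C$ is chordless, hence an induced cycle, of some length $g\ge 3$. Here I would invoke the corollary noted after Theorem~\ref{prop:prime:inv:symmetries}, that no inversion graph contains an induced cycle of length at least five, together with the basic fact that an induced subgraph of an inversion graph is again an inversion graph: namely $G_{\pi}[S]$ is the inversion graph of the pattern formed by the entries whose values lie in $S$. These two facts force $g\in\{3,4\}$. If $g=3$, the three mutually adjacent values are mutually inverted, so they appear in strictly decreasing order of value as the index increases, which is precisely a $321$ pattern. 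If $g=4$, the four vertices induce $C_4$, so the pattern $\sigma\in S_4$ they determine satisfies $G_{\sigma}\cong C_4$, and I would finish by checking that $3412$ is the only permutation of $\{1,2,3,4\}$ with this inversion graph.

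The one step requiring genuine care is this last classification, $g=4\Rightarrow 3412$. Rather than enumerate all of $S_4$, I would give a short positional argument: writing $C_4$ as $K_{2,2}$ with independent pairs $\{a,b\}$ and $\{c,d\}$, each independent pair is a noninversion while every cross pair is an inversion; translating ``values $u<v$ form an inversion'' into ``$v$ lies to the left of $u$'' and tracking the resulting order constraints on the four positions shows that the only consistent configuration yields the pattern $3412$. This positional bookkeeping, and the verification that the no-long-induced-cycle input applies to an arbitrary (not necessarily prime) inversion graph through the induced-subgraph remark, are the only points where I expect to have to be precise; everything else is immediate.
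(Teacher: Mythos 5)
Your proposal is correct and follows essentially the same route as the paper's proof: both directions reduce to observing that $321$ is the unique permutation whose inversion graph is $C_3$, that $3412$ is the unique one yielding $C_4$, and that no inversion graph contains an induced cycle of length at least $5$ (the corollary from Chapter~\ref{chap:uniqueness}). You simply make explicit the steps the paper leaves implicit — that a shortest cycle is induced, and that induced subgraphs of inversion graphs are inversion graphs of the corresponding patterns.
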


\begin{proof}
	A quick investigation reveals that $321$ is the only permutation that yields a $C_3$ as its inversion graph, and $3412$ is the only permutation that yields a $C_4$. Further, we saw in Chapter~\ref{chap:uniqueness} that there are no permutations that yield a cycle $C_n$ for any $n \geq 5$.
\end{proof}

As mentioned above, we use this characterization to give a graph-theoretic proof of Theorem~\ref{thm:boolean_perms}. We discuss in the paragraph following Theorem~\ref{thm-forests-m-transpositions} how Theorem~\ref{thm:boolean_perms} is an easy corollary to that result.

For easy reference, we compile all of the mentioned characterizations of this permutation class in the following list. That is, letting~$\pi$ be a permutation, the following statements are equivalent:
\begin{itemize}
	\item its length $l(\pi)$ and absolute length $l'(\pi)$ coincide;
	\item it avoids the patterns $231$ and $3412$;
	\item each of its cycles comprises an interval and can be written as $(c_1, c_2, \dots, c_k)$ with $i \in [k]$ such that $c_1 < c_2 < \dots < c_i > \dots > c_k$;
	\item its principal order ideal in Bruhat order is isomorphic to a boolean lattice;
	\item all of its reduced words contain no repeated letters; and
	\item its inversion graph $G_{\pi}$ is a forest.
\end{itemize}

\subsection{From Permutations to Graphs}

While it is trivial how the application of a reflection changes a permutation written in one-line notation, it is not so obvious how reflections affect the underlying inversion graph. In Figure~\ref{fig-inversion_graph_transposition}, we see an example of how a reduction manipulates a permutation's inversion graph. Specifically, we see the inversion graph of $651324$ before and after the reflection $t_{2,5}$ is applied, (on either the left or right).

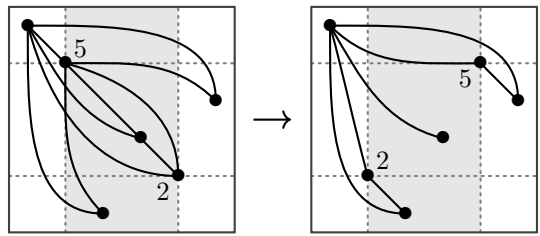
\begin{figure}[h]
\begin{footnotesize}
\begin{center}
	\begin{tikzpicture}[scale=0.5]
		\draw [draw = none, fill=gray!20] (2,0.5) rectangle (5,6.5);
		\draw [color = gray, dotted, thick, line cap=round] (2,0.5)--(2,6.5);
		\draw [color = gray, dotted, thick, line cap=round] (5,0.5)--(5,6.5);
		\draw [color = gray, dotted, thick, line cap=round] (0.5,2)--(6.5,2);
		\draw [color = gray, dotted, thick, line cap=round] (0.5,5)--(6.5,5);
		\draw [out=0, in=90] (1,6) to (6,4);
		\draw [out=-45, in=135] (1,6) to (2,5);
		\draw [out=-60, in=165] (1,6) to (4,3);
		\draw [out=-75, in=180] (1,6) to (5,2);
		\draw [out=-90, in=180] (1,6) to (3,1);
		\draw (2,5) -- (4,3) -- (5,2);
		\draw [out=-15, in=90] (2,5) to (5,2);
		\draw [out=0, in=135] (2,5) to (6,4);
		\draw [out=270, in=135] (2,5) to (3,1);
		
		\plotperm{6,5,1,3,2,4};
		\plotpermbox{1}{1}{6}{6};
		
		\node[rotate = 0] at (2.4, 5.4) {$5$};
		\node[rotate = 0] at (4.6, 1.6) {$2$};
		
		\draw[->] (7,3.5) to (8,3.5);
		
	\end{tikzpicture}
	\hspace{1mm}
	\begin{tikzpicture}[scale=0.5]
		\draw [draw = none, fill=gray!20] (2,0.5) rectangle (5,6.5);
		\draw [color = gray, dotted, thick, line cap=round] (2,0.5)--(2,6.5);
		\draw [color = gray, dotted, thick, line cap=round] (5,0.5)--(5,6.5);
		\draw [color = gray, dotted, thick, line cap=round] (0.5,2)--(6.5,2);
		\draw [color = gray, dotted, thick, line cap=round] (0.5,5)--(6.5,5);
		\draw [out=0, in=90] (1,6) to (6,4);
		\draw [out=-45, in=180] (1,6) to (5,5);
		\draw [out=-60, in=165] (1,6) to (4,3);
		\draw [out=-75, in=105] (1,6) to (2,2);
		\draw [out=-90, in=180] (1,6) to (3,1);
		\draw (2,2) -- (3,1);
		\draw (5,5) -- (6,4);
		
		\plotperm{6,2,1,3,5,4};
		\plotpermbox{1}{1}{6}{6};
		
		\node[rotate = 0] at (4.6, 4.6) {$5$};
		\node[rotate = 0] at (2.4, 2.4) {$2$};
	\end{tikzpicture}
\end{center}
\end{footnotesize}
\caption{A permutation's inversion graph before and after a reflection.}
\label{fig-inversion_graph_transposition}
\end{figure}

To better understand what is happening to the inversion graph in this figure, we can look individually at the different cases of where the other entries are in relation to the inversion we are reflecting. All such cases can be seen in Figure~\ref{fig-inversion_graph_cases}, either by the exact relative position  to the inversion or a 180$^{\circ}$ rotation of the plot.

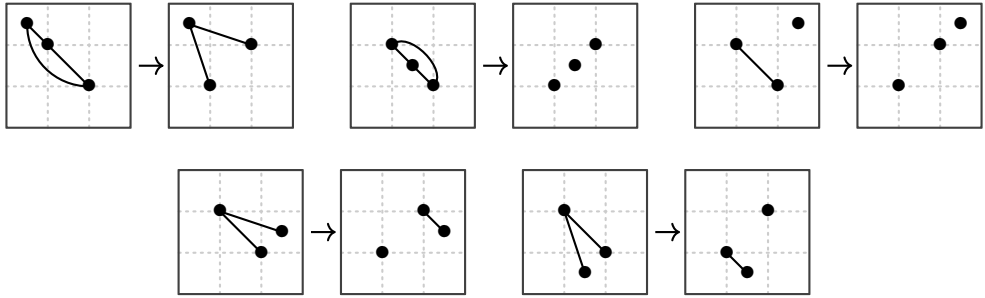
\begin{figure}[h]
\begin{footnotesize}
\begin{center}
	\begin{tikzpicture}[scale=0.55]
		\draw [color = lightgray , dotted, thick, line cap=round] (1,0)--(1,3);
		\draw [color = lightgray , dotted, thick, line cap=round] (2,0)--(2,3);
		\draw [color = lightgray , dotted, thick, line cap=round] (0,1)--(3,1);
		\draw [color = lightgray , dotted, thick, line cap=round] (0,2)--(3,2);
		\draw (0.5, 2.5) -- (1,2);
		\draw [out=-90, in=180] (0.5,2.5) to (2,1);
		
		\plotpermgraph{2,1};
		\plotpermbox{0.5}{0.5}{2.5}{2.5};
		\absdot{(0.5,2.5)};
		\draw[->] (3.2,1.5) to (3.8,1.5);
	\end{tikzpicture}
	\hspace{-1mm}
	\begin{tikzpicture}[scale=0.55]
		\draw [color = lightgray , dotted, thick, line cap=round] (1,0)--(1,3);
		\draw [color = lightgray , dotted, thick, line cap=round] (2,0)--(2,3);
		\draw [color = lightgray , dotted, thick, line cap=round] (0,1)--(3,1);
		\draw [color = lightgray , dotted, thick, line cap=round] (0,2)--(3,2);
		\draw (1,1) -- (0.5, 2.5) -- (2,2);
		\plotperm{1,2};
		\plotpermbox{0.5}{0.5}{2.5}{2.5};
		\absdot{(0.5,2.5)};
	\end{tikzpicture}
	\hspace{6mm} 
	\begin{tikzpicture}[scale=0.55]
		\draw [color = lightgray , dotted, thick, line cap=round] (1,0)--(1,3);
		\draw [color = lightgray , dotted, thick, line cap=round] (2,0)--(2,3);
		\draw [color = lightgray , dotted, thick, line cap=round] (0,1)--(3,1);
		\draw [color = lightgray , dotted, thick, line cap=round] (0,2)--(3,2);
		\draw [out=45, in=45] (1,2) to (2,1);
		\draw (1,2) -- (1.5,1.5) -- (2,1);
		\plotperm{2,1};
		\plotpermbox{0.5}{0.5}{2.5}{2.5};
		\absdot{(1.5,1.5)};
		\draw[->] (3.2,1.5) to (3.8,1.5);
	\end{tikzpicture}
	\hspace{-1mm}
	\begin{tikzpicture}[scale=0.55]
		\draw [color = lightgray , dotted, thick, line cap=round] (1,0)--(1,3);
		\draw [color = lightgray , dotted, thick, line cap=round] (2,0)--(2,3);
		\draw [color = lightgray , dotted, thick, line cap=round] (0,1)--(3,1);
		\draw [color = lightgray , dotted, thick, line cap=round] (0,2)--(3,2);
		\plotperm{1,2};
		\plotpermbox{0.5}{0.5}{2.5}{2.5};
		\absdot{(1.5,1.5)};
	\end{tikzpicture}
	\hspace{6mm} 
	\begin{tikzpicture}[scale=0.55]
		\draw [color = lightgray , dotted, thick, line cap=round] (1,0)--(1,3);
		\draw [color = lightgray , dotted, thick, line cap=round] (2,0)--(2,3);
		\draw [color = lightgray , dotted, thick, line cap=round] (0,1)--(3,1);
		\draw [color = lightgray , dotted, thick, line cap=round] (0,2)--(3,2);
		\absdot{(2.5,2.5)};
		\plotpermgraph{2,1};
		\plotpermbox{0.5}{0.5}{2.5}{2.5};
		
		\draw[->] (3.2,1.5) to (3.8,1.5);
	\end{tikzpicture}
	\hspace{-1mm}
	\begin{tikzpicture}[scale=0.55]
		\draw [color = lightgray , dotted, thick, line cap=round] (1,0)--(1,3);
		\draw [color = lightgray , dotted, thick, line cap=round] (2,0)--(2,3);
		\draw [color = lightgray , dotted, thick, line cap=round] (0,1)--(3,1);
		\draw [color = lightgray , dotted, thick, line cap=round] (0,2)--(3,2);
		\absdot{(2.5,2.5)};
		\plotperm{1,2};
		\plotpermbox{0.5}{0.5}{2.5}{2.5};
	\end{tikzpicture}
	
	\vspace{5mm} 
	\begin{tikzpicture}[scale=0.55]
		\draw [color = lightgray , dotted, thick, line cap=round] (1,0)--(1,3);
		\draw [color = lightgray , dotted, thick, line cap=round] (2,0)--(2,3);
		\draw [color = lightgray , dotted, thick, line cap=round] (0,1)--(3,1);
		\draw [color = lightgray , dotted, thick, line cap=round] (0,2)--(3,2);
		\draw (1,2) -- (2.5,1.5);
		\absdot{(2.5,1.5)};
		\plotpermgraph{2,1};
		\plotpermbox{0.5}{0.5}{2.5}{2.5};
		
		\draw[->] (3.2,1.5) to (3.8,1.5);
	\end{tikzpicture}
	\hspace{-1mm}
	\begin{tikzpicture}[scale=0.55]
		\draw [color = lightgray , dotted, thick, line cap=round] (1,0)--(1,3);
		\draw [color = lightgray , dotted, thick, line cap=round] (2,0)--(2,3);
		\draw [color = lightgray , dotted, thick, line cap=round] (0,1)--(3,1);
		\draw [color = lightgray , dotted, thick, line cap=round] (0,2)--(3,2);
		\draw (2,2) -- (2.5,1.5);
		\absdot{(2.5,1.5)};
		\plotperm{1,2};
		\plotpermbox{0.5}{0.5}{2.5}{2.5};
	\end{tikzpicture}
	\hspace{6mm} 
	\begin{tikzpicture}[scale=0.55]
		\draw [color = lightgray , dotted, thick, line cap=round] (1,0)--(1,3);
		\draw [color = lightgray , dotted, thick, line cap=round] (2,0)--(2,3);
		\draw [color = lightgray , dotted, thick, line cap=round] (0,1)--(3,1);
		\draw [color = lightgray , dotted, thick, line cap=round] (0,2)--(3,2);
		\draw (1,2) -- (1.5,0.5);
		\absdot{(1.5,0.5)};
		\plotpermgraph{2,1};
		\plotpermbox{0.5}{0.5}{2.5}{2.5};
		
		\draw[->] (3.2,1.5) to (3.8,1.5);
	\end{tikzpicture}
	\hspace{-1mm}
	\begin{tikzpicture}[scale=0.55]
		\draw [color = lightgray , dotted, thick, line cap=round] (1,0)--(1,3);
		\draw [color = lightgray , dotted, thick, line cap=round] (2,0)--(2,3);
		\draw [color = lightgray , dotted, thick, line cap=round] (0,1)--(3,1);
		\draw [color = lightgray , dotted, thick, line cap=round] (0,2)--(3,2);
		\draw (1,1) -- (1.5,0.5);
		\absdot{(1.5,0.5)};
		
		\plotperm{1,2};
		\plotpermbox{0.5}{0.5}{2.5}{2.5};
	\end{tikzpicture}
\end{center}
\end{footnotesize}
\caption{The different effects of a reflection on a permutation's inversion graph.}
\label{fig-inversion_graph_cases}
\end{figure}

In light of these figures, we can summarize the effects of a reflection on an inversion graph in a relatively simple graph-theoretic manner. That is, given the permutations~$\pi$ and  $\sigma = t_{k \ell} \circ \pi$, we can obtain $G_{\sigma}$ from $G_{\pi}$ by deleting the edge $k \ell$ and toggling every edge between $\{ k, \ell \}$ and the vertices lying horizontally between them in the plot. For example, in Figure~\ref{fig-inversion_graph_transposition} we delete the edge connecting~$2$ and~$5$ and toggle all of the edges between $\{2,5\}$ and the vertices lying in the shaded gray region. This suggests the following graph operations which we investigate throughout this chapter.

An \emph{edge reflection} on a graph $G$, the graphical analog of a reduction, consists of the following steps:
\begin{enumerate}
	\item Choose two vertices $u,v \in V(G)$ with $uv \notin E(G)$.
	\item Choose some subset $X$ of $\overline{N(u) \cap N(v)}$ containing $u$ and $v$.
	\item Toggle all edges between $\{u,v\}$ and $X$ (including adding the edge $uv$).
\end{enumerate}
We will denote this reflection by $t_{uv}^X$, and thus the resulting graph by $t_{uv}^X \circ G$. This operation is a generalization of a reduction in the sense that if $G_{\pi}$ is the inversion graph of a permutation $\pi \in S_n$, and $\sigma$ is another permutation that can be obtained by applying some reduction $t_{k \ell}$ on the left of~$\pi$, then there is some set $X$ for which $t_{k \ell}^X \circ G_{\pi} = G_{\sigma}$. Note, however, that there are often many more possible edge reflections on an inversion graph than there are reductions of the corresponding permutation.

Since reflections are involutions, the inverse of a reflection is itself. However, after applying a reduction to a permutation, applying the same reflection again is the opposite of a reduction. Thus, we need to define the graphical analog of a nonreduction reflection. So let a \emph{nonedge reflection} on a graph $G$ consist of the following steps:
\begin{enumerate}
	\item Choose two vertices $u,v \in V(G)$ with $uv \notin E(G)$.
	\item Choose some subset $X$ of $\overline{N(u) \cap N(v)}$ containing $u$ and $v$.
	\item Toggle all edges between $\{u,v\}$ and $X$ (including adding the edge $uv$).
\end{enumerate}
By also denoting this operation with $t_{uv}^X$, we now have that any reflection on a graph is an involution. 

It is now clear how to construct a graph analogous to the flipped Bruhat graph $\flipOmega_n$ for these graphical reflections. That is, define the \emph{reflection graph} $\Gamma_n$ to be the directed graph with vertex set $\mathscr{G}_n$ of all graphs on vertex set $[n]$, and such that $(G,H)$ is an arc if and only if $H$ can be obtained by applying an edge reflection to~$G$. By identifying a permutation with its inversion graph, the above discussions show that $\flipOmega_n$ is a subgraph of $\Gamma_n$.

Analogous to the question of absolute length in permutations we can ask, given a graph $G \in \mathscr{G}_n$ and noting that the inversion graph $G_e$ of the identity permutation $e \in S_n$ is the edgeless graph $\overline{K}_n$, what is the length of a shortest path
\[
G = G_0 \rightarrow G_1 \rightarrow \dots \rightarrow G_k = \overline{K}_n
\]
in the reflection graph $\Gamma_n$? That is, how many edge reflections are required to empty some graph of all its edges? These are the questions that we look to answer throughout this chapter, summarizing our situation in the following table.
\begin{center}
\small
\begin{tabular}{|l|l|l|l|}
\hline
Operation on&No. needed to&Operation on&No. needed to\\
permutations&reach identity&graphs&delete all edges\\
\hline & & & \\[-10pt]
Simple reflection&Length&Toggle an edge&No. edges\\
&\quad$=$ No. inversions & & \\[4pt]
&\quad\rotatebox[origin=c]{270}{$\ge$}&&\quad\rotatebox[origin=c]{270}{$\ge$}\\[4pt]
Reduction&Absolute length&Edge reflection&\textit{Studied here}\\
&\quad (by Theorem~\ref{thm:abs_length_in_bruhat_graphs}) & & \\[4pt]
\quad\rotatebox[origin=c]{270}{$\subseteq$}&\quad\rotatebox[origin=c]{270}{$=$}
	&\quad\rotatebox[origin=c]{270}{$\subseteq$}&\quad\rotatebox[origin=c]{270}{$\ge$}\\[4pt]
Reflection&Absolute length&Edge or nonedge&\quad ?\\
&\quad (by definition)&\quad reflection & \\
\hline
\end{tabular}
\end{center}

We begin with a result that follows easily from the above observation that $\flipOmega_n$ is a subgraph of $\Gamma_n$.

\begin{theorem}\label{thm:edge_reflections_abs_length}
	For a permutation $\pi \in S_n$, the number of edge reflections required to empty its inversion graph $G_{\pi}$ of all its edges is at most its absolute length $l'(\pi)$.
\end{theorem}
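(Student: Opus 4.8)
The plan is to deduce this directly from Dyer's theorem (Theorem~\ref{thm:abs_length_in_bruhat_graphs}) together with the observation, already recorded in the preceding discussion, that $\flipOmega_n$ is a subgraph of the reflection graph $\Gamma_n$ once each permutation is identified with its inversion graph. In other words, the whole content of the theorem is to transport an optimal sorting sequence for $\pi$ across this inclusion.

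First I would invoke Theorem~\ref{thm:abs_length_in_bruhat_graphs} to obtain a directed path
\[
\pi = \sigma_0 \rightarrow \sigma_1 \rightarrow \dots \rightarrow \sigma_{m} = e
\]
in $\flipOmega_n$ whose length $m$ equals the absolute length $l'(\pi)$. By the definition of $\flipOmega_n$, each arc $\sigma_{i-1} \rightarrow \sigma_i$ records a reduction: there is a reflection $t_{k\ell}$ with $\sigma_i = t_{k\ell}\circ\sigma_{i-1}$ that swaps the two values of an inversion of $\sigma_{i-1}$, thereby lowering length.

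Next I would pass to inversion graphs. Each reduction is realized, at the level of inversion graphs, by an edge reflection; this is precisely the content of the inclusion $\flipOmega_n \subseteq \Gamma_n$ established via Figures~\ref{fig-inversion_graph_transposition} and~\ref{fig-inversion_graph_cases}. Hence the path above maps to a walk
\[
G_\pi = G_{\sigma_0} \rightarrow G_{\sigma_1} \rightarrow \dots \rightarrow G_{\sigma_m}
\]
in $\Gamma_n$, each arc an edge reflection. The left endpoint is $G_\pi$, and the right endpoint is $G_{\sigma_m} = G_e$, which is the edgeless graph $\overline{K}_n$ since the identity permutation has no inversions.

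This exhibits a sequence of exactly $m = l'(\pi)$ edge reflections carrying $G_\pi$ to $\overline{K}_n$, that is, emptying $G_\pi$ of all of its edges; since the quantity of interest is the \emph{minimum} number of edge reflections needed to reach $\overline{K}_n$, the bound $\le l'(\pi)$ follows. I expect essentially no obstacle here: the genuine difficulty, namely that reductions alone suffice to achieve the absolute length, has already been absorbed into Dyer's Theorem~\ref{thm:abs_length_in_bruhat_graphs}. The only thing to verify with any care is the bookkeeping that the endpoints of the translated path are exactly $G_\pi$ and $\overline{K}_n$, so that the walk in $\Gamma_n$ legitimately witnesses the emptying of $G_\pi$.
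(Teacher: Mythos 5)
Your argument is correct and is precisely the paper's: the theorem is stated there as following ``easily from the above observation'' that the flipped Bruhat graph is a subgraph of the reflection graph $\Gamma_n$ under the identification of a permutation with its inversion graph, combined with Dyer's Theorem~\ref{thm:abs_length_in_bruhat_graphs}. The only difference is that you spell out the bookkeeping---translating the optimal reduction sequence into a walk of edge reflections from $G_\pi$ to $\overline{K}_n$---which the paper leaves implicit.
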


This bound is tight in come cases, but certainly not in all. For example, Theorem~\ref{thm-forests-m-transpositions} shows that Theorem~\ref{thm:edge_reflections_abs_length} is tight for the above discussed permutations avoiding $321$ and $3412$. However, the permutation $3421$ has absolute length 3, yet its inversion graph $G_{3421}$, a diamond, is isomorphic to the nested triangle $N_2$, (defined at the end of the next section), and requires 1 edge reflection to reach the edgeless graph.

Given that these graphical reflections are not dependent on vertex labelings, it is clear that the number of edges required to empty a graph of its edges is a graph invariant. However, it is interesting to note that the absolute length of permutations with the same inversion graph is not necessarily the same. For example, we have $G_{3421} \cong G_{4231}$ with $l'(3421) \neq l'(4231)$.

Analogous to Theorem~\ref{thm:abs_length_in_bruhat_graphs}, we can also ask if there are shorter paths from a graph~$G$ to~$\overline{K}_n$ in the \emph{undirected reflection graph} $\bar{\Gamma}_n$, obtained by removing all of the orientations of the arcs in $\Gamma_n$. That is, can we empty some graph of all its edges with fewer reflections if we also allow nonedge reflections? In contrast with the Dyer's result, it turns out the answer is `yes.' We see in Figure~\ref{fig:P6_counterX} that $P_6$, the path on six vertices, is a small example. 

\begin{figure}[ht]
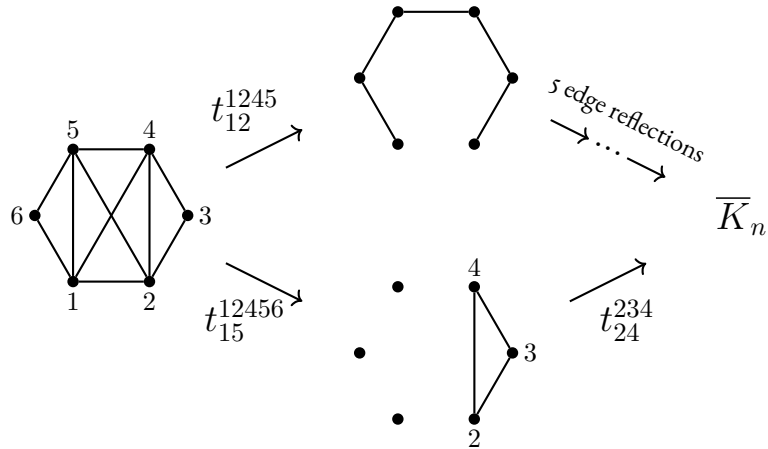

\begin{center}
	\begin{pic}
	\makenodescircle{v}{6}{0.4}[0][circle, fill, minimum size = \nodesize pt, draw]
	\drawedges{v1/v2, v2/v3, v3/v4, v4/v5, v5/v6, v6/v1}
	\drawjoin{v5,v6}{v2,v3}
	\labelnode[90]{v3}{$5$}
	\labelnode[90]{v2}{$4$}
	\labelnode[-90]{v5}{$1$}
	\labelnode[-90]{v6}{$2$}
	\labelnode[0]{v1}{$3$}
	\labelnode[180]{v4}{$6$}
	
	\node[rotate = 0] (pathref) at (0.7, 0.55) {\large $t_{12}^{1245}$};
	\draw[->] (0.6, 0.25) to (1, 0.45);
	
	\node[rotate = 0] (k3ref) at (0.7, -0.55) {\large $t_{15}^{12456}$};
	\draw[->] (0.6, -0.25) to (1, -0.45);

	\node [rotate = -26.565] (5er) at (2.7, 0.5) {$\text{5 edge reflections}$}; 
	\draw[->] (2.3, 0.5) to (2.5, 0.4);
	\node (dot2) at (2.55, 0.375) {\large $.$};
	\node (dot2) at (2.6, 0.35) {\large $.$};
	\node (dot2) at (2.65, 0.325) {\large $.$};
	\draw[->] (2.7, 0.3) to (2.9, 0.2);
	 
	\node[rotate = 0] (bottomrightref) at (2.7, -0.55) {\large $t_{24}^{234}$};
	\draw[->] (2.4, -0.45) to (2.8, -0.25);
	
	\def\heightadj{0.28} 
	\def\rightadj{0.2}   
	\def\nodesz{3.5}
	
	\node (v1_u) at (1.9 + \rightadj, 1 - \heightadj) [circle, fill, minimum size = \nodesize pt, draw]{};
	\node (v2_u) at (1.7 + \rightadj, 1.34641 - \heightadj) [circle, fill, minimum size = \nodesize pt, draw]{};
	\node (v3_u) at (1.3 + \rightadj, 1.34641 - \heightadj) [circle, fill, minimum size = \nodesize pt, draw]{};
	\node (v4_u) at (1.1 + \rightadj, 1 - \heightadj) [circle, fill, minimum size = \nodesize pt, draw]{};
	\node (v5_u) at (1.3 + \rightadj, 0.65359 - \heightadj) [circle, fill, minimum size = \nodesize pt, draw]{};
	\node (v6_u) at (1.7 + \rightadj, 0.65359 - \heightadj) [circle, fill, minimum size = \nodesize pt, draw]{};
	\drawedges{v6_u/v1_u, v1_u/v2_u, v2_u/v3_u, v3_u/v4_u, v4_u/v5_u}
	
	\node (v1_d) at (1.9 + \rightadj, -1 + \heightadj) [circle, fill, minimum size = \nodesize pt, draw]{};
	\node (v2_d) at (1.7 + \rightadj, -0.65359 + \heightadj) [circle, fill, minimum size = \nodesize pt, draw]{};
	\node (v3_d) at (1.3 + \rightadj, -0.65359 + \heightadj) [circle, fill, minimum size = \nodesize pt, draw]{};
	\node (v4_d) at (1.1 + \rightadj, -1 + \heightadj) [circle, fill, minimum size = \nodesize pt, draw]{};
	\node (v5_d) at (1.3 + \rightadj, -1.34641 + \heightadj) [circle, fill, minimum size = \nodesize pt, draw]{};
	\node (v6_d) at (1.7 + \rightadj, -1.34641 + \heightadj) [circle, fill, minimum size = \nodesize pt, draw]{};
	\drawedges{v6_d/v1_d, v1_d/v2_d, v2_d/v6_d}
	\labelnode[90]{v2_d}{$4$}
	\labelnode[-90]{v6_d}{$2$}
	\labelnode[0]{v1_d}{$3$}
		
	\node (comp_k_6) at (2.9 + \rightadj + \rightadj,0) {\large $\overline{K}_n$};
	
	\end{pic}
\end{center}
\caption{$P_6$ requires 5 edge reflections to reach the edgeless graph, but only 3 arbitrary reflections.}
\label{fig:P6_counterX}
\end{figure}

That is, after removing the orientations of the edges, we can start from $P_6$ at the top of the figure and take the counterclockwise path to $\overline{K}_n$ that has 3 edges. However, Theorem~\ref{thm-forests-m-transpositions} in the next section says that since $P_6$ is a tree with 6 vertices, it requires 5 edge reflections to reach $\overline{K}_n$. This is a first result toward filling in the bottom right corner of the table above.

In the next section, we find the least number of edge reflections required to reach the edgeless graph for several graph families. We begin with trees and forests, which are at the core of this study, and then move onto cycles, complete graphs, and nested triangles.

\section{Special Families}


\subsection{Trees and Forests}

In investigating the definition of edge reflection, it is easy to see that they cannot create cycles.

\begin{proposition}
\label{prop-forest-one-transposition}
Applying any single edge reflection to a forest results in a forest with one fewer edge.
\end{proposition}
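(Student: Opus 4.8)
The plan is to follow the edge reflection down into the single tree that contains the reflected edge and show that the operation does nothing more exotic than move subtrees between the two halves of that tree. Recall from the geometric derivation preceding the definition that an edge reflection $t_{uv}^X$ deletes the edge $uv$ and toggles the edges joining $\{u,v\}$ to the chosen set $X$, where $X$ contains no common neighbor of $u$ and $v$ and consists otherwise of vertices lying ``between'' $u$ and $v$. The first thing I would record is that in a forest this between-ness pins down adjacencies completely: since $F$ is a forest and $uv \in E(F)$, the endpoints have no common neighbor (a common neighbor $w$ would close the triangle $uvw$), so every other vertex is adjacent to \emph{at most} one of $u$ and $v$; and since the toggled vertices lie between $u$ and $v$, each $w \in X \setminus \{u,v\}$ is adjacent to \emph{exactly} one endpoint. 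Toggling then removes $w$'s single edge to $\{u,v\}$ and installs the other one, so the only net change to $E(F)$ is the loss of $uv$. This already gives ``one fewer edge,'' and all that remains is acyclicity.

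For acyclicity I would process the reflection in stages. First delete $uv$: in the tree $T \subseteq F$ carrying this edge, removing $uv$ produces two subtrees $T_u \ni u$ and $T_v \ni v$, and leaves every other component of $F$ untouched. Then I would perform the toggles one vertex at a time. A toggled vertex $w$ adjacent only to $u$ sits in $T_u$; deleting the edge $uw$ severs the subtree of $T_u$ hanging beyond $w$, and adding $vw$ grafts that subtree onto the component currently containing $v$. Because $T$ is a tree, at the instant $vw$ is added the severed subtree and $v$ lie in different components, so $vw$ joins two components rather than closing a cycle, and no edge is gained or lost in the swap. The case of a vertex adjacent only to $v$ is symmetric, so the graph remains a forest after every step and $t_{uv}^X \circ F$ is a forest with $|E(F)|-1$ edges.

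The one point that needs genuine care — and the step I expect to be the main obstacle — is justifying that treating the simultaneous toggles sequentially is legitimate, i.e.\ that the toggles cannot jointly manufacture a cycle that no single toggle would. I would make this airtight by observing that toggling $w$ alters only the two edges $uw$ and $vw$, hence the toggles at distinct vertices are independent and may be serialized in any order without changing the outcome; the unique-path property of trees then furnishes, at each individual step, the ``different components'' fact used above. As a clean cross-check I would also verify the same conclusion from the connectivity side: the reattached vertex sets on the $u$-side and the $v$-side partition $V(T)$ into two trees (each a hub vertex with pairwise-disjoint subtrees of $T_u$ and $T_v$ hung from it), so the single tree $T$ is replaced by exactly two trees while all other components are unchanged. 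Thus the component count rises by one, which is precisely the statement that the edge count drops by one, reconfirming both halves of the claim.
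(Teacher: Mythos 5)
Your proof is correct and follows essentially the same route as the paper's: delete $uv$ to split its tree into two, then observe that each toggled vertex $w$ is adjacent to exactly one of $u$ and $v$, so the toggle merely regrafts the subtree hanging off $w$ onto the other side, preserving acyclicity and costing exactly the one edge $uv$. Your added care in serializing the toggles and in justifying \emph{both} halves of the ``adjacent to exactly one'' claim (the paper asserts it ``since $F$ is a forest,'' which by itself only yields ``at most one'') is a tightening of the same argument rather than a different approach.
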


\begin{proof}
Suppose $F$ is a forest and consider any possible edge reflection $t_{uv}^X$. We start applying this reflection by deleting the edge $uv$, thus breaking up one component into two components which are trees, one including the vertex $u$ and the other including $v$. Now for each vertex $w \in X \setminus \{u,v \}$, we know it is adjacent to exactly one of $u$ and $v$ since $F$ is a forest. Assume without loss of generality that $w$ is adjacent to $u$. We then delete the edge $uw$, breaking up the component containing $u$ into two smaller components. Next we add the edge $vw$, connecting the component containing $w$ to the component containing~$v$. Since these components are trees the resulting component is also a tree. This can be seen in Figure~\ref{fig:transp_on_forest}.

\begin{figure}[h]
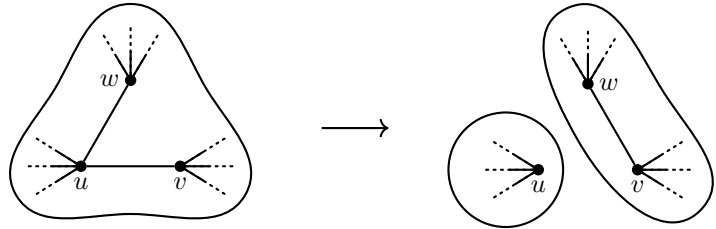

	\begin{center}
	\begin{pic}
		\makenodescircle{v}{3}{0.3}[90][circle, fill, minimum size = \nodesize pt, draw]
		\drawedges{v1/v2, v2/v3}
		\labelnode[180]{v1}{$w$}
		\labelnode[-90]{v2}{$u$}
		\labelnode[-90]{v3}{$v$}
		
		\begin{scope}   [dotted, thick, line cap = round] 
  	 		\draw (v1) to (-0.15,0.55); 
  	 		\draw (v1) to (0,0.6);
  	 		\draw (v1) to (0.15,0.55);
  	 		
  	 		\draw (v2) to (-0.5,0);
  	 		\draw (v2) to (-0.56,-0.15);
  	 		\draw (v2) to (-0.5,-0.3);
  	 		
  	 		\draw (v3) to (0.5,0);
  	 		\draw (v3) to (0.56,-0.15);
  	 		\draw (v3) to (0.5,-0.3);
		\end{scope}
		
		\begin{scope}   [thick, line cap = round] 
			\draw (v1) to (-0.075,0.425); 
			\draw (v1) to (0,0.45);
			\draw (v1) to (0.075,0.425);
			
			\draw (v2) to (-0.379903, -0.075);
			\draw (v2) to (-0.409903,-0.15);	
			\draw (v2) to (-0.379903, -0.15-0.075);
			
			\draw (v3) to (0.379903, -0.075);
			\draw (v3) to (0.409903,-0.15);	
			\draw (v3) to (0.379903, -0.15-0.075);
		\end{scope}
		
		\begin{scope}[on background layer]
		\draw[use Hobby shortcut] ([out angle=180,in angle=0]0, -0.4) .. (-0.6, -0.3) .. (-0.4, 0.25) .. (0, 0.7) .. (0.4, 0.25) .. (0.6, -0.3) .. (0, -0.4);
		
		\end{scope}
		\draw[->] (1,0.05) to (1.35,0.05);	
	\end{pic}
	\hspace{6mm}
	\begin{pic}
		\makenodescircle{v}{3}{0.3}[90][circle, fill, minimum size = \nodesize pt, draw]
		\labelnode[0]{v1}{$w$}
		\labelnode[-90]{v2}{$u$}
		\labelnode[-90]{v3}{$v$}
		\drawedges{v1/v3}
		
		\begin{scope}   [dotted, thick, line cap = round] 
  	 		\draw (v1) to (-0.15,0.55); 
  	 		\draw (v1) to (0,0.6);
  	 		\draw (v1) to (0.15,0.55);
  	 		
  	 		\draw (v2) to (-0.5,0);
  	 		\draw (v2) to (-0.56,-0.15);
  	 		\draw (v2) to (-0.5,-0.3);
  	 		
  	 		\draw (v3) to (0.5,0);
  	 		\draw (v3) to (0.56,-0.15);
  	 		\draw (v3) to (0.5,-0.3);
		\end{scope}
		
		\begin{scope}   [thick, line cap = round] 
			\draw (v1) to (-0.075,0.425); 
			\draw (v1) to (0,0.45);
			\draw (v1) to (0.075,0.425);
			
			\draw (v2) to (-0.379903, -0.075);
			\draw (v2) to (-0.409903,-0.15);	
			\draw (v2) to (-0.379903, -0.15-0.075);
			
			\draw (v3) to (0.379903, -0.075);
			\draw (v3) to (0.409903,-0.15);	
			\draw (v3) to (0.379903, -0.15-0.075);
		\end{scope}
		
		\begin{scope}[on background layer]
		\draw [] (-0.43, -0.15) circle (0.3);
		
		\draw[use Hobby shortcut] ([out angle=120,in angle=-60]-0.05, 0) .. (-0.1, 0.7) .. (0.4, 0.25) .. (0.6, -0.35)  .. (-0.05,0);
		
		\end{scope}	
	\end{pic}
	\end{center}
	\caption{An edge reflection on a forest; enclosed regions correspond to components.}
	\label{fig:transp_on_forest}
\end{figure}

 After doing this for all vertices $w \in X \setminus\{ u,v \}$, we see that the graph resulting from this reflection is a forest with one fewer edge.
\end{proof} 

The following result is then an easy consequence of Proposition~\ref{prop-forest-one-transposition}.

\begin{theorem}
\label{thm-forests-m-transpositions}
For a forest with $m$ edges, exactly $m$ edge reflections are required to transform it into the edgeless graph. Moreover, for a tree with \( n \) vertices, exactly \( n-1 \) edge reflections are required to reach the edgeless graph.
\end{theorem}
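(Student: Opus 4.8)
The plan is to derive the result almost immediately from Proposition~\ref{prop-forest-one-transposition}, which tells us that an edge reflection, when applied to a forest, both preserves the forest property and decreases the edge count by exactly one. Thus the number of edges behaves as a monovariant that drops by precisely $1$ at every step, and the class of forests is closed under edge reflection. The whole argument then reduces to tracking this edge count, together with a brief check that an applicable edge reflection always exists while edges remain.

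First I would settle the lower bound (in fact, exactness). Let $F$ be a forest with $m$ edges and suppose $F = F_0 \to F_1 \to \dots \to F_k = \overline{K}_n$ is any sequence of edge reflections ending at the edgeless graph. By Proposition~\ref{prop-forest-one-transposition} and a trivial induction, each $F_i$ is a forest with exactly $m - i$ edges. Since $F_k$ has $0$ edges, we must have $m - k = 0$, that is $k = m$. Hence no sequence of edge reflections can empty $F$ in fewer than $m$ steps, and in fact every such sequence has length exactly $m$.

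Next I would establish that $m$ edge reflections suffice by exhibiting, whenever at least one edge remains, a legal edge reflection that deletes a single edge. Given an edge $uv$ of a forest, choose $X = \{u,v\}$. This is a valid choice for the support set in the definition of an edge reflection: because the graph is a forest and $uv$ is an edge, the vertices $u$ and $v$ have no common neighbor (a common neighbor would create a cycle), so $N(u)\cap N(v) = \varnothing$ and hence $X \subseteq \overline{N(u)\cap N(v)} = V(G)$. Toggling all edges between $\{u,v\}$ and $X=\{u,v\}$ then affects only the pair $uv$, deleting it and leaving the rest of the graph untouched. Applying such reflections one edge at a time reaches $\overline{K}_n$ after exactly $m$ steps, confirming the upper bound.

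Finally, the tree statement is immediate: a tree on $n$ vertices is a forest with $n-1$ edges, so the forest case gives that exactly $n-1$ edge reflections are required. The only point that requires any genuine care is verifying that the singleton-support reflection $X=\{u,v\}$ is admissible under the definition, which is precisely where the absence of cycles in a forest is used to guarantee $N(u)\cap N(v)=\varnothing$; everything else is bookkeeping on the edge count via Proposition~\ref{prop-forest-one-transposition}.
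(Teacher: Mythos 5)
Your proposal is correct and matches the paper's approach: the paper states this theorem as an immediate consequence of Proposition~\ref{prop-forest-one-transposition}, and your argument simply spells out the two halves (the edge count as a monovariant forcing every emptying sequence to have length exactly $m$, plus the existence of a single-edge-deleting reflection). The only minor remark is that your verification that $X=\{u,v\}$ is admissible does not actually need the forest hypothesis, since $u$ and $v$ always lie in $\overline{N(u)\cap N(v)}$ because no vertex is its own neighbor.
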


Given the characterization that the permutations that avoid $321$ and $3412$ are those whose inversion graphs are forests, it follows from Theorems~\ref{thm:edge_reflections_abs_length} and~\ref{thm-forests-m-transpositions} that their length and absolute length correspond. To obtain the other direction of Theorem~\ref{thm:boolean_perms}, assume a permutation $\pi \in S_n$ has a $321$ or $3412$ pattern. It is clear that one reduction can be applied to~$\pi$ to turn the $321$ pattern into a $123$ pattern, decreasing the number of inversions by 3, or two reductions can be applied to turn $3412$ into $1234$, decreasing the inversions by 4. The result then follows from the fact that every further reduction decreases the number of inversions by at least 1.

Along these same lines, it also follows that if the inversion graph $G_{\pi}$ of a permutation~$\pi$ is a tree, then~$\pi$ consists of a single cycle. Indeed, if~$\pi$ is a permutation of $[n]$ and $G_{\pi}$ is a tree, then~$\pi$ must have absolute length $n-1$, and since its absolute length is $n$ minus its number of cycles, this proves the claim.

\subsection{Cycles}

The result for cycles can be derived similarly.

\begin{theorem}
	The cycle $C_n$ requires exactly $n-2$ edge reflections to reach the edgeless graph.
\end{theorem}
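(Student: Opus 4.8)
The plan is to mirror the argument used for forests: establish a structural lemma describing every possible outcome of a single edge reflection applied to $C_n$, and then run an induction on $n$. The base case is $C_3$, which is a triangle; reflecting any one edge $uv$ and including the common neighbour in the toggled set deletes all three edges at once, so $C_3$ is emptied in exactly $1 = 3-2$ reflection. Writing $f(G)$ for the minimum number of edge reflections needed to reach the edgeless graph, the goal is then to show $f(C_n)=n-2$ by induction.

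For the inductive step I would first record the key simplification: for $n \ge 4$ the cycle $C_n$ is triangle-free, so for every edge $uv$ the endpoints have no common neighbour and $N(u) \cup N(v) = \{u, v, u', v'\}$, where $u'$ and $v'$ are the other two cycle-neighbours of $u$ and $v$. Consequently the only vertices a reflection on $uv$ can toggle are $u'$ and $v'$, leaving exactly four reflections to analyse. Deleting $uv$ alone gives the path $P_n$; deleting $uv$ and moving the edge at $u'$ from $u$ to $v$ (or symmetrically the edge at $v'$) isolates $u$ (resp.\ $v$) and closes the remaining path into a cycle, yielding $C_{n-1}$ together with an isolated vertex; and deleting $uv$ while moving the edges at both $u'$ and $v'$ again produces a Hamiltonian path $P_n$. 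Thus every edge reflection on $C_n$ produces either the tree $P_n$ or $C_{n-1} \cup K_1$.

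With this lemma in hand the count follows at once. Note that $f(C_{n-1} \cup K_1) = f(C_{n-1})$, since an isolated vertex is never an endpoint of an edge nor adjacent to one and so is never involved in a reflection, while $f(P_n) = n-1$ by Theorem~\ref{thm-forests-m-transpositions}. Because the two outcomes above exhaust all first moves and both are realizable, we get $f(C_n) = 1 + \min\{\, n-1,\ f(C_{n-1})\,\}$; applying the inductive hypothesis $f(C_{n-1}) = n-3$ yields $f(C_n) = 1 + (n-3) = n-2$. This settles the upper and lower bounds simultaneously, so there is no separate optimization to carry out.

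The main obstacle is the structural lemma, and specifically verifying the four-case enumeration cleanly. The delicate point is that the enumeration relies on the toggled set being confined to neighbours of $u$ or $v$ (the reading of the reflection that makes Proposition~\ref{prop-forest-one-transposition} correct); triangle-freeness of $C_n$ for $n \ge 4$ is what both pins down $N(u) \cup N(v)$ and guarantees that a single reflection deletes exactly one edge, so that every outcome has $n-1$ edges and the only efficient route is through the smaller cycle $C_{n-1}$ rather than through a tree. Care is also needed at $n=4$, where $C_{n-1}$ is the triangle $C_3$ and the induction meets its base case.
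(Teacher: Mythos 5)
Your proof is correct and takes essentially the same route as the paper's: the same base case $C_3$, the same four-way enumeration of reflections on an edge of $C_n$ (yielding either $P_n$ or $C_{n-1}$ plus an isolated vertex), and the same induction combined with Theorem~\ref{thm-forests-m-transpositions} for the path outcome. The definitional point you flag---that the toggled set must be confined to $N(u)\cup N(v)$ for the case analysis to be exhaustive---is exactly the reading the paper uses implicitly.
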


\begin{proof}
	We consider the cycle $C_n$ with vertices $c_1, c_2, \dots,c_n$ and edges $c_i c_{i+1}$ for each $i \in [n]$ with indices taken modulo $n$. For $n=3$, we simply have a triangle and we can transform it into the edgeless graph with the edge reflection $t_{c_i, c_{i+1}}^{V(C_3)}$ for any $i \in \{ 1,2,3 \}$.
	
	Now suppose that $n \geq 4$, and that the result holds for $n-1$. If we apply some edge reflection~$t_{uv}^X$ to the cycle $C_n$, it must be that $uv = c_i c_{i+1}$ for some $i \in [n]$, and $X = \{ c_i, c_{i+1} \} \cup Y$ where $Y \subseteq \{ c_{i-1}, c_{i+2} \}$, (again, all indices taken modulo $n$). That is, there are essentially only 4 edge reflections that can be applied to this cycle, all of which are considered in Figure~\ref{fig:transposition_on_cycle}.

\begin{figure}[ht]
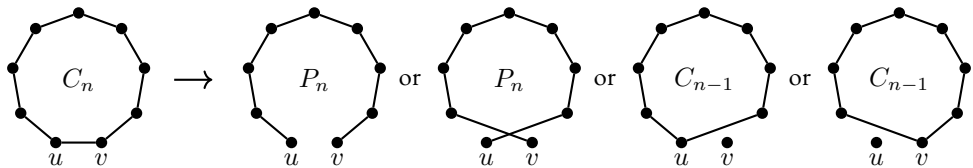

\begin{center}
	\begin{pic}
	\makenodescircle{v}{9}{0.35}[90][circle, fill, minimum size = \nodesize pt, draw]
	\drawedges{v1/v2, v2/v3, v3/v4, v4/v5, v5/v6, v6/v7, v7/v8, v8/v9, v9/v1}
	\labelnode[-90]{v5}{$u$}
	\labelnode[-90]{v6}{$v$}
	\node[] at (0,0) {$C_n$};
	\draw[->] (0.5,0) to (0.7,0);
	\end{pic}
\quad
\hspace{-2mm}
	\begin{pic}
	\makenodescircle{v}{9}{0.35}[90][circle, fill, minimum size = \nodesize pt, draw]
	\drawedges{v1/v2, v2/v3, v3/v4, v4/v5, v6/v7, v7/v8, v8/v9, v9/v1}
	\labelnode[-90]{v5}{$u$}
	\labelnode[-90]{v6}{$v$}
	\node[] at (0,0) {$P_n$};
	\node[] at (0.5,0) {\text{or}};
	\end{pic}
\quad
\hspace{-4mm}
	\begin{pic}
	\makenodescircle{v}{9}{0.35}[90][circle, fill, minimum size = \nodesize pt, draw]
	\drawedges{v1/v2, v2/v3, v3/v4, v4/v6, v5/v7, v7/v8, v8/v9, v9/v1}
	\labelnode[-90]{v5}{$u$}
	\labelnode[-90]{v6}{$v$}
	\node[] at (0,0) {$P_n$};
	\node[] at (0.5,0) {\text{or}};
	\end{pic}
\quad
\hspace{-4mm}
	\begin{pic}
	\makenodescircle{v}{9}{0.35}[90][circle, fill, minimum size = \nodesize pt, draw]
	\drawedges{v1/v2, v2/v3, v3/v4, v4/v5, v5/v7, v7/v8, v8/v9, v9/v1}
	\labelnode[-90]{v5}{$u$}
	\labelnode[-90]{v6}{$v$}
	\node[] at (0,0) {$C_{n-1}$};
	\node[] at (0.5,0) {\text{or}};
	\end{pic}
\quad
\hspace{-4mm}
	\begin{pic}
	\makenodescircle{v}{9}{0.35}[90][circle, fill, minimum size = \nodesize pt, draw]
	\drawedges{v1/v2, v2/v3, v3/v4, v4/v6, v6/v7, v7/v8, v8/v9, v9/v1}
	\labelnode[-90]{v5}{$u$}
	\labelnode[-90]{v6}{$v$}
	\node[] at (0,0) {$C_{n-1}$};
	\end{pic}
\end{center}
\caption{All possible outcomes of applying an edge reflection to a cycle.}
\label{fig:transposition_on_cycle}
\end{figure}

If $Y$ is $\emptyset$ or $\{ c_{i-1}, c_{i+2}\}$ then we see that we obtain the path $P_n$, otherwise if $|Y| = 1$ then we obtain the cycle $C_{n-1}$, (with an isolated vertex). Since $P_n$ is a tree on $n$ vertices, it would require a further $n-1$ edge reflections to reach the edgeless graph, but from the inductive hypothesis we have that $C_{n-1}$ requires $n-3$ transpositions to reach the edgeless graph. Since we have considered every possibility, it follows that~${n-2}$ edge reflections are required to empty $C_n$ of its edges.
\end{proof}

\subsection{Complete Graphs}

Similar to the well-studied concepts of edge and vertex covers in graph theory, we define an \emph{edge-edge cover} of a graph to be a subset of its edges so that all of the edges of the graph are incident to one of the edges in that set. 

\begin{proposition}\label{prop:edge-edge}
	For a graph $G$, if a minimum edge-edge cover contains $k$ edges, then $G$ requires at least $k$ edge reflections to reach the empty the graph.
\end{proposition}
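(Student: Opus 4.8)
The plan is to show that the edge-edge cover number is nearly invariant under a single edge reflection: it can drop by at most one. Write $\rho(H)$ for the minimum size of an edge-edge cover of a graph $H$, so that $\rho(H) = 0$ exactly when $H$ is edgeless. A sequence of edge reflections taking $G$ to the edgeless graph is a chain $G = H_0 \to H_1 \to \cdots \to H_m = \overline{K}_n$, so once I establish $\rho(H_{i-1}) \le \rho(H_i) + 1$ for every step, telescoping gives $k = \rho(G) = \rho(H_0) \le \rho(H_m) + m = m$, which is precisely the claimed lower bound. Thus the entire proof reduces to this one-step estimate, and the rest is bookkeeping.

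The structural facts I will exploit are two. First, an edge reflection $t_{uv}^X$ alters only edges incident to $u$ or $v$, since it deletes $uv$ and toggles edges joining $\{u,v\}$ to $X$, all of which meet $u$ or $v$. Hence, writing $H' = t_{uv}^X \circ H$, the two graphs share exactly the same edges avoiding both $u$ and $v$; I will call these the \emph{far} edges and the edges meeting $\{u,v\}$ the \emph{near} edges. Second, $uv$ is itself an edge of $H$ (it is the edge the reflection removes), and a single edge incident to both $u$ and $v$ covers every near edge of $H$ simultaneously.

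The construction then proceeds as follows. Starting from a minimum edge-edge cover $S'$ of $H'$, I will build an edge-edge cover $S$ of $H$ with $|S| \le |S'| + 1$. Keep every far edge of $S'$ (these lie in $H$ as well), adjoin the single edge $uv$ to handle all near edges of $H$, and for each near edge $s$ of $S'$ adjoin one edge of $H$ incident to the unique endpoint of $s$ lying outside $\{u,v\}$ (such an edge exists whenever $s$ covered a far edge, because every far edge covered by $s$ meets that endpoint). I will then verify that $S$ covers every edge of $H$: near edges are covered by $uv$; a far edge covered in $H'$ by a far edge of $S'$ is covered by that same edge; and a far edge covered in $H'$ only through a near edge $s$ of $S'$ meets the outside endpoint of $s$ and is therefore covered by its substitute. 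Since the far edges of $S'$, the substitutes (at most one per near edge of $S'$), and $uv$ total at most $|S'| + 1$, this yields $\rho(H) \le \rho(H') + 1$.

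The main obstacle is exactly this counting. The naive move is to cover the near edges by keeping one edge at $u$ and one edge at $v$, but that costs two extra edges per reflection and only proves the far weaker bound $m \ge k/2$. The essential gain is recognizing that the deleted edge $uv$ does double duty, covering the near edges at $u$ and at $v$ at once, so that the only remaining difficulty is the far edges previously covered through near edges of $S'$ — and each such family of far edges is concentrated at a single vertex, hence recoverable by a single substitute edge. Confirming that this substitution never loses coverage and truly costs at most one extra edge overall is the delicate step; the telescoping that concludes the argument is then immediate.
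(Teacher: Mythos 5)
Your proof is correct, but it takes a genuinely different route from the paper's. The paper argues directly by contradiction: if only $j<k$ edge reflections $t_{u_1v_1}^{X_1},\dots,t_{u_jv_j}^{X_j}$ are applied, then since a reflection only alters edges meeting its chosen pair, and $j$ edges cannot form an edge-edge cover, some edge of $G$ meets none of the pairs $\{u_i,v_i\}$ and so survives untouched. You instead prove that the minimum edge-edge cover number $\rho$ is a potential function that drops by at most one per reflection, and telescope. Your one-step estimate is sound: the deleted edge $uv$ covers all near edges of $H$ at once, each near edge $s$ of a cover of $H'$ has a unique endpoint $w$ outside $\{u,v\}$ through which all far edges it covers must pass, and a single substitute edge of $H$ at $w$ recovers that coverage, giving $\rho(H)\le\rho(H')+1$. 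The paper's argument is shorter, but it implicitly treats the pairs $\{u_i,v_i\}$ for $i\ge 2$ as if they were edges of $G$, whereas later reflections may act on edges created by earlier toggles; a set of $j$ vertex pairs meeting every edge of $G$ need not yield an edge-edge cover of size $j$ (consider $2K_2$ and a single non-edge pair meeting both components). Your monovariant argument is insensitive to this issue, since it only ever compares consecutive graphs in the sequence, so it buys a somewhat more robust (if longer) justification of the same bound.
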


\begin{proof}
	Suppose we apply the $j$ edge reflections $t_{u_1 v_1}^{X_1}, t_{u_2 v_2}^{X_2}, \dots, t_{u_j v_j}^{X_j}$ to $G$ where $j<k$. There then must be some edge $uv$ that is not incident to any of the edges $u_1 v_1, u_2 v_2, \dots , u_j v_j$. Thus, the resulting graph will still contain the edge $uv$.
\end{proof}

This proposition makes the following easy to prove.

\begin{theorem}
	The complete graph $K_n$ requires exactly $\lfloor \nicefrac{n}{2} \rfloor$ edge reflections to reach the edgeless graph.
\end{theorem}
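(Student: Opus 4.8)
The plan is to prove the two inequalities separately: the lower bound $\ell(K_n)\ge\lfloor\nicefrac{n}{2}\rfloor$ will come directly from Proposition~\ref{prop:edge-edge}, and the matching upper bound from an explicit sequence of $\lfloor\nicefrac{n}{2}\rfloor$ edge reflections that strips off two vertices at a time.

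For the lower bound, the key observation is that a minimum edge-edge cover of $K_n$ has exactly $\lfloor\nicefrac{n}{2}\rfloor$ edges. First I would note that a set $C$ of edges is an edge-edge cover precisely when the set $S$ of vertices incident to $C$ is a vertex cover of $K_n$: every edge of $K_n$ must share an endpoint with some edge of $C$, which is the same as saying every edge has an endpoint in $S$. Since a vertex cover of $K_n$ can omit at most one vertex, we have $|S|\ge n-1$, and since $|S|\le 2|C|$ this forces $|C|\ge\lceil\nicefrac{(n-1)}{2}\rceil=\lfloor\nicefrac{n}{2}\rfloor$. Conversely, a (near-)perfect matching of $\lfloor\nicefrac{n}{2}\rfloor$ edges has at least $n-1$ endpoints and hence is an edge-edge cover, so the minimum is exactly $\lfloor\nicefrac{n}{2}\rfloor$. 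Proposition~\ref{prop:edge-edge} then gives that at least $\lfloor\nicefrac{n}{2}\rfloor$ edge reflections are required.

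For the upper bound, I would empty $K_n$ by repeatedly isolating a pair of vertices, exactly as the triangle is emptied in the cycle argument with the reflection $t_{c_1 c_2}^{V(C_3)}$. Working on the surviving clique $K$ on a vertex set $A$, I apply $t_{uv}^{A}$ for two vertices $u,v\in A$: since every other vertex of $A$ is joined to both $u$ and $v$, toggling all edges between $\{u,v\}$ and $A$ deletes $uv$ together with every edge incident to $u$ or $v$, leaving $u$ and $v$ isolated and a clique on $A\setminus\{u,v\}$. Crucially, I take $X=A$ (the current clique) rather than all of $[n]$, so that no edges to previously isolated vertices are toggled back on. Iterating, $K_n$ passes through $K_{n-2},K_{n-4},\dots$; when $n$ is even this reaches $\overline{K}_n$ after $\nicefrac{n}{2}$ steps, and when $n$ is odd the final clique is a triangle, emptied by one more reflection, for a total of $\nicefrac{(n-1)}{2}=\lfloor\nicefrac{n}{2}\rfloor$ steps. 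Combining the two bounds yields the result.

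The main thing to get right is the bookkeeping of the upper-bound construction---specifically, restricting each $X$ to the vertices of the current clique so that the reflections genuinely decrease the edge count monotonically and never reintroduce a deleted edge---together with confirming that $t_{uv}^{A}$ is a legitimate edge reflection on a clique, which is the very move already used to empty the triangle $C_3=K_3$. By contrast, the lower bound is essentially a one-line consequence of Proposition~\ref{prop:edge-edge}, once the edge-edge cover number of $K_n$ is identified with its vertex cover number.
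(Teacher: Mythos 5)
Your proof is correct and takes essentially the same route as the paper: the lower bound via the $\lfloor n/2\rfloor$-edge minimum edge-edge cover and Proposition~\ref{prop:edge-edge} is exactly the paper's argument, and your explicit reflections $t_{uv}^{A}$ that peel two vertices off the surviving clique at each step are precisely the edge reflections induced by the optimal reduction sequence of the reverse identity $e^{\textnormal{r}}$ (whose inversion graph is $K_n$), which is how the paper gets the upper bound by citing $l'(e^{\textnormal{r}})=\lfloor n/2\rfloor$ and Theorem~\ref{thm:edge_reflections_abs_length}. The only difference is presentational: you carry out the moves directly on the graph, making the upper bound self-contained rather than routed through the absolute-length machinery.
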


\begin{proof}
	We have that $G_{e^{\text{r}}} = K_n$ where $e^{\text{r}} = n \dots 21$ is the reverse identity permutation. It is not too hard to show that $l'(G_{e^{\text{r}}}) = \lfloor \nicefrac{n}{2} \rfloor$, which is thus is an upper bound on the number of edge reflections required to reach $\overline{K}_n$ by Theorem~\ref{thm:edge_reflections_abs_length}. To see we can't do any better than this, it is easily seen that the cardinality of a minimum edge-edge cover of $K_n$ is $\lfloor \nicefrac{n}{2} \rfloor$, and so the result follows from Proposition~\ref{prop:edge-edge}.
\end{proof}

\subsection{Complete Bipartite Graphs}

We have another example that is easy to establish using Theorem~\ref{thm:edge_reflections_abs_length} and Proposition~\ref{prop:edge-edge}.

\begin{theorem}
	The complete bipartite graph $K_{m,m}$ requires exactly $m$ edge reflections to reach the edgeless graph.
\end{theorem}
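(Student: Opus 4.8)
The plan is to mirror the proof given for complete graphs, obtaining the upper bound from Theorem~\ref{thm:edge_reflections_abs_length} and the lower bound from Proposition~\ref{prop:edge-edge}.

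For the upper bound, I would first realize $K_{m,m}$ as an inversion graph and compute the absolute length of a corresponding permutation. Since $K_{m,m}$ is the join of two independent sets of size $m$, and the empty graph $\overline{K}_m$ is the inversion graph of the identity $e = 12 \cdots m$, the relation $G_{\sigma \ominus \tau} \cong G_\sigma \vee G_\tau$ recorded earlier gives $K_{m,m} \cong G_{e \ominus e}$, where $e \ominus e = (m+1)(m+2)\cdots(2m)\,1\,2\cdots m$. As a function this permutation sends $i \mapsto i+m$ and $i+m \mapsto i$ for each $1 \le i \le m$, so it is the product of the $m$ disjoint transpositions $(i \; i+m)$ and therefore has exactly $m$ cycles on its $2m$ entries. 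Using the fact noted after Theorem~\ref{thm:abs_length_in_bruhat_graphs} that the absolute length equals the number of entries minus the number of cycles, we get $l'(e \ominus e) = 2m - m = m$. By Theorem~\ref{thm:edge_reflections_abs_length}, the graph $K_{m,m}$ can thus be emptied of all its edges in at most $m$ edge reflections.

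For the lower bound, I would show that a minimum edge-edge cover of $K_{m,m}$ has exactly $m$ edges, after which Proposition~\ref{prop:edge-edge} yields the result. The starting observation is that a set $S$ of edges is an edge-edge cover if and only if the set $V(S)$ of endpoints of its edges is a vertex cover of the graph: every edge is incident to an $S$-edge exactly when it has an endpoint in $V(S)$. Writing $A$ and $B$ for the two parts of $K_{m,m}$, a vertex set is a vertex cover precisely when it contains all of $A$ or all of $B$. Because each edge of $K_{m,m}$ has exactly one endpoint in $A$, an edge-edge cover $S$ with $A \subseteq V(S)$ must contribute $m$ distinct $A$-endpoints, one per $A$-vertex, forcing $|S| \ge m$; the case $B \subseteq V(S)$ is symmetric. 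Conversely, any perfect matching of $K_{m,m}$ is an edge-edge cover of size $m$, so the minimum is exactly $m$.

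The only genuine subtlety is in this lower bound. A naive count gives only $2|S| \ge |V(S)| \ge \tau(K_{m,m}) = m$, hence the weaker estimate $|S| \ge \lceil m/2 \rceil$; it is therefore essential to exploit the bipartite structure—that no edge has both endpoints in a single part—to upgrade the bound to $|S| \ge m$. Everything else is routine verification.
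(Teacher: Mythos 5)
Your proposal is correct and follows essentially the same route as the paper: the upper bound comes from realizing $K_{m,m}$ as $G_{12\cdots m \ominus 12\cdots m}$, noting this permutation is a product of $m$ disjoint $2$-cycles so its absolute length is $m$, and invoking Theorem~\ref{thm:edge_reflections_abs_length}; the lower bound comes from the minimum edge-edge cover together with Proposition~\ref{prop:edge-edge}. Your write-up is somewhat more detailed than the paper's (which simply asserts that a minimum edge-edge cover of $K_{m,m}$ has cardinality $m$), and your observation that an edge set is an edge-edge cover exactly when its endpoints form a vertex cover is a clean way to justify that assertion.
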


\begin{proof}
	Letting $\sigma = 12\dots m \ominus 12\dots m \in S_{2m}$, we have that $K_{m,m}$ is isomorphic to $G_{\sigma}$. The permutation $\sigma$ is composed of $m$ 2-cycles, and hence $l'(\sigma) = m$. Then by Theorem~\ref{thm:edge_reflections_abs_length}, $K_{m,m}$ requires at most $m$ edge reflections to reach the edgeless graph. The result then follows from the observation that a minimum edge-edge cover of $K_{m,m}$ has cardinality~$m$.
\end{proof}

\subsection{Nested Triangles}

Finally, the graphs that require exactly one edge reflection to reach the edgeless graph are the \textit{nested triangles} seen in Figure~\ref{nested-triangle}. These are the split graphs given by $N_k = K_2 \vee \overline{K}_k$, for $k \geq 0$, with $\vee$ denoting the join operation. We note that $N_0$ is simply the single-edge graph $K_2$.

\begin{figure}[h]
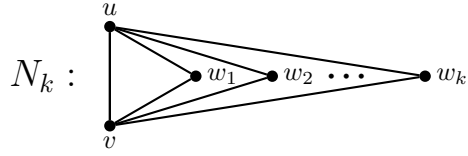

	\begin{center}
		\begin{pic}
			\makenodescircle{v}{3}{0.3}[0][circle, fill, minimum size = \nodesize pt, draw]
			
			\node at (-0.5, 0) {\large$N_k:$};
			
			\node (v4) at (0.7,0) [circle, fill, minimum size = \nodesize pt, draw]{};
			\node at (1.1, 0) {\Large \dots};
			\node (vk) at (1.5,0) [circle, fill, minimum size = \nodesize pt, draw]{};
			\drawedges{v1/v2, v2/v3, v3/v1, v2/v4, v3/v4, v2/v3, v2/vk, v3/vk}
	
			\labelnode[0]{v1}{$w_1$}
			\labelnode[0]{v4}{$w_2$}
			\labelnode[0]{vk}{$w_k$}
			\labelnode[90]{v2}{$u$}
			\labelnode[-90]{v3}{$v$}
		\end{pic}
	\end{center}
	\caption{The graphs requiring exactly one edge reflection to reach the edgeless graph.}
	\label{nested-triangle}
\end{figure}

\section{Extremal Considerations}

\subsection{The Extremal Bound}

As discussed in the previous section, trees on $n$ vertices require $n-1$ edge reflections to reach the edgeless graph. The next result shows that any $n$-vertex graph can always reach the edgeless graph with $n-1$ or fewer edge reflections.

\begin{theorem} \label{thm:first}
A graph on \( n \) vertices can be transformed into the edgeless graph with \( n-1 \) or fewer edge reflections.
\end{theorem}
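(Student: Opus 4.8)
The plan is to prove Theorem~\ref{thm:first} by induction on $n$, where the single recurring move is to use one edge reflection to render a chosen vertex isolated, thereby passing from $G$ to a graph that, apart from one isolated vertex, lives on $n-1$ vertices. The base case $n=1$ is immediate: a one-vertex graph is already edgeless and needs no reflections. For the inductive step, assume the statement holds for all graphs on fewer than $n$ vertices and let $G$ have $n$ vertices; if $G$ is already edgeless there is nothing to do, so assume $G$ has an edge.

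The heart of the argument is the claim that \emph{any} non-isolated vertex $v$ can be isolated by a single edge reflection. Since $v$ is non-isolated, pick a neighbor $u$ and consider the edge reflection $t_{uv}^{X}$ that deletes the edge $uv$ and toggles all edges between $\{u,v\}$ and the set $X = N(v)$. I would verify that this strips every edge incident to $v$: the edge $uv$ is deleted outright, and for each remaining neighbor $w$ of $v$ the edge $vw$ is present and hence toggled off, while no non-neighbor of $v$ is placed in $X$, so no new edge at $v$ is created. Thus after this one reflection $v$ is isolated; note that the edges incident to $u$ may change arbitrarily, but that does not matter.

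Let $G'$ be the resulting graph, in which $v$ is isolated, and let $G' - v$ be the induced subgraph on the other $n-1$ vertices. By the inductive hypothesis, $G' - v$ can be emptied using at most $(n-1)-1 = n-2$ edge reflections, and since each of these reflections acts entirely within $V(G)\setminus\{v\}$ it leaves $v$ isolated; so the same sequence empties $G'$ as well. Together with the initial reflection this gives at most $1 + (n-2) = n-1$ edge reflections to transform $G$ into the edgeless graph, completing the induction. The bound is best possible, since by Theorem~\ref{thm-forests-m-transpositions} a tree on $n$ vertices already requires exactly $n-1$ edge reflections.

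The one delicate point, and the step I expect to require the most care, is the isolation claim: I must confirm that the toggle set realizing the isolation of $v$ is an admissible choice under the definition of an edge reflection, in particular when $u$ and $v$ have common neighbors, and that toggling both $uw$ and $vw$ for each $w \in N(v)$ indeed clears $v$ regardless of the adjacencies among $v$'s neighbors. I would sanity-check the construction against the extremal families already computed, for instance $K_n$, where isolating one vertex in fact isolates a second and leaves $K_{n-2}$, consistent with the known count of $\lfloor \nicefrac{n}{2}\rfloor$; such checks both guard against an off-by-one in the admissibility condition and make the $n-1$ bound's slack for dense graphs transparent.
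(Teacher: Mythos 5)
Your proposal is correct and takes essentially the same approach as the paper: the paper's proof also works through the vertices one at a time, isolating each non-isolated vertex $v_1$ with the single reflection $t_{v_1 v_k}^{N[v_1]}$ for some neighbor $v_k$, so your ``isolate one vertex, recurse on the rest'' induction is just an inductive phrasing of the same greedy procedure with the same count. The only cosmetic discrepancy is that you take the toggle set to be $N(v)$ rather than the closed neighborhood $N[v]$; since the definition of an edge reflection formally requires both $u$ and $v$ to lie in $X$, you should write $X = N[v]$, which changes nothing about the effect on $v$ and resolves the admissibility worry you raise (the paper itself uses exactly this choice, common neighbors and all).
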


\begin{proof}
Suppose $v_1, v_2, \dots , v_n $ are the vertices of the graph $G$. Beginning with $v_1$, if its neighborhood is nonempty, we consider its closed neighborhood $N [ v_1 ]$, (otherwise $v_1$ is already isolated from the rest of the vertices of $G$ and there is nothing to do). Then for any $v_k \in N(v_1)$, we can apply the edge reflection $t_{v_1 v_k}^{N[v_1]}$ to $G$, isolating the vertex $v_1$.

We continue going through the list of vertices in this manner, applying an edge reflection which isolates (at least) the current vertex if it is not already isolated. Eventually, we will reach the vertex $v_{n-1}$, which can only possibly be adjacent to $v_n$. If they are adjacent, we need only apply the edge reflection $t_{v_{n-1} v_n}^{\{v_{n-1}, v_n\}}$, which isolates both vertices. Otherwise, the graph is already edgeless. This process requires at most $n-1$ edge reflections.
\end{proof}

The reflection graph $\Gamma_n$ is not strongly connected, however Theorems~\ref{thm:first} and~\ref{thm-forests-m-transpositions} allow us to establish some concept of diameter in the next result.

\begin{theorem}
	For all $n \geq 1$, it holds that
	\[
	\max_{G \in \mathscr{G}_n} \min \{ k : G= G_0 \rightarrow \dots \rightarrow G_k = \overline{K}_n \text{ is a path in $\Gamma_n$} \} = n-1.
	\]
\end{theorem}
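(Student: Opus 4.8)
The plan is to recognize this statement as a straightforward consequence of two results already established in this section, namely Theorem~\ref{thm:first} and Theorem~\ref{thm-forests-m-transpositions}, split into the two inequalities that together give the claimed equality. Writing $d(G) = \min \{ k : G = G_0 \rightarrow \dots \rightarrow G_k = \overline{K}_n \text{ in } \Gamma_n \}$ for the quantity being maximized, I would prove $\max_{G} d(G) \leq n-1$ and $\max_G d(G) \geq n-1$ separately.

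For the upper bound, I would invoke Theorem~\ref{thm:first} directly: it asserts that every graph on $n$ vertices can be transformed into the edgeless graph using $n-1$ or fewer edge reflections. Translating this into the language of $\Gamma_n$, it says precisely that $d(G) \leq n-1$ for every $G \in \mathscr{G}_n$, and hence the maximum over all such $G$ is at most $n-1$. No further work is needed here beyond noting that a sequence of edge reflections emptying $G$ of its edges is exactly a directed path from $G$ to $\overline{K}_n$ in $\Gamma_n$.

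For the lower bound, I would exhibit a single graph achieving the value $n-1$. The natural choice is any tree $T$ on $n$ vertices (for instance the path $P_n$), which certainly lies in $\mathscr{G}_n$ for every $n \geq 1$. By Theorem~\ref{thm-forests-m-transpositions}, a tree on $n$ vertices requires \emph{exactly} $n-1$ edge reflections to reach the edgeless graph, so $d(T) = n-1$. This forces $\max_{G \in \mathscr{G}_n} d(G) \geq n-1$. Combining the two inequalities yields the desired equality. I would also note in passing that the small cases behave correctly: for $n=1$ the tree is a single isolated vertex with $d = 0 = n-1$, consistent with both bounds.

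There is essentially no genuine obstacle here, since the heavy lifting was done in proving Theorems~\ref{thm:first} and~\ref{thm-forests-m-transpositions}; the only point demanding any care is the bookkeeping that identifies $d(G)$ with the minimum number of edge reflections, and the observation that a witness tree exists for every $n \geq 1$. If anything, the subtlety worth double-checking is that the lower bound really needs the word \emph{exactly} in Theorem~\ref{thm-forests-m-transpositions} (not merely \emph{at most}), since otherwise a tree would only certify $d(T) \leq n-1$ and the lower bound would collapse.
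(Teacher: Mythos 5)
Your proposal is correct and matches the paper's own justification exactly: the paper derives this result directly from Theorem~\ref{thm:first} (upper bound) and the tree case of Theorem~\ref{thm-forests-m-transpositions} (lower bound), just as you do. Your remark that the word \emph{exactly} is what makes the lower bound work is a fair observation, but there is no divergence from the paper's argument.
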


By Theorem~\ref{thm:first}, we see that we can always empty a graph of its edges with at most one fewer edge reflection than its number of vertices. Thus, considering the components of a graph independently, the next result follows quickly.

\begin{corollary}\label{cor-connected-components}
	A graph on $n$ vertices with $k$ components can be transformed into the edgeless graph with $n-k$ or fewer edge reflections. 
\end{corollary}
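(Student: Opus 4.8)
The plan is to reduce the statement to Theorem~\ref{thm:first} by treating each connected component separately. I would begin by writing the components of $G$ as $C_1, \dots, C_k$ with $|V(C_i)| = n_i$, so that $n_1 + \dots + n_k = n$. The key structural observation to establish first is that an edge reflection carried out entirely within one component leaves every other component untouched: if $u$ and $v$ both lie in $C_i$ and the set $X$ is taken inside $V(C_i)$, then every edge toggled by $t_{uv}^X$ has both endpoints in $V(C_i)$, so no edge incident to any $C_j$ with $j \neq i$ is created or destroyed.

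With this in hand, I would note that the emptying procedure of Theorem~\ref{thm:first} only ever uses such component-internal reflections. Its reflections have the form $t_{v v_k}^{N[v]}$, and the closed neighborhood $N[v]$ is always contained in the component of $v$, since every neighbor of $v$ shares its component. Hence applying that procedure to $C_i$, regarded as a graph on its own $n_i$ vertices, empties $C_i$ using $n_i - 1$ or fewer edge reflections while leaving the remaining components unchanged.

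Running this independently on each component and concatenating the resulting reflection sequences then transforms $G$ into the edgeless graph, with total count $\sum_{i=1}^{k} (n_i - 1) = n - k$ or fewer, as claimed. I do not anticipate any serious obstacle; the only point that needs care is the localization observation above, namely that the set $X$ (in particular the closed neighborhoods used in Theorem~\ref{thm:first}) can be chosen inside a single component, which is immediate from the definition of a component together with the definition of edge reflection.
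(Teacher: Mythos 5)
Your proposal is correct and follows the same route the paper takes: the paper derives the corollary by applying Theorem~\ref{thm:first} to each component independently and summing the bounds $n_i-1$. Your added localization observation (that the reflections $t_{v v_k}^{N[v]}$ used in Theorem~\ref{thm:first} never toggle an edge leaving the component of $v$, so the components can be emptied one at a time without interference) is exactly the detail the paper leaves implicit, and it is verified correctly.
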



\subsection{Trees are the Only Extremal Graphs}

We now see that trees are actually the only graphs that require the extremal number of edge reflections to reach the edgeless graph.

\begin{theorem}\label{thm:tree_iff_n-1}
A graph on \( n \) vertices requires exactly \( n-1 \) edge reflections to become an edgeless graph if and only if it is a tree.
\end{theorem}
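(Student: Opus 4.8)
The plan is to prove the two directions separately. The forward direction is immediate: if $G$ is a tree on $n$ vertices then Theorem~\ref{thm-forests-m-transpositions} says it requires exactly $n-1$ edge reflections. For the converse I would prove the contrapositive: if $G$ is \emph{not} a tree, then it can be emptied using at most $n-2$ edge reflections, and hence---since Theorem~\ref{thm:first} already guarantees at most $n-1$ in all cases---it does not \emph{require} $n-1$. A graph that is not a tree is either disconnected or connected with a cycle. The disconnected case is handled at once by Corollary~\ref{cor-connected-components}: if $G$ has $k \geq 2$ components, then it can be emptied with at most $n-k \leq n-2$ edge reflections.

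The substance is the connected case, which I would attack by induction on $n$, the base case $n=3$ being the triangle $K_3$, emptied in one reflection (and $1 = n-2$). For the inductive step, the goal is to find a single edge reflection that isolates one vertex and leaves, on the remaining $n-1$ vertices, a graph that is again connected and still contains a cycle; the inductive hypothesis then empties that graph in at most $(n-1)-2 = n-3$ reflections, for a total of at most $n-2$. The clean instance is an edge $uv$ lying on no triangle, i.e.\ with $N(u) \cap N(v) = \varnothing$. Reflecting on $uv$ with toggling set $N[u]$ deletes every edge at $u$, isolating it, and, since $v$ has no common neighbor with $u$, makes $v$ adjacent to all of $u$'s former neighbors; the residual graph on $V(G) \setminus \{u\}$ is therefore the contraction $G/uv$. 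Contracting a triangle-free edge preserves connectivity and leaves the cycle rank $m-n+c$ unchanged, so $G/uv$ is connected and cyclic on $n-1$ vertices, exactly as the induction requires. This covers every vertex of degree $1$ (a leaf edge is triangle-free, and its contraction is just leaf deletion) and every triangle-free graph, such as a cycle $C_n$, whose contraction yields $C_{n-1}$, recovering the cycle computation.

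The one situation this reduction does not directly reach---and which I expect to be the main obstacle---is when $G$ has minimum degree at least $2$ and \emph{every} edge lies on a triangle, as happens for the complete graphs. Here no edge is triangle-free, so isolating an endpoint $u$ of an edge $uv$ necessarily toggles the edges from $v$ to the common neighbors of $u$ and $v$ as well, and the residual graph is $G-u$ with $v$'s neighborhood replaced by the symmetric difference $N(u) \triangle N(v)$; it is no longer a clean contraction, and one must check that it remains connected and cyclic. I would resolve this either by exploiting the freedom in the choice of the toggling set $X$ to keep the residual graph connected with a surviving cycle, or by observing that such dense graphs are emptied in far fewer than $n-2$ reflections---the complete graph $K_n$ needs only $\lfloor n/2 \rfloor$---and supplementing the induction with a direct argument for the all-edges-on-triangles case, anchored at the complete and nested-triangle families and run on edge count rather than vertex count. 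Verifying that some valid reflection always reduces such a graph to a smaller connected graph still containing a cycle is the crux on which the whole converse rests.
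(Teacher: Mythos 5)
Your forward direction, the reduction to connected graphs containing a cycle via Corollary~\ref{cor-connected-components}, and your ``clean case'' are all correct; in particular the observation that reflecting a triangle-free edge $uv$ with toggle set $N[u]$ isolates $u$ and leaves exactly the contraction $G/uv$, which is connected with unchanged cycle rank, is a nice reduction. But the case you yourself flag as the crux is a genuine gap, not a detail to be checked. The inductive step you need---one reflection leaving a connected graph on $n-1$ non-isolated vertices that still contains a cycle---is provably unavailable when every edge lies on a triangle. Already for $K_4$: for any edge $uv$, both remaining vertices are common neighbours of $u$ and $v$, and common neighbours are always toggled by an edge reflection (they lose both their edges to $u$ and $v$, exactly as in the triangle base case $t_{c_i c_{i+1}}^{V(C_3)}$); hence every edge reflection sends $K_4$ to $K_2$ plus two isolated vertices, and no reflection leaves a connected cyclic residue. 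So the induction cannot run in the form stated. Your fallback---a separate argument ``anchored at the complete and nested-triangle families''---does not cover the case either: connected graphs of minimum degree at least $2$ in which every edge lies on a triangle include friendship graphs, chordal graphs, triangulations, two cliques glued at a vertex, and so on, and for each of these one must still exhibit reflections whose residue can be emptied within budget. That verification is the entire content of the hard direction.

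For comparison, the paper avoids induction on $n$ and contraction altogether. It fixes an induced cycle $C$, isolates the off-cycle vertices one reflection at a time (each reflection isolating at least one vertex, so the running count stays within the number of vertices removed), collapses the cycle to a triangle $x,y,z$, and then analyses the single remaining component: the triangle together with the sets $X$, $Y$, $Z$ of vertices adjacent to exactly two of $x,y,z$ and the set $A$ of vertices adjacent to all three. A two-case computation shows this component with $\ell$ vertices is emptied in at most $\ell-2$ reflections, which is where the one saved reflection actually comes from. If you want to salvage your induction, the invariant must be weakened: it suffices that one reflection produces $j\ge 1$ isolated vertices and $r$ nontrivial components with $j+r\ge 3$ (so that Theorem~\ref{thm:first} applied componentwise gives at most $n-j-r\le n-3$ further reflections), \emph{or} a component that still contains a cycle; proving that one of these always occurs in the all-edges-on-triangles case is essentially the work the paper's case analysis performs.
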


\begin{proof}
With Theorem~\ref{thm-forests-m-transpositions} and Corollary~\ref{cor-connected-components} established, we are left to consider connected graphs with a cycle. So suppose $G$ is a graph on $n$ vertices with an induced cycle $c_1 c_2 \dots c_k$. Considering the rest of the vertices $v_1, v_2, \dots, v_{n-k}$, we begin by selectively isolating vertices in this list. That is, reading from $v_1$ to $v_{n-k}$,
\begin{enumerate}
	\item if $v_i$ is adjacent to some vertex $v_j$ in $\{ v_{i+1}, \dots, v_{n-k}\}$ when we come to it, we apply the edge reflection $t_{v_i v_j}^{N[v_i]}$ thus isolating $v_i$, or
	\item if $v_i$ is not adjacent to any vertex in $\{v_{i+1}, \dots, v_{n-k} \}$ when we come to it, we do nothing.
\end{enumerate}
After these edge reflections are applied, we note that the set of vertices $\{v_1, \dots, v_{n-k} \}$ in the resulting graph is an independent set, and the vertices $c_1, \dots, c_k$ still form an induced cycle.

We next apply the edge reflections $t_{c_1 c_2}^{N[c_1]}$, $t_{c_2 c_3}^{N[c_2]}$, $\dots, t_{c_{k-3}c_{k-2}}^{N[c_{k-3}]}$ in this order, (we do nothing if $k=3$), thus isolating each of the vertices $c_1, \dots , c_{k-3}$. If there are any degree one vertices after performing these operations, we isolate each of them with the edge reflection that only deletes its incident edge. Now after relabeling the vertices $c_{k-2}$, $c_{k-1}$ and $c_k$ with $x$, $y$ and $z$, the resulting graph will appear as in Figure~\ref{fig:cyclic_graph_after_transpositions}, (omitting isolated vertices).

\begin{figure}[h]
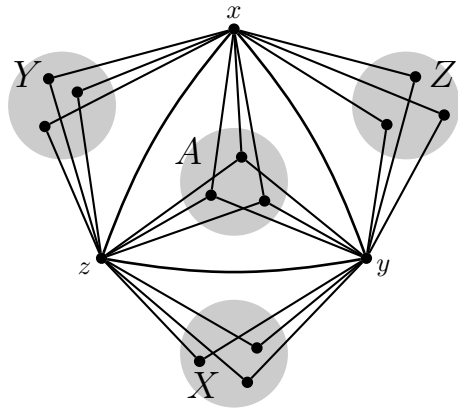

	\begin{center}
		\begin{pic}
			\makenodescircle{v}{3}{0.8}[90][circle, fill, minimum size = \nodesize pt, draw]
			\node (v4) at (-0.12, -0.07) [circle, fill, minimum size = \nodesize pt, draw]{};
			\node (v5) at (0.16,-0.1) [circle, fill, minimum size = \nodesize pt, draw]{};
			\node (vj) at (0.04, 0.13) [circle, fill, minimum size = \nodesize pt, draw]{};
			\node at (-0.24, 0.16) {\large$A$};
			\node (v6) at (0.8, 0.3) [circle, fill, minimum size = \nodesize pt, draw]{};
			\node (v7) at (1.1, 0.35) [circle, fill, minimum size = \nodesize pt, draw]{};
			\node (v8) at (0.95, 0.55) [circle, fill, minimum size = \nodesize pt, draw]{}; 
			\node [below right] at (1, 0.65) {\large$Z$};
			\node (v9) at (-0.97, 0.54) [circle, fill, minimum size = \nodesize pt, draw]{};
			\node (v10) at (-0.82, 0.47) [circle, fill, minimum size = \nodesize pt, draw]{};
			\node (v11) at (-0.99, 0.29) [circle, fill, minimum size = \nodesize pt, draw]{};
			\node [below left] at (-0.98, 0.65) {\large$Y$};
			\node (v13) at (0.12,-0.87) [circle, fill, minimum size = \nodesize pt, draw]{};
			\node (v14) at (-0.18,-0.94) [circle, fill, minimum size = \nodesize pt, draw]{};
			\node (v15) at (0.07, -1.05) [circle, fill, minimum size = \nodesize pt, draw]{};
			\node [above left] at (-0.05, -1.15) {\large$X$};
			
			\drawedges{v1/v6, v2/v4, v1/v4, v3/v4, v3/v6, v1/v5, v2/v5, v3/v5, v1/v7, v1/v8, v1/v9, v3/v7, v3/v8, v2/v9, v1/v10, v1/v11, v2/v10, v2/v11, v2/v13, v2/v14, v3/v13, v3/v14, v2/v15, v3/v15, v1/vj, v2/vj, v3/vj}
			
			\draw[line width = 1pt, out = -130, in = 70] (v1) to (v2); 
			\draw[line width = 1pt, out = -10, in = -170] (v2) to (v3);
			\draw[line width = 1pt, out = 110, in = -50] (v3) to (v1);
			
			\labelnode[90]{v1}{$x$}
			\labelnode[-30]{v3}{$y$}
			\labelnode[210]{v2}{$z$}
			
			\begin{scope}[on background layer]
			\node at  (0,0) [circle, fill, color = gray!40, minimum size = 40pt, draw]{};
			
			\node at  (0.9,0.4) [circle, fill, color = gray!40, minimum size = 40pt, draw]{};

			\node at  (-0.9,0.4) [circle, fill, color = gray!40, minimum size = 40pt, draw]{};
			
			\node at  (0,-0.9) [circle, fill, color = gray!40, minimum size = 40pt, draw]{};
			
			\end{scope}	
		\end{pic}
	\end{center}
	\caption{A connected graph with a cycle after some edge reflections are applied, (omitting isolated vertices).}
	\label{fig:cyclic_graph_after_transpositions}
\end{figure}

As in the figure, we consider the sets $X = (N(y) \cap N(z)) \setminus N[x]$, $Y = (N(x) \cap N(z)) \setminus N[y]$, $Z = (N(x) \cap N(y)) \setminus N[z]$ and $A = N(x) \cap N(y) \cap N(z)$. Supposing that this remaining component has $\ell$ vertices and noting every edge reflection applied thus far isolated at least one vertex, it remains to show we can reach the edgeless graph with $\ell-2$ or fewer edge reflections. There are two cases to deal with.

\textit{Case 1:} At most one of $X$, $Y$ and $Z$ is nonempty. Suppose without loss of generality that $Y = Z = \emptyset$. Then after applying the edge reflection $t_{yz}^{N(y) \cup N(z)}$, the only remaining edges are between the vertex $x$ and each of the vertices of $A$. Those edges can be deleted each one at a time with an edge reflection. That is a total of $\ell - 2 - |X|$ edge reflections. See Figure~\ref{fig:case1}.

\begin{figure}[ht]
\begin{center}
	\begin{pic}
	\makenodescircle{v}{3}{0.4}[90][circle, fill, minimum size = \nodesize pt, draw]
	\drawedges{v1/v2, v2/v3, v3/v1}
	\labelnode[90]{v1}{$x$}
	\labelnode[210]{v2}{$z$}
	\labelnode[-30]{v3}{$y$}
	
	\node at  (0,0) [circle, fill, color = gray!40, minimum size = 17pt, draw]{};
	\node (a) at (0,0) [circle, fill, minimum size = \nodesize pt, draw]{};
	\node[] at (-0.1, 0.1) {$A$};
	
	\node at  (0,-0.4) [circle, fill, color = gray!40, minimum size = 17pt, draw]{};
	\node (X) at (0,-0.4) [circle, fill, minimum size = \nodesize pt, draw]{};
	\node[] at (-0.1, -0.5) {$X$};
	
	\drawedges{a/v1, a/v2, a/v3, v2/X, v3/X}
	\draw[->] (0.6,0) to (0.9,0);
	\end{pic}
\quad
\hspace{-1mm}
	\begin{pic}
	\makenodescircle{v}{3}{0.4}[90][circle, fill, minimum size = \nodesize pt, draw]
	\labelnode[90]{v1}{$x$}
	
	\node at  (0,0) [circle, fill, color = gray!40, minimum size = 17pt, draw]{};
	\node (a) at (0,0) [circle, fill, minimum size = \nodesize pt, draw]{};
	\node[] at (-0.1, 0.1) {$A$};
	
	\node at  (0,-0.4) [circle, fill, color = gray!40, minimum size = 17pt, draw]{};
	\node (x) at (0,-0.4) [circle, fill, minimum size = \nodesize pt, draw]{};
	\drawedges{v1/a}
	
	\draw[->] (0.65,0) to (0.85,0);
	\draw[->] (1.25,0) to (1.45,0);
	\node[] at (1.0, 0.15) {\scriptsize $|A|$ edge reflections};
	\node[] at (1.05, 0) {$\dots$};
	\node[] at (1.7, 0) {\large $\overline{K}_n$};
	\end{pic}
\end{center}
\caption{Case 1 in the proof of Theorem~\ref{thm:tree_iff_n-1}.}
\label{fig:case1}
\end{figure}

\textit{Case 2:} At most one of $X$, $Y$ and $Z$ is empty. Thus $\ell$ is at least $|A| + 5$, and we can reach the empty graph with $|A| + 3 = \ell - (|X| + |Y| + |Z|)$ edge reflections as follows. We apply the edge reflection $t_{yz}^{X \cup A \cup \{y,z \}}$, isolating the vertices in $X$ and removing all edges between $A$ and $\{y,z\}$, then~$t_{xy}^{Z \cup \{ y,z\}}$, isolating $z$ and the vertices in $Y$, and then~$t_{xz}^{Y \cup \{ x,z\}}$, isolating $y$ and the vertices in $Z$. We can then delete the remaining $|A|$ edges, all between $A$ and the vertex $x$, with one edge reflection each. See Figure~\ref{fig:case2}.

\begin{figure}[ht]
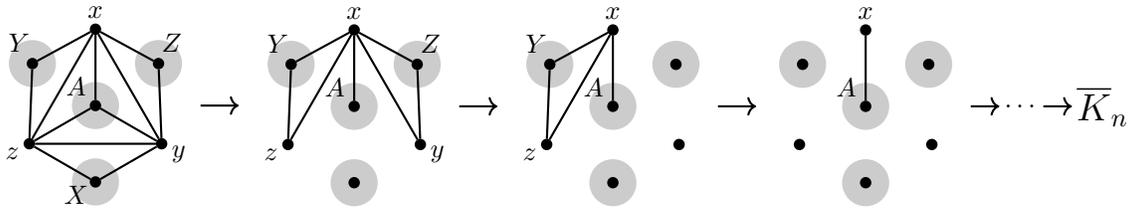

\begin{center}
	\begin{pic}
	\makenodescircle{v}{3}{0.4}[90][circle, fill, minimum size = \nodesize pt, draw]
	\drawedges{v1/v2, v2/v3, v3/v1}
	\labelnode[90]{v1}{$x$}
	\labelnode[210]{v2}{$z$}
	\labelnode[-30]{v3}{$y$}
	\node at  (0,0) [circle, fill, color = gray!40, minimum size = 17pt, draw]{};
	\node (a) at (0,0) [circle, fill, minimum size = \nodesize pt, draw]{};
	\node[] at (-0.1, 0.1) {$A$};
	\node at  (0,-0.4) [circle, fill, color = gray!40, minimum size = 17pt, draw]{};
	\node (X) at (0,-0.4) [circle, fill, minimum size = \nodesize pt, draw]{};
	\node[] at (-0.1, -0.47) {$X$};
	\node at  (0.33,0.22) [circle, fill, color = gray!40, minimum size = 17pt, draw]{};
	\node (Y) at (0.33,0.22) [circle, fill, minimum size = \nodesize pt, draw]{};
	\node[] at (0.4, 0.33) {$Z$};
	\node at  (-0.33,0.22) [circle, fill, color = gray!40, minimum size = 17pt, draw]{};
	\node (Z) at (-0.33,0.22) [circle, fill, minimum size = \nodesize pt, draw]{};
	\node[] at (-0.4, 0.33) {$Y$};
	\drawedges{a/v1, a/v2, a/v3, v2/X, v3/X, v3/Y, v1/Y, v2/Z, v1/Z}
	\draw[->] (0.55,0) to (0.75,0);
	\end{pic}
\quad
\hspace{-3mm}
	\begin{pic}
	\makenodescircle{v}{3}{0.4}[90][circle, fill, minimum size = \nodesize pt, draw]
	\drawedges{v1/v2, v3/v1}
	\labelnode[90]{v1}{$x$}
	\labelnode[210]{v2}{$z$}
	\labelnode[-30]{v3}{$y$}
	\node at  (0,0) [circle, fill, color = gray!40, minimum size = 17pt, draw]{};
	\node (a) at (0,0) [circle, fill, minimum size = \nodesize pt, draw]{};
	\node[] at (-0.1, 0.1) {$A$};
	\node at  (0,-0.4) [circle, fill, color = gray!40, minimum size = 17pt, draw]{};
	\node (X) at (0,-0.4) [circle, fill, minimum size = \nodesize pt, draw]{};
	\node at  (0.33,0.22) [circle, fill, color = gray!40, minimum size = 17pt, draw]{};
	\node (Y) at (0.33,0.22) [circle, fill, minimum size = \nodesize pt, draw]{};
	\node[] at (0.4, 0.33) {$Z$};
	\node at  (-0.33,0.22) [circle, fill, color = gray!40, minimum size = 17pt, draw]{};
	\node (Z) at (-0.33,0.22) [circle, fill, minimum size = \nodesize pt, draw]{};
	\node[] at (-0.4, 0.33) {$Y$};
	\drawedges{a/v1, v3/Y, v1/Y, v2/Z, v1/Z}
	\draw[->] (0.55,0) to (0.75,0);
	\end{pic}
\quad
\hspace{-3mm}
	\begin{pic}
	\makenodescircle{v}{3}{0.4}[90][circle, fill, minimum size = \nodesize pt, draw]
	\drawedges{v1/v2}
	\labelnode[90]{v1}{$x$}
	\labelnode[210]{v2}{$z$}
	\node at  (0,0) [circle, fill, color = gray!40, minimum size = 17pt, draw]{};
	\node (a) at (0,0) [circle, fill, minimum size = \nodesize pt, draw]{};
	\node[] at (-0.1, 0.1) {$A$};
	\node at  (0,-0.4) [circle, fill, color = gray!40, minimum size = 17pt, draw]{};
	\node (X) at (0,-0.4) [circle, fill, minimum size = \nodesize pt, draw]{};
	\node at  (0.33,0.22) [circle, fill, color = gray!40, minimum size = 17pt, draw]{};
	\node (Y) at (0.33,0.22) [circle, fill, minimum size = \nodesize pt, draw]{};
	\node at  (-0.33,0.22) [circle, fill, color = gray!40, minimum size = 17pt, draw]{};
	\node (Z) at (-0.33,0.22) [circle, fill, minimum size = \nodesize pt, draw]{};
	\node[] at (-0.4, 0.33) {$Y$};
	\drawedges{a/v1, v2/Z, v1/Z}
	\draw[->] (0.55,0) to (0.75,0);
	\end{pic}
\quad
\hspace{-3mm}
	\begin{pic}
	\makenodescircle{v}{3}{0.4}[90][circle, fill, minimum size = \nodesize pt, draw]
	\labelnode[90]{v1}{$x$}
	\node at  (0,0) [circle, fill, color = gray!40, minimum size = 17pt, draw]{};
	\node (a) at (0,0) [circle, fill, minimum size = \nodesize pt, draw]{};
	\node[] at (-0.1, 0.1) {$A$};
	\node at  (0,-0.4) [circle, fill, color = gray!40, minimum size = 17pt, draw]{};
	\node (X) at (0,-0.4) [circle, fill, minimum size = \nodesize pt, draw]{};
	\node at  (0.33,0.22) [circle, fill, color = gray!40, minimum size = 17pt, draw]{};
	\node (Y) at (0.33,0.22) [circle, fill, minimum size = \nodesize pt, draw]{};
	\node at  (-0.33,0.22) [circle, fill, color = gray!40, minimum size = 17pt, draw]{};
	\node (Z) at (-0.33,0.22) [circle, fill, minimum size = \nodesize pt, draw]{};
	\drawedges{a/v1}
	\draw[->] (0.55,0) to (0.7,0);
	\draw[->] (0.93,0) to (1.08,0);
	\node[] at (0.82, 0) {$\dots$};
	\node[] at (1.24, 0) {\large $\overline{K}_n$};
	\end{pic}
\end{center}
\caption{Case 2 in the proof of Theorem~\ref{thm:tree_iff_n-1}.}
\label{fig:case2}
\end{figure}

This gives the result.
\end{proof}

Thus, if the permutation $\pi \in S_n$ consists of single cycle but $G_{\pi}$ is not a tree, then the upper bound of $l'(\pi) = n-1$ edge reflections required for $G_{\pi}$ to reach the edgeless graph from Theorem~\ref{thm:edge_reflections_abs_length} is not tight. Recalling our previous discussions, these are exactly the permutations that consist of one cycle that cannot be written as $(c_1, c_2, \dots, c_n)$ with $i \in [n]$ such that $c_1 < c_2 < \dots < c_i > \dots > c_n$.

By following the algorithm in the proof of Theorem~\ref{thm:tree_iff_n-1}, it is clear that in many cases we can empty an $n$-vertex graph containing a cycle in even fewer than $n-2$ edge reflections. That is, suppose that $G$ is an $n$-vertex graph with an induced cycle $C$, and that we followed the algorithm described in this proof so that we are at the point of Figure~\ref{fig:cyclic_graph_after_transpositions}. Consulting both of the cases, it follows that we can empty the graph of its edges it in at least $|X|+|Y|+|Z|$ fewer than $n-2$ edge reflections. Some thought shows that each of the vertices in $X$, $Y$, and $Z$ is the remnant of a component of the graph $G[V(G)-C]$ with an even number of edges between it and the cycle $C$ in~$G$. The key observation is that for every edge reflection applied up to that point, the number of edges between a component of $G[V(G)-C]$ and $C$ can only decrease by an even amount. We observe that there are also components of $G[V(G)-C]$ such that all of their vertices became isolated prior to this point, and for each of those, we also `saved' an edge reflection. These observations yield the following.

\begin{corollary}
	Suppose $G = (V,E)$ is a graph on $n$ vertices with an induced cycle $C$. Let $k$ be the number of components $K$ of $G[V-C]$ such that there are an even number of edges between $K$ and $C$ in $G$. Then $G$ can be transformed into the edgeless graph in $(n - 2) - k$ or fewer edge reflections. 
\end{corollary}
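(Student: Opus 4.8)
The plan is to run the clearing algorithm from the proof of Theorem~\ref{thm:tree_iff_n-1} on $G$, using the given induced cycle $C$ as the cycle in that argument, and then to account more carefully for the reflections it uses. Recall that the algorithm first isolates the non-cycle vertices $v_1,\dots,v_{n-|C|}$ greedily, then isolates all but three of the cycle vertices together with any newly created degree-one vertices, and finally clears the triangle-plus-pendants configuration of Figure~\ref{fig:cyclic_graph_after_transpositions} according to Cases~1 and~2. That argument already yields the bound $n-2$; the goal is to show that the same run in fact uses at most $(n-2)-k$ reflections by exhibiting $k$ ``saved'' reflections, one for each component of $G[V-C]$ whose cut to $C$ has even size.

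The heart of the matter is a parity invariant. First I would prove that, as the algorithm proceeds up to the triangle configuration, every reflection it applies changes the number of edges between $V(K)$ and $V(C)$ by an even amount, for each component $K$ of $G[V-C]$. The reason is structural: the isolating reflection $t_{v_iv_j}^{N[v_i]}$ and the cycle-shrinking reflection $t_{c_ic_{i+1}}^{N[c_i]}$ both toggle the pertinent cut edges in matched pairs, so that for every cut edge toggled at one of the two special vertices a partner edge at the other is toggled as well, whence the cut size can only move by multiples of two. Granting this, a component $K$ with an even cut to $C$ keeps an even-sized cut to the current cycle throughout, and in particular into the final triangle $\{x,y,z\}$; since a surviving vertex lands in $X$, $Y$, or $Z$ exactly when it meets two of $x,y,z$ and in $A$ when it meets all three, the parity ties the membership of the remnants of $K$ to the parity of its original cut.

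With the invariant in hand, I would convert parity into savings. Each even-cut component $K$ falls into one of two situations. Either all of $K$ is cleared during the isolating phase---and since any connected piece on $t$ vertices clears in at most $t-1$ reflections (Theorem~\ref{thm:first}), one fewer than the one-per-vertex budget implicit in the $n-2$ count, this yields a saved reflection---or $K$ leaves surviving vertices, which by the previous paragraph are absorbed into $X\cup Y\cup Z$, where the Case~1 and Case~2 counts are designed to save a reflection per remnant. Summing these contributions over the $k$ even-cut components, and checking that distinct components are credited with distinct reflections, would give the claimed total $(n-2)-k$. The reduction from an arbitrary $G$ to this analysis is routine: components of $G$ disjoint from $C$ have empty, hence even, cut and are handled directly by Theorem~\ref{thm-forests-m-transpositions} and Corollary~\ref{cor-connected-components}.

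I expect the savings bookkeeping to be the main obstacle, for two reasons. The first is that the isolating phase can create edges between vertices lying in different components of $G[V-C]$, so the phrase ``component of $G[V-C]$'' must be pinned to the fixed original vertex sets and the invariant verified against these merges rather than against the evolving components. The second, and more serious, is reconciling the Case~2 count: there the final phase uses $|A|+3=\ell-(|X|+|Y|+|Z|)$ reflections, which undercuts the budget $\ell-2$ only by $|X|+|Y|+|Z|-2$, so when two or more of $X,Y,Z$ are nonempty the final phase alone falls two short of crediting every remnant. Closing this gap requires showing that the configurations forcing Case~2 also force at least two extra simultaneous isolations earlier in the run, and making that combined accounting precise---so that no reflection is counted as a saving twice---is where the real difficulty lies.
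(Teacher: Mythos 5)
Your proposal is essentially the paper's own argument: the corollary is derived there by re-running the algorithm from the proof of Theorem~\ref{thm:tree_iff_n-1} on the given cycle $C$, invoking exactly your parity invariant (every reflection applied before the final triangle stage changes the number of edges between a component of $G[V-C]$ and $C$ by an even amount), and crediting one saved reflection per even-cut component either through full isolation in the early phases or through a remnant vertex landing in $X\cup Y\cup Z$. The two difficulties you flag are real, but they are not resolved in the paper either --- its justification is a short discussion that asserts the savings is ``at least $|X|+|Y|+|Z|$ fewer than $n-2$,'' whereas, as you compute, Case~2 of that proof only accounts for $|X|+|Y|+|Z|-2$ of these, so the remaining credit must come from earlier reflections that isolate more than one vertex at once (and from components whose remnants are tracked against their original vertex sets despite merging); the paper leaves both points implicit, so your scrutiny here goes beyond, rather than falls short of, the source.
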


\section{Other Parameters and Further Directions}

There are many questions to explore about these edge reflections and the reflection graph~$\Gamma_n$. In this section, we outline some of these problems, hopefully giving rise to interesting combinatorial questions. We begin with a proposition.

\begin{proposition}
	Suppose that $G$ is a graph and $\Pi$ is a partition of the edges of $G$ into blocks that induce nested triangle graphs. Further, let $n_k$ denote the number of blocks of $\Pi$ that induce the nested triangle graph $N_k$ for each $k \geq 0$. Then the graph $G$ can be transformed into the edgeless graph with
	\begin{equation} \tag{$\ddag$} \label{eqn:partition_bound}
		\left| \Pi \right| = m - 2 \sum_{k \geq 0} k \cdot n_k
	\end{equation}
	edge reflections.
\end{proposition}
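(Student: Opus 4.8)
The plan is to prove the two assertions packaged in the statement separately: first the arithmetic identity $|\Pi| = m - 2\sum_{k\ge 0} k\cdot n_k$, and then the operational claim that $G$ can be emptied in $|\Pi|$ edge reflections. The identity is immediate once we count edges block by block. A block inducing the nested triangle $N_k = K_2 \vee \overline{K}_k$ contains exactly $2k+1$ edges, namely the central edge $uv$ together with the two edges joining each of the $k$ satellites to the central pair. Since $\Pi$ partitions $E(G)$, summing over all blocks gives
\[
m = \sum_{k\ge 0} (2k+1)\,n_k = \sum_{k\ge 0} n_k + 2\sum_{k\ge 0} k\cdot n_k = |\Pi| + 2\sum_{k\ge 0} k\cdot n_k,
\]
which rearranges to the claimed formula. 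Thus it suffices to show that $G$ can be transformed into $\overline{K}_n$ using one edge reflection per block, that is, $|\Pi|$ reflections in total.

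For the operational claim I would process the blocks one at a time, in any order. For a block $B$ inducing $N_k$, write $u_B, v_B$ for its two dominating (central) vertices and $w_1, \dots, w_k$ for its satellites, so that $V(B) = \{u_B, v_B, w_1, \dots, w_k\}$. The nested-triangle result already established says $N_k$ is emptied by a single edge reflection; concretely this is the reflection $t_{u_B v_B}^{V(B)}$, which deletes $u_B v_B$ and toggles every edge between $\{u_B, v_B\}$ and $V(B)$. Because the central vertices dominate $V(B)$, all $2k+1$ of these pairs are present as edges, so the reflection removes exactly them while the (nonadjacent) satellite pairs are never touched. The plan is to apply precisely this reflection for each block of $\Pi$.

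The crux, and the step I expect to require the most care, is \emph{non-interference}: since $\Pi$ partitions only the edges and not the vertices, distinct blocks may share vertices, so I must verify that the reflection emptying $B$ disturbs no edge of any other block $B'$. The key observation is that an edge toggled by $t_{u_B v_B}^{V(B)}$ must have one endpoint in $\{u_B, v_B\}$ and the other in $V(B)$. As $u_B$ and $v_B$ are dominating within $B$, every vertex of $V(B)\setminus\{u_B\}$ is already joined to $u_B$ by an edge of $B$, and likewise for $v_B$; because $G$ is simple, this forces every existing edge from a central vertex of $B$ into $V(B)$ to be an edge of $B$ itself. Consequently $t_{u_B v_B}^{V(B)}$ removes exactly the edges of $B$ and toggles no edge of any other block: any other-block edge incident to $V(B)$ either has both endpoints outside $\{u_B, v_B\}$ (a satellite–satellite or satellite–exterior pair, hence untoggled) or would coincide with a block-$B$ edge, which is impossible since $\Pi$ is a partition.

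With non-interference established the argument closes cleanly. Applying the $|\Pi|$ reflections in turn, the reflection assigned to each block $B$ deletes exactly the edges of $B$ and preserves the edges of every block not yet processed, so after all $|\Pi|$ reflections no edge of any block survives. Since the blocks exhaust $E(G)$, the resulting graph is the edgeless graph $\overline{K}_n$, giving the result.
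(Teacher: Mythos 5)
Your proof is correct and follows the same route as the paper's (which simply applies, for each block, the single edge reflection $t_{u_B v_B}^{V(B)}$ that deletes that block's edges, and notes the edge count $m=\sum_k(2k+1)n_k$ rearranges to the stated identity). The only difference is that you spell out the non-interference step --- that the pairs toggled by the reflection for block $B$ are exactly the edges of $B$, so reflections for distinct blocks toggle disjoint sets of pairs --- which the paper leaves implicit.
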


\begin{proof}
	For each block of $\Pi$, simply apply the edge reflection that removes all of its edges. It is not difficult to see that both sides of (\ref{eqn:partition_bound}) are equivalent. 
\end{proof}

It is thus interesting to ask for which graphs does there exist a partition of its edges into nested triangles such that the bound given by this proposition is optimal. For example, the bound is optimal for all forests. We include the following conjecture, noting that a graph is \emph{chordal} if all of its induced cycles are triangles.

\begin{conjecture}
	For all chordal graphs $G$, the minimum number of edge reflections required to obtain the empty graph is given by some partition in the previous theorem.
\end{conjecture}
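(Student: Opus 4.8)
The plan is to prove the matching lower bound, since the upper bound---that $G$ can be emptied using $|\Pi|$ edge reflections for any nested-triangle partition $\Pi$---is exactly the preceding proposition. Writing $m$ for the number of edges of $G$ and setting $\mu(G) = \min_{\Pi} |\Pi|$ over all partitions of $E(G)$ into nested-triangle blocks, the proposition gives that the minimum number of edge reflections is at most $\mu(G)$, and the conjecture asserts equality for chordal $G$. Recalling that a block inducing $N_k$ contributes $k$ ``pages'' and $2k+1$ edges, I would first record the reformulation $\mu(G) = m - 2\tau(G)$, where $\tau(G)$ is the maximum total number of pages over all such partitions; equivalently, $\tau(G)$ is the largest number of triangles one can select so that they partition into books (each book's triangles sharing a common spine edge) with all edges used disjointly.

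First I would isolate the mechanism that makes books optimal for deletion. A single edge reflection $t_{uv}^{X}$ only toggles edges incident to $u$ or $v$, and a direct count shows that it changes the edge total by $-1 - 2a + 2c$, where $a$ is the number of common neighbors of $u$ and $v$ that are toggled (each such toggle destroying a triangle on the spine $uv$) and $c$ is the number of vertices of $X$ adjacent to neither. Thus a single reflection deletes at most $1 + 2a$ edges, where $a$ counts triangles through its central edge; this is precisely why emptying a book $N_k$ in one step (with $a=k$ and $c=0$) is the most efficient possible move. The natural route to the lower bound is therefore to show that the potential $\phi(G) = m(G) - 2\tau(G) = \mu(G)$ drops by at most $1$ under every edge reflection, since together with $\phi(\overline{K}_n) = 0$ this would force at least $\mu(G)$ reflections.

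The essential difficulty---and the reason chordality must enter---is that reflections both add edges and leave the chordal class, so $\phi$ need not behave monotonically in general. Indeed, $C_4$ shows the statement fails without chordality: $C_4$ is triangle-free, so $\tau(C_4)=0$ and $\mu(C_4)=4$, yet two reflections (via the cycle argument) suffice. What goes wrong is exactly that a reflection can create triangles that did not previously exist, inflating $\tau$ of the resulting graph and enabling a shortcut. I would therefore aim to establish the key drop-by-one lemma only for the situation arising from a chordal start: using a perfect elimination ordering, build the optimal book partition canonically by repeatedly peeling a simplicial vertex $v$ (whose neighborhood is a clique, so its incident edges assemble into books with spines inside $N(v)$), and then argue by an exchange argument that an optimal emptying sequence can be transformed, without increasing its length, into one performing only book-deletions---each such move dropping $\phi$ by exactly $1$. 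Chordality would be used precisely to rule out the $C_4$-type shortcut: four vertices a reflection might exploit already span a chord, so no genuinely new triangle-packing capacity is created.

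The step I expect to be the main obstacle is the control of edge additions, that is, proving that no sequence exploiting temporarily created edges can beat $\mu(G)$. Concretely, the hard part is showing that $\phi = m - 2\tau$ cannot drop by more than $1$ per reflection once we remain tethered to the chordal world, since a reflection that adds edges can simultaneously raise $\tau$ and complicate the bookkeeping. I would try to tame this either by an uncrossing argument along the elimination ordering---replacing any ``crossing'' reflection that manufactures useful triangles with book-deletions of equal or smaller total cost---or, more structurally, by exhibiting an integral dual certificate: formulate minimum book-cover as a covering program and show its linear relaxation is integral on chordal graphs, mirroring the way many packing and covering parameters become tractable under perfect elimination, with this integrality supplying the matching lower bound. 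Proving that integrality, or equivalently making the exchange argument airtight, is where the real work---and the genuine use of the chordal hypothesis---would lie.
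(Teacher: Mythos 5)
First, a point of order: the statement you are addressing is presented in the paper only as a \emph{conjecture}; the paper gives no proof, so there is nothing to compare your argument against. The only question is whether your proposal itself settles it, and it does not. Your framing is correct as far as it goes: the preceding proposition already gives the upper bound, so everything reduces to the lower bound $\mu(G) = m - 2\tau(G)$; your local count that a reflection $t_{uv}^{X}$ changes the edge total by $-1-2a+2c$ is consistent with how the operation is actually used in the paper (e.g.\ emptying $N_k$ in one step); and $C_4$ does show that chordality cannot be dropped, since $\mu(C_4)=4$ while two reflections suffice. But the entire content of the conjecture \emph{is} the lower bound, and every route you sketch toward it is left open by your own account. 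The potential-function plan --- show that $\phi = m - 2\tau$ drops by at most $1$ per reflection --- fails exactly where you say it does: a single reflection can carry a chordal graph out of the chordal class and can create edges and triangles, so the inequality you need would have to hold at arbitrary intermediate graphs, where your own $C_4$ example shows it is false. The proposed repairs (an exchange or uncrossing argument along a perfect elimination ordering; integrality of an unspecified covering LP) are named but never executed, and you explicitly defer them as ``where the real work would lie.'' A proposal whose decisive lemma is deferred in this way is a research program, not a proof.

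Beyond that global gap, two specific steps would need to be supplied even within your own plan. (i) You assert that peeling simplicial vertices ``canonically'' builds an optimal book partition, i.e.\ that the greedy book packing along a perfect elimination ordering attains $\tau(G)$; this is itself a nontrivial packing statement about chordal graphs and is not argued. (ii) The exchange argument must convert an optimal emptying sequence into one using only book deletions \emph{of the original graph} $G$: a later reflection in an optimal sequence may delete a book whose spine or pages use edges created by earlier reflections, and you give no mechanism for pulling such a move back to a block of a static partition of $E(G)$ without increasing the length of the sequence. Until at least one of these --- the drop-by-one lemma restricted to a suitable invariant class, the exchange argument, or an integral dual certificate --- is actually proved, the conjecture remains open.
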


We mention that there are graphs that are not chordal such that there exists a partition of its edges into nested triangles with (\ref{eqn:partition_bound}) giving the optimal bound. See, for example, the square with spikes in Figure~\ref{fig:square_with_spikes}.

\begin{figure}[ht]
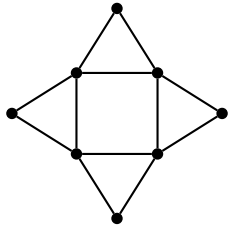

\begin{center}
	\begin{pic}
	\makenodescircle{v}{4}{0.3}[45][circle, fill, minimum size = \nodesize pt, draw]
	\drawedges{v1/v2, v2/v3, v3/v4, v4/v1}

	\node (a) at (0,0.55) [circle, fill, minimum size = \nodesize pt, draw]{};
	\node (b) at (-0.55,0) [circle, fill, minimum size = \nodesize pt, draw]{};
	\node (c) at (0,-0.55) [circle, fill, minimum size = \nodesize pt, draw]{};
	\node (d) at (0.55,0) [circle, fill, minimum size = \nodesize pt, draw]{};
	
	\drawedges{a/v1, a/v2, b/v2, b/v3, c/v3, c/v4, d/v4, d/v1}
	\end{pic}
\end{center}
\caption{The square with spikes graph.}
\label{fig:square_with_spikes}
\end{figure}

As a very general problem, we propose the following.

\begin{problem}
	Investigate bounds on the number of edge reflections required to reach the edgeless graph in terms of other graph parameters. For example, consider parameters such as diameter, connectivity, girth, etc.
\end{problem}






 
 

 


\cleardoublepage
\backmatter

\chapter*{Biography}
\addcontentsline{toc}{chapter}{Biography}
  Sean Patrick Mandrick was born in 1999 in Rochester, New York, where he was also raised. He attended Colgate University from 2017 to 2020, earning a BA in mathematics with a minor in computer science. He then studied mathematics at the University of Florida from 2021 to 2025, earning an MS in 2023 and a PhD in 2025.


\end{document}